\providecommand{\tabularnewline}{\\}
\numberwithin{equation}{section}
\numberwithin{figure}{section}
\numberwithin{table}{section}
\theoremstyle{plain}
\newtheorem{thm}{\protect\theoremname}[section]
  \theoremstyle{plain}
  \newtheorem{cor}[thm]{\protect\corollaryname}
  \theoremstyle{definition}
  \newtheorem{defn}[thm]{\protect\definitionname}
  \theoremstyle{remark}
  \newtheorem{rem}[thm]{\protect\remarkname}
  \theoremstyle{plain}
  \newtheorem{prop}[thm]{\protect\propositionname}
  \theoremstyle{definition}
  \newtheorem{example}[thm]{\protect\examplename}
  \theoremstyle{plain}
  \newtheorem{fact}[thm]{\protect\factname}
  \theoremstyle{plain}
  \newtheorem{lem}[thm]{\protect\lemmaname}
  \newtheorem{question}[thm]{Question}
  \newtheorem{notation}[thm]{Notations}
  \theoremstyle{plain}
  \newtheorem*{cor*}{\protect\corollaryname}
\newcommand{\xyR}[1]{
  \xydef@\xymatrixrowsep@{#1}}
\newcommand{\xyC}[1]{
  \xydef@\xymatrixcolsep@{#1}}
\newcommand{\bb}[1]{\mathbb{#1}}
  \providecommand{\corollaryname}{Corollary}
  \providecommand{\definitionname}{Definition}
  \providecommand{\examplename}{Example}
  \providecommand{\factname}{Fact}
  \providecommand{\lemmaname}{Lemma}
  \providecommand{\propositionname}{Proposition}
  \providecommand{\remarkname}{Remark}
\providecommand{\theoremname}{Theorem}
\begin{document}

\title{HEEGAARD FLOER HOMOLOGY OF SURGERIES ON TWO-BRIDGE LINKS}

\author{YAJING LIU}
\begin{abstract}
We give an $O(p^{2})$ time algorithm to compute the generalized Heegaard
Floer complexes $A_{s_{1},s_{2}}^{-}(\overrightarrow{L})$'s for a two-bridge
link $\overrightarrow{L}=b(p,q)$ by using nice diagrams. Using the
link surgery formula of Manolescu-Ozsv\'{a}th, we also show that
${\bf HF}^{-}$ and their $d$-invariants of all integer surgeries
on two-bridge links are determined by $A_{s_{1},s_{2}}^{-}(\overrightarrow{L})$'s.
We obtain a polynomial time algorithm to compute ${\bf HF}^{-}$ of
all the surgeries on two-bridge links, with $\mathbb{Z}/2\mathbb{Z}$
coefficients. In addition, we calculate some examples explicitly:
 ${\bf HF}^{-}$ and the $d$-invariants of all integer surgeries
on a family of hyperbolic two-bridge links including the Whitehead link.
\end{abstract}
\maketitle

\section{Introduction}

\subsection{Background and motivation.}

Heegaard Floer homology is a package of invariants of 3-manifolds
invented by Ozsv\'{a}th and Szab\'{o}, using holomorphic disks and
Heegaard splittings of the 3-manifold \cite{OS_HF1,OS_HF2}. It detects
the Thurston norm and fiberedness of a 3-manifold \cite{[Ghiggini]fibred,[OS]genus,[YiNi]Fiber}.
Furthermore, it fits into a kind of 3+1 dimensional topological quantum
field theory, which is important in the study of smooth structures
on 4-manifolds. Unlike other Floer homological invariants, Heegaard
Floer homology is combinatorially computable, and there are several
algorithms for computing various versions of it. Manolescu, Ozsv\'{a}th
and Sarkar described knot Floer homology combinatorially using grid
diagrams in \cite{Combinatorial_Knot_Floer}. Sarkar and Wang in \cite{Sakar-Wang}
found an algorithm for computing $\widehat{HF}(M^{3})$ over $\mathbb{Z}/2\mathbb{Z}$
by using nice Heegaard diagrams. Lipshitz, Ozsv\'{a}th and Thurston
used bordered Floer homology to give another algorithm for computing
$\widehat{HF}(M^{3})$ in \cite{Bordered_Floer}. In \cite{MOT}, Manolescu, Ozsv\'{a}th and
Thurston showed that the plus and minus versions of Heegaard Floer
homology (over $\mathbb{Z}/2\mathbb{Z}[[U]]$) can also be described
combinatorially, by using link surgery and grid diagrams. (Admittedly,
the MOT algorithm has a high time complexity.) Improving these algorithms
and developing new methods for computations are still important and
interesting questions.

\subsection{The basic idea. }

This paper is aimed at studying the Heegaard Floer homology of
 surgeries on two-component links by using the link surgery formula due
to Manolescu-Ozsv\'{a}th \cite{link_surgery}. When $\overrightarrow{L}$
is a two-bridge link $b(p,q)$, we find a fast algorithm for computing
the Floer homology of surgeries on $L$, $\mathbf{HF}^{-}(S_{\Lambda}^{3}(\overrightarrow{L}))$
over $\mathbb{F}=\mathbb{Z}/2\mathbb{Z}$, where $\Lambda$ is the
framing matrix of a surgery. (Here, $\mathbf{HF}^{-}(S_{\Lambda}^{3}(\overrightarrow{L}))$ is the $U$-completion of $HF^{-}$. See \cite{link_surgery} Section 2.) This algorithm uses genus-0 nice diagrams
and algebraic arguments to simplify the Manolescu-Ozsv\'{a}th surgery
formula. Its worst-case time complexity is a polynomial of $p$ and $\det(\Lambda)$.

Let us mention some related work. In \cite{Rasmussen_2_bri}, Rasmussen
studied Heegaard Floer homology of surgeries on two-bridge knots.
In \cite{knot_surgery}, Ozsv\'{a}th and Szab\'{o} developed a formula
for the Heegaard Floer homology of surgeries on knots. The paper
\cite{link_surgery} presents a generalization of this formula to
the case of links. Two sets of data are needed in the surgery formula
in \cite{link_surgery}: the generalized Floer complexes $A_{s}^{-}(\overrightarrow{L})$'s
and the maps in the surgery formula, namely the maps $I_{s}^{\overrightarrow{L}'},D_{s}^{\overrightarrow{L}'}$
connecting the complexes associated to oriented sublinks. In general,
the Heegaard Floer homology of link surgeries is more difficult to
compute, due to more involved algebraic structures. However, in some
cases, computations using this surgery formula can be simplified.

The main complexity in the link surgery formula is the counting of
the holomorphic domains on the Heegaard surface, which corresponds
to holomorphic bigons and polygons in the symmetric product. For the
special case of two-bridge links, we directly find a formula for the
counts of holomorphic bigons. Furthermore, the general link surgery
formula involves counting holomorphic polygons in the symmetric product
for computing some cobordism maps, and this is of considerably high
time complexity. Here we notice that, for two-bridge links, all these
maps can be determined algebraically.

Note that this paper provides new examples of hyperbolic 3-manifolds for which
we can compute their Heegaard Floer homology.

\subsection{Main results and organization.}

In Section 2, we review some preliminaries for the link surgery formula,
including the generalized Floer complexes, polygon maps and nice diagrams.

In Section 3, using the Schubert normal form of two-bridge links we
get a class of nice Heegaard diagrams called \emph{Schubert Heegaard diagrams},
in which every region is either a bigon or a square. We can explicitly
describe all the composite bigons on a Schubert Heegaard diagram,
and hence the Floer differentials. Further, we get a formula for the
Alexander gradings of all intersection points, thus giving a formula
for the multi-variable Alexander polynomial of a two-bridge link $b(p,q)$
in terms of $p,q$. See Theorem \ref{thm:bigon} and Proposition \ref{prop:Alexander grading}
below for the precise statements. This implies that $A_{\mathrm{s}}^{-}(\overrightarrow{L})$
can be directly computed from this diagram. For a two-bridge link
$\overrightarrow{L}=b(p,q)$, we get an $O(p^{2})$ time algorithm
for computing $A_{\mathrm{s}}^{-}(\overrightarrow{L}).$ We also found
different two-bridge links (modulo mirror and reorientation) sharing
the same multi-variable Alexander polynomial, signature, and linking
number.

In Section 4, we review the link surgery formula from \cite{link_surgery}
for two-component links $\overrightarrow{L}=\overrightarrow{L_{1}}\cup\overrightarrow{L_{2}}$
with basic diagrams. First, we review some algebraic tools, \emph{hyperboxes
of chain complexes}. In order to see the algebraic structure of the
link surgery formula, we define a \emph{twisted gluing} of squares
of chain complexes. Then the link surgery formula is a twisted gluing
of certain squares of chain complexes derived from $L$. These squares
are constructed in \cite{link_surgery} by means of \emph{complete
system of hyperboxes}, which is a set of compatible Heegaard diagrams
for the sublinks. For any two-component link, we define a type of
complete system of hyperboxes which generalizes the basic systems
used in \cite{link_surgery}, called a \emph{primitive system of hyperboxes}.
We also show that any basic diagram of $\overrightarrow{L}=\overrightarrow{L_{1}}\cup\overrightarrow{L_{2}}$
produces a primitive system.

In Section 5, we use algebraic arguments to show some rigidity results
of the destabilization maps $D_{\text{s}}^{\overrightarrow{M}}\text{'s},M\subset L$
up to chain homotopy, for two-bridge links. Further, if we perturb
the destabilization maps $D_{\text{s}}^{\pm L_{i}}$'s by chain homotopy,
i.e. replace $D_{\text{s}}^{\pm L_{i}}$ by $\tilde{D}_{\text{s}}^{\pm L_{i}}\simeq D_{\text{s}}^{\pm L_{i}}$,
we can construct a new square of chain complexes called the \emph{perturbed
surgery complex.} Using the rigidity results, we show that the perturbed
surgery complex is isomorphic to the original complex in the link
surgery formula. Based on the perturbed surgery complex, we give the
algorithm for computing $\mathbf{HF}^{-}(S_{\Lambda}^{3}(\overrightarrow{L}))$
mentioned before.

Throughout this paper, we use $\mathbb{F}=\mathbb{Z}/2\mathbb{Z}$
coefficients. The main result we obtain is the following:
\begin{thm}
\label{thm:perturbed_surgery_formula}
Suppose $\overrightarrow{L}$ is an oriented two-bridge link with
framing $\Lambda$. Let $\mathcal{H}^{L}$ be a basic Heegaard diagram
of $\overrightarrow{L}$ and let $\mathcal{H}$ be a primitive system
induced by $\mathcal{H}^{L}$. After we determine the $\mathbb{F}[[U_1,U_2]]$-modules
 $A_{\mathrm{s}}^{-}(\overrightarrow{L})$'s sitting at the
vertices of the square in the link surgery formula, any choices of
\begin{itemize}
\item $\mathbb{F}[[U_1]]$-linear chain homotopy equivalences $\tilde{D}_{s_{1},s_{2}}^{-L_{i}}$ for the edge maps,
\item $\mathbb{F}[[U_1]]$-linear chain homotopies for the diagonal maps
\end{itemize} yield a perturbed surgery complex $(\mathcal{\tilde{C}}^{-}(\mathcal{H}^{L},\Lambda),\tilde{\mathcal{D}}^{-})$
which is isomorphic to the original surgery complex in \cite{link_surgery}
as an $\mathbb{F}[[U_{1}]]$-module. By imposing the $U_{2}$-action
to be the same as the $U_{1}$-action, the $\mathbb{F}[[U_{1},U_{2}]]$-module
$H_{*}(\mathcal{\tilde{C}}^{-}(\mathcal{H},\Lambda),\tilde{\mathcal{D}}^{-})$
becomes isomorphic to the homology $\mathbf{HF}^{-}(S_{\Lambda}^{3}(\overrightarrow{L}))$.
This isomorphism preserves the absolute grading.
\end{thm}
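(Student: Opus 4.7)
The plan is to produce an explicit $\mathbb{F}[[U_1]]$-linear chain isomorphism $\Phi$ from the original surgery complex $(\mathcal{C}^{-}(\mathcal{H}^{L},\Lambda),\mathcal{D}^{-})$ of \cite{link_surgery} to the perturbed complex $(\tilde{\mathcal{C}}^{-}(\mathcal{H}^{L},\Lambda),\tilde{\mathcal{D}}^{-})$, and then to deduce the homology statement by combining this isomorphism with the main theorem of \cite{link_surgery}. Since both complexes have identical vertex modules $A^{-}_{\mathrm{s}}(\overrightarrow{L})$, I take $\Phi$ to be the identity on each vertex and use the hypothesized chain homotopies to supply its edge and diagonal components, so that $\Phi$ becomes a lower-triangular endomorphism with respect to the natural filtration of the square by sublinks.

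For each edge, the assumption $\tilde{D}_{s_{1},s_{2}}^{-L_{i}}\simeq D_{s_{1},s_{2}}^{-L_{i}}$ via an $\mathbb{F}[[U_1]]$-linear chain homotopy lets me fix an $\mathbb{F}[[U_1]]$-linear map $H_{s_{1},s_{2}}^{-L_{i}}$ with $\partial H_{s_{1},s_{2}}^{-L_{i}} + H_{s_{1},s_{2}}^{-L_{i}} \partial = D_{s_{1},s_{2}}^{-L_{i}} + \tilde{D}_{s_{1},s_{2}}^{-L_{i}}$. Placing these on the edges of $\Phi$, the chain-map equation $\Phi\mathcal{D}^{-}=\tilde{\mathcal{D}}^{-}\Phi$ reduces, in the diagonal direction, to an equation expressing the original diagonal polygon-counting map plus corrections built from the $H_{s_{1},s_{2}}^{-L_{i}}$'s and the edge maps as being equal to the perturbed diagonal map modulo a boundary. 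Solving this equation is exactly the problem of choosing a valid diagonal chain homotopy for the perturbed complex, which is possible by hypothesis; placing the solution on the diagonal component of $\Phi$ makes it a chain map. Since $\Phi$ is the identity on the associated graded of the hyperbox filtration, it is automatically an $\mathbb{F}[[U_1]]$-module isomorphism.

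The main obstacle is to verify that the diagonal equation is genuinely solvable for arbitrary admissible $\tilde{D}_{s_{1},s_{2}}^{-L_{i}}$ and diagonal chain homotopies: a priori the discrepancy between the two candidate diagonal maps could fail to be a boundary, or could demand $A_\infty$-style higher homotopies to absorb. This is exactly where the \emph{rigidity} results of Section 5 for two-bridge links become essential. They guarantee that the destabilization maps $D_{s_{1},s_{2}}^{\pm L_{i}}$, and the relevant diagonal polygon-counting maps built from them, are each uniquely determined up to $\mathbb{F}[[U_1]]$-linear chain homotopy; consequently every discrepancy of the relevant form is realized as an honest boundary at the square level, with no need to climb higher in the hyperbox hierarchy. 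In effect, two-bridgeness collapses the whole hyperbox-compression machinery into a single square of chain complexes.

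Finally, the homology statement follows immediately: by \cite{link_surgery}, after imposing $U_{2}=U_{1}$ the original surgery complex computes $\mathbf{HF}^{-}(S^{3}_{\Lambda}(\overrightarrow{L}))$ with its absolute grading. Because $\Phi$ is $\mathbb{F}[[U_{1}]]$-linear and the identification $U_{2}=U_{1}$ is imposed symmetrically on both sides, $\Phi$ descends to an isomorphism $H_{*}(\tilde{\mathcal{C}}^{-},\tilde{\mathcal{D}}^{-})\cong \mathbf{HF}^{-}(S^{3}_{\Lambda}(\overrightarrow{L}))$ of $\mathbb{F}[[U_{1},U_{2}]]/(U_{1}-U_{2})$-modules preserving the absolute grading.
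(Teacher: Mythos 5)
Your proposal is correct and follows essentially the same route as the paper: the isomorphism $\Phi$ you describe (identity on vertices, the chosen homotopies $H^{-L_i}_{s_1,s_2}$ on edges, a solved-for diagonal component) is exactly the hyperbox isomorphism the paper builds in Propositions \ref{prop:mapping cone iso.}--\ref{prop:replacing_maps}, the solvability of the diagonal equation is supplied by the rigidity statement of Proposition \ref{prop:homotopies on the diagonal} via Corollary \ref{cor:replacing_diagonal}, and the homology/grading conclusion is drawn from the Manolescu--Ozsv\'ath theorem in the same way. The only cosmetic difference is that the paper organizes the argument rectangle-by-rectangle on the $(2,2)$ hyperboxes of Equation \eqref{eq:perturbed rect} before compressing and gluing, rather than writing one global lower-triangular map on the glued complex.
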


See Theorem \ref{thm:invariance of perturbed surg. formula} below for a more
precise statement and the proof.

\begin{cor}
\label{cor:A_s->HF}
For a two-bridge link $\overrightarrow{L}$, knowledge
of the $A_{\mathrm{s}}^{-}(\overrightarrow{L})$ determines ${\bf HF}^{-}$
of all the surgeries on $L$.
\end{cor}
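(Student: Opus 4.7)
The plan is to derive the corollary directly from Theorem \ref{thm:perturbed_surgery_formula}. That theorem builds the perturbed surgery complex $(\tilde{\mathcal{C}}^{-}(\mathcal{H}^{L},\Lambda), \tilde{\mathcal{D}}^{-})$ out of three ingredients: the $\mathbb{F}[[U_{1},U_{2}]]$-modules $A_{\mathrm{s}}^{-}(\overrightarrow{L})$ at the four vertices of the square; arbitrary $\mathbb{F}[[U_{1}]]$-linear chain homotopy equivalences $\tilde{D}^{-L_{i}}_{s_{1},s_{2}}$ on the edges; and arbitrary $\mathbb{F}[[U_{1}]]$-linear chain homotopies on the diagonal. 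After identifying the $U_{2}$-action with the $U_{1}$-action and passing to homology, the output is $\mathbf{HF}^{-}(S^{3}_{\Lambda}(\overrightarrow{L}))$. So to prove the corollary I only need to check that all three ingredients are determined, in the sense required by the theorem, by knowledge of the $A_{\mathrm{s}}^{-}$'s alone.

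The vertex data are literally the $A_{\mathrm{s}}^{-}$'s, so nothing needs to be done there. For the edge data, the $A_{\mathrm{s}}^{-}$ complexes at adjacent vertices of the square are chain homotopy equivalent over $\mathbb{F}[[U_{1}]]$---the geometric destabilization maps $D^{-L_{i}}_{s_{1},s_{2}}$ of the Manolescu-Ozsv\'{a}th surgery formula provide such equivalences---so once the source and target complexes are exhibited, a chain homotopy equivalence between them can be produced by standard homological algebra, for instance by lifting an isomorphism on homology through the $U_{1}$-adic filtration on $A_{\mathrm{s}}^{-}$. For the diagonal data, the requirement is a chain homotopy between the two edge compositions around the square, and such a homotopy must exist because, by Theorem \ref{thm:perturbed_surgery_formula}, those two compositions are chain homotopic once the edge maps are chosen; again, producing a representative is a routine algebraic lifting.

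Independence of the resulting homology from all of these algebraic choices is precisely the content of Theorem \ref{thm:perturbed_surgery_formula}, so whichever edge equivalences and diagonal homotopies one picks, the same $\mathbf{HF}^{-}(S^{3}_{\Lambda}(\overrightarrow{L}))$ is recovered. The main obstacle has already been surmounted inside the theorem; the only extra observation the corollary needs is that the edge and diagonal ingredients carry no geometric information beyond what is visible from the $A_{\mathrm{s}}^{-}$'s themselves, which is exactly why the theorem permits them to be chosen arbitrarily. I therefore anticipate no substantive difficulty in writing out the proof.
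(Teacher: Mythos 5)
Your proposal is correct and follows the same route the paper intends: the corollary is an immediate consequence of Theorem \ref{thm:perturbed_surgery_formula} (i.e.\ Theorem \ref{thm:invariance of perturbed surg. formula}), since the only non-canonical inputs beyond the $A_{\mathrm{s}}^{-}$'s are the destabilization edge maps and the diagonal homotopies, which the rigidity results (Propositions \ref{prop:homotopy equivalence} and \ref{prop:homotopies on the diagonal}, applicable because each component of a two-bridge link is an unknot so the relevant complexes have homology $\mathbb{F}[[U]]$) show exist and may be chosen arbitrarily without changing the resulting homology. One small imprecision worth fixing in a write-up: the edge maps $\Phi_{\mathrm{s}}^{\pm L_i}$ of the compressed square are not themselves homotopy equivalences (they factor as a destabilization map composed with an inclusion map $\mathcal{I}_{\mathrm{s}}^{\pm L_i}$, the latter being canonically determined by the Alexander gradings); only the destabilization factors are the homotopy equivalences to which the rigidity argument applies.
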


In Section 6, we compute some examples explicitly: the surgeries on
$b(4n,2n+1)$, $n\in\mathbb{N}$, which are two sequences of hyperbolic two-bridge links generalizing
the Whitehead link and the torus link $T(2,4)$. See Proposition \ref{prop:surgery on Wh}
and Theorem \ref{thm:surgery_on_b(8k,4k+1)}. Actually in the course of the computation, we also show that
 the Whitehead link is an L-space link, which means all of its large surgeries are L-spaces, i.e.
 $A^-_{s_1,s_2}(\mathit{Wh})$'s all have homology $\mathbb{F}[[U]]$. This provides examples
of hyperbolic L-spaces.

To compute these examples, we study the filtered homotopy type of $CFL^-(L)$.
In Section 6.2, we prove that when $L=b(4n,2n+1)$,
the filtered chain homotopy type of $CFL^{-}(L)$ is determined by
the filtered chain homotopy type of $\widehat{CFL}(L)$. See Proposition
\ref{prop:CFL^-(b(8k,4k+1))} and Proposition \ref{prop:CFL^-b(8k,4k+3)}
for the precise statements. Basically, this is based
on an observation that the Alexander polytope is simple and there
are several symmetries on $CFL^{-}(L)$, which give constraints for
the differentials in $CFL^{-}(L)$. From $CFL^{-}(L)$ we derive all
the $A_{\mathrm{s}}^{-}(L)$'s and the inclusion maps. Finally, using
the perturbed surgery complex,  we compute the Floer homology of their
surgeries and the associated $d$-invariants in Section 6.3.

Since $CFL^{-}(L)$ is the same as $A_{+\infty,+\infty}^{-}(L)$ viewed
as a $\mathbb{Z}\oplus \mathbb{Z}$-filtered chain complex (with the Alexander
filtration), Corollary \ref{cor:A_s->HF} means the filtered homotopy
type of $CFL^{-}(L)$ contains all the information about the Floer homology
of the surgeries on $L$, when $L$ is a two-bridge link. In \cite{OS_link_FLoer},
it is shown that, for an alternating two-component link $L$, the
filtered chain homotopy type of $\widehat{CFL}(L)$ is determined
by the set of data:
\begin{itemize}
\item the multi-variable Alexander polynomial $\Delta_{L}(x,y)$,
\item the signature $\sigma(L)$,
\item the linking number $\mathrm{lk}(L)$,
\item  the filtered homotopy type of $\widehat{CFK}(L_{i})$ of each component.
\end{itemize}
However, it is hard to determine the filtered homotopy type of $CFL^{-}(L)$
in general. For two-bridge links, the Schubert Heegaard diagrams show that the
$U_{1},U_{2}$-differentials in $CFL^{-}(L)$ are quite simple, since the bigons always
contain exactly one basepoint. In addition, every component of a two-bridge link
 is the unknot. Thus, for two-bridge links, we raise the following question:

\begin{question}
Given an oriented two-bridge link $L$, is the filtered homotopy type
of $CFL^{-}(L)$ determined by the set of data $\{\Delta_{L}(x,y),\sigma(L),\mathrm{lk}(L)\}$?
\end{question}

We note that ${\bf HF}^{-}$ of surgeries on two-bridge links may
also be computed by other methods. For example, as long as one of
the framing coefficients is not $0$, one can view one component as
a knot in a lens space and compute using the grid diagram methods
in \cite{[Baker-Hedden-Grigsby]}. Another method is to consider these surgeries
as surgeries on (1,1)-knots in lens spaces and use the method of \cite{(1-1)_knot}.
Nevertheless, the method in this paper is more conceptual.
Some of the arguments here could be potentially used for other classes
of links. In fact, Theorem \ref{thm:perturbed_surgery_formula} and Corollary \ref{cor:A_s->HF}
can be directly generalized to the two-component links with every component being an L-space knot.

\subsection{Acknowledgments.}

This project was written under the supervision of my adviser Ciprian
Manolescu. I am greatly indebted to him for his continuous guidance
and support, and for his numerous valuable suggestions and encouragements.

\section{Heegaard diagrams and generalized Floer complexes}

In this section, we give the precise definitions of what we need in
the link surgery formula, including Heegaard diagrams, generalized
Floer complexes, polygon maps, and nice diagrams. Here, the Heegaard
diagram is adapted for a link inside a 3-manifold with multiple basepoints;
the generalized Floer complexes of a link $L$ are derived from the
filtered complex $CFL^{-}(L)$, and they govern the large surgeries;
the polygon maps are used in constructing cobordism maps and certain
maps in the link surgery formula; knowledge of nice diagrams are also
introduced to deal with two-bridge links.

\subsection{Heegaard diagrams of links.}

We give the most general definition of Heegaard diagrams for
an oriented link $\overrightarrow{L}$ inside a 3-manifold $M^{3}$.
When the link $\overrightarrow{L}=\emptyset$, the Heegaard diagram
is simply for $M^{3}$.
\begin{defn}[Heegaard diagram of links]
A\emph{ multi-pointed} \emph{Heegaard
diagram} for the oriented link $\overrightarrow{L}$ in $M^{3}$ is
the data of the form $\mathcal{H}=(\Sigma,\boldsymbol{\alpha},\boldsymbol{\beta},\text{{\bf{w}}},\text{{\bf{z}}})$,
where:
\begin{itemize}
\item $\Sigma$ is a closed, oriented surface of genus $g$;
\item $\boldsymbol{\alpha}=\{\alpha_1,...,\alpha_{g+k-1}\}$ is a collection of disjoint, simple closed curves on $\Sigma$ which span a $g$-dimensional lattice of $H_1(\Sigma;\mathbb{Z})$, hence specify a handlebody $U_\alpha$; the same goes for $\boldsymbol{\beta}=\{\beta_1,...,\beta_{g+k-1}\}$ specify a handlebody $U_\beta$.
\item ${\bf{w}}=\{w_1,...,w_k\}$ and ${\bf{z}}=\{z_1,...,z_m\}$ (with $k\geq m$) are collections of points on $\Sigma$ with the following property. Let $\{A_i\}_{i=1}^k$ be the connected components of $\Sigma-\alpha_{1}-\dots-\alpha_{g+k-1}$ and $\{B_{i}\}_{i=1}^{k}$ be the connected components of $\Sigma-\beta_{1}-\dots-\beta_{g+k-1}.$ Then there is a permutation $\sigma$ of $\{1,...,m\}$ such that $w_{i}\in A_{i}\cap B_{i}$ for $i=1,...,k$, and $z_{i}\in A_{i}\cap B_{\sigma(i)}$ for $i=1,...,m$, such that connecting $w_i$ to $z_i$ inside $A_i$ and connecting $z_i$ to $w_{\sigma(i)}$ inside $B_i$ will give rise to the link $\overrightarrow{L}$.
\end{itemize}
\end{defn}

\begin{defn}[Admissible diagrams]
A \emph{periodic domain} is a two-chain $\phi$ on $\Sigma$ which is
a linear combination of components of $\Sigma-\boldsymbol{\alpha}\cup\boldsymbol{\beta}$ with integral coefficients,
such that the local multiplicity of $\phi$ at every $w_{i}\in{\bf w}$
is $0$ and the boundary of $\phi$ is a integral combination of $\alpha$- and $\beta$-curves.
 A multi-pointed Heegaard diagram $\mathcal{H}=(\Sigma,\boldsymbol{\alpha},\boldsymbol{\beta},{\bf w},{\bf z})$
is called \emph{admissible }if every non-trivial periodic domain has
some positive local multiplicities and some negative local multiplicities.
\end{defn}

\begin{defn}[Basic diagrams of links]
An admissible Heegaard diagram of $\overrightarrow{L}$
is called \emph{basic}, if $l=k=m$, meaning there are exactly two
basepoints $w_{i},z_{i}$ for every component $\overrightarrow{L_{i}}$
and no free basepoints.
\end{defn}

\begin{rem}
(1) The definitions of pointed Heegaard moves are systematically formulated
in \cite{link_surgery} section 4.

(2) In order to avoid the issue of naturality, we fix the Heegaard
surface $\Sigma$ as an embedded surface in the underlying 3-manifold
$M^{3}$. Thus, a Heegaard diagram is equivalent to a self-indexed
Morse function.

(3) In this paper we will only consider maximally colored diagrams
in the sense of \cite{link_surgery}.
\end{rem}

\subsection{Generalized Floer complexes.}
Here we define some chain complexes of a Heegaard diagram for an oriented
link in $S^{3}$, which govern the large surgeries on this link. Suppose
$\overrightarrow{L}=\overrightarrow{L_{1}}\cup\overrightarrow{L_{2}}\cup\cdots\cup\overrightarrow{L_{l}}$,
and $\overrightarrow{M}$ is an oriented sublink of $\overrightarrow{L}$,
where $\overrightarrow{M}$ may not have the induced orientation from
$\overrightarrow{L}$ on each component. By $\overrightarrow{L}-M$,
we denote the oriented link obtained by deleting all the components
of $\overrightarrow{M}$ from $\overrightarrow{L}$.

The identity $H_{1}(S^{3}-\overrightarrow{L})\cong\mathbb{Z}^{l}$
provides a way to record the $\text{Spin}^{c}$ structures over $S^{3}$
relative to $L$ as an affine lattice over $\mathbb{Z}^{l}$.

\begin{defn}[$\mathbb{H}(L)$ and reduction maps]
Define the affine
lattice $\mathbb{H}(\overrightarrow{L})$ over $H_{1}(S^{3}-\overrightarrow{L})$
as follows:
\[
\mathbb{H}(\overrightarrow{L})_{i}=\frac{\mathrm{lk}(\overrightarrow{L_{i}},\overrightarrow{L}-\overrightarrow{L_{i}})}{2}+\mathbb{Z}\subset\mathbb{Q},\mathbb{H}(\overrightarrow{L})=\underset{i}{\overset{l}{\bigoplus}}\mathbb{H}(\overrightarrow{L})_{i},
\]
together with its completion
\[
\overline{\mathbb{H}}(\overrightarrow{L})_{i}=\mathbb{H}(\overrightarrow{L})_{i}\cup\{-\infty,+\infty\},\overline{\mathbb{H}}(\overrightarrow{L})=\underset{i}{\overset{l}{\bigoplus}}\overline{\mathbb{H}}(\overrightarrow{L})_{i}.
\]

The map $\psi^{\overrightarrow{M}}:\mathbb{H}(\overrightarrow{L})\rightarrow\mathbb{H}(\overrightarrow{L}-M)$
is defined by $\psi^{\overrightarrow{M}}(\text{s})=\text{s}-[\overrightarrow{M}]/2$.
More precisely, let $M=L_{j_{1}}\cup...\cup L_{j_{m}}$. Then for
all $i$ not in $\{j_{1},...,j_{m}\}$, let $L_{i}=(L-M)_{k_{i}}$,
set
\begin{equation}
\psi_{i}^{\overrightarrow{M}}:\overline{\mathbb{H}}(\overrightarrow{L})_{i}\rightarrow\overline{\mathbb{H}}(\overrightarrow{L}-M)_{k_{i}},s_{i}\rightarrow s_{i}-\frac{\mathrm{lk}(\overrightarrow{L_{i}},\overrightarrow{M})}{2}.
\label{eq:psi}
\end{equation}
The map $\psi^{\overrightarrow{M}}$ is defined to be the direct sum
of the maps $\psi_{i}^{\overrightarrow{M}}$, for those $i$'s with
$L_{i}$ not in $M$.
\end{defn}

 The reduction maps $\psi^{\overrightarrow{M}}$ are used
 in the definition of the destabilization maps in Section 4.3.2 and
 also in the statement of link surgery formula in Section 4.2.

For convenience to define the generalized Floer complexes, here we focus
on Heegaard diagrams with only one pair of
basepoints $w_{i},z_{i}$ on each component and allow free basepoints.
Given an admissible multi-pointed Heegaard diagram
 $\mathcal{H}=(\Sigma,\boldsymbol{\alpha},\boldsymbol{\beta},\text{{\bf{w}}},\text{{\bf{z}}})$
for $\overrightarrow{L}$ with exactly two basepoints ${z_{i}}$ and $w_{i}$ for each link
component $L_i$,  we consider the Lagrangian pair $\mathbb{T}_{\alpha},\mathbb{T}_{\beta}$
in   $\mathrm{Sym}^{g+k-1}(\Sigma)$
and the Floer complex $CF(\mathbb{T}_{\alpha},\mathbb{T}_{\beta})$.
There is an Alexander multi-grading  $A:\mathbb{T}_{\alpha}\cap\mathbb{T}_{\beta}\rightarrow\mathbb{H}(L)$
characterized by the property
\[
A_{i}({\bf{x}})-A_{i}({\bf{y}})=n_{z_{i}}(\phi)-n_{w_{i}}(\phi),\forall\phi\in\pi_{2}(\bf{x},\bf{y})
\]
and a normalization condition on the Alexander polynomial. The Alexander grading induces
a filtration on $CF^-(\mathbb{T}_{\alpha},\mathbb{T}_{\beta})$.
Given a $\text{Spin}^{c}$ structure on $S^{3}-L$, i.e. an element
$\mathrm{s}\in\mathbb{H}(L)$, we associate a chain complex $\mathfrak{A}^{-}(\mathcal{H},\text{s})$
called the  \emph{generalized Heegaard Floer complex} using the Alexander
filtration. We introduce variables $U_{i}$ with $1\leq i\leq l$ for each link component
$L_{i}$, and $U_{i}$ with $l+1\leq i\leq k$ for each free basepoint $w_{i}$.

\begin{defn}[Generalized Floer complex]
For $\text{s}\in\mathbb{H}(L)$, the \emph{generalized Floer complex}
$\mathfrak{A}^{-}(\mathcal{H},\text{s})$ is the free module over
$\mathcal{R}=\mathbb{F}[[U_{1},...,U_{l}]]$ generated by $\mathbb{T}_{\alpha}\cap\mathbb{T}_{\beta}\in \mathrm{Sym}^{g+k-1}(\Sigma)$,
and equipped with the differential:
\begin{equation}
\label{eq:A_s}
\partial_{\mathrm{s}}^{-}{\bf x}=\underset{{\bf y}\in\mathbb{T}(\alpha)\cap\mathbb{T}(\beta)}{\displaystyle \sum}{\displaystyle \underset{\begin{array}{c}
\phi\in\pi_{2}({\bf x},{\bf y})\\
\mu(\phi)=1
\end{array}}{\displaystyle \sum}\#(\mathcal{M}(\phi)/\mathbb{R})}\cdot U_{1}^{E_{s_{1}}^{1}(\phi)}\cdots U_{l}^{E_{s_{l}}^{l}(\phi)}\cdot U_{l+1}^{n_{w_{l+1}}(\phi)}\cdots U_{k}^{n_{w_{k}}(\phi)}\cdot {\bf y},
\end{equation}
where $E_{s}^{i}(\phi)$ is defined by
\[
E_{s}^{i}(\phi)=\max\{s-A_{i}({\bf x}),0\}-\max\{s-A_{i}({\bf y}),0\}+n_{z_{i}}(\phi)=\max\{A_{i}({\bf x})-s,0\}-\max\{A_{i}({\bf y})-s,0\}+n_{w_{i}}(\phi).
\]
For simplicity, we also write
\[
{\bf U}^{E_{\mathrm{s}}(\phi)}=\prod_{i=1}^{l}U_{i}^{E_{s_{i}^i}(\phi)}\prod_{i=l+1}^{k}U_{i}^{n_{w_{i}}(\phi)}.
\]
\end{defn}

When the Heegaard diagram in the context is unique, we simply denote
$\mathfrak{A}^{-}(\mathcal{H},\text{s})$ by $A_{s_{1},s_{2}}^{-}(L)$
or $A_{s_{1},s_{2}}^{-}$, where $\text{s}=(s_{1},s_{2})$. The direct
product of all the generalized Floer complexes forms the first input
of the surgery formula.

\begin{rem}
Let us explain the relation between $A_{\mathrm s}^{-}(L)$ and $CFL^{-}(L).$
First, the filtered chain complex $CFL^{-}(L)$ defined in
\cite{OS_link_FLoer} is the chain complex $CF^-(S^3)$ with
the Alexander filtration. Second, the subcomplexes forming the Alexander
filtration are isomorphic to the $A_{\mathrm s}^-(L)$'s.
The Equation \eqref{eq:A_s} is an explicit formulation of those differentials
in $A_{\mathrm s}^-(L)$.
\end{rem}

\subsection{Polygon maps and homotopy equivalences between Floer complexes.}

In the Fukaya category of a symplectic manifold $(X,\omega)$ (when
it is well-defined), the product of morphisms
\[
\mu^{2}:\text{Hom}(L_{1},L_{2})\otimes\text{Hom}(L_{0},L_{1})\rightarrow\text{Hom}(L_{0},L_{1})
\]
 is defined by counting holomorphic triangles. In general, higher
products are defined by means of holomorphic polygons. In Heegaard
Floer theory, the polygon maps are defined similarly. However, the
technical issue is the compactness of moduli spaces of holomorphic
polygons in the symmetric product of the Heegaard surface, i.e. whether
the polygon counts are finite. This problem breaks down to periodic
domains on the Heegaard surface. Admissibility of Heegaard multi-diagrams
solves this problem. For more details, one can see Section 4.4 in \cite{link_surgery}.

\begin{defn}[Strongly equivalent Heegaard diagrams]
(1) If two Heegaard diagrams $\mathcal{H}$ and $\mathcal{H}'$ have
the same underlying Heegaard surface $\Sigma$, and their collections
of curves $\beta$ and $\beta'$ are related by isotopies and handleslides
only (supported away from the basepoints), we say that $\beta$ and
$\beta'$ are \emph{strongly equivalent}.

(2) Two multi-pointed Heegaard diagrams $\mathcal{H}=(\Sigma,\boldsymbol{\alpha},\boldsymbol{\beta},{\bf w},{\bf z},\tau),\mathcal{H}'=(\Sigma',\boldsymbol{\alpha}',\boldsymbol{\beta}',{\bf w}',{\bf z}',\tau')$
are called \emph{strongly equivalent}, if $\Sigma=\Sigma',{\bf w}={\bf w}',{\bf z}={\bf z}',\tau=\tau'$,
the curve collections $\boldsymbol{\alpha}$ and $\boldsymbol{\alpha}'$
are strongly equivalent, and $\boldsymbol{\beta}$ and $\boldsymbol{\beta}'$
are strongly equivalent as well.

(3) We say that two Heegaard diagrams $\mathcal{H}$ and $\mathcal{H}'$
differ by a \emph{surface isotopy} if there is a self-diffeomorphism
$\phi:\Sigma\rightarrow\Sigma$ isotopic to the identity and supported
away from the link $\overrightarrow{L}$, such that $\Sigma=\Sigma'$
and $\phi$ takes all the attaching curves and basepoints on $\Sigma$
to the corresponding one on $\Sigma'$. If $\mathcal{H}$ and $\mathcal{H}'$
are surface isotopic, we write $\mathcal{H}\cong\mathcal{H}'$.
\end{defn}

\begin{defn}[Triangle maps] Let $(\Sigma,\boldsymbol{\alpha},\boldsymbol{\beta},\boldsymbol{\gamma},{\bf w},{\bf z})$
be a generic, admissible Heegaard triple-diagram, where $\boldsymbol{\beta}$
and $\boldsymbol{\gamma}$ are strongly equivalent, such that $\mathcal{H}=(\Sigma,\boldsymbol{\alpha},\boldsymbol{\beta},{\bf w},{\bf z}),\mathcal{H}''=(\Sigma,\boldsymbol{\alpha},\boldsymbol{\gamma},{\bf w},{\bf z})$
are both Heegaard diagrams of the link $\overrightarrow{L}$ and $\mathcal{H}'=(\Sigma,\boldsymbol{\beta},\boldsymbol{\gamma},{\bf w},{\bf z})$
is the Heegaard diagram of the unlink in  $\#^{g+k-1}(S^{1}\times S^{2})$.
Then we can define the triangle map
\[
f_{\alpha\beta\gamma}:\mathfrak{A}^{-}(\mathbb{T}_{\alpha},\mathbb{T}_{\beta},\text{s})\otimes\mathfrak{A}^{-}(\mathbb{T}_{\beta},\mathbb{T}_{\gamma},\text{s}')\rightarrow\mathfrak{A}^{-}(\mathbb{T}_{\alpha},\mathbb{T}_{\gamma},\text{s+s}')
\]
\begin{align*}
f_{\alpha\beta\gamma}({\bf x}\otimes{\bf y})= & \sum_{{\bf z}\in\mathbb{T_{\alpha}\cap\mathbb{T}_{\beta}}}\sum_{\{\phi\in\pi_{2}({\bf x},{\bf y},{\bf z})|\mu(\phi)=0\}}\#(M(\phi))\cdot{\bf U}^{E_{\text{s},\text{s}'}(\phi)}{\bf z},
\end{align*}
where
\begin{align*}
{\bf U}^{E_{\text{s},\text{s}'}(\phi)}= & U_{1}^{E_{s_{1},s_{1}'}(\phi)}\cdots U_{l}^{E_{s_{l},s_{l}'}(\phi)}U_{l+1}^{n_{w_{l+1}}(\phi)}\cdots U_{k}^{n_{w_{k}}(\phi)},\text{s}=(s_{1},\dots,s_{l}),\text{s}'=(s_{1}',\dots,s_{l}'),\\
E_{s,s'}^{i}(\phi)= & \max\{A_{i}({\bf x})-s,0\}+\max\{A_{i}({\bf y})-s',0\}-\max\{A_{i}({\bf z})-s-s',0\}+n_{w_{i}}(\phi).
\end{align*}
\end{defn}

\begin{defn}[Quadrilateral maps] Let $(\Sigma,\boldsymbol{\eta}^{0},\boldsymbol{\eta}^{1},\boldsymbol{\eta}^{2},\boldsymbol{\eta}^{3},{\bf w},{\bf z})$
be a generic, admissible multi-diagram, such that there are two equivalence
classes of strongly equivalent attaching curves among $\{\boldsymbol{\eta}^{i}\}_{i}$,
and $\boldsymbol{\eta}^{0},\boldsymbol{\eta}^{3}$ are in different
equivalent classes so that $(\Sigma,\boldsymbol{\eta}^{0},\boldsymbol{\eta}^{3},{\bf w},{\bf z})$
is a Heegaard diagram for the link $\overrightarrow{L}$. Now we can
define the quadrilateral maps
\begin{align*}
f_{\eta^{0},...,\eta^{3}}: & \bigotimes_{i=1}^{3}\mathfrak{A}^{-}(\mathbb{T}_{\eta^{i-1}},\mathbb{T}_{\eta^{i}},\text{s}_{i})\rightarrow\mathfrak{A}^{-}(\mathbb{T}_{\eta^{0}},\mathbb{T}_{\eta^{3}},\text{s}_{1}+\text{s}_{2}+\text{s}_{3})\\
f_{\eta^{0},...,\eta^{3}}({\bf x}_{1}\otimes{\bf x}_{2}\otimes{\bf x}_{3})= & \sum_{{\bf y}\in\mathbb{T}_{\eta^{0}}\cap\mathbb{T}_{\eta^{3}}}\sum_{\{\phi\in\pi_{2}({\bf x}_{1},{\bf x}_{2},{\bf x}_{3},{\bf y})|\mu(\phi)=-1\}}\#(M(\phi))\cdot{\bf U}^{E_{\text{s}_{1},\text{s}_{2},\text{s}_{3}}(\phi)}{\bf y},
\end{align*}
 where
\begin{align*}
{\bf U}^{E_{\text{s}_{1},\text{s}_{2},\text{s}_{3}}(\phi)}= & U_{1}^{E_{s_{1}^{1},s_{2}^{1},s_{3}^{1}}^{1}(\phi)}\cdots U_{l}^{E_{s_{1}^{l},s_{2}^{l},s_{3}^{l}}^{l}(\phi)}U_{l+1}^{n_{w_{l+1}}(\phi)}\cdots U_{k}^{n_{w_{k}}(\phi)},\text{s}_{i}=(s_{i}^{1},...,s_{i}^{l}),i=1,2,3,\\
E_{s_{1},s_{2},s_{3}}^{i}(\phi) & =\max\{A_{i}({\bf x}_{1})-s_{1},0\}+\max\{A_{i}({\bf x}_{2})-s_{2},0\}\\
 & +\max\{A_{i}({\bf x}_{3})-s_{3},0\}-\max(A_{i}({\bf y})-s_{1}-s_{2}-s_{3},0)+n_{w_{i}}(\phi).
\end{align*}
\end{defn}

One can define higher polygon counts $f_{\eta^{0}\cdots\eta^{l}}$
similarly, although the case $l>3$ will not be needed in this paper.
For simplicity, we ignore the subscripts of $f_{\eta^{0}\cdots\eta^{l}}$.
An important property of polygon maps is the so-called \emph{quadratic
$A_{\infty}$-associativity equation}
\begin{equation}
\sum_{0\leq i<j\leq l}f(x_{1},...x_{i},f(x_{i+1},...,x_{j}),x_{j+1},...,x_{l})=0.\label{eq:A-infinity}
\end{equation}

\subsection{Nice diagrams.}

In \cite{Sakar-Wang}, Sarkar and Wang use nice Heegaard diagrams
to combinatorially compute the $\widehat{HF}(M)$. This algorithm
is based on a fact: in a nice diagram $\mathcal{H}=(\Sigma,\boldsymbol{\alpha},\boldsymbol{\beta},{\bf w})$,
the index-$1$ pseudo-holomorphic disks in  $\text{Sym}^{g+k-1}(\Sigma)$
with $n_{w}=0$ have simple domains on $\Sigma$ and can be combinatorially
counted.

\begin{defn}[Nice diagrams] A Heegaard diagram $\mathcal{H}=(\Sigma,\boldsymbol{\alpha},\boldsymbol{\beta},{\bf w})$
is called \emph{nice}, if any region (i.e. connect component of $\Sigma-\boldsymbol{\alpha}-\boldsymbol{\beta}$)
without any basepoint $w_{i}\in{\bf w}$ is either a bigon or a square.
For ${\bf x},{\bf y}\in\mathbb{T}_{\alpha}\cap\mathbb{T}_{\beta}$,
a domain $\phi\in\pi_{2}({\bf x},{\bf y})$ is called an \emph{empty
embedded $2n$-gon, }if it is an embedded disk with $2n$ vertices
on its boundary, such that for each vertex $v$, $\mu_{v}(\phi)=\frac{1}{4}$,
and it does not contain any $x_{i}$ or $y_{i}$ in its interior.
An empty embedded $4$-gon is also called empty embedded square.
\end{defn}

\begin{rem}
The notation $\pi_{2}({\bf x},{\bf y})$ in \cite{Sakar-Wang} denotes
the sets of domains, namely $2$-chains $\phi$ on $\Sigma$ such
that $\partial(\partial\phi|_{\alpha})=\mathbf{y}-\mathbf{x}$, whereas
in this paper $\pi_{2}(\mathbf{x},\mathbf{y})$ denotes the homology
classes of Whitney disks in $\text{Sym}^{g+k-1}(\Sigma)$ from
${\bf{x}}$ to ${\bf{y}}$.
\end{rem}

\begin{thm}[\cite{Sakar-Wang}]
Let $\phi\in\pi_{2}({\bf x},{\bf y})$
be a domain on a nice diagram such that $\mu(\phi)=1$ and $n_{w_{i}}(\phi)=0,\forall i$.
If $\phi$ has a holomorphic representative, then $\phi$ is either
an empty embedded bigon or an empty embedded square. Conversely, if
$\phi\in\pi_{2}(x,y)$ with $n_{w_{i}}(\phi)=0,\forall i$ is an empty
embedded bigon or an empty embedded square, then the product complex
structure on $\Sigma\times D^{2}$ achieves transversality for $\phi$
under a generic perturbation of the $\alpha$ and the $\beta$ curves,
and $\mu(\phi)=1$ as well as $\#M(\phi)/\mathbb{R}=1\pmod2$.
\end{thm}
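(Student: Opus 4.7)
The plan is to combine Lipshitz's combinatorial formula for the Maslov index
\[
\mu(\phi) = e(\phi) + n_{\mathbf{x}}(\phi) + n_{\mathbf{y}}(\phi)
\]
with the non-negativity of any domain carrying a holomorphic representative. Here $e(\phi)=\sum_{R} n_R(\phi)\,e(R)$ is the Euler measure of $\phi$, and $n_{\mathbf{x}}(\phi), n_{\mathbf{y}}(\phi)$ are the sums over the coordinates of $\mathbf{x}, \mathbf{y}$ of the averaged local multiplicities of $\phi$ at each corner. By positivity of holomorphic disks in $\mathrm{Sym}^{g+k-1}(\Sigma)$, every $n_R(\phi)$ is non-negative, so $n_{\mathbf{x}}(\phi), n_{\mathbf{y}}(\phi)\geq 0$. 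The hypothesis $n_{w_i}(\phi)=0$ confines the support of $\phi$ to basepoint-free regions, each of which is a bigon with $e(R)=1/2$ or a square with $e(R)=0$ by the nice hypothesis. Consequently $e(\phi)\geq 0$, and $\mu(\phi)=1$ expresses $1$ as a sum of three non-negative half-integers.

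For the forward direction I would proceed by case analysis on $e(\phi)\in\{0,1/2,1\}$. In the case $e(\phi)=1/2$, exactly one bigon appears in the support with multiplicity $1$, and the remaining budget $n_{\mathbf{x}}+n_{\mathbf{y}}=1/2$ is saturated by the two $1/4$-contributions at that bigon's corners, identifying $\phi$ as an empty embedded bigon. In the case $e(\phi)=0$, every region in the support is a square, and the goal is to show the support is a single empty square. Here I would induct on the total multiplicity $\sum_R n_R(\phi)$: using the boundary combinatorics of $\phi$ together with the positions of the $\mathbf{x},\mathbf{y}$ corners, locate an internal chord along which to cut $\phi$ into two smaller positive subdomains; any such non-trivial decomposition would split $\mu(\phi)=1$ into two positive contributions, a contradiction, so the support must be a single square with multiplicity $1$. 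Finally, $e(\phi)=1$ would force $n_{\mathbf{x}}=n_{\mathbf{y}}=0$, so no positive region touches $\mathbf{x}$ or $\mathbf{y}$; admissibility of the nice diagram then forces $\phi$ to be trivial, excluding this case.

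For the converse, the Maslov index calculation is immediate from Lipshitz's formula: the $2n$ corners of an empty embedded bigon or square each contribute $1/4$, giving $\mu=1$ when combined with the corresponding Euler measure $1/2$ or $0$. Transversality and the mod-$2$ count of $1$ follow from Lipshitz's cylindrical reformulation: after a generic small isotopy of the $\alpha$- and $\beta$-curves, an empty embedded bigon reduces to the Riemann mapping of a disk with two boundary marked points, while an empty embedded square reduces to the unique conformal parametrization of a rectangle up to target reparametrization; in both model cases the linearized Cauchy--Riemann operator is surjective and the signed count is a single $\mathbb{R}$-orbit. The main obstacle is the cutting step in the $e(\phi)=0$ case: identifying an internal chord whose removal preserves positivity of both pieces demands a careful analysis of the corner combinatorics at $\mathbf{x}$ and $\mathbf{y}$, and it is this step that forms the combinatorial heart of the Sarkar--Wang argument.
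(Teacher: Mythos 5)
The paper does not prove this statement at all --- it is imported verbatim from Sarkar--Wang, and the text immediately moves on to the adaptation in Proposition \ref{prop:Super nice diagram}. So the relevant comparison is with the Sarkar--Wang argument itself, and your outline does follow its strategy: Lipshitz's formula $\mu(\phi)=e(\phi)+n_{\mathbf x}(\phi)+n_{\mathbf y}(\phi)$, positivity of domains with holomorphic representatives, and the observation that niceness plus $n_{w_i}(\phi)=0$ forces every region in the support to be a bigon or a square, hence $e(\phi)\geq 0$. The converse direction (index computation, Riemann mapping for the bigon, conformal modulus for the square) is also the standard argument and is fine at the level of a sketch.

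The forward direction, however, contains two genuine gaps rather than routine omissions. First, in the case $e(\phi)=\tfrac12$ you conclude too quickly: the identity $n_{\mathbf x}(\phi)+n_{\mathbf y}(\phi)=\tfrac12$ does not by itself force the support to be a single bigon, because a square region whose closure avoids every quadrant at every coordinate of $\mathbf x$ and $\mathbf y$ contributes nothing to either corner sum; you must also rule out multiplicities $\geq 2$, disconnected supports, and the possibility that the bigon's two corners are intersection points of $\boldsymbol\alpha$ and $\boldsymbol\beta$ other than coordinates of $\mathbf x$ and $\mathbf y$. This case therefore needs essentially the same boundary-combinatorics analysis as the all-squares case, not just the budget count. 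Second, the $e(\phi)=0$ case rests entirely on the existence of an ``internal chord'' along which a non-embedded or higher-multiplicity domain can be cut into two positive pieces of positive index; you correctly identify this as the combinatorial heart of Sarkar--Wang, but you do not supply it, and without it nothing excludes, say, an immersed chain of squares with multiplicity one or a region covered with multiplicity two. As it stands the proposal is an accurate road map of the cited proof, not a proof.
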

The above theorem enables us to combinatorially count differentials
in $\widehat{CF}$ on a nice diagram, by counting empty embedded bigons
and squares. Following the same lines of the proof, we can obtain the
following adaption.
\begin{prop}
\label{prop:Super nice diagram}Suppose $\mathcal{H}=(\Sigma,\boldsymbol{\alpha},\boldsymbol{\beta},{\bf w},{\bf z})$
is a Heegaard diagram such that any region of $\Sigma$ is either
a bigon or a square. Then, there is a 1-1 correspondence between the
differentials in $\mathfrak{A}{}^{-}(\mathcal{H},{\bf \infty})$ and
the set of empty embedded bigons and empty embedded squares. Thus,
the complex $\mathfrak{A}^{-}(\mathcal{H},\mathrm{s})$ can be described combinatorially.
\end{prop}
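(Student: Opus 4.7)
The plan is to argue that under the super-nice hypothesis the Sarkar--Wang analysis applies to \emph{every} positive index-$1$ domain, not merely those with $n_{w_i}(\phi)=0$, so that the combinatorial description of $\widehat{HF}$ lifts to a combinatorial description of $\mathfrak{A}^{-}(\mathcal{H},\mathbf{\infty})$, and from there to all $\mathfrak{A}^{-}(\mathcal{H},\mathrm{s})$.

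First I would simplify the differential at $\mathrm{s}=\mathbf{\infty}$. For $s$ large, both $\max\{s-A_i(\mathbf{x}),0\}$ and $\max\{s-A_i(\mathbf{y}),0\}$ expand to $s-A_i(\cdot)$, so
\[
E^{i}_{s}(\phi)\ \longrightarrow\ A_i(\mathbf{y})-A_i(\mathbf{x})+n_{z_i}(\phi)=n_{w_i}(\phi).
\]
Hence the differential in $\mathfrak{A}^{-}(\mathcal{H},\mathbf{\infty})$ becomes
\[
\partial^{-}_{\infty}\mathbf{x}=\sum_{\mathbf{y}}\sum_{\substack{\phi\in\pi_{2}(\mathbf{x},\mathbf{y})\\ \mu(\phi)=1}}\#(\mathcal{M}(\phi)/\mathbb{R})\cdot\prod_i U_i^{n_{w_i}(\phi)}\cdot\mathbf{y},
\]
and the problem reduces to enumerating, mod~$2$, all index-$1$ holomorphic domains with no restriction on the local multiplicities at the basepoints.

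Next I would invoke the Sarkar--Wang chase. Their argument selects a distinguished corner of a positive index-$1$ domain, identifies the unique quadrant of positive local multiplicity, and then propagates outward along $\alpha$- and $\beta$-boundaries through successive regions; at each step the bigon-or-square hypothesis on the next region forces either immediate closure (producing an empty embedded bigon) or one reflection followed by closure (producing an empty embedded square). In the original setting, the role of $n_{w_i}(\phi)=0$ is exactly to guarantee that the regions traversed by this chase are bigons or squares, via niceness. In our stronger setting every region of $\Sigma-\boldsymbol{\alpha}-\boldsymbol{\beta}$ is already a bigon or a square, so the chase proceeds without the $n_{w_i}=0$ hypothesis. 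The transversality and mod-$2$ count statements of Sarkar--Wang depend only on the combinatorial shape of $\phi$, so each empty embedded bigon or square contributes a single term to $\partial^{-}_{\infty}$, with $U_i$-exponent $n_{w_i}(\phi)\in\{0,1\}$ read off directly from the region.

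Finally I would upgrade to general $\mathrm{s}$. The set of index-$1$ domains is independent of $\mathrm{s}$; only the weight $\mathbf{U}^{E_{\mathrm{s}}(\phi)}$ changes. For a bigon or a square, $n_{w_i}(\phi)$ and $n_{z_i}(\phi)$ lie in $\{0,1\}$, and the Alexander gradings $A_i(\mathbf{x}),A_i(\mathbf{y})$ are also combinatorial once the normalization on the Alexander polynomial is fixed. Plugging these into the formula for $E^i_{s}(\phi)$ yields a completely combinatorial description of $\mathfrak{A}^{-}(\mathcal{H},\mathrm{s})$. The main obstacle is to audit the Sarkar--Wang proof and pinpoint that the sole role of the condition $n_{w_i}(\phi)=0$ is to constrain the support of $\phi$ to bigon-or-square regions; once this is confirmed, the super-nice hypothesis subsumes it and the rest of their analysis transfers verbatim, with the basepoints now playing no restrictive role other than contributing powers of $U_i$.
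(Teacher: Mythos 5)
Your proposal is correct and follows exactly the route the paper intends: the paper gives no separate proof, asserting only that the statement follows "along the same lines" as Sarkar--Wang, and your write-up correctly identifies that the sole role of the hypothesis $n_{w_i}(\phi)=0$ in their argument is to force the support of a positive index-$1$ domain into bigon-or-square regions, which the super-nice hypothesis guarantees unconditionally. The simplification $E^i_{\infty}(\phi)=n_{w_i}(\phi)$ and the observation that the weights $\mathbf{U}^{E_{\mathrm{s}}(\phi)}$ for general $\mathrm{s}$ are combinatorial are both accurate.
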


\section{Generalized Floer complexes of two-bridge links}

In this section, we combinatorially compute the generalized Floer complexes $A_{s_{1},s_{2}}^{-}(\overrightarrow{L})$
for all two-bridge links $\overrightarrow{L}$  by
using nice diagrams.

\subsection{Schubert normal form.}

A two-bridge link/knot can be obtained by closing a rational tangle.
For the definition of rational tangles, one can see the reference
\cite{knot_theory[M]} chapter 9 and \cite{burde_Zieschang} chapter
7E, 12D. Let us adopt the notations in \cite{burde_Zieschang}. By
$b(p,q)$ where $\gcd(p,q)=1$, we denote the two-bridge link/knot
according to the rational tangle of slope $\frac{q}{p}$.

\begin{defn}[Schubert normal form] For a two-bridge link/knot $L=b(p,q)$, the
\emph{Schubert normal form} is a canonical projection of $L$ with two
over-bridges and two under-bridges, where we regard the projection
 plane as a sphere $S$ in $S^{3}$. The two
over-bridges $O_{1},O_{2}$ are straight segments on the projection
plane, and each component of the other two under-bridges $U_{1},U{}_{2}$
 crosses $O_{1},O_{2}$ alternatively. Together
with the lower half space (which is a ball in $S^{3}$), the under-bridges
$U_{1},U_{2}$ form a rational tangle of slope $\frac{q}{p}$. Moreover,
if $L$ has two components, we arrange the notation such that $L_{i}=O_{i}\cup U_{i}.$
We denote the Schubert form by $(S,O_{1},O_{2},U_{1},U_{2})$.

Concretely, the Schubert normal form can be obtained by gluing two
disks $D_{1}^{\alpha},D_{2}^{\alpha}$ shown in Figure \ref{schubert form}.
The endpoint $a_{i}$ is glued to $a'_{q-i}$, where
all the subscripts are modulo $2p$. When $L$ has two components,
$L$ can be endowed with a canonical orientation induced by the orientation
of $\overrightarrow{O_{1}}=\overrightarrow{a_{0}a_{p}},\overrightarrow{O_{2}}=\overrightarrow{a'_{0}a'_{p}}$,
which is also shown in Figure \ref{schubert form}.
\end{defn}

\begin{figure}
\centering
\includegraphics[scale=0.7]{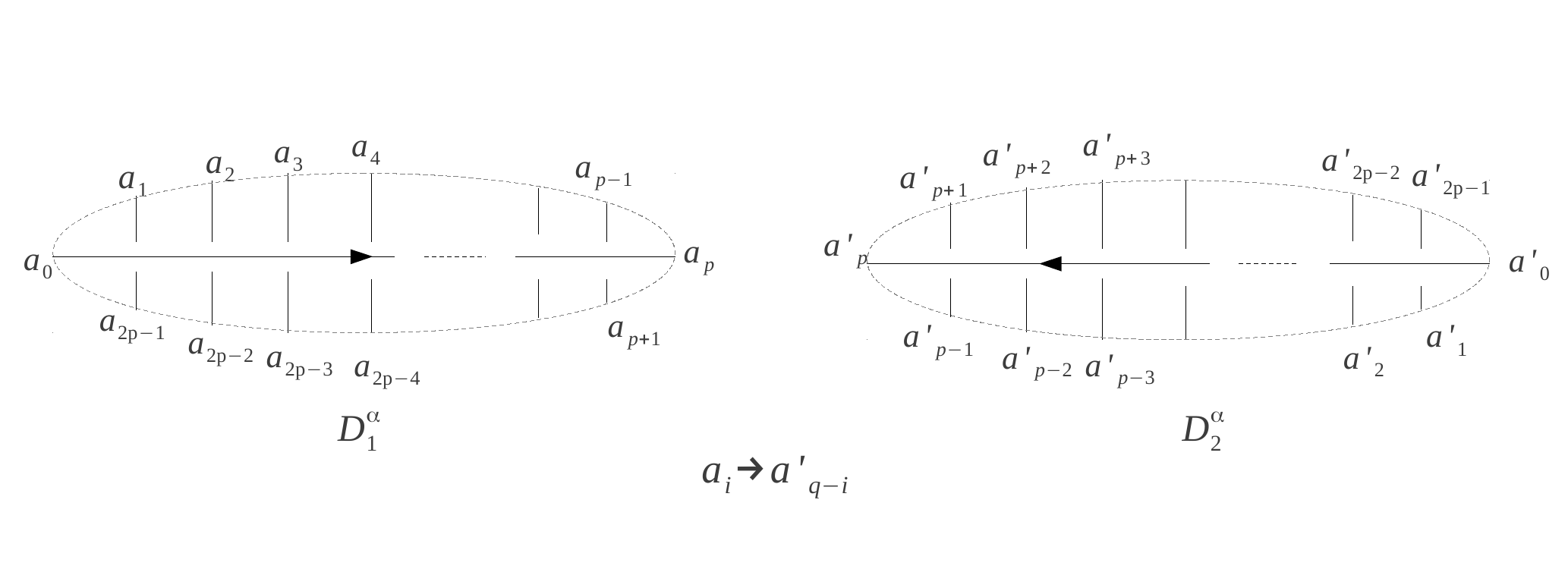}
\caption{\textbf{The Schubert form: the neighborhoods of the two over-bridges.}}
\label{schubert form}
\end{figure}

\begin{example}
In Figure \ref{whitehead}, we show the Schubert normal form of the
Whitehead link $b(8,3)$.
\end{example}

\begin{figure}
\centering
\includegraphics[scale=0.45]{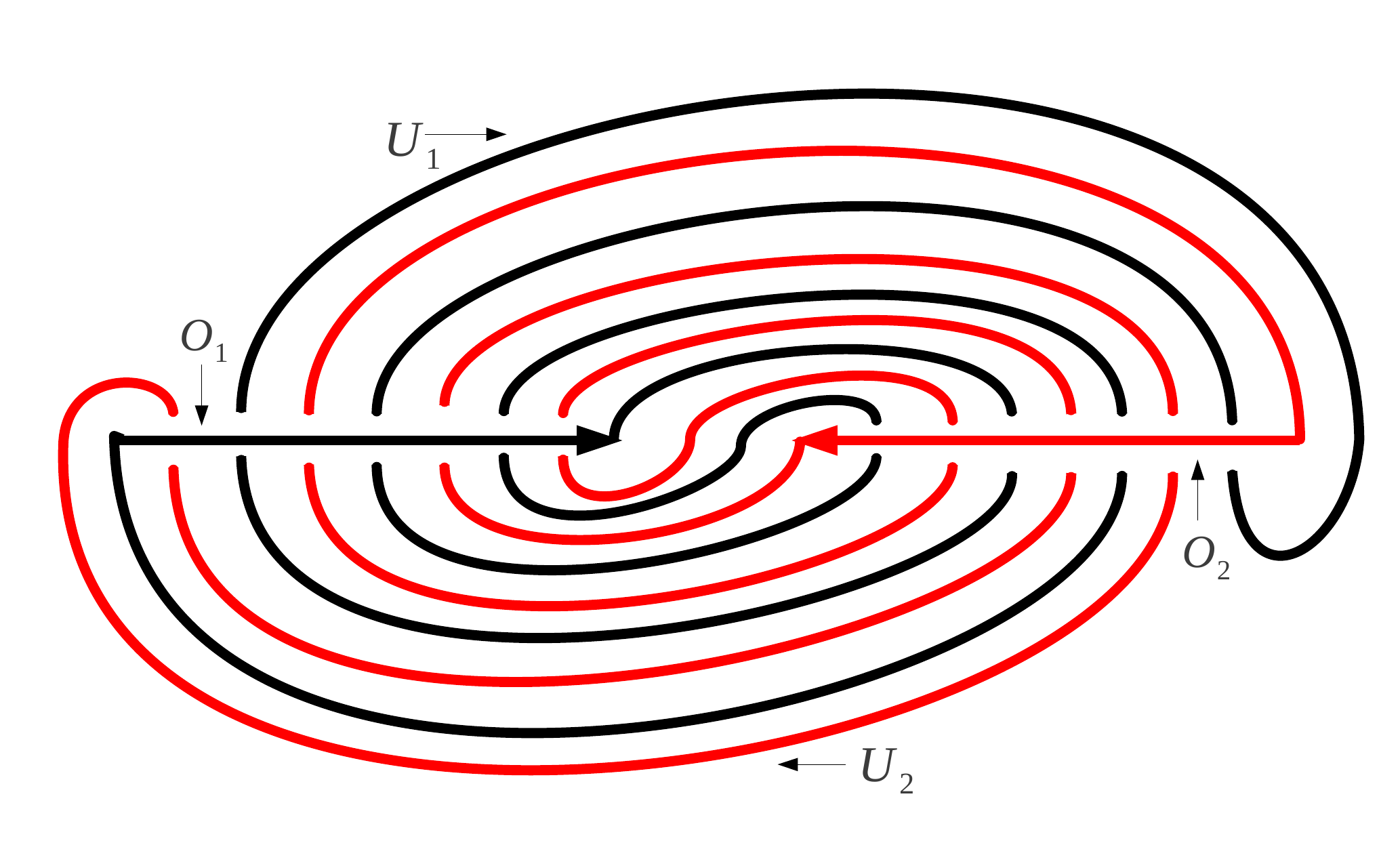}
\caption{\textbf{The Schubert normal form of the Whitehead link.}}
\label{whitehead}
\end{figure}

\begin{rem}
Here in the definition of Schubert normal form, we set the straight
arcs to be over-bridges, while in other places the straight
arcs are set to be under-bridges. However, given $p,q$, these two links are the same up
to taking the mirror of each other.
\end{rem}

\begin{fact}
Let $b(p,q)$ denote the two-bridge link defined as above, where $p,q\in\mathbb{Z},\,\gcd(p,q)=1$,
$p>0$. Then

(1)\emph{(12.1. in \cite{burde_Zieschang})} When $p$ is odd, the link $b(p,q)$ is a knot; and when $p$
is even, $b(p,q)$ has two components.

(2)\emph{(Schubert, \cite{burde_Zieschang}, Theorem 12.6.)}
As an oriented link/knot, $b(p,q)$ is equivalent to $b(p',q')$ if
and only if $p'=p,q'\equiv q^{\pm1}(mod\ 2p)$; as an unoriented link/knot,
$b(p,q)$ is equivalent to $b(p',q')$ if and only if $p'=p,q'\equiv q^{\pm1}(mod\ p)$.

(3)\emph{(12.8 in \cite{burde_Zieschang})} When $b(p,q)$ has two components, the link $b(p,-q)$ is the
mirror of $b(p,q)$, and the link $b(p,q+p)$ can be obtained by changing
the orientation on one component of $b(p,q)$.

(4)\emph{(Remark 12.7, \cite{burde_Zieschang})}
The linking number can be computed by the formula:
\[
\mathrm{lk}(b(p,q))=-\sum_{i=1}^{\frac{p}{2}}(-1)^{\left\lfloor \frac{(2i-1)q}{p}\right\rfloor }.
\]

(5)\emph{(Theorem 9.3.6, \cite{knot_theory[M]})}
The signature can be computed by the formula:
\[
\sigma(b(p,q))=\sum_{i=1}^{p-1}(-1)^{\left\lfloor \frac{iq}{p}\right\rfloor }.
\]
\end{fact}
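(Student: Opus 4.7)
The plan is to handle the five parts separately: (1), (3), (4), (5) are amenable to direct combinatorial analysis of the Schubert normal form set up in the preceding definition, while (2) requires passing to the two-fold branched cover. The figure already encodes everything needed for the combinatorial parts, so the main work is careful bookkeeping.

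For (1), I would trace components by iterating the permutation of $\{a_0,\ldots,a_{2p-1}\}\cup\{a'_0,\ldots,a'_{2p-1}\}$ obtained by composing the over-bridge identifications $a_j \leftrightarrow a_{j+p}$ (and similarly for $a'$) with the under-bridge gluing $a_i \leftrightarrow a'_{q-i}$ (indices mod $2p$). The number of closed components equals the number of orbits of this composite permutation on $\mathbb{Z}/2p\mathbb{Z}$, and a short parity computation using $\gcd(p,q)=1$ shows there is one orbit when $p$ is odd and two when $p$ is even. Part (3) is immediate from the same picture: replacing $q$ by $-q$ reflects the lower rational tangle and hence produces the mirror, while replacing $q$ by $q+p$ inserts a half-twist across the bottom hemisphere, which reverses orientation on exactly one of the two components. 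For (4), I would enumerate the crossings of $U_1\cup U_2$ with $O_1\cup O_2$: the crossing at $a_i$ lies between \emph{different} components precisely when $i$ has the appropriate parity, and its sign is controlled by whether the under-arc is going ``down'' or ``up'' at that crossing, a datum that flips exactly when $\lfloor (2i-1)q/p \rfloor$ changes parity. Summing the signed contributions from inter-component crossings and halving yields the stated formula.

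For (5), I would use that the Schubert diagram is alternating, so the Gordon--Litherland signature formula applied to its Goeritz matrix produces an alternating sum governed by the same parity function $\lfloor iq/p \rfloor$; this is essentially the computation in \cite{knot_theory[M]}, Theorem 9.3.6.

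The main obstacle is (2), Schubert's classification, which admits no combinatorial shortcut. The plan is to invoke the two-fold cyclic branched cover of $S^3$ along $b(p,q)$, which is the lens space $L(p,q)$. By Reidemeister's classification, $L(p,q)$ and $L(p',q')$ are homeomorphic iff $p=p'$ and $q'\equiv \pm q^{\pm 1}\pmod{p}$. To descend from the lens space to the link one needs to upgrade an arbitrary homeomorphism of $L(p,q)$ to one that commutes with the covering involution; this is the genuinely hard input, proved by Bonahon--Hodgson--Hatcher-style uniqueness of the involution up to isotopy. The unoriented equivalence $q' \equiv q^{\pm 1} \pmod p$ then drops out, and the refinement to $\pmod{2p}$ in the oriented case comes from tracking how the involution permutes the two preimage arcs over each link component, i.e.\ from remembering the orientation data of the branch set.
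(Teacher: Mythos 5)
The paper does not prove this statement at all: it is stated as a \emph{Fact} with each item explicitly attributed to Burde--Zieschang (items (1)--(4)) and to Murasugi (item (5)), and the text relies on those references rather than on any argument of its own. Your proposal therefore cannot be compared to an internal proof; judged on its own terms, the combinatorial parts are reasonable reconstructions of the standard arguments. The orbit count for (1), the tangle reflection and half-twist for (3), the signed count of inter-component crossings for (4), and the Goeritz/Gordon--Litherland computation for (5) are all the usual routes, and the paper's own Lemma 3.8 and Proposition 3.11 implicitly use exactly the gluing combinatorics ($a_i \leftrightarrow a'_{q-i}$, under-arcs joining $a_i$ to $a_{-i}$) that your orbit argument needs, so these sketches are consistent with how the surrounding text treats the Schubert form.

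There is one genuine slip, in (2). You invoke the \emph{unoriented} lens space classification, $L(p,q)\cong L(p',q')$ iff $p=p'$ and $q'\equiv\pm q^{\pm1}\pmod p$. An ambient isotopy of links in $S^3$ induces an \emph{orientation-preserving} homeomorphism of double branched covers, so the necessity direction must use the oriented classification, $q'\equiv q^{\pm1}\pmod p$; with the version you quote, the argument cannot exclude $q'\equiv -q^{\pm1}$, i.e.\ it cannot distinguish $b(p,q)$ from its mirror $b(p,-q)$, contradicting item (3). Two further remarks: the equivariance input (uniqueness of the hyperelliptic involution up to isotopy, Hodgson--Rubinstein) is only needed for the sufficiency direction, and even there it can be bypassed by exhibiting explicit isotopies realizing $q\mapsto q+2p$ and $q\mapsto q^{-1}\pmod{2p}$ (rotating the bridge sphere), with $q\mapsto q+p$ accounting for the passage from $\bmod\ 2p$ to $\bmod\ p$ in the unoriented case; and your explanation of the $\bmod\ 2p$ refinement in the oriented case is too vague to carry weight as written, since the branched cover genuinely forgets the orientation of the branch locus and the refinement must be argued at the level of the link itself.
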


\subsection{Heegaard diagrams of two-bridge links.}

In this section, we construct nice Heegaard diagrams of two-bridge
links by using their Schubert forms.
\begin{defn}
A \emph{bridge presentation} of a link $L$ is a topological pair $(L,S)$ inside $S^3$, such that
\begin{itemize}
\item $S$ is an embedded sphere transversely intersecting $L$,
\item $S^{3}-S\cong B_{1}\cup B_{2}$, where $B_{1},B_{2}$ are homeomorphic to the unit ball $B^3$,
\item each pair $(L\cap B_{i},B_{i})$ with $i=1,2$ is homeomorphic to the pair $(\{P_{j}\}_{j=1}^{k}\times I,D^{2}\times I)$, where $\{P_{j}\}_{j=1}^{k}$
is a set of points in the interior of the unit disk $D^{2}$.
\end{itemize} The minimum over
all possible $k$ is called the \emph{bridge number} of the link, denoted
by $\mathrm{br}(L)$.
\end{defn}
Every bridge presentation of $L$ gives rise to a genus-0 multi-pointed Heegaard
diagram for  $L$. Let the sphere $S$  be the Heegaard surface.
In each ball $B_{i}$, choose $k-1$ disjoint proper disks
to divide $B_{i}$ into $k$ chambers, such that every component of
the $k$ bridges is in a distinct chamber. The boundaries of these
disks are the alpha, beta curves. The basepoints $w_{i},z_{i}$ are
the intersection points of $L$ and $S$. In other words, by pushing all the bridges
onto the sphere $S$, we obtain a projection of $L$ consisting of $2k$ arcs
$a_1,\cdots,a_k,b_1,\cdots,b_k$, such that the arcs $\{a_i\}$ are disjoint, the arcs
$\{b_j\}$ are disjoint, and the arcs $\{a_i\}$ are always over the arcs $\{b_j\}$.
Then the boundaries of tubular neighborhoods of the arcs $\{a_{i}\}_{i=1}^{k-1},\{b_{j}\}_{j=1}^{k-1}$
are the alpha, beta curves.

\begin{defn}[Schubert Heegaard diagrams] Let $\overrightarrow{L}=\overrightarrow{L_{1}}\cup\overrightarrow{L_{2}}$
be a two-bridge link, and let $(S,O_{1},O_{2},U_{1},U_{2})$ be its Schubert
form. The \emph{Schubert Heegaard diagram} of $L$ is the Heegaard diagram
$\mathcal{H}=(S^{2},\{\alpha\},\{\beta\},\{z_{1},z_{2}\},\{w_{1},w_{2}\})$, where
\begin{itemize}
\item $\alpha=\partial N(O_{1}),\beta=\partial N(U_{1})$ with $N(O_{1}),N(U_{1})$ being disjoint tubular neighborhoods of
$O_{1},U_{1}$ on $S$,
\item $\{z_{1},w_{1}\}=\{L_{1}\cap S\}$ and $\{z_{2},w_{2}\}=\{L_{2}\cap S\}$.
\end{itemize}
\end{defn}

Concretely, regarding the Schubert form as the gluing of
two disks $D_{1}^{\alpha},D_{2}^{\alpha}$ in Figure \ref{schubert form},
we can take $\alpha=\partial D_{1}^{\alpha}$ and $\beta=\partial N(U_{1})$.
The basepoint $z_{1}$ can be any point in $D_{1}^{\alpha}$ near
 $a_{p}$, and the basepoint $w_{1}$ can be any point in
$D_{1}^{\alpha}$ near $a_{0}$; whereas the basepoint $z_{2}$
can be any point in $D_{2}^{\alpha}$ near $a_{p}'$,
and the basepoint $w_{2}$ can be any point in $D_{2}^{\alpha}$ near $a_{0}'.$

\begin{example}
The two-bridge link $b(8,3)$ is the Whitehead link $\mathit{Wh}$
(or its mirror due to the convention). The Schubert Heegaard diagram
of $\mathit{Wh}$ is in Figure \ref{whitehead Heegaard}.
\end{example}

\begin{notation}
\label{notations of diagram} Since we will
repeatedly discuss the Schubert Heegaard diagram, it is convenient
to make a notational convention for all the intersection points and regions as follows.
\begin{itemize}

\item
The components of $S-\beta$ are both disks, denoted by $D_{1}^{\beta},D_{2}^{\beta}$
such that the disk $D_{i}^{\beta}$ is a neighborhood of $U_{i}$.

\item
There is a total of four bigons among the components of $S-(\alpha\cup\beta)$,
and each of them contains a distinct basepoint in $\{z_{1},z_{2},w_{1},w_{2}\}$.
All the other components are squares.

\item
We label all the $p+1$ components of $D_{1}^{\alpha}-\beta$ by $X_{0},X_{1},...,X_{p}$,
and label all the $p+1$ components of $D_{2}^{\alpha}-\beta$ by
$Y_{0},Y_{1},...,Y_{p}$, such that $X_{0},X_{p},Y_{0}$, and $Y_{p}$
are bigons and $w_{1}\in X_{0},w_{2}\in Y_{0},z_{1}\in X_{p},z_{2}\in Y_{p}$.

\item
There is a total of $2p$ intersection points of $\alpha$ and $\beta$.
We label them by $b_{0},b_{1},...,b_{2p-1}$ clockwise, such that
$b_{0},b_{2p-1}$ are vertices of $X_{0}$,  and $b_{p-1},b_{p}$ are
vertices of $X_{p}$. All the subscripts are modulo $2p$.
\end{itemize}
\end{notation}

The above properties and conventions are illustrated in Figure \ref{heegaard diagram for b(p,q)}.

\begin{figure}
\centering

\includegraphics[scale=0.4]{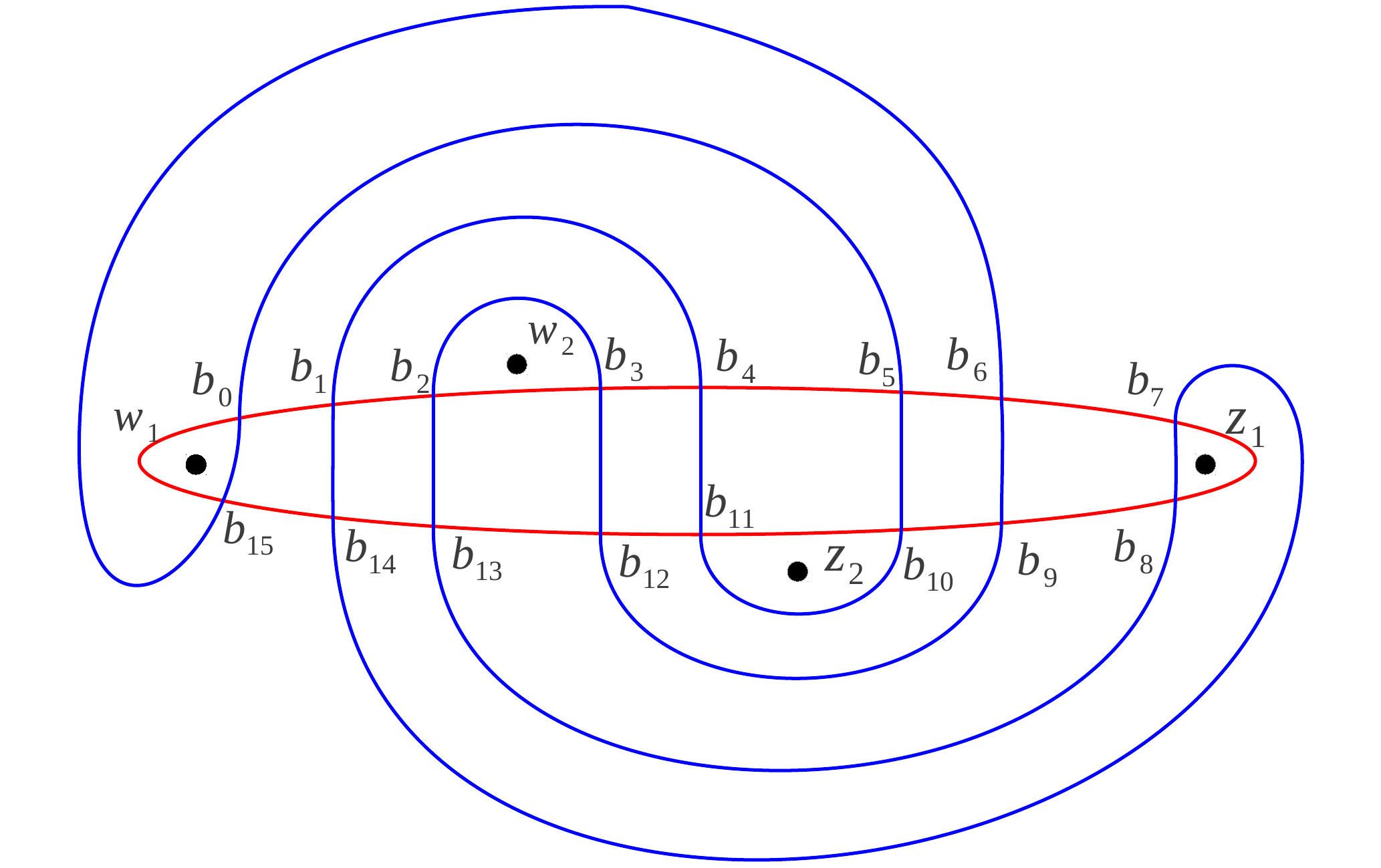}

\caption{\textbf{The Schubert Heegaard diagram of the Whitehead link}. The
red curve is $\alpha$, and the blue curve is $\beta$.}

\label{whitehead Heegaard}

\end{figure}

\begin{figure}
\centering

\includegraphics[width=12cm,height=5cm]{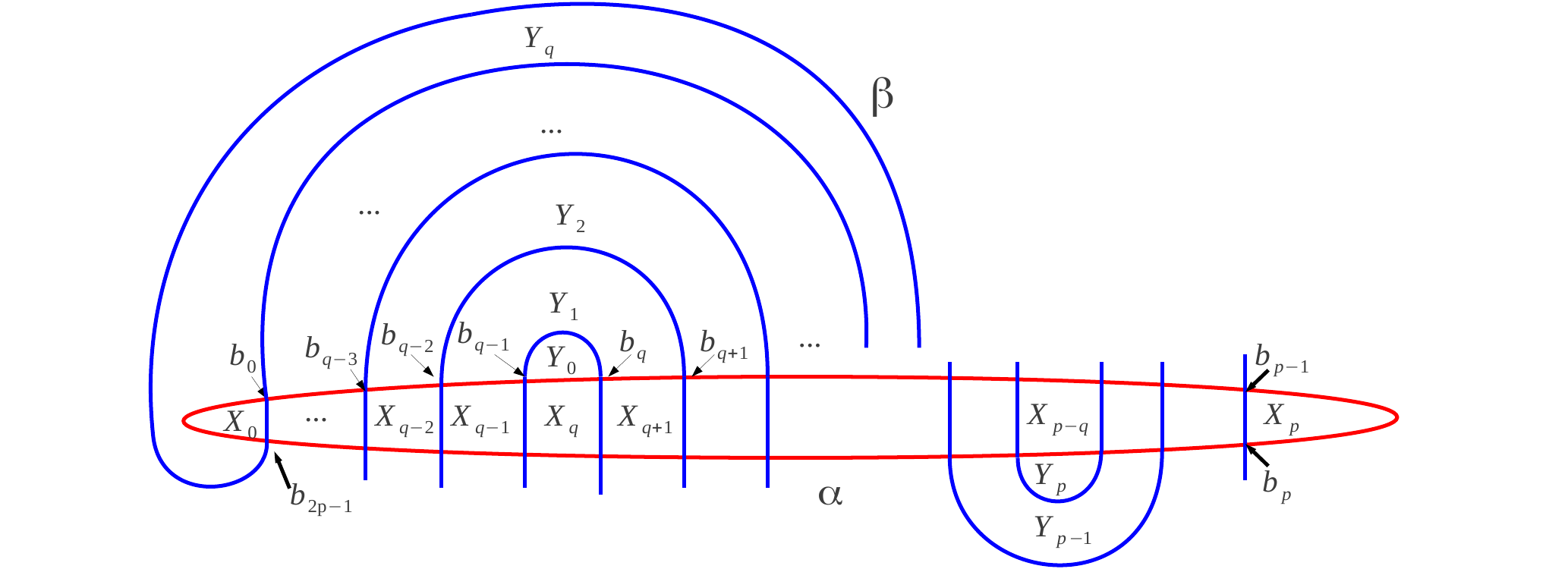}

\caption{\textbf{The Schubert Heegaard diagram for the two-bridge link $b(p,q)$.}}

\label{heegaard diagram for b(p,q)}

\end{figure}

\begin{lem}
The Schubert Heegaard diagram of the two-bridge link $b(p,q)$ is
a nice diagram. By Proposition \ref{prop:Super nice diagram}, the generalized
Floer complexes of the Schubert Heegaard diagram are combinatorial.
\end{lem}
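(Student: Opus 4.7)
My plan is to verify directly that every component of $S^{2}\setminus(\alpha\cup\beta)$ in the Schubert Heegaard diagram is either a bigon or a square. Given this, the four bigons $X_{0},X_{p},Y_{0},Y_{p}$ of Notation \ref{notations of diagram} contain all four basepoints and every other region is a square, so the diagram is nice and Proposition \ref{prop:Super nice diagram} immediately yields the second sentence of the lemma.

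I first carry out an Euler-characteristic count. The transverse intersection of the simple closed curves $\alpha,\beta$ on $S^{2}$ produces $V=2p$ vertices, $E=4p$ edges, and therefore $F=2p+2$ regions. This matches $\lvert\{X_{0},\dots,X_{p}\}\rvert+\lvert\{Y_{0},\dots,Y_{p}\}\rvert=2p+2$, confirming that the regions listed in Notation \ref{notations of diagram} account for every component of the complement of $\alpha\cup\beta$.

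The central step is geometric: I claim that the $p$ components of $\beta\cap D_{1}^{\alpha}$ are mutually nested arcs on the disk $D_{1}^{\alpha}$, and likewise for $\beta\cap D_{2}^{\alpha}$. Since $\beta=\partial N(U_{1})$ and, in the Schubert form (Figure \ref{schubert form}), the under-bridge $U_{1}$ meets $\partial D_{1}^{\alpha}$ only near its two endpoints $a_{0},a_{p}$ and transversely at the interior crossings $U_{1}\cap O_{1}$, a thin regular neighborhood $N(U_{1})$ intersects $D_{1}^{\alpha}=N(O_{1})$ in a disjoint family of parallel strips. The boundary arcs of these strips in $D_{1}^{\alpha}$ are therefore pairwise non-crossing and nested; the outermost arc cuts off a small bigon containing $w_{1}$ (namely $X_{0}$, near $a_{0}$) and the innermost arc cuts off a small bigon containing $z_{1}$ (namely $X_{p}$, near $a_{p}$). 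This is precisely what Figure \ref{heegaard diagram for b(p,q)} depicts, and the analogous statement for $\beta\cap D_{2}^{\alpha}$ holds with $w_{2},z_{2}$ in place of $w_{1},z_{1}$.

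Finally, any nested family of $p$ disjoint arcs in a disk with $2p$ boundary endpoints partitions the disk into exactly two extremal bigons and $p-1$ intermediate regions whose boundary alternates two $\alpha$-edges with two $\beta$-edges, hence squares. Applying this to both $D_{1}^{\alpha}$ and $D_{2}^{\alpha}$ partitions the $2p+2$ regions of the diagram as four basepointed bigons together with $2(p-1)$ squares. The diagram is therefore nice, and Proposition \ref{prop:Super nice diagram} finishes the argument. The main obstacle is the geometric step above, namely extracting the nested structure of $\beta\cap D_{i}^{\alpha}$ from the Schubert normal form; the rest is bookkeeping.
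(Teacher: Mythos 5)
Your proof is correct, and it is worth noting that the paper does not actually give an argument for this lemma: the bigon/square structure is simply asserted in Notation \ref{notations of diagram} and read off from Figures \ref{schubert form} and \ref{heegaard diagram for b(p,q)}. So you are supplying details the paper leaves implicit rather than paralleling an existing proof. Your Euler-characteristic count is right ($V=2p$, $E=4p$, $F=2p+2$ on $S^2$), and you correctly isolate the real content: disjoint properly embedded arcs in a disk are automatically non-crossing, but one needs the stronger fact that the $p$ arcs of $\beta$ in each $\alpha$-disk form a single \emph{linearly nested} chain, since only that rules out regions meeting three or more $\beta$-arcs and forces exactly two bigons and $p-1$ squares per disk. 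One point to tighten is that the two disks are not symmetric in the way your word ``analogous'' suggests: in $D_1^{\alpha}=N(O_1)$ the two bigons $X_0,X_p$ are cut off by the boundary arcs of the two end-``tongues'' of $N(U_1)$ at the bridge endpoints $a_0,a_p$, whereas $\beta=\partial N(U_1)$ has no tongues in $D_2^{\alpha}$ (the endpoints of $U_1$ lie in $D_1^{\alpha}$); there the $p$ arcs pair up as the two sides of the $p/2$ transverse strips of $N(U_1)$, and the bigons $Y_0\ni w_2$, $Y_p\ni z_2$ are the two extremal regions of the nested family, which happen to contain the endpoints $a'_0,a'_p$ of the \emph{other} pair of bridges. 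The cleanest certification of linear nesting in both disks is the paper's own Lemma \ref{lem:Schubert Heegaard diagram}: the endpoint pairs are $(b_i,b_{2p-1-i})$ in $D_1^{\alpha}$ and $(b_i,b_{2q-1-i})$ in $D_2^{\alpha}$, each the orbit structure of a reflection of the cyclically ordered set $\{b_0,\dots,b_{2p-1}\}$, hence a single nested chain whose two adjacent extremal pairs bound the two bigons. With that substitution your argument is complete and establishes the stronger statement (every region, basepointed or not, is a bigon or square) needed to invoke Proposition \ref{prop:Super nice diagram}.
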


From the property of Schubert normal form, it follows a direct description
of the Schubert Heegaard diagram.

\begin{lem}
\label{lem:Schubert Heegaard diagram}In the Schubert Heegaard diagram
of $b(p,q)$,

(1) in the disk $D_{1}^{\alpha}$, the points
$b_{i}$ and $b_{2p-1-i}$ are connected by a $\beta$-arc,

(2) in the disk $D_{2}^{\alpha}$, the points
$b_{i}$ and $b_{j}$ are connected by a $\beta$-arc if and only
if $i+j\equiv2q-1\ (\mathrm{mod}\ 2p)$.
\end{lem}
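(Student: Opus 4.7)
The plan is to read the structure of the Schubert Heegaard diagram directly off the Schubert normal form in Figure \ref{schubert form}, using the gluing identification $a_i \sim a'_{q-i}$ to pass between the two disks.

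First I would establish Part (1). The disk $D_1^\alpha$ is a tubular neighborhood of the over-bridge $O_1$, and inside it the under-bridges run as parallel arcs nested around $O_1$; this is the entire content of the local Schubert picture near an over-bridge. When we take the tubular neighborhood $N(U_1)$ on $S^2$ and restrict to $D_1^\alpha$, the curve $\beta = \partial N(U_1)$ inherits this nested structure and consists of $p$ concentric arcs in $D_1^\alpha$. The outermost arc, together with an $\alpha$-segment, bounds the bigon $X_0$ containing $w_1$, and the innermost bounds $X_p$ containing $z_1$. The $b$-labels were defined so that $b_0, b_{2p-1}$ are the vertices of $X_0$ and $b_{p-1}, b_p$ are the vertices of $X_p$; combined with the clockwise ordering, this forces the $i$-th nested arc in $D_1^\alpha$ to connect $b_i$ to $b_{2p-1-i}$, which is Part (1).

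Next I would turn to Part (2). The same local argument applied at $O_2$ shows that the $\beta$-arcs inside $D_2^\alpha$ are again nested, and in the $a'$-labeling native to $D_2^\alpha$ from Figure \ref{schubert form}, the $j$-th arc connects $a'_j$ with $a'_{2p-1-j}$. To convert this pairing into the global $b$-labels I would use the gluing rule $a_i \sim a'_{q-i}$ together with the identification of $a_i$-labels with $b_i$-labels coming from Part (1). Because the two disks glue along $\alpha$ with an orientation reversal, $a'_j$ corresponds to a suitably shifted $b$-index; a brief modular computation then shows that a nested pair $(a'_j, a'_{2p-1-j})$ in $D_2^\alpha$ has $b$-indices summing to $2q - 1 \pmod{2p}$, which is the claim.

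The main obstacle is the bookkeeping in Part (2): one must nail down the exact correspondence $a'_j \leftrightarrow b_?$, which requires tracking (i) the orientation reversal between the two disks on $S^2$, and (ii) an off-by-one stemming from the specific sides of $a'_0$ and $a'_p$ on which $w_2$ and $z_2$ are placed, i.e.\ from the placement of the bigons $Y_0$ and $Y_p$ in the $b$-ordering. Once the correct translation is fixed so that the outermost arcs in $D_1^\alpha$ and $D_2^\alpha$ match up to form a single closed curve $\beta$ on $S^2$ and the bigons $Y_0, Y_p$ land at the $b$-indices forced by the formula, the rest of the argument is a routine computation modulo $2p$.
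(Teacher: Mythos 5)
Your proposal is correct and follows the same route the paper intends: the paper states this lemma without proof as a direct consequence of the Schubert normal form, and your argument (nested $\beta$-arcs in each disk, the vertex convention for $X_0$ and $X_p$ fixing the indices in $D_1^\alpha$, then the gluing rule $a_i\sim a'_{q-i}$ plus the mod-$2p$ bookkeeping for $D_2^\alpha$) is exactly the computation being elided. The key check works out: the arc of $D_2^\alpha$ separating $Y_j$ from $Y_{j+1}$ joins the intersection point between $a'_{j}=a_{q-j}$ and $a'_{j+1}=a_{q-j-1}$, namely $b_{q-j-1}$, to the one between $a'_{-j}=a_{q+j}$ and $a'_{-j-1}=a_{q+j+1}$, namely $b_{q+j}$, and indeed $(q-j-1)+(q+j)=2q-1$.
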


\subsection{The multi-variable Alexander polynomial of two-bridge links.}

With the help of link Floer homology, we can directly calculate the
multi-variable Alexander polynomial of knots and links. In \cite{OS_link_FLoer},
there is a formula of the Euler characteristic of $\widehat{HFL}(L)$:
\begin{equation}
\sum_{h\in\mathbb{H}(L)}\chi(\widehat{HFL}_{*}(L,h))\cdot e^{h}=\prod_{i=1}^{l}(T_{i}^{\frac{1}{2}}-T_{i}^{-\frac{1}{2}})\Delta_{L}.\label{eq:AlexPoly}
\end{equation}

\begin{defn} [Thin complex and $E_{2}$-collapsed complex]
Suppose $(C,\partial)$ is a $\mathbb{Z}^{2}$-filtered chain complex of
$\mathbb{F}$-vector spaces. Let $(i,j)$ denote the filtration, and let $g$ denote the
internal grading. The complex  $(C,\partial)$ is called \emph{thin}, if $i+j-g$
is a constant for all elements in $C$. The chain complex $C$ is
called \emph{$E_{2}$-collapsed}, if the differential can be decomposed as
$\partial=\partial_{1}+\partial_{2}$, such that $F(\partial_{1}(x))=F(x)-(1,0)$
and $F(\partial_{2}(x))=F(x)-(0,1)$, where $F(x)$ is the $\mathbb{Z}^{2}$-filtration
of $x$.\end{defn}

\begin{rem}
A thin complex is $E_{2}$-collapsed. The classification
of $E_{2}$-collapsed complexes of $\mathbb{F}$-vector spaces is
shown in \cite{OS_link_FLoer} Section 12.1.
\end{rem}

\begin{prop}
\label{prop:Alexander grading}Let $L=b(p,q)$ be a two-bridge link,
 where $p$ is even and $-p<q<p$.  Let  $A(b_{i})$ be the Alexander
grading, and let $q^{-1}$ be the
number theoretical reciprocal of $q$ modulo $2p$. Then
\[
A(b_{i})-A(b_{i-1})=\begin{cases}
((-1)^{\left\lfloor q^{-1}\cdot i/p\right\rfloor },0), & i\text{ is even,}\\
(0,(-1)^{\left\lfloor (q^{-1}\cdot i+1)/p\right\rfloor }), & i\text{ is odd}.
\end{cases}
\]
Furthermore, we have $A_{1}(b_{i})+A_{2}(b_{i})-M(b_{i})$ is a constant,
i.e. not dependent on $i$, where $M(b_{i})$ is the Maslov grading.
In other words, the chain complex $A_{+\infty,+\infty}^{-}(L)$ is
thin.
\end{prop}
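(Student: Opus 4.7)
The plan is to exploit the identity $A_j(b_{i-1}) - A_j(b_i) = n_{z_j}(\phi_i) - n_{w_j}(\phi_i)$ for $\phi_i \in \pi_2(b_{i-1}, b_i)$. Since Alexander grading differences are intrinsic to the diagram, one is free to choose any convenient $\phi_i$; I pick one whose basepoint content can be read off combinatorially from the Schubert diagram using Lemma \ref{lem:Schubert Heegaard diagram}.

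I first locate the four basepoints: the Schubert gluing $a_i \sim a'_{q-i}$ forces $w_1, z_1$ to sit in the bigons of $D_1^\alpha$ adjacent to the $\alpha$-arcs $\alpha_0, \alpha_p$, and $w_2, z_2$ to sit in the bigons of $D_2^\alpha$ adjacent to $\alpha_q, \alpha_{p+q}$. Since $p$ is even and $q$ odd (as $\gcd(p, q) = 1$), the $D_1^\alpha$-bigons land on even-indexed arcs and the $D_2^\alpha$-bigons on odd-indexed ones. Correspondingly, for each $i$ I will choose $\phi_i$ so that its basepoint content is nonzero only in the first coordinate (when $i$ is even) or only in the second coordinate (when $i$ is odd); this parity split is the combinatorial source of the even/odd dichotomy in the statement.

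The main technical step, which is also the main obstacle, is to compute the exact sign and magnitude of $n_{z_j}(\phi_i) - n_{w_j}(\phi_i)$. Combining the two involutions of Lemma \ref{lem:Schubert Heegaard diagram}, two consecutive arcs of the $\beta$-curve act as a rotation by $2q$ on the indices: after $2k$ steps along $\beta$ starting at $b_0$ one reaches $b_{2kq \pmod{2p}}$. Inverting this shows that for $i$ even, $b_i$ sits at position $q^{-1} i \pmod{2p}$ along the $\beta$-cycle, and the number of times $\phi_i$ crosses one of the basepoint-containing bigons as $i$ advances is precisely $\lfloor q^{-1} i / p \rfloor \pmod 2$. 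The parallel analysis for odd $i$ yields the shifted floor $\lfloor (q^{-1} i + 1) / p \rfloor$, the $+1$ reflecting the half-integer offset between even and odd positions along the $\beta$-cycle. Carrying out this bookkeeping with consistent signs is where the number-theoretic content of the proposition lives, and where the main care is required.

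Finally, for the thinness statement, I compute the Maslov difference $M(b_{i-1}) - M(b_i) = \mu(\phi_i) - 2 n_{\mathbf{w}}(\phi_i)$ using the Lipshitz formula $\mu(\phi_i) = e(\phi_i) + n_{b_{i-1}}(\phi_i) + n_{b_i}(\phi_i)$ on the same explicit $\phi_i$. A short Euler-measure count on the domain gives $\mu(\phi_i) = n_{\mathbf{w}}(\phi_i) + n_{\mathbf{z}}(\phi_i)$, and substituting this into the difference $(A_1 + A_2 - M)(b_{i-1}) - (A_1 + A_2 - M)(b_i) = n_{\mathbf{z}}(\phi_i) + n_{\mathbf{w}}(\phi_i) - \mu(\phi_i)$ gives zero, establishing that $A_{+\infty, +\infty}^{-}(L)$ is thin.
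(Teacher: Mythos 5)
Your proposal is correct and follows essentially the same route as the paper's proof: both locate the index-one bigons joining consecutive $b_{i-1},b_i$ (each carrying exactly one basepoint, with the parity of $i$ selecting the link component and the position $q^{-1}i \bmod 2p$ along the $\beta$-cycle, equivalently the direction of the under-bridge, producing the floor-function sign), and both deduce thinness from the Lipshitz index formula applied to these single-basepoint index-one bigons. The bookkeeping you defer is exactly the computation the paper carries out by chasing the under-bridges $U_1,U_2$.
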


\begin{proof}
We use Notations \ref{notations of diagram}.
There is  a set of bigons of Maslov index 1 connecting $b_{i}$
and $b_{i+1}$, for $i=0,1,...,2p-2.$ Each of these
bigons is a part of one of the disks $D_{1}^{\beta}$ and $D_{2}^{\beta}$.
In fact, these bigons can be obtained by chasing the under-bridges
$U_1$ and $U_2$.

The under-bridge $U_{1}$ starts from $z_{1}=a_{p}={a'}_{q-p}$ and passes the
disks $D_{2}^{\alpha}$ and $D_{1}^{\alpha}$ alternately.  For $i$ even, at the point $a_{i}$, if the under-bridge $U_{1}$ is pointing out of
 $D_{1}^{\alpha}$, then $i\equiv p-2kq\ (\mathrm{mod}\ 2p)$ for some $k$ with $0<k<\frac{p}{2}$,
which is equivalent to $\left\lfloor \frac{i\cdot q^{-1}}{p}\right\rfloor $
is even. In this case, there is a bigon $\phi$ from
$b_{i}$ to $b_{i-1}$ with a single basepoint $z_{1}$ on it, and thereby
 $$A(b_{i})-A(b_{i-1}) = ((-1)^{\lfloor\frac{q^{-1}\cdot i}{p}\rfloor},0).$$
If the under-bridge $U_{1}$ is pointing into
$D_{1}^{\alpha}$, then $i\equiv 2kq-p\ (\mathrm{mod}\ 2p)$ for some $k$
with $0<k<\frac{p}{2}$, which is equivalent to $\left\lfloor \frac{i\cdot q^{-1}}{p}\right\rfloor $
is odd. In this case, there is a bigon $\phi$ from
$b_{i-1}$ to $b_{i}$ with a single basepoint $z_{1}$ on it, and still
 $$A(b_{i})-A(b_{i-1}) = ((-1)^{\lfloor\frac{q^{-1}\cdot i}{p}\rfloor},0).$$

Similarly, by keeping track of $U_{2}$, we can prove the other cases. For $i$ odd,
at the point $a_{i}$, if the under-bridge $U_{2}$ is pointing off $D_{1}^{\alpha}$,
then $i\equiv p-q-2kq\ (\mathrm{mod}\ 2p)$ for some $k$ with $0<k<\frac{p}{2}$,
which is equivalent to $\left\lfloor \frac{1+i\cdot q^{-1}}{p}\right\rfloor $
is even. In this case, there is a bigon $\phi$ of index $1$ from
$b_{i}$ to $b_{i-1}$ with a single basepoint $z_{2}$ on it. Thus,
we have $A(b_{i})-A(b_{i-1})=(0,(-1)^{\lfloor\frac{q^{-1}\cdot i+1}{p}\rfloor})$
for $i$ odd.

From Lipshitz's formula $\mu(\phi)=e(\phi)+\mu_{b_{i}}(\phi)+\mu_{b_{i-1}}(\phi)$,
it follows $\mu(\phi)=1$, and thereby for all $i$,
$$A_{1}(b_{i})+A_{2}(b_{i})-M(b_{i})=A_{1}(b_{i-1})+A_{2}(b_{i-1})-M(b_{i-1}).$$
\end{proof}

Now we are able to compute the multi-variable Alexander polynomial
of two-bridge links by Equation \eqref{eq:AlexPoly}. Since
the Floer chain complex for the Schubert Heegaard diagram is thin,
there are no differentials in the associated graded complex of $\widehat{CFL}(L,h)$
for the Alexander filtration. That is, for this thin complex,
 $\widehat{HFL}(L,h)=\widehat{CFL}(L,h)$. Thus,

\begin{equation*}
{\prod_{i=1}^{l}(T_{i}^{\frac{1}{2}}-T_{i}^{-\frac{1}{2}})}\cdot\Delta_{L}(x,y)=\sum_{i=0}^{2p-1}(-1)^{A_{1}(b_{i})+A_{2}(b_{i})}\cdot x^{A_{1}(b_{i})}\cdot y^{A_{2}(b_{i})}.
\end{equation*}

By computer experiments, we have found two distinct two-bridge links $b(126,47)$ and $b(126,55)$ that
share the same Alexander polynomial, signature, and linking number, but are not the same or mirror
to each other.
\begin{align*}
& \Delta_{b(126,47)}(x,y)=\Delta_{b(126,55)}(x,y)=-15+\frac{8}{x}+8x+\frac{y}{8}+8y-\frac{4}{xy}-4xy-\frac{4x}{y}-\frac{4y}{x},\\
& \sigma(b(126,47))=\sigma(b(126,55))=3,\\
& \mathrm{lk}(126,47)=\mathrm{lk}(126,55)=1.
\end{align*}
\subsection{The Floer complexes for two-bridge links.}

Let $\mathcal{H}=(S,\boldsymbol{\alpha},\boldsymbol{\beta},\text{{\bf w},{\bf z}})$
be the Schubert Heegaard diagram of $b(p,q)$. By Lemma 3.9, the generalized Floer
complex $\mathfrak{A}^{-}(\mathcal{H},\text{s})$ is combinatorial.
It consists of counting the empty embedded bigons of Maslov index
$1$ on $S$, since here $g+k-1=1$.

The pattern of empty embedded bigons of Maslov index $1$
is illustrated in Figure \ref{bigons} as in \cite{Sakar-Wang}.
In Schubert Heegaard diagram, these bigons are always
in a similar form of Figure \ref{bigons}, where the
function $f_{p}$ is defined as follows.

\begin{defn}
For all $n,m,k\in\mathbb{Z}$, let $\mathrm{Mod}(n,m,k)$ be the residue
of $n$ modulo $m$ starting from $k$, that is,
\[
\mathrm{Mod}(n,m,k)\equiv n\ (\mathrm{mod}\ m)\quad \text{and}\quad k\leq \mathrm{Mod}(n,m,k)\leq k+m-1.
\]
Then $f_{p}$ is defined by
\[
f_{p}(n)=\left|\mathrm{Mod}(n,2p,-p+1)\right|.
\]
\end{defn}

\begin{figure}
\centering

\includegraphics[scale=0.3]{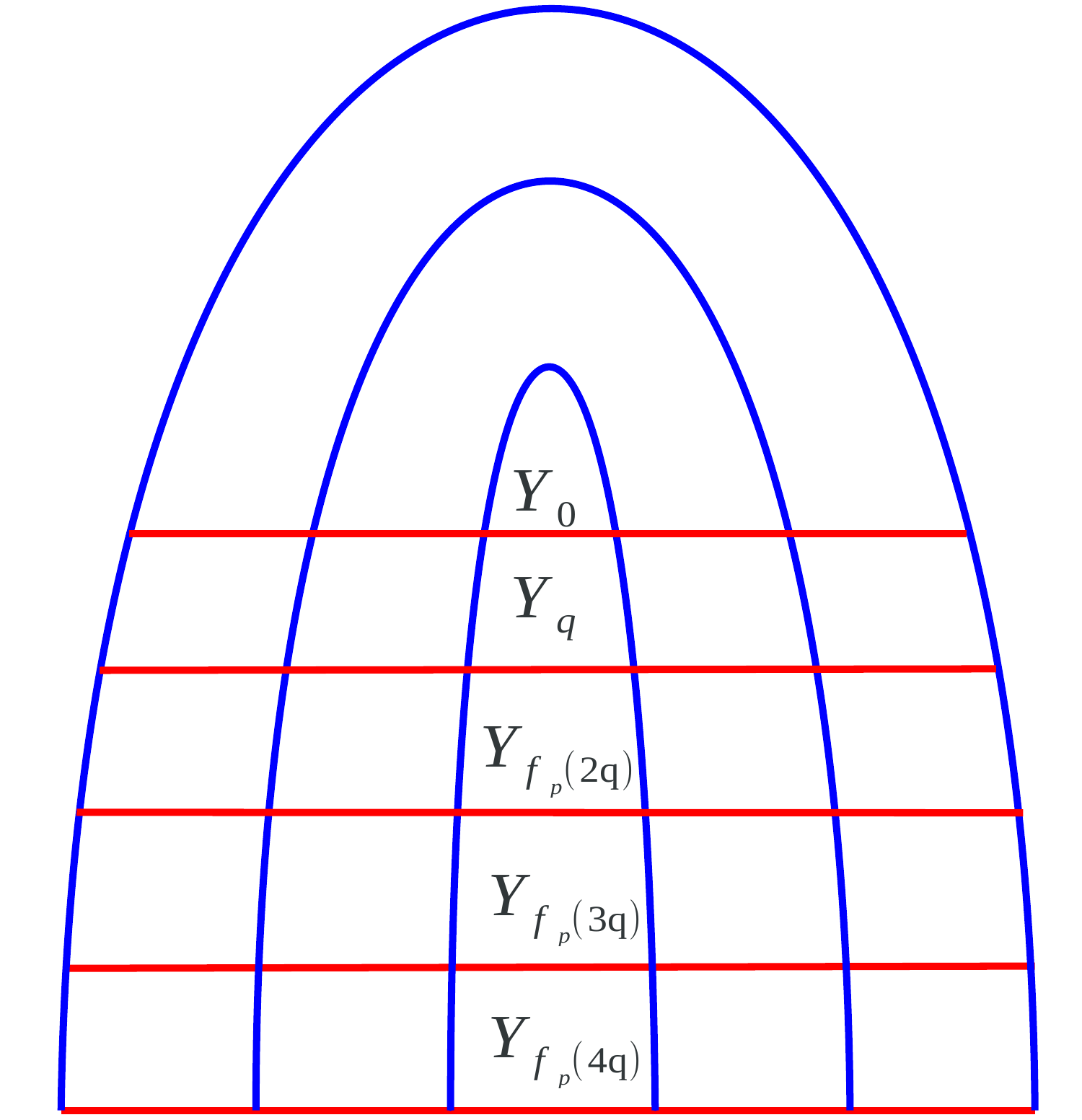}

\caption{\textbf{A bigon in the Schubert Heegaard diagram of a two-bridge link.} The
red lines are parts of $\alpha$, and the blue curves are parts of $\beta$.}

\label{bigons}

\end{figure}
\begin{lem}
\label{lem:X,Y-spine-of-bigon}
In the Schubert Heegaard diagram $(S,\{\alpha\},\{\beta\},\{w_{1},w_{2}\},\{z_{1},z_{2}\})$
of the two-bridge link $\overrightarrow{L}=b(p,q)$, the regions in $D_{1}^{\beta}$ are $X_{0},Y_{q},$$X_{f_{p}(2q)},$$Y_{f_{p}(3q)}$,...,$X_{f_{p}(pq)}=X_{p}$
consecutively, and the regions in $D_{2}^{\beta}$ are $Y_{0},X_{q},$$Y_{f_{p}(2q)},$$X_{f_{p}(3q)}$,...,$Y_{f_{p}(pq)}=Y_{p}$
consecutively.
\end{lem}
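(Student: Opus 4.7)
The plan is to trace the under-bridge $U_1$ along $D_1^\beta$ from $w_1$ to $z_1$ and read off, in order, the regions of the Heegaard diagram that it visits. Since $\beta = \partial N(U_1)$ runs parallel to $U_1$, the pairing of intersection points in Lemma \ref{lem:Schubert Heegaard diagram} lifts to a pairing of the $\alpha$-crossings of $U_1$: a $U_1$-arc in $D_1^\alpha$ joins $a_i$ with $a_{-i} \pmod{2p}$, while a $U_1$-arc in $D_2^\alpha$ joins $a_i$ with $a_{2q-i} \pmod{2p}$.

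First I would use Notations \ref{notations of diagram} and the nested structure of the $\beta$-arcs to obtain adjacency descriptions of the region labels. For $1 \leq j \leq p-1$, the region $X_j$ is the unique region of $D_1^\alpha$ whose two $\alpha$-arc sides contain the points $a_j$ and $a_{-j}$; the bigon cases are handled by $w_1 \in X_0$ near $a_0$ and $z_1 \in X_p$ near $a_p$. Running the same argument in $D_2^\alpha$, using the pairing $b_i \leftrightarrow b_{2q-1-i}$ together with the basepoints $w_2 \in Y_0$ near $a'_0 = a_q$ and $z_2 \in Y_p$ near $a'_p = a_{q-p}$, I would show that $Y_j$ is the unique region of $D_2^\alpha$ bounded by $\alpha$-arcs containing $a_{q-j}$ and $a_{q+j}$.

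Next I would iterate the two pairing rules along $U_1$. Let $a_{j_k}$ denote the $k$th $\alpha$-crossing of $U_1$ starting from $w_1$, so $j_0 = 0$. Since $U_1$ alternates between the two disks after each crossing, the recursion is $j_{k+1} = 2q - j_k$ when $k$ is even and $j_{k+1} = -j_k$ when $k$ is odd; solving gives $j_{2m} \equiv -2mq$ and $j_{2m+1} \equiv 2(m+1)q \pmod{2p}$. Plugging into the adjacency descriptions, the $k$th region along $U_1$ is $X_{f_p(kq)}$ for even $k$ and $Y_{f_p(kq)}$ for odd $k$, because $f_p(kq)$ is precisely the common $[0,p]$-label of the region adjacent to the pair $\{a_{j_{k-1}}, a_{j_k}\}$. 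The terminal region is $X_{f_p(pq)} = X_p$ since $p$ even and $q$ odd force $pq \equiv p \pmod{2p}$ so $f_p(pq) = p$; this is the bigon containing $z_1$, as required. The statement for $D_2^\beta$ follows by the symmetric argument with the roles of the two link components exchanged.

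The main technical obstacle will be the modular bookkeeping: one must verify that the two possible pre-images $j$ and $2p - j$ of a pair $\{a_j, a_{-j}\}$ in $D_1^\alpha$ always collapse to the same label $f_p(j) \in [0,p]$, and similarly in $D_2^\alpha$ for the shifted pair $\{a_{q-j}, a_{q+j}\}$. Both facts follow directly from the linear nested arrangement of the $\beta$-arcs, which orders the $p+1$ regions in each $\alpha$-disk from $j = 0$ to $j = p$, but the index-chasing through all the reductions modulo $2p$ deserves careful attention.
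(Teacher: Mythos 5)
Your proposal is correct and takes essentially the same route as the paper's proof: both arguments first identify $X_{f_{p}(i)}$ (resp.\ $Y_{f_{p}(j)}$) as the region containing the link point $a_{i}$ (resp.\ $a'_{j}$), using the nested arrangement of the $\beta$-arcs, and then chain the regions together along the under-bridge via the gluing $a_{i}\leftrightarrow a'_{q-i}$. The paper phrases the second step as an induction on adjacency ($Y_{f_{p}(kq)}$ is adjacent to $X_{f_{p}((k\pm1)q)}$), while you make the same induction explicit by solving the crossing recursion $j_{2m}\equiv -2mq$, $j_{2m+1}\equiv 2(m+1)q \pmod{2p}$; the bookkeeping you flag checks out, including the terminal identification $f_{p}(pq)=p$.
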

\begin{proof}
Note that $D_{i}^{\beta}$ is the regular neighborhood of the under-bridge
$U_{i}$. The region $X_{i}$ contains the arc $a_{i}a_{-i}\subset L$,
and the region $Y_{j}$ contains arc $a'_{j}a'_{-j}\subset L$. Conversely,
the point $a_{i}$ is contained in $X_{f_{p}(i)}$, and the point
$a'_{j}$ is contained in $Y_{f_{p}(j)}$. Thus since $a_{i},a'_{q-i}$
are glued together and $a_{-i},a'_{q+i}$ are glued together, $X_{f_{p}(i)}$
is adjacent to $Y_{f_{p}(q-i)}$ and $Y_{f_{p}(q+i)}$. Since $Y_{0}$
is in $D_{2}^{\beta}$ and it is adjacent to $X_{q}$, the region
$X_{q}$ is adjacent to $Y_{f_{p}(2q)}$. Inductively, we can show
in $D_{2}^{\beta}$, $Y_{f_{p}(kq)}$ is adjacent to $X_{f_{p}[(k-1)q]}$
and $X_{f_{p}[(k+1)q]}$. A similar argument applies to $D_{1}^{\beta}$.
\end{proof}

\begin{defn}
In the bigon $\phi$, denote the number of $\alpha$ arcs in $\phi$
by $n_{\alpha}(\phi)$, and denote the number $\beta$ arcs in $\phi$
by $n_{\beta}(\phi)$.
\end{defn}

Every bigon $\phi$  is uniquely determined by $n_{\alpha}(\phi),n_{\beta}(\phi)$
 and the basepoint on it.

\begin{lem}[Patterns of bigons]
\label{lem:(Pattern-of-bigons)} In the
Schubert Heegaard diagram of $b(p,q)$, suppose $\phi$ is an empty
embedded bigon of index 1 in $\pi_{2}(b_{i},b_{j})$. Then
\begin{equation*}
\begin{aligned}[c]
&(i,j)=((1-n_{\alpha})q+n_{\beta}-1,(1-n_{\alpha})q-n_{\beta}),\\
&(i,j)=(n_{\alpha}q-n_{\beta},n_{\alpha}q+n_{\beta}-1),\\
&(i,j)=(n_{\alpha}q-n_{\beta},n_{\alpha}q+n_{\beta}-1),\\
&(i,j)=((1-n_{\alpha})q+n_{\beta}-1,(1-n_{\alpha})q-n_{\beta}),
\end{aligned}
\qquad
\begin{aligned}[c]
&\text{if }w_{1}\in\phi,n_{\alpha}\text{ is odd,}\\
&\text{if }w_{1}\in\phi,n_{\alpha}\text{ is even,}\\
&\text{if }w_{2}\in\phi,n_{\alpha}\text{ is odd,}\\
&\text{if }w_{2}\in\phi,n_{\alpha}\text{ is even}.
\end{aligned}
\end{equation*}

Furthermore, given $m,n\in\mathbb{Z}$ and a basepoint $pt\in\{w_1,w_2,z_1,z_2\}$,
there exists at most one empty embedded bigon $\phi$ with $n_{\alpha}(\phi)=m$,
$n_{\beta}(\phi)=n$, and $n_{pt}(\phi)=1$.

There exists an empty embedded bigon $\phi$ of index
1 with $n_{\alpha}(\phi)=m$, $n_{\beta}(\phi)=n$, and $n_{w_{2}}(\phi)=1$
if and only if the condition $P_{1}(m,n)$ holds.

The condition $P_{1}(m,n)$ is as follows:
\begin{enumerate}
\item
either $m=1$, or if $m>1$, then the set of intervals: $[0,n-1]$
and all intervals $[f_{p}(2iq)-n+1,f_{p}(2iq)+n-1]$ with $1\leq2i\leq m-1$
are pairwise disjoint intervals in $[0,p]$;
\item
either $m=1$, or if $m>1$, then the set of intervals: all intervals
$[f_{p}((2i+1)q)-n+1,f_{p}((2i+1)q)+n-1]$ with $1\leq2i+1\leq m-1$
are also pairwise disjoint intervals in $[1,p-1]$.
\end{enumerate}

Similarly, there exists an empty embedded bigon $\phi$
of index 1 with $n_{\alpha}(\phi)=m$, $n_{\beta}(\phi)=n$ and
$n_{w_{1}}(\phi)=1$ if and only if the condition $P_{2}(m,n)$ holds.

The condition $P_{2}(m,n)$ is as follows:
\begin{enumerate}
\item
either $n=1$, or $n>1$ and the set of intervals: $[0,m-1]$ and
all intervals $[f_{p}(2iq)-m+1,f_{p}(2iq)+m-1]$ with $1\leq2i\leq n-1$
are pairwise disjoint intervals in $[0,p]$;
\item
either $n=1$, or $n>1$ and the set of intervals: all intervals $[f_{p}((2i+1)q)-m+1,f_{p}((2i+1)q)+m-1]$
with $1\leq2i+1\leq n-1$ are also pairwise disjoint intervals in
$[1,p-1]$.
\end{enumerate}

In addition, for $i=1,2$, there is a one-to-one correspondence between
the set of all the empty embedded bigons with $n_{w_{i}}=1$ and the
set of empty embedded bigons with $n_{z_{i}}=1$, where the bigon
$\phi\in\pi_{2}(b_i,b_j)$ with $n_{\alpha}=m,n_{\beta}=n$,$n_{w_{i}}=1$
is sent to the bigon $\phi'\in\pi_{2}(b_i+p,b_j+p)$ with $n_{\alpha}=m,n_{\beta}=n$,$n_{z_{i}}=1$.
\end{lem}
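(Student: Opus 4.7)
The plan is to prove all four corner formulas and both existence conditions simultaneously by induction on $n_{\alpha}(\phi)+n_{\beta}(\phi)$, leveraging the explicit descriptions of the Schubert Heegaard diagram provided by Lemmas~\ref{lem:Schubert Heegaard diagram} and~\ref{lem:X,Y-spine-of-bigon}. First I would set up the local picture: as suggested by Figure~\ref{bigons}, an empty embedded bigon is a thin strip on $S^{2}$ threaded by $n_{\alpha}$ parallel $\alpha$-arcs and $n_{\beta}$ parallel $\beta$-arcs, partitioning its interior into a grid of rectangular subregions together with one bigon endcap carrying the unique basepoint of $\phi$. I would carry out the detailed argument for bigons containing $w_{2}$; the $w_{1}$ case is entirely parallel (swap the roles of $D_{1}^{\alpha}$ and $D_{2}^{\alpha}$), and the two $z_{i}$ cases are deduced at the end from the half-turn symmetry $b_{k}\mapsto b_{k+p}$ of the Schubert diagram, which swaps $X_{0}\leftrightarrow X_{p}$, $Y_{0}\leftrightarrow Y_{p}$, and preserves $(n_{\alpha},n_{\beta})$.

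For the corner formulas I would take the base case $n_{\alpha}=n_{\beta}=1$ to be the bigon $Y_{0}$ itself, whose corners $(b_{q-1},b_{q})$ can be read off from Lemma~\ref{lem:Schubert Heegaard diagram}(2). In the inductive step I extend a bigon of size $(m,n)$ either to $(m+1,n)$, by reflecting across the next $\alpha$-arc along the spine of $D_{1}^{\beta}$ or $D_{2}^{\beta}$, or to $(m,n+1)$ by reflecting across the next $\beta$-arc. By Lemma~\ref{lem:Schubert Heegaard diagram}, such reflections act on corner labels by the involution $b_{k}\mapsto b_{2p-1-k}$ when the new arc lies in $D_{1}^{\alpha}$ and by $b_{k}\mapsto b_{2q-1-k}$ when it lies in $D_{2}^{\alpha}$. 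Composing these two involutions in the order dictated by Lemma~\ref{lem:X,Y-spine-of-bigon} and distinguishing by the parity of $n_{\alpha}$ yields precisely the four affine formulas displayed in the statement.

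To derive the existence conditions $P_{1}(m,n)$ and $P_{2}(m,n)$, I would read off from Lemma~\ref{lem:X,Y-spine-of-bigon} the sequence of $X$- and $Y$-regions that a growing bigon must traverse, namely $Y_{0},X_{q},Y_{f_{p}(2q)},X_{f_{p}(3q)},\ldots$ inside $D_{2}^{\beta}$. In each such region the bigon occupies a sub-strip of width $n_{\beta}-1$ (or $n_{\alpha}-1$ in the $P_{2}$ case) transverse to the spine; requiring these sub-strips to be simultaneously present, pairwise disjoint, and free of the forbidden basepoints $w_{1},z_{1},z_{2}$ is exactly equivalent to the pairwise-disjointness statements in $P_{1}(m,n)$ for the intervals $[f_{p}(2iq)-n+1,f_{p}(2iq)+n-1]$ and $[f_{p}((2i+1)q)-n+1,f_{p}((2i+1)q)+n-1]$. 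The split by parity in $P_{1}$ reflects the fact that the $X$-regions span the interval $[0,p]$ while the interior $Y$-regions span $[1,p-1]$; the analysis of $P_{2}$ is symmetric with the roles of $D_{1}^{\beta}$ and $D_{2}^{\beta}$ exchanged. Uniqueness of the bigon with prescribed $(m,n)$ and basepoint then follows immediately, since each inductive extension is forced once these data are fixed.

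The main obstacle I expect is not conceptual but combinatorial: correctly composing the two involutions across both disks, handling the parity bookkeeping for $n_{\alpha}$, and translating the geometric "sub-strip fits without collision" requirement into the modular-arithmetic interval statements of $P_{1}$ and $P_{2}$ requires careful and error-prone accounting. The key structural input that makes the argument go through is Lemma~\ref{lem:X,Y-spine-of-bigon}, which pins down the cyclic order $Y_{0},X_{q},Y_{f_{p}(2q)},\ldots$ in which the spine regions are encountered; once this order is in hand, the rest of the proof is essentially mechanical.
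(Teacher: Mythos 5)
Your proposal follows essentially the same route as the paper's (quite terse) proof: induction on $n_{\alpha},n_{\beta}$ from the base case $(i,j)=(q-1,q)$ using the two involutions $b_{k}\mapsto b_{2p-1-k}$ and $b_{k}\mapsto b_{2q-1-k}$ supplied by Lemma \ref{lem:Schubert Heegaard diagram}, the non-overlapping-regions criterion read through Lemma \ref{lem:X,Y-spine-of-bigon} for $P_{1},P_{2}$, and the symmetry $b_{k}\mapsto b_{k+p}$ for the $w_{i}\leftrightarrow z_{i}$ correspondence. Your write-up in fact supplies more of the combinatorial mechanism (the explicit form of the two reflections and the parity-alternating composition) than the paper does, and the details you give check out against the stated formulas.
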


\begin{proof}
Suppose $\phi$ is a bigon of index 1 in $\pi_{2}(b_{i},b_{j})$ with
$n_{w_{2}}(\phi)=1$. Combining Lemma \ref{lem:Schubert Heegaard diagram}
and Lemma \ref{lem:X,Y-spine-of-bigon}, we can get the formula of $(i,j)$
out of Figure \ref{bigons} by induction on $n_{\alpha},n_{\beta}$.
The initial step is  $(i,j)=(q-1,q)$, for $n_{\alpha}=n_{\beta}=1$.
Similarly, we can show the other case where $n_{w_{1}}(\phi)=1$.

For the second part, the sufficient and necessary condition of when
there exists an empty embedded bigon is that all the regions in the
bigon are not overlapped. By Lemma \ref{lem:X,Y-spine-of-bigon}, it is not hard to
get the formulas by induction.

Finally, notice that there is a symmetry of the Heegaard Schubert
diagram which sends $b_{k}$ to $b_{k+p}$ and exchanges $w_{i}$ to $z_{i}$
for all $1\leq k\leq 2p,i=1,2$. This symmetry directly gives the one-to-one correspondence
between the bigons with $w_{i}$ and the ones with $z_{i}$.
\end{proof}

Consequently, we get an algorithm for computing $A_{\mathrm{s}}^{-}(b(p,q))$
as follows.

\begin{thm}
\label{thm:bigon}Let $L=b(p,q)$ be a two-bridge link and $\mathcal{H}$ be the Schubert Heegaard diagram.
Define functions $F_{i}:\mathbb{N}\times\mathbb{N}\rightarrow\mathbb{Z}/2p\mathbb{Z}\times\mathbb{Z}/2p\mathbb{Z},i=1,2,$
by
\begin{align*}
F_{1}(m,n)= & \begin{cases}
((1-m)q+n-1,(1-m)q-n), & \text{if }m\text{ is odd},\\
(mq-n,mq+n-1), & \text{if }m\text{ is even.}
\end{cases}\\
F_{2}(m,n)= & \begin{cases}
((1-m)q+n-1,(1-m)q-n), & \text{if }m\text{ is even},\\
(mq-n,mq+n-1), & \text{if }m\text{ is odd.}
\end{cases}
\end{align*}
The conditions $P_{i}(m,n),i=1,2$ are as in Lemma \ref{lem:(Pattern-of-bigons)}.

Then, the complex $\mathfrak{A}^{-}(\mathcal{H},+\infty,+\infty)$
is a free $\mathbb{F}[[U_{1},U_{2}]]$-module generated by $g_{0},...g_{2p-1}$
with differentials
\[
\partial g_{i}=\sum_{j=0}^{2p-1}(\lambda_{i+p,j+p}+\mu_{i+p,j+p})g_{j}+\sum_{j=0}^{2p-1}\lambda_{i,j}U_{1}g_{j}+\sum_{j=0}^{2p-1}\mu_{i,j}U_{2}g_{j},
\]
where the coefficients $\lambda_{i,j},\mu_{i,j}\in{\mathbb{Z}/2\mathbb{Z}}$
are determined by the following equations
\begin{align*}
\lambda_{i,j} & =\#\{(m,n)\in\mathbb{N}\times\mathbb{N}|1\leq m,n\leq p,F_{1}(m,n)=(i,j),P_{1}(m,n)\text{ is true}\}\pmod2,\\
\mu_{i,j} & =\#\{(m,n)\in\mathbb{N}\times\mathbb{N}|1\leq m,n\leq p,F_{2}(m,n)=(i,j),P_{2}(m,n)\text{ is true}\}\pmod2.
\end{align*}
\end{thm}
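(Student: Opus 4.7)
My plan is to derive Theorem \ref{thm:bigon} by transcribing Proposition \ref{prop:Super nice diagram} into explicit closed form using Lemma \ref{lem:(Pattern-of-bigons)}, with $g_i := b_i$ identifying $\mathfrak{A}^-(\mathcal{H},+\infty,+\infty)$ as a free $\mathbb{F}[[U_1,U_2]]$-module of rank $2p$.

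I begin by applying Proposition \ref{prop:Super nice diagram}. The Schubert Heegaard diagram $\mathcal{H}$ is nice and has $g+k-1=1$, so the points of $\mathbb{T}_\alpha\cap\mathbb{T}_\beta$ are the single intersection points $b_0,\dots,b_{2p-1}$. Any Whitney disk in $\mathrm{Sym}^1(\Sigma)=\Sigma$ between two of these has exactly two corners, so no empty embedded squares arise and only empty embedded bigons contribute to $\partial$.

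Next I compute the monomial weight contributed by each bigon. For $\mathrm{s}=(+\infty,+\infty)$ the exponent $E^i_{s_i}(\phi)$ appearing in \eqref{eq:A_s} collapses to $n_{w_i}(\phi)$ via the second formula for $E^i_s$, since both $\max\{A_i(\mathbf{x})-\infty,0\}$ and $\max\{A_i(\mathbf{y})-\infty,0\}$ vanish. By Lemma \ref{lem:(Pattern-of-bigons)} every empty embedded bigon contains exactly one basepoint in $\{w_1,w_2,z_1,z_2\}$, so a bigon $\phi\in\pi_2(b_i,b_j)$ contributes exactly one term to $\partial g_i$: $U_1 g_j$ if $w_1\in\phi$, $U_2 g_j$ if $w_2\in\phi$, and $g_j$ if $z_1\in\phi$ or $z_2\in\phi$. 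The endpoint formulas of Lemma \ref{lem:(Pattern-of-bigons)} depend on the parity of $n_\alpha$, and the functions $F_1,F_2$ of the theorem simply repackage these formulas; likewise, $P_1,P_2$ repackage the existence criteria. Summing over all pairs $(m,n)$ with $1\leq m,n\leq p$ that produce a given endpoint pair $(i,j)$ and reducing modulo $2$ to absorb cancellations in $\mathbb{F}$ yields the coefficients $\lambda_{i,j}$ and $\mu_{i,j}$ governing the $w_1$- and $w_2$-bigon terms.

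Finally, the contributions from bigons through $z_1$ or $z_2$ follow from the symmetry at the end of Lemma \ref{lem:(Pattern-of-bigons)}, which provides a bijection sending a $w_i$-bigon in $\pi_2(b_i,b_j)$ to a $z_i$-bigon in $\pi_2(b_{i+p},b_{j+p})$. Hence the number of $z_i$-bigons from $b_i$ to $b_j$ equals the corresponding $w_i$-coefficient at $(i+p,j+p)$, producing the combined no-$U$ term $(\lambda_{i+p,j+p}+\mu_{i+p,j+p})g_j$. I do not anticipate any conceptual obstacle; the principal bookkeeping tasks are to align the parity-dependent case split of Lemma \ref{lem:(Pattern-of-bigons)} with the definitions of $F_1,F_2$ and to apply the index shift by $p$ consistently modulo $2p$.
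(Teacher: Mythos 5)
Your proposal is correct and follows exactly the route the paper intends: the paper offers no separate proof of Theorem \ref{thm:bigon}, presenting it as the immediate consequence of Proposition \ref{prop:Super nice diagram} (only bigons contribute since $g+k-1=1$), the collapse of $E^i_{+\infty}(\phi)$ to $n_{w_i}(\phi)$, and the endpoint formulas, uniqueness/existence criteria, and $w_i\leftrightarrow z_i$ symmetry of Lemma \ref{lem:(Pattern-of-bigons)}. Your treatment of the $z_i$-bigon contributions via the shift by $p$ (using $i+p\equiv i-p\pmod{2p}$) reproduces the paper's constant term $(\lambda_{i+p,j+p}+\mu_{i+p,j+p})g_j$ exactly.
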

\begin{rem}
One can get an algorithm of $O(p^{2})$ time complexity for computing
$A_{\mathrm{s}}^{-}(b(p,q))$. To compute $A_{\mathrm{s}}^{-}(b(p,q))$,
we only need to know $A_{+\infty,+\infty}^{-}(b(p,q))$, which is
determined by all the counting of bigons, i.e. those $\lambda_{i,j}$'s
 and $\mu_{i,j}$'s. Computing
$\lambda_{i,j}$'s and computing $\mu_{i,j}$'s are similar. In order
to get all the $\lambda_{i,j}$'s, one can nest two loops. The outer
loop is indexed by $n\geq1$, and the inner loop is indexed by $m\geq1$
with a test condition $P_{1}(m,n)$. When $P_{1}(m,n)$ is true, we change
the value of $\lambda_{F_{1}(m,n)}$ by $1\ (\mathrm{mod}\ 2)$ and keep
running the inner loop; when $P_{1}(m,n)$ is false, we stop the inner
loop and go back to the outer loop.

Let us estimate the time complexity. Switching the $\alpha$ and $\beta$ roles
converts the Heegaard diagram of $b(p,q)$ to its mirror $b(p,-q)$. Thus, we assume $0<q<p.$ First, when $P_{1}(m,n)$ is
true and $n>q$, $m$ must be $1$, as otherwise the second part
of $P_{1}(m,n)$ would imply $f_{p}(q)-n+1=q-n+1\ge1$. Thus we force
the outer loop stop when $n=q+1$. Computing the other $\lambda_{F_{1}(m,n)}$'s
with $(m,n)=(1,n),n>p$ can be done within $O(p)$ operations. Second, if
$P_{1}(m,n)$ is true, then $m\leq(p+1)/n$. This is because the first part of $P_{1}(m,n)$ implies that
there are $m$ pieces of open intervals $(f_{p}(2iq)-n+\frac{1}{2},f_{p}(2iq)+n-\frac{1}{2})$
pairwise disjoint in $(-\frac{1}{2},p+\frac{1}{2})$.
Thus, at the $n^{\text{th}}$ step of the outer loop, the inner
loop stops within $\left\lfloor(p+1)/n\right\rfloor$ steps. Finally, testing $P_{1}(m,n)$ can be done
within $2m$ steps. In fact, when $m$ is even, we can check if the
new interval $[f_{p}(mq)-n+1,f_{p}(mq)+n-1]$ is disjoint from the
other $\frac{m}{2}-1$ intervals in the first part of $P_{1}(m,n)$
(which are already ordered in the previous step). If it is disjoint
from the other intervals, we put it in the correct position in the
order. This is done within $2m$ operations. It is similar when $m$
is odd. Thus the time complexity is of the order
\[
T(p,q)=(\sum_{n=1}^{q}\sum_{m=1}^{\left\lfloor(p+1)/n\right\rfloor}2m)+O(p)\leq\left(\sum_{n=1}^{q}[(\frac{p+1}{n})^{2}+\frac{p+1}{n}]\right)+O(p).
\]
Since $n\leq q\leq p$, $(p+1)/n\geq1$, thus
\[
T(p,q)\leq2[\sum_{n=1}^{q}\frac{(p+1)^{2}}{n^{2}}]+O(p)=O(p^{2}).
\]
\end{rem}

\section{Link surgery formula}
In this section, we review the link surgery formula of Manolescu-Ozsv\'{a}th
for two-component links with basic diagrams. In Section 4.1, we review
some algebra on \emph{hyperboxes of chain complexes} and introduce
\emph{twisted gluing} of squares of chain complexes. In Section 4.2, we express
the link surgery formula for a two-component link as a twisted gluing of certain squares of chain complexes
derived from the link. These squares are elaborated in Section 4.3, by using
\emph{primitive systems of hyperboxes}. The primitive systems of hyperboxes
are generalizations of the \emph{basic systems of hyperboxes}
used in \cite{link_surgery}.  One can consult \cite{link_surgery}
for the full generality of link surgery formula with general Heegaard
diagrams. We assume that the reader is  familiar with Heegaard Floer
homology \cite{OS_HF1,OS_HF2,OS_Mixed_inv,OS_link_FLoer}.

Throughout, $\overrightarrow{L}=\overrightarrow{L_{1}}\cup\overrightarrow{L_{2}}$
will be an oriented link in $S^{3}$, and $\overrightarrow{M}$ will
denote an oriented sublink of $\overrightarrow{L}$ which may not
have the induced orientation from $\overrightarrow{L}$ on each component.

\subsection{Hyperboxes of chain complexes. }

\subsubsection{Hyperboxes of chain complexes.}
\begin{defn}[Hyperbox]
An $n$-dimensional \emph{hyperbox} of size ${\bf d}=(d_{1},...,d_{n})\in\mathbb{Z}_{\geq0}^{n}$
is the subset
\[
\mathbb{E}({\bf d})=\{(\varepsilon_{1},...,\varepsilon_{n})\in\mathbb{Z}_{\geq0}^{n}:0\leq\varepsilon_{i}\leq d_{i}\}.
\]
If $\mathbb{E}({\bf d})=\{0,1\}^{n},$ then $\mathbb{E}({\bf d})$
is called a \emph{hypercube}, denoted by $\mathbb{E}_{n}$.
\end{defn}

\begin{defn}[Hyperbox of chain complexes]
\label{defn:hyperbox of chain complexes}
Let $R$ be an $\mathbb{F}$-algebra.
An $n$-dimensional \emph{hyperbox of chain complexes} of size ${\bf d}\in\mathbb{Z}_{\geq0}^{n}$
is a collection of $\mathbb{Z}$-graded $R$-modules
\[
(C^{\varepsilon})_{\varepsilon\in\mathbb{E}(\text{{\bf d}})},C^{\varepsilon}=\underset{*\in\mathbb{Z}}{\bigoplus}C_{*}^{\varepsilon},
\]
together with a collection of $R$-linear maps
\[
D_{\varepsilon^{0}}^{\varepsilon}:C_{*}^{\varepsilon^{0}}\rightarrow C_{*-1+||\varepsilon||}^{\varepsilon^{0}+\varepsilon},
\]
one map for each $\varepsilon^{0}\in\mathbb{E}({\bf d})$ and $\varepsilon\in\mathbb{E}_{n}$
such that $\varepsilon^{0}+\varepsilon\in\mbox{\ensuremath{\mathbb{E}}}({\bf d})$.
The maps are required to satisfy the relations
\[
\sum_{\varepsilon'\leq\varepsilon}D_{\varepsilon^{0}+\varepsilon'}^{\varepsilon-\varepsilon'}\circ D_{\varepsilon^{0}}^{\varepsilon'}=0,
\]
for all $\varepsilon^{0}\in\mathbb{E}({\bf d}),\varepsilon\in\mathbb{E}_{n}$
such that $\varepsilon^{0}+\varepsilon\in\mathbb{E}({\bf d})$.
\end{defn}

By abuse of notation, we sometimes let $D^{\varepsilon}$ stand for
any of its map $D_{\varepsilon^{0}}^{\varepsilon}.$ Note that a hypercube
of chain complexes $H$ gives rise to a total complex of the hypercube
$\text{Tot}(H)$.

\begin{example}[1-dimensional hyperboxes]
\label{eg:dim-1 hyperbox}
A 1-dimensional hyperbox of chain complexes is a sequence of chain
complexes $C_{n}$, together with a sequence of chain maps $f_{n}:C^{(n-1)}\rightarrow C^{(n)}$.

\[
\xymatrix{C^{(0)}\ar[r]^{f_{1}} & C^{(1)}\ar[r]^{f_{2}} & C^{(2)}\ar[r]^{f_{3}} & \cdots\ar[r] & C^{(n-1)}\ar[r]^{f_{n}} & C^{(n)}.}
\]
The total complex of a 1-dimensional hypercube of chain complexes
can be regarded as a mapping cone. Therefore, we also call a 1-dimensional
hyperbox of chain complexes a \emph{sequence of chain complexes}.

\end{example}

\begin{example} [2-dimensional hyperboxes]
\label{eg:dim-2 hyperbox}
A \emph{square of chain complexes} is a 2-dimensional hypercube of
chain complexes:
\[
\begin{array}{cc}
\xyR{1.5pc}\xyC{3pc}\xymatrix{C^{(0,0)}\ar[r]^{D_{(0,0)}^{(1,0)}}\ar[d]_{D_{(0.0)}^{(0,1)}}\ar[dr]|-{D_{(0,0)}^{(1,1)}} & C^{(1,0)}\ar[d]^{D_{(1,0)}^{(0,1)}}\\
C^{(0,1)}\ar[r]_{D_{(0,1)}^{(1,0)}} & C^{(1,1)}
}
= & \xyR{1.5pc}\xyC{3pc}\xymatrix{C^{(0,0)}\ar[r]^{D^{(1,0)}}\ar[d]_{D^{(0,1)}}\ar[dr]|-{D^{(1,1)}} & C^{(1,0)}\ar[d]^{D^{(0,1)}}\\
C^{(0,1)}\ar[r]_{D^{(1,0)}} & C^{(1,1)}.
}
\end{array}
\]

Here $D^{(1,0)},D^{(0,1)}$ are chain maps, and $D^{(1,1)}$ is a
chain homotopy between $D^{(0,1)}\circ D^{(1,0)}$ and $D^{(1,0)}\circ D^{(0,1)}$.
We can regard the total complex of this square as the mapping cone
of

\[
\xyC{3pc}\xymatrix{\text{cone}(D_{(0,0)}^{(1,0)})\ar[rr]^{D_{(0,0)}^{(0,1)}+D_{(1,0)}^{(0,1)}+D_{(0,0)}^{(1,1)}} &  & \text{cone}(D_{(0,1)}^{(1,0)}).}
\]
A \emph{rectangle of chain complexes} is a 2-dimensional hyperbox
of chain complexes. It consists of squares of chain complexes. A rectangle
of chain complexes of size $(m,1)$ can also be regarded as a sequence
of mapping cones, i.e. a size $(m)$ 1-dimensional hyperbox of mapping
cones.
\end{example}

Let $R=(C,D)$ be a hyperbox of chain complexes of size $(d_{1},d_{2},\dots,d_{n})$.
Fixing $1\leq i\leq n$, for any integer $0\leq l\leq d_{i}$, we have a hyperbox $R^{\varepsilon_{i}=l}=(C^{\varepsilon_{i}=l},D^{\varepsilon_{i}=l})$
of size $(d_{1},...,d_{i-1},0,d_{i+1},...,d_{n})$, which consists of
 the chain complexes $C^{(\varepsilon_{1},...,\varepsilon_{n})}$ with $\varepsilon_{i}=l$.
The differentials $D^{\varepsilon_{i}=l}$ consist of all the
differentials $D_{\varepsilon^{0}}^{\varepsilon}$ of $(C,D)$ inside
$R^{\varepsilon_{i}=l}$.

\begin{rem}
In general, a hyperbox of chain complexes is not a chain complex.
But a hypercube is a chain complex considered as the total complex,
and it can also be regarded as a mapping cone in many ways.
\end{rem}

\subsubsection{Compression.}
From a hyperbox of chain complexes $H=((C^{\varepsilon})_{\varepsilon\in\mathbb{E}(\text{d})},(D^{\varepsilon})_{\varepsilon\in\mathbb{E}_{n}})$,
we can obtain a hypercube of chain complexes $\hat{H}=(\hat{C}^{\varepsilon},\hat{D}^{\varepsilon})_{\varepsilon\in\mathbb{E}_{n}}$,
thus generating a total complex $\text{Tot}(\hat{H})$. The process
of turning $H$ into $\hat{H}$ is called \emph{compression}.

\begin{example}[Compression of 1-dimensional hyperboxes]
Let $R$ be a hyperbox of dimension $1$,
see Example \ref{eg:dim-1 hyperbox}. The compression $\hat{R}$ is the
mapping cone of the composition of the maps $f_1,\dots,f_n$
\[
\xymatrix{C^{(0)}\ar[r]^{f_{n}\circ\cdots\circ f_{1}} & C^{(n)}.}
\]
\end{example}

\begin{example}[Compression of 2-dimensional hyperboxes]
Consider a rectangle of chain
complexes $R$ of size $(n,1)$:
\[
\xymatrix{C^{(0,0)}\ar[r]^{f_{1}}\ar[d]^{k_{0}}\ar[dr]^{H_{1}} & C^{(1,0)}\ar[r]^{f_{2}}\ar[d]^{k_{1}}\ar[dr]^{H_{2}} & C^{(2,0)}\ar[r]^{f_{3}}\ar[d]^{k_{2}}\ar[dr]^{H_{3}} & \cdots\ar[r]^{f_{n}}\ar[dr]^{H_{n}} & C^{(n,0)}\ar[d]^{k_{n}}\\
C^{(0,1)}\ar[r]^{g_{1}} & C^{(1,1)}\ar[r]^{g_{2}} & C^{(2,1)}\ar[r]^{g_{3}} & \cdots\ar[r]^{g_{n}} & C^{(n,1)}.
}
\]

As in \ref{eg:dim-2 hyperbox}, we can regard this rectangle as a 1-dimensional
hyperbox of mapping cones $\text{cone}(k_{i}),i=0,1,\ldots,n$.  The compression
of 1-dimensional hyperboxes induces the compression of rectangles
of chain complexes as follows
\[
\xyC{3pc}\xymatrix{C^{(0,0)}\ar[r]^{f_{n}\circ\cdots\circ f_{1}}\ar[d]_{k_{0}}\ar[dr]^{\hat{H}} & C^{(n,0)}\ar[d]^{k_{n}}\\
C^{(0,1)}\ar[r]_{g_{n}\circ\cdots\circ g_{1}} & C^{(n,1)},
}
\]
where
\[
\hat{H}=\sum_{i=1}^{n}f_{1}\circ\cdots\circ f_{i-1}\circ H_{i}\circ g_{i+1}\circ\cdots\circ g_{n}.
\]

Similarly, we can compress a rectangle of chain complexes of size
$(1,m)$. For a rectangle of chain complexes $R$ of size $(n,m)$,
we can decompose this rectangle into a union of $n$ vertical rectangles
of size $(1,m)$. We first compress all of these $n$ vertical rectangles,
and thus get a rectangle $R'$ of size $(n,1)$. Then we keep compressing
$R'$ and get a square of chain complexes $\hat{R}$. Alternatively,
we can also first compress every row, and then compress the
column. So every ordering of the coordinate axes gives a different way to
compress the rectangle.
\end{example}

For higher dimensional hyperbox, the compression is defined similarly
by induction, once we fix an order of the coordinate axes. Let us
describe this procedure using the language of composing \emph{chain maps of
hyperboxes}. One can check that it is the same as the compression
by means of the \emph{algebra of songs} introduced in \cite{link_surgery}.

Let ${^{0}{H}}=\bigl(({^{0}{C^{\varepsilon}}})_{\varepsilon\in\mathbb{E}({\bf d})},({^{0}{D^{\varepsilon}}})_{\varepsilon\in\mathbb{E}_{n}}\bigr),\ \ {^{1}{H}}=\bigl(({^{1}{C^{\varepsilon}}})_{\varepsilon\in\mathbb{E}({\bf d})},({^{1}{D^{\varepsilon}}})_{\varepsilon\in\mathbb{E}_{n}}\bigr)$
be two hyperboxes of chain complexes, having the same size ${\bf d}\in\mathbb{Z}_{\geq0}^{n}$.
Let $({\bf d},1)\in\mathbb{Z}_{\geq0}^{n+1}$ be the sequence obtained
from ${\bf d}$ by adding $1$ at the end.
\begin{defn}[Chain maps of hyperboxes]
A chain map $F:{^{0}{H}}\rightarrow{^{1}{H}}$
is a collection of linear maps
\[
F_{\varepsilon^{0}}^{\varepsilon}:{^{0}{C_{*}^{\varepsilon^{0}}}}\rightarrow{^{1}{C_{*+\Vert\varepsilon\Vert}^{\varepsilon^{0}+\varepsilon}}},
\]
satisfying
\[
\sum_{\varepsilon'\le\varepsilon}(D_{\varepsilon^{0}+\varepsilon'}^{\varepsilon-\varepsilon'}\circ F_{\varepsilon^{0}}^{\varepsilon'}+F_{\varepsilon^{0}+\varepsilon'}^{\varepsilon-\varepsilon'}\circ D_{\varepsilon^{0}}^{\varepsilon'})=0,
\]
for all $\varepsilon^{0}\in\mathbb{E}({\bf d}),\varepsilon\in\mathbb{E}_{n}$
such that $\varepsilon^{0}+\varepsilon\in\mathbb{E}({\bf d}).$
\end{defn}
In other words, a chain map between the hyperboxes ${^{0}{H}}$ and
${^{1}{H}}$ is an $(n+1)$-dimensional hyperbox of chain complexes,
of size $({\bf d},1)$, such that the sub-hyperbox corresponding to
$\varepsilon_{n+1}=0$ is ${^{0}{H}}$ and the one corresponding to
$\varepsilon_{n+1}=1$ is ${^{1}{H}}$. The maps $F$ are those maps
$D$ in the new hyperbox that increase $\varepsilon_{n+1}$ by 1.
Direct computations show the associativity $(F\circ G)\circ H=F\circ(G\circ H).$

For a $n$-dimensional hyperbox $H$ of size ${\bf d}=(d_{1},...,d_{n})$,
we fix an order of the axes, say, the increasing order $1,2,...,n$.
The hyperbox $H$ can be decomposed into
$d_{n}$ pieces of hyperboxes of size $(d_{1},...,d_{n-1},1)$, which
is a chain map $F_{i}:H^{\varepsilon_{n+1}=i-1}\rightarrow H^{\varepsilon_{n+1}=i}$.
Thus the composition $F_{d_{n}}\circ\cdots\circ F_{1}$ is a hyperbox
of size $(d_{1},...,d_{n-1},1)$, and we call it the compression along
the $n^{\text{th}}$-axis $\text{Comp}_{n}(H)$. If we keep doing
compressions for the other axes, then we get the compression $\hat{H}=\text{Comp}_{1}\circ\cdots\circ\text{Comp}_{n}(H).$

\subsubsection{Gluing of squares.}

In the link surgery formula, an algebraic operation occurs, which
we could call a twisted gluing of hypercubes. It consists in
 repeatedly  gluing mapping cones $A\xrightarrow{f}B,A\xrightarrow{g}B$
to get a new mapping cone $A\xrightarrow{f+g}B$. In this section we describe
this operation in detail for the case of two-component links. We call it
the \emph{twisted gluing of framed product squares}.

\begin{rem}
In Section 4.1.1, a hypercube of chain complexes requires a $\mathbb{Z}$-grading on it.
However, after gluing of hypercubes, it does not always admit a $\mathbb{Z}$-grading, but admits a $\mathbb{Z}/2\mathbb{Z}$-grading.
Now we only require a $\mathbb{Z}/2\mathbb{Z}$-grading on each chain complex sitting at
a vertex in the hypercube.
\end{rem}

\begin{defn}[Gluing of squares]
Suppose there are four squares of chain complexes
$R_{i,j}=(C_{i,j}^{\varepsilon},D_{i,j}^{\varepsilon}),i,j=0,1$ as
listed below,
\[
R_{i,j}:\xyR{1pc}\xyC{4pc}\xymatrix{C_{i,j}^{(0,0)}\ar[r]^{D_{i,j}^{(1,0)}}\ar[d]_{D_{i,j}^{(0,1)}}\ar[dr]|-{D_{i,j}^{(1,1)}} & C_{i,j}^{(1,0)}\ar[d]^{D_{i,j}^{(0,1)}}\\
C_{i,j}^{(0,1)}\ar[r]_{D_{i,j}^{(1,0)}} & C_{i,j}^{(1,1)}.
}
\]
 The squares $\{R_{i,j}\}_{i,j}$ are called \emph{gluable}, if $C_{0,0}^{\varepsilon}=C_{0,1}^{\varepsilon}=C_{1,0}^{\varepsilon}=C_{1,1}^{\varepsilon}$
for all $\varepsilon\in\mathbb{E}_{2}$ and $D_{i,0}^{(1,0)}=D_{i,1}^{(1,0)}=D_{i}^{(1,0)},D_{0,j}^{(0,1)}=D_{1,j}^{(0,1)}=D_{j}^{(0,1)}$
for all $i,j=0,1$. Then we can define $R=(C^{\varepsilon},D^{\varepsilon})$
to be the \emph{gluing} of $R_{i,j}$'s as below, where
we suppress the subscripts $i,j$ of $C_{i,j}^{\varepsilon}$ . One
can check that $R$ is a square of chain complexes.
\begin{equation*}
R:=\xyR{2pc}\xyC{5pc}\xymatrix{C^{(0,0)}\ar[r]^{D_{0}^{(1,0)}+D_{1}^{(1,0)}}\ar[d]_{D_{0}^{(0,1)}+D_{1}^{(0,1)}}\ar[dr]|-{\sum\limits_{i,j}D_{i,j}^{(1,1)}} & C^{(1,0)}\ar[d]^{D_{0}^{(0,1)}+D_{1}^{(0,1)}}\\
C^{(0,1)}\ar[r]_{D_{0}^{(1,0)}+D_{1}^{(1,0)}} & C^{(1,1)}.
}
\end{equation*}
\end{defn}

\begin{defn} [Framed product square]
A \emph{$\mathbb{Z}^{2}$-product square of chain complexes} is
a direct product of squares of chain complexes $$R=\prod_{\text{s}\in\mathbb{Z}^{2}}R_{\text{s}},$$
 where $R_{\text{s}}$ is a square of chain complexes for all $\text s\in \mathbb{Z}^2$. We call $\text s$ the \emph{coordinate} of any
 element $x\in R_{\text s}$. The function
 $$\mathcal{F}:R\to \mathbb{Z}^2,\quad \mathcal{F}(x)=\text s,\forall \text s \in R_{\text s},$$
 is called the \emph{framing} of $R$. In order to denote the framing, we write a \emph{framed product square} as a pair
 \[
(R,\mathcal{F})=\prod_{\text{s}\in\mathbb{Z}^{2}}(C_{\text{s}}^{\varepsilon},D_{\text s}^{\varepsilon}).
\]

We can shift the framing $\mathcal{F}$ by a set of vectors in $\mathbb{Z}^{2}$,
${\mathbf V}=\{v^{\varepsilon}\}_{\varepsilon\in\mathbb{E}_{2}}$, to get a new framing $\mathcal{F}^{\mathbf V},$
such that
$$\forall x\in C_{\text{s}}^{\varepsilon},\quad \mathcal{F}^{\mathbf V}(x)=\mathcal{F}(x)+v^{\varepsilon}.$$
We call the new framed product square $(R,\mathcal{F}^{\mathbf V})$  the \emph{shifted square of $R$ by $\mathbf V$},
and simply denote it by $R[{\bf V}]$. Thus, we can write $R[\mathbf V]=\prod_{\text s\in \mathbb{Z}^2}(\tilde{C}_{\text s}^{\varepsilon},\tilde{C}_{\text s}^{\varepsilon})$, where $\tilde{C}_{\text{s}+v^{\varepsilon}}^{\varepsilon}=C_{\text{s}}^{\varepsilon},\forall\varepsilon\in\mathbb{E}_{2},\forall\text{s}\in\mathbb{Z}^{2}.$
\end{defn}

\begin{defn}[Framed gluable]

Let $(R_{i,j},\mathcal{F}_{i,j})$ with $i,j=0,1$
be a set of framed product squares of chain complexes.
The set of four squares $\{R_{i,j}\}_{i,j}$ is called \emph{framed
gluable}, if $\{R_{i,j}\}_{i,j}$ are gluable as squares of chain complexes and all the framings
$\mathcal{F}_{i,j},\forall i,j$ are the same. Then, the result is called the \emph{framed gluing}
of $(R_{i,j},\mathcal{F}_{i,j})$'s.

\end{defn}

\begin{defn}[Twisted gluing]
Let $(R_{i,j},\mathcal{F}_{i,j})$ with $i,j=0,1$ be a set of
framed product squares of chain complexes. For any matrix
$\Lambda=(\Lambda_{1},\Lambda_{2})\in\mathbb{Z}^{2\times2}$,
let
\[
{\mathbf{V}}_{i,j}(\Lambda)=\{v^{\varepsilon}=\Lambda\cdot(i\varepsilon_{1},j\varepsilon_{2})^{t}\}_{\varepsilon=(\varepsilon_{1},\varepsilon_{2})\in\mathbb{E}_{2}}, \quad \forall i,j=0,1.
\]
Then there are four shifted squares $R_{i,j}[{\mathbf V}_{i,j}(\Lambda)]$, with $i,j=0,1$.
As long as these four shifted squares $\{R_{i,j}[{\mathbf V}_{i,j}(\Lambda)]\}_{i,j=0,1}$
are framed gluable, we define the \emph{$\Lambda$-twisted gluing
of $\{R_{i,j}\}_{i,j}$} to be the framed gluing of $\{R_{i,j}[{\mathbf V}_{i,j}(\Lambda)]\}_{i,j}$,
denoted by $R^{\Lambda}$. See Figure
\ref{twisted_gluing} for an example of twisted gluing.
\end{defn}

\begin{figure}

\centering

\includegraphics[scale=0.7]{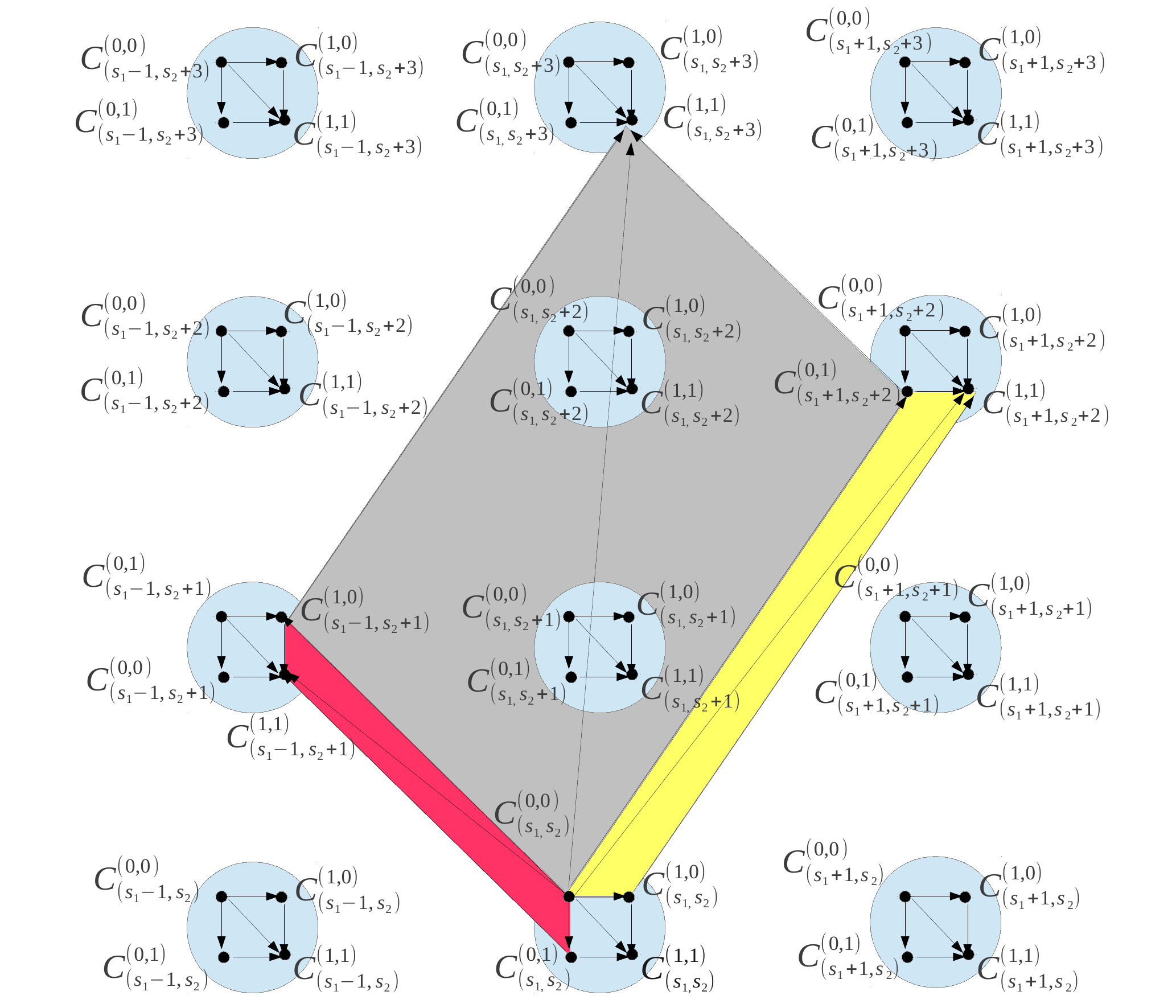}

\caption{\textbf{An example of twisted gluing}. This is an example of $\left(\protect\begin{array}{cc}
-1 & 1\protect\\
1 & 2
\protect\end{array}\right)$-twisted gluing of four squares $\{R_{i,j}=\prod_{\text{s}\in\mathbb{Z}^{2}}R_{\text{s},i,j}\}_{i,j=0,1}$,
where $R_{\text{s},i,j}=(C_{{\mathrm s},i,j}^{\varepsilon},D_{\text{s,}i,j}^{\varepsilon})$. Since $C_{{\mathrm s},i,j}^{\varepsilon}$ is identified with some $C_{{\mathrm s'},0,0}^{\varepsilon}$, we omit the subscripts $i,j$ in the picture.
Every shaded circle encloses a factor $R_{\text{s},0,0}$
of the $\mathbb{Z}^{2}$-product square $R_{0,0}$
with some $\text{s}\in\mathbb{Z}^{2}.$ The yellow parallelogram indicates
the $D$-maps of $R_{0,1}$, which is shifted by the vector
$\Lambda_{2}$; whereas the red parallelogram indicates the $D$-maps
of $R_{1,0}$, which is shifted by the vector $\Lambda_{1}.$
The gray parallelogram indicates all the maps of the square $R_{1,1}$,
which is shifted by using both $\Lambda_{1}$ and $\Lambda_{2}$.}

\label{twisted_gluing}
\end{figure}

\begin{example}[Twisted gluing in link surgery formula]
\label{(Twisted-gluing-in}
Suppose for any $(i,j)\in \mathbb{E}_2$,
$$R_{i,j}=\prod_{\text{s}\in\mathbb{Z}^{2}}R_{\text{s},i,j}$$
with $R_{\text{s},i,j}=(C_{\text{s},i,j}^{\varepsilon},D_{\text{s},i,j}^{\varepsilon})$
is a framed product square of chain complexes with the natural framing $\mathcal{F}_{i,j}(x)=\text{s},\forall x\in C_{\text{s},i,j}$.
Let $\overrightarrow{L}$ be a two-component link and $\mathrm{lk}$
be the linking number. Given any surgery framing matrix
\[
\Lambda=\left(\begin{array}{cc}
\lambda_{1} & \mathrm{lk}\\
\mathrm{lk} & \lambda_{2}
\end{array}\right),
\]
as long as the following identities hold for all $\mathrm{s}\in\mathbb{Z}^2$,
 \begin{equation}
\begin{aligned}
&C_{\text{s},0,0}^{(0,0)}=C_{\text{s},0,1}^{(0,0)}=C_{\text{s},1,0}^{(0,0)}=C_{\text{s},1,1}^{(0,0)},\\
&C_{\text{s},0,0}^{(1,0)}=C_{\text{s},0,1}^{(1,0)}=C_{\text{s}-\Lambda_{1},1,0}^{(1,0)}=C_{\text{s}-\Lambda_{1},1,1}^{(1,0)},\\
&C_{\text{s},0,0}^{(0,1)}=C_{\text{s}-\Lambda_{2},0,1}^{(0,1)}=C_{\text{s},1,0}^{(0,1)}=C_{\text{s}-\Lambda_{2},1,1}^{(0,1)},\\
&C_{\text{s},0,0}^{(1,1)}=C_{\text{s}-\Lambda_{2},0,1}^{(1,1)}=C_{\text{s}-\Lambda_{1},1,0}^{(1,1)}=C_{\text{s}-\Lambda_{1}-\Lambda_{2},1,1}^{(1,1)},
\end{aligned}
\label{eq:gluable}
\end{equation}
the shifted squares $\{R_{i,j}[{\mathbf V}_{i,j}(\Lambda)]\}_{i,j}$
are framed gluable. Thus we define the twisted gluing
of squares $R^{\Lambda}.$
\end{example}

\begin{rem}
The twisted glued square $R^{\Lambda}$ no longer
decomposes as a $\mathbb{Z}^{2}$ direct product. However, it decomposes
as a direct sum $\bigoplus_{\mathfrak{u}\in\mathbb{Z}^{2}/\Lambda}R^{\Lambda}(\mathfrak{u}),$
where $\Lambda$ is viewed as a lattice spanned by $\Lambda_{1},\Lambda_{2}.$
The equivalence classes $\mathbb{Z}^{2}/\Lambda$ correspond to the $\text{Spin}^{c}$
structures over the surgery manifold on a link in $S^{3}.$
\end{rem}

\subsection{Link surgery formula for a two-component link $\protect\overrightarrow{L}=\protect\overrightarrow{L_{1}}\cup\protect\overrightarrow{L_{2}}$.}

In order to denote the orientations of the sublinks, we use $\pm$ signs
to denote the positive and negative orientations, where the positive
orientation is the induced orientation from $\overrightarrow{L}$
and the negative orientation is the opposite orientation. Let
$L=+L_{1}\cup+L_{2}$, that is, $L$ has the same orientation as $\overrightarrow{L}$.

The link surgery formula is the total complex of a square of chain
complexes: the chain complexes at the vertices are the generalized Floer complexes
described in Section 2.2, and the maps in the square are defined by
means of \emph{complete systems of hyperboxes} (see section 4.3 for the definition).
From a complete system of hyperboxes of $L$, we get four sets of squares
of chain complexes $R_{\text{s},i,j}$, where $\text{s}\in\mathbb{H}(L),i,j\in\{0,1\}$:

\begin{equation}
R_{\text{s},0,0}:\xymatrix{\mathfrak{A}^{-}(\mathcal{H}^{L},\text{s})\ar[d]_{\Phi_{\text{s}}^{+L_{2}}}\ar[r]^{\Phi_{\text{s}}^{+L_{1}}}\ar[dr]|-{\Phi_{\text{s}}^{+L_{1}\cup+L_{2}}} & \mathfrak{A}^{-}(\mathcal{H}^{L_{2}},\psi^{+L_{1}}(\text{s}))\ar[d]^{\Phi_{\text{\ensuremath{\psi^{+L_{1}}}(\text{s})}}^{+L_{2}}}\\
\mathfrak{A}^{-}(\mathcal{H}^{L_{1}},\psi^{+L_{2}}(\text{s}))\ar[r]_{\Phi_{\text{\ensuremath{\psi^{+L_{2}}}(\text{s})}}^{+L_{1}}} & \mathfrak{A}^{-}(\mathcal{H}^{\emptyset},\psi^{+L_{1}\cup+L_{2}}(\text{s}));
}
\label{eq:surgery formula1}
\end{equation}

\begin{equation}
R_{\text{s},1,0}:\xymatrix{\mathfrak{A}^{-}(\mathcal{H}^{L},\text{s})\ar[d]_{\Phi_{\text{s}}^{+L_{2}}}\ar[r]^{\Phi_{\text{s}}^{-L_{1}}}\ar[dr]|-{\Phi_{\text{s}}^{-L_{1}\cup+L_{2}}} & \mathfrak{A}^{-}(\mathcal{H}^{L_{2}},\psi^{-L_{1}}(\text{s}))\ar[d]^{\Phi_{\text{\ensuremath{\psi^{-L_{1}}}(\text{s})}}^{+L_{2}}}\\
\mathfrak{A}^{-}(\mathcal{H}^{L_{1}},\psi^{+L_{2}}(\text{s}))\ar[r]_{\Phi_{\psi^{+L_{2}}(\text{s})}^{-L_{1}}} & \mathfrak{A}^{-}(\mathcal{H}^{\emptyset},\psi^{-L_{1}\cup+L_{2}}(\text{s}));
}
\end{equation}

\begin{equation}
R_{\text{s},0,1}:\xymatrix{\mathfrak{A}^{-}(\mathcal{H}^{L},\text{s})\ar[d]_{\Phi_{\text{s}}^{-L_{2}}}\ar[r]^{\Phi_{\text{s}}^{+L_{1}}}\ar[dr]|-{\Phi_{\text{s}}^{+L_{1}\cup-L_{2}}} & \mathfrak{A}^{-}(\mathcal{H}^{L_{2}},\psi^{+L_{1}}(\text{s}))\ar[d]^{\Phi_{\text{\ensuremath{\psi^{+L_{1}}}(\text{s})}}^{-L_{2}}}\\
\mathfrak{A}^{-}(\mathcal{H}^{L_{1}},\psi^{-L_{2}}(\text{s}))\ar[r]_{\Phi_{\text{\ensuremath{\psi^{-L_{2}}}(\text{s})}}^{+L_{1}}} & \mathfrak{A}^{-}(\mathcal{H}^{\emptyset},\psi^{+L_{1}\cup-L_{2}}(\text{s}));
}
\end{equation}

\begin{equation}
R_{\text{s},1,1}:\xymatrix{\mathfrak{A}^{-}(\mathcal{H}^{L},\text{s})\ar[d]_{\Phi_{\text{s}}^{-L_{2}}}\ar[r]^{\Phi_{\text{s}}^{-L_{1}}}\ar[dr]|-{\Phi_{\text{s}}^{-L_{1}\cup-L_{2}}} & \mathfrak{A}^{-}(\mathcal{H}^{L_{2}},\psi^{-L_{1}}(\text{s}))\ar[d]^{\Phi_{\psi^{-L_{1}}(\text{s})}^{-L_{2}}}\\
\mathfrak{A}^{-}(\mathcal{H}^{L_{1}},\psi^{-L_{2}}(\text{s}))\ar[r]_{\Phi_{\text{\ensuremath{\psi^{-L_{2}}}(\text{s})}}^{-L_{1}}} & \mathfrak{A}^{-}(\mathcal{H}^{\emptyset},\psi^{-L_{1}\cup-L_{2}}(\text{s})),
}
\label{eq:surgery formual4}
\end{equation}
where $\psi^{\overrightarrow{M}}$ is defined in Equation \eqref{eq:psi}.
Thus, we have four framed product squares with the natural framings
\[
R_{i,j}=\prod_{\text{s}\in\mathbb{H}(L)}R_{\text{s},i,j}.
\]

\begin{defn}[Link surgery formula]
For any surgery framing matrix $\Lambda$, the shifted squares $\{R_{i,j}[{\bf V}_{i,j}(\Lambda)]\}_{i,j}$
are framed gluable according to Equations \eqref{eq:gluable}. The link surgery
formula for the framed link $(L,\Lambda)$ is the total complex
of the $\Lambda$-twisted gluing of $\{R_{i,j}\}_{i,j}$ as follows,

\[
(\mathcal{C}^{-}(\mathcal{H},\Lambda),\mathcal{D}^{-}):=\xyR{1pc}\xyC{2pc}\xymatrix{\underset{\text{s}\in\mathbb{H}(L)}{\prod}\mathfrak{A}^{-}(\mathcal{H}^{L},\text{s})\ar[dd]_{\Phi^{+L_{2}}+\Phi^{-L_{2}}}\ar[rr]^{\Phi^{+L_{1}}+\Phi^{-L_{1}}}\ar[ddrr]|-{\begin{array}{c}
\Phi^{+L_{1}\cup-L_{2}}+\Phi^{-L_{1}\cup-L_{2}}\\
+\Phi{}^{+L_{1}\cup+L_{2}}+\Phi^{-L_{1}\cup+L_{2}}
\end{array}} &  & \underset{\text{s}\in\mathbb{H}(L)}{\prod}\mathfrak{A}^{-}(\mathcal{H}^{L_{2}},\psi^{+L_{1}}(\text{s}))\ar[dd]^{\Phi^{+L_{2}}+\Phi^{-L_{2}}}\\
\\
\underset{\text{s}\in\mathbb{H}(L)}{\prod}\mathfrak{A}^{-}(\mathcal{H}^{L_{1}},\psi^{+L_{2}}(\text{\text{s}}))\ar[rr]_{\Phi^{+L_{1}}+\Phi^{-L_{1}}\ } &  & \underset{\text{s}\in\mathbb{H}(L)}{\prod}\mathfrak{A}^{-}(\mathcal{H}^{\emptyset},\psi^{+L_{1}\cup+L_{2}}(\text{s})),
}
\]
where $\Phi^{\circ}=\prod_{\text{s}\in\mathbb{H}(L)}\Phi_{\text{s}}^{\circ}$ with
$\circ=\pm L_{1},\pm L_2,\pm L_{1}\cup\pm L_{2}.$

The map
\[
\Phi_{\text{s}}^{\overrightarrow{M}}:\mathfrak{A}^{-}(\mathcal{H}^{L},\text{s})\rightarrow\mathfrak{A}^{-}(\mathcal{H}^{L-M},\psi^{\overrightarrow{M}}(\text{s}))
\]
is defined by
\begin{equation}
\Phi_{\text{s}}^{\overrightarrow{M}}=D_{p^{\overrightarrow{M}}(\text{s})}^{\overrightarrow{M}}\circ\mathcal{I}_{\text{s}}^{\overrightarrow{M}}.\label{eq:phi}
\end{equation}
We will spell out the constructions of $D_{p^{\overrightarrow{M}}(\text{s})}^{\overrightarrow{M}}$
and $\mathcal{I}_{\text{s}}^{\overrightarrow{M}}$ in the next sections
by using \emph{primitive systems of hyperboxes}.
\end{defn}

For the $\Lambda$-twisted gluing of squares, there is a direct sum splitting of the complex
\[
\mathcal{C}^{-}(\mathcal{H},\Lambda)=\bigoplus_{\mathfrak{u}\in\mathbb{H}(L)/H(L,\Lambda)}\mathcal{C}^{-}(\mathcal{H},\Lambda,\mathfrak{u}),
\]
where we identify $\mathbb{H}(L)/H(L,\Lambda)$ with $\text{Spin}^{c}(S_{\Lambda}^{3}(L))$.

\begin{thm}[Manolescu-Ozsv\'{a}th Link Surgery Theorem, Theorem 7.7 of \cite{link_surgery} for two-component links]\label{(Manolescu-Ozsvth-Link-Surgery} Fix
a primitive system of hyperboxes $\mathcal{H}$ for an oriented two-component
link $\overrightarrow{L}$ in $S^{3}$, and fix a framing $\Lambda$
of $L$. Then for any $\mathfrak{u}\in\text{Spin}^{c}(S_{\Lambda}^{3}(L))\cong\mathbb{H}(L)/H(L,\Lambda)$,
there is an isomorphism of $\mathbb{F}[[U]]$-modules
\[
H_{*}(\mathcal{C}^{-}(\mathcal{H},\Lambda,\mathfrak{u}),\mathcal{D}^{-})\cong{\bf HF}_{*}^{-}(S_{\Lambda}^{3}(L),\mathfrak{u}),
\]
where $\mathbb{F}[[U]]=\mathbb{F}[[U_{1},U_{2}]]/(U_{1}-U_{2})$.
\end{thm}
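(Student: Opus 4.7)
The plan is to treat this theorem as a specialization of Theorem 7.7 of \cite{link_surgery} to the two-component case, using the observation that a primitive system of hyperboxes (defined in Section 4.3) is a particular case of the complete systems required by Manolescu--Ozsv\'ath. First I would verify the structural hypotheses: that the vertex chain complexes $\mathfrak{A}^{-}(\mathcal{H}^{M},\psi^{\overrightarrow{M}}(\mathrm{s}))$ are independent, up to quasi-isomorphism, of the Heegaard data in $\mathcal{H}$, that the inclusion maps $\mathcal{I}_{\mathrm{s}}^{\overrightarrow{M}}$ and destabilization maps $D_{\mathrm{s}}^{\overrightarrow{M}}$ assemble into the edge and diagonal maps via \eqref{eq:phi}, and that the resulting framed product squares $R_{i,j}$ satisfy the identifications \eqref{eq:gluable} so that the $\Lambda$-twisted gluing is well-defined.

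The heart of the proof, inherited from \cite{link_surgery}, is a two-step reduction. The base case is the Ozsv\'ath--Szab\'o integer surgery formula for knots \cite{knot_surgery}, which realizes $\mathbf{HF}^{-}(S^3_n(K),\mathfrak{u})$ as the homology of a mapping cone $\prod_{\mathrm{s}}\mathfrak{A}^{-}(K,\mathrm{s})\to\prod_{\mathrm{s}}\mathfrak{A}^{-}(\emptyset,\psi(\mathrm{s}))$ built from the knot's destabilization maps. For a two-component link one applies this principle twice, once per component: the surgery $S^3_{\Lambda}(\overrightarrow{L})$ is obtained from $S^3$ by two independent two-handle attachments, and compatibility of the two resulting mapping cones forces the structure of a square. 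The square's diagonal chain homotopy is precisely the higher $A_\infty$-correction produced by counting holomorphic quadrilaterals in the polygon maps of Section 2.3; the quadratic $A_\infty$-associativity \eqref{eq:A-infinity} then gives $(\mathcal{D}^-)^2=0$. Invariance of the total complex under the choices made in building the primitive system (strong-equivalence data, almost complex structures, polygon chain homotopies) follows by constructing chain maps between surgery complexes using one-higher-dimensional polygon counts, and showing that these give $\mathbb{F}[[U_1,U_2]]$-linear quasi-isomorphisms.

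The hard part, and the reason the statement in \cite{link_surgery} is so elaborate, is precisely this invariance argument: one must control polygon counts in an arbitrary complete system, and show that any two are related by a zigzag of elementary moves each inducing a quasi-isomorphism on the twisted-glued complex. In the two-component, primitive-system setting this is more manageable because the inductive ladder only has three sublinks to track ($L$, $L_1$, $L_2$, $\emptyset$) and polygons of at most four sides appear. Once invariance is in hand, one computes both sides using a convenient model Heegaard diagram (for instance a stabilization where the $\beta$-curves have been handle-slid to realize the surgery slopes geometrically) and reads off the isomorphism $H_*(\mathcal{C}^-(\mathcal{H},\Lambda,\mathfrak u),\mathcal{D}^-)\cong \mathbf{HF}^{-}_*(S^3_\Lambda(L),\mathfrak u)$ directly, summand by summand.

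Finally, the decomposition $\mathcal{C}^-(\mathcal{H},\Lambda)=\bigoplus_{\mathfrak u\in\mathbb H(L)/H(L,\Lambda)}\mathcal{C}^-(\mathcal{H},\Lambda,\mathfrak u)$ is a formal consequence of the definition of twisted gluing together with the identification $\mathbb H(L)/H(L,\Lambda)\cong\mathrm{Spin}^c(S^3_\Lambda(L))$ via the first Chern class of the relative $\mathrm{Spin}^c$ structure. The identification $\mathbb F[[U]]=\mathbb F[[U_1,U_2]]/(U_1-U_2)$ is natural: in the surgered manifold all basepoints become basepoints on the same closed $3$-manifold, so $U_1$ and $U_2$ both act as the canonical $U$-action on $\mathbf{HF}^-(S^3_\Lambda(L))$, and the identification is compatible with the gluing maps because the polygon-count exponents are consistent with total basepoint multiplicity.
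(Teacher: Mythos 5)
The paper does not prove this statement: it is quoted verbatim as Theorem 7.7 of \cite{link_surgery}, specialized to two-component links, and the only obligation the paper discharges is to check that the input it feeds into that theorem --- a \emph{primitive} system of hyperboxes --- really is a complete system of hyperboxes in the sense of Manolescu--Ozsv\'ath. That verification is the content of Section 4.3.3: a primitive almost complete system is built from a primitive basic diagram via the reductions $r_{\pm L_i}$ and the surface isotopies $h_i^{\mathcal H}$, and is then enlarged by inserting isotopic copies of the $\boldsymbol\beta$-curves (the finger-move argument) to achieve admissibility, yielding a genuine complete system. Your first paragraph correctly identifies this as the required step, so on the point that actually matters for the paper's logic your proposal agrees with the paper.

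Your second and third paragraphs, however, attempt to sketch the proof of the cited theorem itself, and there the route you describe diverges from (and in places misrepresents) the actual argument of \cite{link_surgery}. The proof there is not an iteration of the knot surgery mapping cone ``once per component,'' nor does it conclude by ``computing both sides on a convenient model diagram and reading off the isomorphism summand by summand.'' It proceeds by comparing the (truncated) surgery complex with the Floer complex of a large surgery on the link augmented by meridians, using holomorphic polygon counts to build the comparison quasi-isomorphism, together with a separate invariance argument for complete systems. Since the theorem is being imported rather than reproved, this inaccuracy does not open a gap in the paper; but if you intend your sketch to stand in for a proof, the step ``compatibility of the two resulting mapping cones forces the structure of a square'' and the final ``read off the isomorphism directly'' are exactly the places where the real work lives, and as written they would not survive scrutiny. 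The safer and more honest formulation is the one the paper adopts: cite Theorem 7.7 and prove only that primitive systems satisfy its hypotheses.
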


Here, we let $\bf{HF}^-$ denote the completion of $HF^-$ with respect to the maximal
ideal $(U)$ in the ring $\mathbb{F}[U]$. Since completion is an exact functor, $\bf{HF}^-$
can be regarded as the homology of the complex  ${\bf{CF}}^{-}=CF^{-}\otimes_{\mathbb{F}[ U ]}\mathbb{F}[[ U ]]$,
where $\mathbb{F}[[U]]$ is the completion of $\mathbb{F}[U]$. When $\text s$ is a torsion $\mathrm{Spin}^c$
structure of a 3-manifold $M$, if
\[
{\bf{HF}}^-(M,\text s)=\mathbb{F}[[U]]\oplus T
\]
with $T$ a torsion $\mathbb{F}[[U]]$-module, then
\[
HF^-(M,\text s)=\mathbb{F}[U]\oplus T.
\]
For more details, see Section 2 in \cite{link_surgery}.

\begin{rem}
The link surgery theorem states that all the $U_{i}$-actions are
the same in the homology of the surgery complex.
\end{rem}

\begin{rem}
Although all the squares in Equations \eqref{eq:surgery formula1} to \eqref{eq:surgery formual4} posses
 $\mathbb{Z}$-gradings, the surgery complex $\mathcal{C}^{-}(\mathcal{H},\Lambda)$ does not always have
a $\mathbb{Z}$-grading after the twisted gluing. In \cite{link_surgery}, an absolute grading was also given
 in Section 7.4, which is the same as the absolute grading on Heegaard Floer homology of the surgery manifold.
\end{rem}

\subsection{Inclusion maps and destabilization maps.}

\subsubsection{Inclusion maps.}

In the link surgery formula, we need a set of chain maps $\mathcal{I}_{\text{s}}^{\overrightarrow{M}}$
in (\ref{eq:phi}) which are called \emph{inclusion maps}. Here, we define the inclusion maps
for all links with arbitrary number of components. In the knot case, the inclusion maps correspond to
the maps $h_{s}$ and $v_{s}$ from \cite{knot_surgery}.

\begin{defn}
Let $\overrightarrow{M}$ be an oriented sublink of $\overrightarrow{L}$.
Define
\begin{align*}
I_{+}(\overrightarrow{L},\overrightarrow{M}) & =\{i:\overrightarrow{L}\text{ and }\overrightarrow{M}\text{ share the same orientation on }L_{i}\};\\
I_{-}(\overrightarrow{L},\overrightarrow{M}) & =\{i:\overrightarrow{L}\text{ and }\overrightarrow{M}\text{ have different orientations on }L_{i}\}.
\end{align*}
A projection map $p^{\overrightarrow{M}}:\overline{\mathbb{H}}(L)\rightarrow\overline{\mathbb{H}}(L)$
is defined component-wisely as follows:
\[
p_{i}^{\overrightarrow{M}}(s)=\begin{cases}
+\infty & \text{if}\ i\in I_{+}(\overrightarrow{L},\overrightarrow{M})\\
-\infty & {\bf \text{if}}\ i\in I_{-}(\overrightarrow{L},\overrightarrow{M})\\
s & \text{otherwise}
\end{cases}.
\]
\end{defn}

\begin{defn}[Inclusion maps]
Suppose $\overrightarrow{M}\subset\overrightarrow{L}$
is an oriented sublink, and $\text{s}=(s_{1},s_{2})\in\mathbb{H}(L)$
satisfies $s_{i}\neq\mp\infty$ for those $i\in I_{\pm}(\overrightarrow{L},\overrightarrow{M})$.
Let $\mathcal{H}$ be a Heegaard diagram of $L$. The  \emph{inclusion map}
$\mathcal{I}_{\text{s}}^{\overrightarrow{M}}:\mathfrak{A}^{-}(\mathcal{H}^{L},\text{s})\rightarrow\mathfrak{A}^{-}(\mathcal{H}^{L},p^{\overrightarrow{M}}(\text{s}))$
is defined by the formula
\[
\mathcal{I}_{\text{s}}^{\overrightarrow{M}}(x)={\displaystyle \underset{i\in I_{+}(\overrightarrow{L},\overrightarrow{M})}{{\displaystyle \prod}}U_{\tau_{i}}^{\max(A_{i}(x)-s_{i},0)}\underset{i\in I_{-}(\overrightarrow{L},\overrightarrow{M})}{{\displaystyle {\displaystyle \prod}}}U_{\tau_{i}}^{\max(s_{i}-A_{i}(x),0)}}x.
\]
One can verify this is a chain map.
\end{defn}

\begin{example}
Suppose $\overrightarrow{L}$ is a two-component link and $\mathcal H$ is a basic Heegaard diagram
 of $L$. We have the following inclusion maps
\begin{itemize}
\item
$\mathcal{I}_{s_{1},s_{2}}^{+L_{1}}:A_{s_{1},s_{2}}^{-}\rightarrow A_{+\infty,s_{2}}^{-},\quad
\mathcal{I}_{s_{1},s_{2}}^{-L_{1}}:A_{s_{1},s_{2}}^{-}\rightarrow A_{-\infty,s_{2}}^{-}.$

\item

$\mathcal{I}_{s_{1},s_{2}}^{+L_{2}}:A_{s_{1},s_{2}}^{-}\rightarrow A_{s_1,+\infty}^{-},\quad
\mathcal{I}_{s_{1},s_{2}}^{-L_{2}}:A_{s_{1},s_{2}}^{-}\rightarrow A_{s_1,-\infty}^{-}.$

\item
$\mathcal{I}_{s_{1},s_{2}}^{+L_{1}\cup+L_{2}}:A_{s_{1},s_{2}}^{-}\rightarrow A_{+\infty,+\infty}^{-},\quad
 \mathcal{I}_{s_{1},s_{2}}^{-L_{1}\cup+L_{2}}:A_{s_{1},s_{2}}^{-}\rightarrow A_{-\infty,+\infty}^{-},\quad\\
 \mathcal{I}_{s_{1},s_{2}}^{+L_{1}\cup-L_{2}}:A_{s_{1},s_{2}}^{-}\rightarrow A_{+\infty,-\infty}^{-},\quad
 \mathcal{I}_{s_{1},s_{2}}^{-L_{1}\cup-L_{2}}:A_{s_{1},s_{2}}^{-}\rightarrow A_{-\infty,-\infty}^{-}.$
\end{itemize}
\end{example}

\subsubsection{Destabilization maps.}

Let $\overrightarrow{L}=\overrightarrow{L_{1}}\cup\overrightarrow{L_{2}}.$
 We set
\[
J(+L_{i})=\{(s_{1},s_{2})\in\overline{\mathbb{H}}(L)|s_{i}=+\infty\},\quad J(-L_{i})=\{(s_{1},s_{2})\in\overline{\mathbb{H}}(L)|s_{i}=-\infty\}.
\]

Now let $\overrightarrow{M}\subset L$ be an oriented sublink, and let $\{\overrightarrow{M_{i}}\}_{i}$
be all the oriented components of $\overrightarrow{M}$. Define
\[
J(\overrightarrow{M})=\underset{i}{\bigcap}J(\overrightarrow{M_{i}}).
\]
For  $s\in J(\overrightarrow{M})$,
there is a \emph{destabilization map}
\[
D_{\text{s}}^{\overrightarrow{M}}:\mathfrak{A}^{-}(\mathcal{H}^{L},\text{s})\rightarrow\mathfrak{A}^{-}(\mathcal{H}^{L-M},\psi^{\overrightarrow{M}}(\text{s})),
\]
which gives rise to the map $D_{p^{\overrightarrow{M}}(\text{s})}^{\overrightarrow{M}}$
in (\ref{eq:phi}). Note that $p^{\overrightarrow{M}}(s)\in J(\overrightarrow{M})$ for
any $\text{s}\in\mathbb{H}(L)$. In the knot case, the destabilization map corresponds
to the map identifying $C\{i>0\}$ and $C\{j>0\}$. We will give
the definition in the next section.

\begin{example}
Let $\text{s}=(s_{1},s_{2})\in \mathbb{H}(L)$ and $\overrightarrow{M}=\pm L_{1},$
then $p^{\pm L_{1}}(\text{s})=(\pm\infty,s_{2}).$ The destabilization map
\[
D_{\pm\infty,s_{2}}^{\pm L_{1}}:\mathfrak{A}^{-}(\mathcal{H}^{L},\pm\infty,s_{2})\rightarrow\mathfrak{A}^{-}(\mathcal{H}^{L_{2}},s_{2}-\frac{\mathrm{lk}(+L_{1},\pm L_{2})}{2})
\]
is a chain homotopy equivalence.

If we consider sublinks $\overrightarrow{M}=\pm L_{1}\cup\pm L_{2}$,
then we will get destabilization maps from $\mathfrak{A}^{-}(\mathcal{H}^{L},\pm\infty,\pm\infty)$
to $\mathfrak{A}^{-}(\mathcal{H}^{\emptyset},0)$, namely,
\begin{eqnarray*}
D_{+\infty,+\infty}^{+L_{1}\cup+L_{2}}: & \mathfrak{A}^{-}(\mathcal{H}^{L},+\infty,+\infty)\rightarrow\mathfrak{A}^{-}(\mathcal{H}^{\emptyset},0),\\
D_{-\infty,+\infty}^{-L_{1}\cup+L_{2}}: & \mathfrak{A}^{-}(\mathcal{H}^{L},-\infty,+\infty)\rightarrow\mathfrak{A}^{-}(\mathcal{H}^{\emptyset},0),\\
D_{+\infty,-\infty}^{+L_{1}\cup-L_{2}}: & \mathfrak{A}^{-}(\mathcal{H}^{L},+\infty,-\infty)\rightarrow\mathfrak{A}^{-}(\mathcal{H}^{\emptyset},0),\\
D_{-\infty,-\infty}^{-L_{1}\cup-L_{2}}: & \mathfrak{A}^{-}(\mathcal{H}^{L},-\infty,-\infty)\rightarrow\mathfrak{A}^{-}(\mathcal{H}^{\emptyset},0).
\end{eqnarray*}
\end{example}

\subsubsection{Primitive system of hyperboxes.}

In \cite{link_surgery}, \emph{complete system of hyperboxes} is defined
in order to define the destabilization maps.

\begin{defn}[Complete pre-system of hyperboxes]
A \emph{complete pre-system of hyperboxes}
$\mathcal{H}$ representing the link $\overrightarrow{L}$ consists
of a collection of hyperboxes of Heegaard diagrams, subject to certain
compatibility conditions as follows. For each pair of subsets $M\subseteq L'\subseteq L$,
and each orientation $\overrightarrow{M}$ on $M$, the complete
pre-system assigns a hyperbox $\mathcal{H}^{\overrightarrow{L'},\overrightarrow{M}}$
for the pair $(\overrightarrow{L'},\overrightarrow{M})$, where $\overrightarrow{L'}$
has the induced orientation from $\overrightarrow{L}$. Moreover,
the hyperbox $\mathcal{H}^{\overrightarrow{L'},\overrightarrow{M}}$
is required to be compatible with both $\mathcal{H}^{\overrightarrow{L'},\overrightarrow{M'}}$
and $\mathcal{H}^{\overrightarrow{L'}-\overrightarrow{M'},\overrightarrow{M}-\overrightarrow{M'}}$.
\end{defn}

In the above definition, there is some compatibility condition we have not spelled out. A
 \emph{complete system of hyperboxes} is a complete pre-system with
some additional conditions regarding the surface isotopies connecting
those hyperboxes.  In a complete system of hyperboxes, every
hyperbox of Heegaard diagrams induces a hyperbox of generalized Floer complexes. Instead of
explaining these compatibility conditions, we give a special
complete system of hyperboxes for two-component links satisfying these conditions, which
illustrates the main idea. They are called \emph{primitive system of hyperboxes}.

When the sublink $\overrightarrow{L'}$ has the induced orientation
from $\overrightarrow{L}$, we simply denote it by $L'$. Thus,
we use notation $\mathcal{H}^{L',\overrightarrow{M}}=\mathcal{H}^{\overrightarrow{L'},\overrightarrow{M}}$.
In a complete pre-system $\mathcal{H}$, we have four zero dimensional hyperboxes of Heegaard diagrams, $\mathcal{H}^{L,\emptyset},\mathcal{H}^{L_{1},\emptyset},\mathcal{H}^{L_{2},\emptyset},\mathcal{H}^{\emptyset,\emptyset}$,
where $\mathcal{H}^{L,\emptyset}$ is a Heegaard diagram of $L$,
$\mathcal{H}^{L_{i},\emptyset}$ is a Heegaard diagram of $L_{i}$,
and $\mathcal{H}^{\emptyset,\emptyset}$ is a Heegaard diagram of
$S^{3}$. We denote the four Heegaard diagrams simply by $\mathcal{H}^{L},\mathcal{H}^{L_{1}},\mathcal{H}^{L_{2}},\mathcal{H}^{\emptyset}.$

Given a basic Heegaard diagram $\mathcal{H}^{L}=(\Sigma,\boldsymbol{\alpha},\boldsymbol{\beta},\{w_{1},w_{2}\},\{z_{1},z_{2}\})$
of $L$,
from Equation (\ref{eq:A_s}), we see $\mathfrak{A^{-}}(\mathcal{H}^{L},(+\infty,s_{2}))$
is counting $n_{w_{1}}(\phi)$ without using $z_{1}$, thus as the
same as deleting $z_{1}$. Moreover $\mathcal{H}^{L_{2}}=(\Sigma,\boldsymbol{\alpha},\boldsymbol{\beta},\{w_{1},w_{2}\},\{z_{2}\})$
is a Heegaard diagram of $L_{2}$ with one free basepoint
$w_{1}$. We call this diagram the \emph{reduction} of $\mathcal{H}^{L}$
at $+L_{1}$, denoted by $r_{+L_{1}}(\mathcal{H}^{L})$; see \cite{link_surgery}
Definition 4.17. Hence, we have an identification between $\mathfrak{A}^{-}(\mathcal{H}^{L},(+\infty,s_{2}))$
and $\mathfrak{A}^{-}(r_{+L_{1}}(\mathcal{H}^{L}),s_{2}-\frac{\mathrm{lk}(+L_{1},+L_{2})}{2})$.
Similarly, we define $r_{-L_{1}}(\mathcal{H}^{L})$ to be the diagram
obtained from $\mathcal{H}^{L}$ by deleting $w_{1}$ and relabeling
$z_{1}$ as $w_{1}$. We have an identification between $\mathfrak{A}^{-}(\mathcal{H}^{L},(-\infty,s_{2}))$
and $\mathfrak{A}^{-}(r_{-L_{1}}(\mathcal{H}^{L}),s_{2}-\frac{\mathrm{lk}(-L_{1},+L_{2})}{2})$,
since $\mathfrak{A}^{-}(\mathcal{H}^{L},(-\infty,s_{2}))$ uses basepoints
$\{z_{1},w_{2}\}\subset\mathcal{H}^{L}$.

Moreover, the diagrams $r_{-L_{1}}(\mathcal{H}^{L})$ and $r_{+L_{1}}(\mathcal{H}^{L})$
are related by Heegaard moves, for they represent the same knot
$L_{2}$. By definition, there is an arc $c$ in $\Sigma-\boldsymbol{\alpha}$
connecting $w_{1}$ and $z_{1}$, so we can move $z_{1}$ along $c$
to $w_{1}$, by a sequence of Heegaard moves. Moving a basepoint to
cross some $\beta$-curve can be done by a sequence of handleslides
and isotopies of $\beta$-curves, stabilizations, and destabilization
followed by a surface isotopy. However, if we need stabilizations/destabilizations,
we can modify the original Heegaard diagram $\mathcal{H}^{L}$ by
these stabilizations in the beginning. Thus, we can always get a diagram
$\tilde{\mathcal{H}}^{L}$, such that there is a sequence of Heegaard
moves only of handleslides and isotopies of $\beta$-curves together
with some surface isotopy from $r_{-L_{1}}(\mathcal{\tilde{H}}^{L})$
to $r_{+L_{1}}(\tilde{\mathcal{H}}^{L})$. In sum, there is an surface
isotopy $h:\Sigma\rightarrow\Sigma$ supported in a small neighborhood
of $c$ (so $h$ fixes other basepoints and all the $\alpha$-curves),
such that $h(w_{1})=z_{1}$ and $h(r_{+L_{1}}(\tilde{\mathcal{H}}^{L}))$
is strongly equivalent to $r_{-L_{1}}(\tilde{\mathcal{H}}^{L})$ via
handleslides and isotopies of $\beta$-curves.

\begin{defn}[Primitive Heegaard diagrams]
For any basic Heegaard diagram $\mathcal{H}$
of an oriented link $\overrightarrow{L}=\overrightarrow{L_{1}}\cup\overrightarrow{L_{2}}$,
there are surface isotopies $h_{i}^{\mathcal{H}}:\Sigma\rightarrow\Sigma$
supported in a small neighborhood of the arc $c_{i}$ connecting $w_{i}$
and $z_{i}$ in $\Sigma-\boldsymbol{\alpha}$, such that $h_{i}^{\mathcal{H}}(w_{i})=z_{i}$
and $h_{i}^{\mathcal{H}}$ preserves the $\alpha$-curves and the
other basepoints. They are unique up to isotopy. The basic Heegaard
diagram $\mathcal{H}$ is called \emph{primitive}, if it is admissible and $r_{-L_{i}}(\mathcal{H})$
is strongly equivalent to $h_{i}^{\mathcal{H}}(r_{+L_{i}}(\mathcal{H}))$
for both $i=1,2$.
\end{defn}
From the above discussion, we can get the following lemma.

\begin{lem}
\label{lem:primitive diagram}Let $L$ be an oriented two-component
link, and let $\mathcal{H}$ be a basic admissible Heegaard diagram of $L$. Then there
is an index one/two stabilization turning $\mathcal{H}$ into a primitive Heegaard
diagram  $\mathcal{\tilde{H}}.$
\end{lem}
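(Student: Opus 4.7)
The plan is to make rigorous the informal discussion given in the paragraph immediately preceding the lemma. The crucial observation is that, for each $i\in\{1,2\}$, the two diagrams $r_{-L_i}(\mathcal{H})$ and $h_i^{\mathcal{H}}(r_{+L_i}(\mathcal{H}))$ are both multi-pointed Heegaard diagrams for the same pointed 3-manifold $(S^3, \overrightarrow{L_{3-i}})$ with one free basepoint: they share the same Heegaard surface $\Sigma$, the same $\alpha$-curves (since $h_i^{\mathcal{H}}$ fixes $\boldsymbol{\alpha}$), and the same set of basepoints $\{w_{3-i}, z_{3-i}, z_i\}$ (since $h_i^{\mathcal{H}}(w_i) = z_i$ and $h_i^{\mathcal{H}}$ fixes the other basepoints). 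The only possible difference is their $\beta$-curves.

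First, I would invoke the multi-pointed Heegaard moves theorem as formulated in \cite{link_surgery} Section 4: with the Heegaard surface, the $\alpha$-curves and all basepoints fixed, the $\beta$-curves of the two diagrams are related by a finite sequence of $\beta$-isotopies supported away from the basepoints, $\beta$-handleslides, and (de)stabilizations. If no (de)stabilizations appear in the sequence, then $r_{-L_i}(\mathcal{H})$ is already strongly equivalent to $h_i^{\mathcal{H}}(r_{+L_i}(\mathcal{H}))$ and we may take $\tilde{\mathcal{H}} = \mathcal{H}$.

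Next I would handle the general case by rearranging the move sequence. A standard fact about Heegaard move sequences is that one can always push all stabilizations to the beginning and all destabilizations to the end, giving a sequence of the form: stabilizations, then handleslides/isotopies, then destabilizations. After this rearrangement, the destabilizations at the end are cancelable against matching stabilizations by performing them in advance on $\mathcal{H}$. Concretely, for each $i=1,2$, let $n_i$ be the number of destabilizations in the rearranged sequence going from $r_{-L_i}(\mathcal{H})$ to $h_i^{\mathcal{H}}(r_{+L_i}(\mathcal{H}))$. I would perform the corresponding index-1/2 stabilizations on $\mathcal{H}$ itself, in small disks disjoint from all basepoints, all $\alpha$- and $\beta$-curves, and the arcs $c_1, c_2$. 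Since each such stabilization adds a small handle with a canceling $(\alpha',\beta')$ pair, it modifies both reductions $r_{+L_i}$ and $r_{-L_i}$ in exactly the same way; after performing enough of them simultaneously for both $i=1$ and $i=2$, the resulting diagram $\tilde{\mathcal{H}}$ has the property that, for each $i$, the move sequence from $r_{-L_i}(\tilde{\mathcal{H}})$ to $h_i^{\tilde{\mathcal{H}}}(r_{+L_i}(\tilde{\mathcal{H}}))$ involves only $\beta$-handleslides and $\beta$-isotopies, that is, these two diagrams are strongly equivalent.

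The main obstacle will be ensuring that the stabilizations added for the $i=1$ reduction are compatible with those added for $i=2$, and that the resulting diagram $\tilde{\mathcal{H}}$ remains basic and admissible. Compatibility is straightforward because the stabilizations are supported in tiny, disjoint disks on $\Sigma$ which can be chosen generically, so they do not interfere with the move sequences needed for the other component. The basic property is preserved automatically, since stabilizations do not introduce or delete link basepoints $w_i, z_i$. Admissibility can then be restored, if necessary, by further small isotopies of the newly added $\beta$-curves (which do not affect strong equivalence). This yields the desired primitive diagram $\tilde{\mathcal{H}}$.
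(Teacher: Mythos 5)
Your overall strategy --- identify the two reductions as diagrams of the same pointed manifold and perform whatever stabilizations are needed on $\mathcal{H}$ in advance, so that afterwards only $\beta$-handleslides and $\beta$-isotopies remain --- is the same as the paper's. The gap is in your first step, where you invoke "the multi-pointed Heegaard moves theorem" to conclude that $\boldsymbol{\beta}$ and $h_i^{\mathcal{H}}(\boldsymbol{\beta})$ are related by $\beta$-isotopies, $\beta$-handleslides and index one/two (de)stabilizations alone. The general uniqueness theorem does not say this: as formulated in \cite{link_surgery} Section 4 it relates two diagrams of the same pointed link complement by moves acting on \emph{both} attaching-curve systems, and in the multi-pointed setting it also allows index zero/three (de)stabilizations, which create or destroy free basepoints and would ruin the fixed basepoint set $\{z_i, w_{3-i}, z_{3-i}\}$ your comparison relies on. The statement you actually need --- that two diagrams with literally identical $\Sigma$, $\boldsymbol{\alpha}$ and basepoints are related by $\beta$-moves and index one/two stabilizations only --- is true, but it is precisely the content that has to be produced rather than quoted.

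The paper produces it by a local construction that your proof skips: the two reductions differ by moving the basepoint $z_i$ along the arc $c_i \subset \Sigma - \boldsymbol{\alpha}$ to $w_i$, and each time $c_i$ crosses a $\beta$-curve the basepoint is pushed across by an explicit modification of that single $\beta$-curve, realized by $\beta$-handleslides, $\beta$-isotopies and (where necessary) index one/two stabilizations, followed at the end by the surface isotopy $h_i^{\mathcal{H}}$ supported near $c_i$. Because $c_i$ lies in $\Sigma - \boldsymbol{\alpha}$, the $\alpha$-curves are never touched and no index zero/three moves arise. Once this is in place, your remaining steps (performing the finitely many stabilizations on $\mathcal{H}$ at the outset in small disks disjoint from everything, handling $i=1,2$ independently, and restoring admissibility by isotopies of the new curves) match the paper's argument. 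Your "push all stabilizations to the front" rearrangement is likewise asserted rather than proved, but it becomes unnecessary once the moves are constructed explicitly as above: one simply reads off which stabilizations occur and performs them on $\mathcal{H}$ first.
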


Fixing a primitive Heegaard diagram $\mathcal{H}^{L}$ for an oriented two-component
 link $L$, we can get two sequences of strongly equivalent Heegaard diagrams
$\bar{\mathcal{H}}^{L,-L_{i}}$:
\begin{align}\label{eq:H^L,-L_1}
\bar{\mathcal{H}}^{L,-L_{1}}:r_{-L_{1}}(\mathcal{H}^{L})=\bar{\mathcal{H}}^{L,-L_{1}}(\emptyset)\rightarrow  \bar{\mathcal{H}}^{L,-L_{1}}(L_{1})=h_{1}^{\mathcal{H}}(r_{+L_{1}}(\mathcal{H}^{L})),\\
\bar{\mathcal{H}}^{L,-L_{2}}:r_{-L_{2}}(\mathcal{H}^{L})=\bar{\mathcal{H}}^{L,-L_{2}}(\emptyset)\rightarrow  \bar{\mathcal{H}}^{L,-L_{2}}(L_{2})=h_{2}^{\mathcal{H}}(r_{+L_{2}}(\mathcal{H}^{L})).
\end{align}

These induce another two sequences of strongly equivalent Heegaard
diagrams $\bar{\mathcal{H}}^{L_{i},-L_{i}}$:
\begin{align}
\bar{\mathcal{H}}^{L_{1},-L_{1}}:r_{-L_{1}}\big(r_{+L_{2}}(\mathcal{H}^{L})\big)=\bar{\mathcal{H}}^{L_{1},-L_{1}}(\emptyset)\rightarrow \bar{\mathcal{H}}^{L_{1},-L_{1}}(L_{1})=h_{1}^{\mathcal{H}}\big(r_{+L_{1}}(r_{+L_{2}}(\mathcal{H}^{L}))\big),
\\
\bar{\mathcal{H}}^{L_{2},-L_{2}}:r_{-L_{2}}\big(r_{+L_{1}}(\mathcal{H}^{L})\big)=\bar{\mathcal{H}}^{L_{2},-L_{2}}(\emptyset)\rightarrow  \bar{\mathcal{H}}^{L_{2},-L_{2}}(L_{2})=h_{2}^{\mathcal{H}}(r_{+L_{2}}\big(r_{+L_{1}}(\mathcal{H}^{L}))\big),
\label{eq:H^L_1,-L_1}
\end{align}
together with a square of strongly equivalent Heegaard diagrams $\bar{\mathcal{H}}^{L,-L_{1}\cup-L_{2}}:$

\begin{equation}
\label{eq:H^L,-L}
\xymatrix{r_{-L_{1}\cup-L_{2}}(\mathcal{H})= \bar{\mathcal{H}}^{L_{1}\cup L_{2},-L_{1}\cup-L_{2}}(\emptyset)\ar[r]\ar[d]\ar[rd] &  \bar{\mathcal{H}}^{L_{1}\cup L_{2},+L_{1}\cup-L_{2}}(L_{1})=h_{1}^{\mathcal{H}}(r_{+L_{1}\cup-L_{2}}(\mathcal{H})) \ar[d]\\
h_{2}^{\mathcal{H}}(r_{-L_{1}\cup+L_{2}}(\mathcal{H}))=\bar{\mathcal{H}}^{L_{1}\cup L_{2},-L_{1}\cup-L_{2}}(L_{2})  \ar[r] & \bar{\mathcal{H}}^{L_{1}\cup L_{2},-L_{1}\cup-L_{2}}(L) =h_{1}^{\mathcal{H}}\circ h_{2}^{\mathcal{H}}(r_{+L_{1}\cup+L_{2}}(\mathcal{H})).
}
\end{equation}

These almost produce a complete system of hyperbox $\bar{\mathcal{H}}$
except for the admissibility of $\bar{\mathcal{H}}$. We call this
system a \emph{primitive almost complete system of hyperbox}.

\begin{defn}[Primitive almost complete system of hyperbox]
Given a primitive Heegaard diagram $\mathcal{H}^{L}=(\Sigma,\boldsymbol{\alpha},\boldsymbol{\beta},\{w_{1},w_{2}\},\{z_{1},z_{2}\})$
of $\overrightarrow{L}=\overrightarrow{L_{1}}\cup\overrightarrow{L_{2}}$,
there exists a \emph{primitive almost complete system of hyperbox}
$\bar{\mathcal{H}}$ associated to $\mathcal{H}^{L}$ consisting of

\begin{itemize}
\item four 0-dimensional hyperboxes of Heegaard diagrams:
\[
\bar{\mathcal{H}}^{L}=\mathcal{H},\ \mathcal{\bar{H}}^{L_{1}}=r_{+L_{2}}(\mathcal{H}),\ \mathcal{\bar{H}}^{L_{2}}=r_{+L_{1}}(\mathcal{H}),\ \mathcal{\bar{H}}^{\emptyset}=r_{+L_{1}\cup+L_{2}}(\mathcal{H});
\]

\item eight 1-dimensional hyperboxes of Heegaard diagrams:
\[
\mathcal{\bar{H}}^{L,\pm L_{i}},\quad \bar{\mathcal{H}}^{L_{i},\pm L_{i}},\forall i=1,2,
\]
where $\mathcal{\bar{\mathcal{H}}}^{L,+L_{i}},\bar{\mathcal{H}}^{L_{i},+L_{i}}$
are trivial hyperboxes, i.e. just a Heegaard diagram, and $\bar{\mathcal{H}}^{L,-L_{i}},$$\bar{\mathcal{H}}^{L_{i},-L_{i}}$
are described in Equations from \eqref{eq:H^L,-L_1} to \eqref{eq:H^L_1,-L_1};

\item four 2-dimensional hyperboxes of Heegaard diagrams: one trivial
hyperbox $\bar{\mathcal{H}}^{L,+L_{1}\cup+L_{2}}$, two degenerate
hyperboxes:
\[
\bar{\mathcal{H}}^{L,+L_{1}\cup-L_{2}}=\bar{\mathcal{H}}^{L_{2},-L_{2}},\ \ \bar{\mathcal{H}}^{L,-L_{1}\cup+L_{2}}=\bar{\mathcal{H}}^{L_{1},-L_{1}},
\]
and a square of strongly equivalent Heegaard diagrams $\mathcal{H}^{L,-L_{1}\cup-L_{2}}$,
which is described in Equation (\ref{eq:H^L,-L}). \end{itemize}\end{defn}

\begin{defn}[Primitive system of hyperboxes]
Given a primitive diagram $\mathcal{H}^{L}$ and the induced primitive
almost complete systems of hyperboxes $\mathcal{\bar{H}}$, if the
admissibility of $\mathcal{\bar{H}}$ is not satisfied, we can enlarge
the hyperbox in $\mathcal{\bar{H}}$  to  achieve admissibility, thus
getting a complete system of hyperboxes. We call the result a \emph{primitive
system of hyperboxes}  $\mathcal{H}$.
\end{defn}

Indeed, if $\bar{\mathcal{H}}^{L,-L_{1}}$
is not admissible,  i.e. $(\Sigma,\boldsymbol{\alpha},\boldsymbol{\beta},\boldsymbol{\beta}',{\bf w},{\bf z})$
is not admissible,  then we can insert an isotopy of $\boldsymbol{\beta}$, namely  $\boldsymbol{\beta}''$,  such that both $(\Sigma,\boldsymbol{\alpha},\boldsymbol{\beta},\boldsymbol{\beta}'',{\bf w},{\bf z})$ and $(\Sigma,\boldsymbol{\alpha},\boldsymbol{\beta}'',\boldsymbol{\beta}',{\bf w},{\bf z})$ are admissible.
Suppose $\{D_1,\dots,D_{m}\}$ is a basis of the $\mathbb{Q}$-vector space of the periodic domains in $(\Sigma,\boldsymbol{\alpha},\boldsymbol{\beta},\boldsymbol{\beta}',{\bf w},{\bf z})$
with only positive multiplicities . Let $D_1^{c}$ be the union of
all the regions which are not in $D_1$. Then $D_1^{c}\neq\emptyset$, since $n_{\bf w}D_1=0$.   As  $(\Sigma,\boldsymbol{\alpha},\boldsymbol{\beta},{\bf w},{\bf z})$ and $(\Sigma,\boldsymbol{\alpha},\boldsymbol{\beta}',{\bf w},{\bf z})$
 are both admissible, the boundary of $D_1$ must contain a $\beta$-curve and a ${\beta}'$-curve. Thus there is a $\beta$-arc
$b$ and a ${\beta}'$-arc $b'$ on $D_1\cap D_1^{c}$. So we can find a path
$\gamma$ in $D_1$ connecting $b$ to $b'$,
and then do a finger move
of the $\beta$-curve containing $b$ along $\gamma$ to
get negative multiplicities for $D_1$ (see \cite{Sakar-Wang} for the definition of finger move). Similarly we deal with the other $D_i$'s. Finally, the new $\boldsymbol{\beta}$ in the above process is chosen to be $\boldsymbol{\beta}''$. Similar arguments work for the case
of the square $\bar{\mathcal{H}}^{L,-L_{1}\cup-L_{2}}.$

Therefore to achieve admissibility, we can enlarge the square of Heegaard
diagram $\mathcal{\bar{H}}^{L,-L_{1}\cup-L_{2}}:$
\[
\xyR{1pc}\xyC{1pc}\xymatrix{(\Sigma,\boldsymbol{\alpha},\boldsymbol{\beta}_{11},{\bf w},{\bf z})\ar[r]\ar[d]\ar[rd] & (\Sigma,\boldsymbol{\alpha},\boldsymbol{\beta}_{13},{\bf w},{\bf z})\ar[d]\\
(\Sigma,\boldsymbol{\alpha},\boldsymbol{\beta}_{31},{\bf w},{\bf z})\ar[r] & (\Sigma,\boldsymbol{\alpha},\boldsymbol{\beta}_{33},{\bf w},{\bf z})
}
\]
into $\mathcal{H}^{L,-L_{1}\cup-L_{2}}:$
\[
\xyR{1pc}\xyC{1pc}\xymatrix{(\Sigma,\boldsymbol{\alpha},\boldsymbol{\beta}_{11},{\bf w},{\bf z})\ar[r]\ar[d]\ar[rd] & (\Sigma,\boldsymbol{\alpha},\boldsymbol{\beta}_{12},{\bf w},{\bf z})\ar[r]\ar[d]\ar[rd] & (\Sigma,\boldsymbol{\alpha},\boldsymbol{\beta}_{13},{\bf w},{\bf z})\ar[d]\\
(\Sigma,\boldsymbol{\alpha},\boldsymbol{\beta}_{21},{\bf w},{\bf z})\ar[r]\ar[d]\ar[rd] & (\Sigma,\boldsymbol{\alpha},\boldsymbol{\beta}_{22},{\bf w},{\bf z})\ar[r]\ar[d]\ar[rd] & (\Sigma,\boldsymbol{\alpha},\boldsymbol{\beta}_{23},{\bf w},{\bf z})\ar[d]\\
(\Sigma,\boldsymbol{\alpha},\boldsymbol{\beta}_{31},{\bf w},{\bf z})\ar[r] & (\Sigma,\boldsymbol{\alpha},\boldsymbol{\beta}_{32},{\bf w},{\bf z})\ar[r] & (\Sigma,\boldsymbol{\alpha},\boldsymbol{\beta}_{33},{\bf w},{\bf z}).
}
\]

In order to send every hyperbox of Heegaard diagrams $\mathcal{H}^{\overrightarrow{L},\overrightarrow{M}}$
to a hyperbox of chain complexes $\mathfrak{A}^{-}(\mathcal{H}^{\overrightarrow{L},\overrightarrow{M}},\text{s})$,
we need a set of $\Theta$ chain elements. We call the choice of these $\Theta$-elements a \emph{filling} of the
hyperboxes Heegaard diagrams. Let us explain $\Theta$-elements case by case.

For $\mathcal{H}^{L,-L_{1}}$, we have a sequence of strongly equivalent
Heegaard diagrams of $\overrightarrow{L}-L_{1}$,
\[
\mathcal{H}^{L,-L_{1}}:(\Sigma,\boldsymbol{\alpha},\boldsymbol{\beta}_{1},{\bf w},{\bf z})\rightarrow(\Sigma,\boldsymbol{\alpha},\boldsymbol{\beta}_{2},{\bf w},{\bf z})\rightarrow(\Sigma,\boldsymbol{\alpha},\boldsymbol{\beta}_{3},{\bf w},{\bf z}).
\]
We choose a cycle element $\Theta_{\beta_{1},\beta_{2}}$ representing
the maximal degree element in the homology of $\mathfrak{A}^{-}(\mathbb{T}_{\beta_{1}},\mathbb{T}_{\beta_{2}},0)$.
Then we define a chain homotopy equivalence $D_{\beta_{1},\beta_{2}}:\mathfrak{A}^{-}(\mathbb{T_{\alpha}},\mathbb{T}_{\beta_{1}},s)\rightarrow\mathfrak{A}^{-}(\mathbb{T}_{\alpha},\mathbb{T}_{\beta_{2}},s)$
by using triangle maps $D_{\beta_{1},\beta_{2}}({\bf x})=f_{\alpha\beta_{1}\beta_{2}}({\bf x}\otimes\Theta_{\beta_{1},\beta_{2}}).$
Similarly, we get a chain homotopy equivalence $D_{\beta_{2},\beta_{3}}:\mathfrak{A}^{-}(\mathbb{T_{\alpha}},\mathbb{T}_{\beta_{2}},s)\rightarrow\mathfrak{A}^{-}(\mathbb{T}_{\alpha},\mathbb{T}_{\beta_{3}},s)$
by choosing $\Theta_{\beta_{2},\beta_{3}}\in\mathfrak{A}^{-}(\mathbb{T}_{\beta_{2}},\mathbb{T}_{\beta_{3}},0)$.
Thus, $D^{-L_{1}}=D_{\beta_{2}\beta_{3}}\circ D_{\beta_{1}\beta_{2}}$
is also a chain homotopy equivalence. Let us put a subscript on $D^{-L_{1}}$
for labeling the $\mathrm{Spin}^{c}$ structure. Since $\mathfrak{A}^{-}(\mathcal{H}^{L},(+\infty,s_{2}))=\mathfrak{A}(r_{+L_{1}}(\mathcal{H}^{L}),s_{2}-\frac{\mathrm{lk}(+L_{1},+L_{2})}{2})$,
$\mathfrak{A}^{-}(\mathcal{H}^{L},(-\infty,s_{2}))=\mathfrak{A}^{-}(r_{-L_{1}}(\mathcal{H}^{L}),s_{2}-\frac{\mathrm{lk}(-L_{1},+L_{2})}{2})$,
we write
\[
D_{-\infty,s_{2}}^{-L_{1}}:\mathfrak{A}^{-}(\mathcal{H}^{L},(-\infty,s_{2}))\rightarrow\mathfrak{A}^{-}\bigg(\mathcal{H}^{L},\big(+\infty,s_{2}+\mathrm{\mathrm{lk}}(+L_{1},+L_{2})\big)\bigg),
\]
or simply
\[
D_{-\infty,s_{2}}^{-L_{1}}:A_{-\infty,s_{2}}^{-}\rightarrow A_{+\infty,s_{2}+\mathrm{lk}}^{-}.
\]
Similarly we define $D_{s_{1},-\infty}^{-L_{2}}:A_{s_{1},-\infty}^{-}\rightarrow A_{s_{1}+\mathrm{lk},+\infty}^{-}.$

For the 2-dimensional hyperbox of Heegaard diagrams $\mathcal{H}^{L,-L_{1}\cup-L_{2}}$,
we can get a square of chain complexes. Let us first look at the upper
left quarter of $\mathcal{H}^{L,-L_{1}\cup-L_{2}}:$

\[
\xyR{2pc}\xyC{3pc}\xymatrix{(\Sigma,\boldsymbol{\alpha},\boldsymbol{\beta}_{11},{\bf w},{\bf z})\ar[r]^{\Theta_{\beta_{11}\beta_{12}}}\ar[d]_{\Theta_{\beta_{11}\beta_{21}}}\ar[rd]^{\Theta_{\beta_{11}\beta_{22}}} & (\Sigma,\boldsymbol{\alpha},\boldsymbol{\beta}_{12},{\bf w},{\bf z})\ar[d]^{\Theta_{\beta_{12}\beta_{22}}}\\
(\Sigma,\boldsymbol{\alpha},\boldsymbol{\beta}_{21},{\bf w},{\bf z})\ar[r]_{\Theta_{\beta_{21}\beta_{22}}} & (\Sigma,\boldsymbol{\alpha},\boldsymbol{\beta}_{22},{\bf w},{\bf z}).
}
\]
In the above the diagram, the $\Theta$-elements on the edges are
arbitrary cycles representing the maximal degree elements in their
homology groups. Let $c=f_{\beta_{11}\beta_{12}\beta_{22}}(\Theta_{\beta_{11}\beta_{12}}\otimes\Theta_{\beta_{12}\beta_{22}})+f_{\beta_{11}\beta_{21}\beta_{22}}(\Theta_{\beta_{11}\beta_{21}}\otimes\Theta_{\beta_{21}\beta_{22}})$.
The equation
\begin{align*}
\partial(f_{\beta_{11}\beta_{12}\beta_{22}}(\Theta_{\beta_{11}\beta_{12}}\otimes\Theta_{\beta_{12}\beta_{22}})+f_{\beta_{11}\beta_{21}\beta_{22}}(\Theta_{\beta_{11}\beta_{21}}\otimes\Theta_{\beta_{21}\beta_{22}})) & =\\
f_{\beta_{11}\beta_{12}\beta_{22}}((\partial\Theta_{\beta_{11}\beta_{12}})\otimes\Theta_{\beta_{12}\beta_{22}})+f_{\beta_{11}\beta_{21}\beta_{22}}(\Theta_{\beta_{11}\beta_{21}}\otimes(\partial\Theta_{\beta_{21}\beta_{22}})) & =0
\end{align*}
shows that $c$ is a cycle in $\mathfrak{A}_{\mu}^{-}(\mathbb{T}_{\beta_{11}},\mathbb{T}_{\beta_{22}},0)$,
where $\mu$ equals to the maximal degree of the homology of $\mathfrak{A}^{-}(\mathbb{T}_{\beta_{11}},\mathbb{T}_{\beta_{22}},0)$.
Since the curves $\beta_{**}$ are all strongly equivalent, up to
chain homotopy equivalences, we can only consider the case when they are
all small Hamiltonian isotopies of each other. By Lemma 9.7 in \cite{OS_HF1},
we can see that $f_{\beta_{11}\beta_{12}\beta_{22}}(\Theta_{\beta_{11}\beta_{12}}\otimes\Theta_{\beta_{12}\beta_{22}}),f_{\beta_{11}\beta_{21}\beta_{22}}(\Theta_{\beta_{11}\beta_{21}}\otimes\Theta_{\beta_{21}\beta_{22}})$
both represent the maximal degree element in the homology of $CF(\mathbb{T}_{\beta_{11}},\mathbb{T}_{\beta_{22}})$.
Thus, $c$ is $0$ in the homology. So there is a $\Theta_{\beta_{11},\beta_{22}}$
such that $\partial\Theta_{\beta_{11},\beta_{22}}=c$, where $\partial=f_{\beta_{11},\beta_{22}}.$  In sum,
\[
f_{\beta_{11}\beta_{12}\beta_{22}}(\Theta_{\beta_{11}\beta_{12}}\otimes\Theta_{\beta_{12}\beta_{22}})+f_{\beta_{11}\beta_{21}\beta_{22}}(\Theta_{\beta_{11}\beta_{21}}\otimes\Theta_{\beta_{21}\beta_{22}})=f_{\beta_{11}\beta_{22}}(\Theta_{\beta_{11}\beta_{22}}).
\]
From the quadratic $A_{\infty}$ associativity Equation \eqref{eq:A-infinity}, we have a square of chain complexes
\[
{\xyR{1.5pc}\xymatrix{{\mathfrak{A}^{-}(\mathbb{T}_{\alpha},\mathbb{T}_{\beta_{11}},\text{s})}\ar[r]^{D_{\beta_{11}\beta_{12}}}\ar[d]_{{D_{\beta_{11}\beta_{21}}}}\ar[rd]^{{D_{\beta_{11}\beta_{22}}}} & {\mathfrak{A}^{-}(\mathbb{T}_{\alpha},\mathbb{T}_{\beta_{12}},\text{s})\ar[d]^{{D_{\beta_{12}\beta_{22}}}}}\\
{\mathfrak{A}^{-}(\mathbb{T}_{\alpha},\mathbb{T}_{\beta_{21}},\text{s})}\ar[r]_{\mathit{D_{\beta_{21}\beta_{22}}}} & {\mathfrak{A}^{-}(\mathbb{T}_{\alpha},\mathbb{T}_{\beta_{22}},\text{s}),}
}
}
\]
where $D_{\beta_{11}\beta_{22}}(x)=f_{\alpha\beta_{11}\beta_{12}\beta_{22}}(x\otimes\Theta_{\beta_{11}\beta_{12}}\otimes\Theta_{\beta_{12}\beta_{22}})+f_{\alpha\beta_{11}\beta_{21}\beta_{22}}(x\otimes\Theta_{\beta_{11}\beta_{21}}\otimes\Theta_{\beta_{21}\beta_{22}})+f_{\alpha\beta_{11}\beta_{22}}(x\otimes\Theta_{\beta_{11}\beta_{22}}),$
and
\begin{align*}
D_{\beta_{11}\beta_{12}}(x)=f_{\alpha\beta_{11}\beta_{12}}(x\otimes\Theta_{\beta_{11}\beta_{12}}), & \ \ D_{\beta_{12}\beta_{22}}(x)=f_{\alpha\beta_{12}\beta_{22}}(x\otimes\Theta_{\beta_{12}\beta_{22}}),\\
D_{\beta_{11}\beta_{21}}(x)=f_{\alpha\beta_{11}\beta_{21}}(x\otimes\Theta_{\beta_{11}\beta_{21}}), & \ \ D_{\beta_{11}\beta_{22}}(x)=f_{\alpha\beta_{11}\beta_{22}}(x\otimes\Theta_{\beta_{11}\beta_{22}}).
\end{align*}
Similarly, we can choose other $\Theta$-elements on $\mathcal{H}^{L,-L_{1}\cup-L_{2}},$
and get a rectangle of chain complexes of size $(2,2)$. We denote
it by $\mathfrak{A}^{-}(\mathcal{H}^{L,-L_{1}\cup-L_{2}},\text{s}).$

\begin{defn}
We define the destabilization map $D^{-L_{1}\cup-L_{2}}$ to be the diagonal
map in the compression of $\mathfrak{A}^{-}(\mathcal{H}^{L,-L_{1}\cup-L_{2}},\text{s})$:
\[
\mathit{\xyR{1pc}\xymatrix{\mathit{\mathfrak{A}^{-}(r_{-L_{1}\cup-L_{2}}(\mathcal{H}^{L}),\text{s})}\ar[r]\ar[d]\ar[rd] & \mathit{\mathfrak{A}^{-}(r_{-L_{1}\cup+L_{2}}(\mathcal{H}^{L}),\text{s})}\ar[d]\\
\mathit{\mathfrak{A}^{-}(r_{+L_{1}\cup-L_{2}}(\mathcal{H}^{L}),\text{s})}\ar[r] & \mathit{\mathfrak{A}^{-}(r_{+L_{1}\cup+L_{2}}(\mathcal{H}^{L}),\text{s}).}
}
}
\]
Since $\mathfrak{A}^{-}(r_{-L_{1}\cup-L_{2}}(\mathcal{H}^{L}),\text{s})=\mathfrak{A}^{-}(\mathcal{H}^{L},-\infty,-\infty)$,
we denote it by $D_{-\infty,-\infty}^{-L_{1}\cup-L_{2}}.$
\end{defn}

As all the other hyperboxes of Heegaard diagrams are trivial, the
following identities hold
\[
D_{+\infty,s_{2}}^{+L_{1}}=id,\ D_{s_{1},+\infty}^{+L_{2}}=id,\ D_{-\infty,+\infty}^{-L_{1}\cup+L_{2}}=0,\ D_{+\infty,-\infty}^{+L_{1}\cup-L_{2}}=0,\ D_{+\infty,+\infty}^{+L_{1}\cup+L_{2}}=0.
\]

Now we can build all the rectangles of chain complexes as follows,
where $\mathrm{lk}=\mathrm{lk}(\overrightarrow{L_{1}},\overrightarrow{L_{2}})$.
\begin{align}
\label{eq:surgery square}
\begin{array}{c}
\mathit{\mathfrak{R}_{\text{s},1,1}:=}\\
\xyC{2pc}\xyR{2pc}\mathit{\xymatrix{\mathit{A_{s_{1},s_{2}}^{-}}\ar[d]_{\mathit{I_{s_{1},s_{2}}^{-L_{1}}}}\ar[r]^{\mathit{I_{s_{1},s_{2}}^{-L_{2}}}}\ar[dr]|-{\mathit{I_{s_{1},s_{2}}^{-L_{1}\cup-L_{2}}}} & \mathit{A_{s_{1},-\infty}^{-}}\ar[d]^{\mathit{I_{s_{1},-\infty}^{-L_{1}}}}\ar[r]^{\mathit{D_{s_{1},-\infty}^{-L_{2}}}} & \mathit{A_{s_{1+\mathrm{lk}},+\infty}^{-}\ar[d]^{I_{s_{1}+\mathrm{lk},+\infty}^{-L_{1}}}}\\
\mathit{A_{-\infty,s_{2}}^{-}}\ar[d]_{\mathit{D_{-\infty,s_{2}}^{-L_{1}}}}\ar[r]_{\mathit{I_{-\infty,s_{2}}^{-L_{2}}}} & \mathit{A_{-\infty,-\infty}^{-}}\ar[d]_{\mathit{D_{-\infty,-\infty}^{-L_{1}}}}\ar[r]^{\mathit{D_{-\infty,-\infty}^{-L_{2}}}}\ar[dr]|-{\mathit{D_{-\infty,-\infty}^{-L_{1}\cup-L_{2}}}} & A_{-\infty,+\infty}^{-}\ar[d]^{D_{-\infty,+\infty}^{-L_{1}}}\\
\mathit{A_{+\infty,s_{2}+\mathrm{lk}}^{-}}\ar[r]_{\quad\mathit{I_{+\infty,s_{2}+\mathrm{lk}}^{-L_{2}}}} & \mathit{A_{+\infty,-\infty}^{-}}\ar[r]_{\mathit{D_{+\infty,-\infty}^{-L_{2}}}} & A_{+\infty,+\infty}^{-};
}
}
\end{array} & \begin{array}{c}
\mathit{\mathfrak{R}_{\text{s},0,1}:=}\\
\xyC{2pc}\xyR{2pc}\mathit{\xymatrix{\mathit{A_{s_{1},s_{2}}^{-}}\ar[d]_{\mathit{I_{s_{1},s_{2}}^{+L_{1}}}}\ar[r]^{\mathit{I_{s_{1},s_{2}}^{-L_{2}}}}\ar[dr]|-{\mathit{I_{s_{1},s_{2}}^{+L_{1}\cup-L_{2}}}} & \mathit{A_{s_{1},-\infty}^{-}}\ar[d]^{\mathit{I_{s_{1},-\infty}^{+L_{1}\cup-L_{2}}}}\ar[r]^{\mathit{D_{s_{1},-\infty}^{-L_{2}}}} & \mathit{A_{s_{1}+\mathrm{lk},+\infty}^{-}}\ar[d]^{\mathit{I_{s_{1},+\infty}^{+L_{1}}}}\\
\mathit{A_{+\infty,s_{2}}^{-}}\ar[d]_{\mathit{id}}\ar[r]_{\mathit{I_{+\infty,s_{2}}^{-L_{2}}}} & \mathit{A_{+\infty,-\infty}^{-}}\ar[d]_{\mathit{id}}\ar[r]_{\mathit{D_{+\infty,-\infty}^{-L_{2}}}} & \mathit{A_{+\infty,+\infty}^{-}}\ar[d]^{\mathit{id}}\\
\mathit{A_{+\infty,s_{2}}^{-}}\ar[r]_{\mathit{I_{+\infty,s_{2}}^{-L_{2}}}} & \mathit{A_{+\infty,-\infty}^{-}}\ar[r]_{\mathit{D_{+\infty,-\infty}^{-L_{2}}}} & \mathit{A_{+\infty,+\infty}^{-};}
}
}
\end{array}\\
\mathit{\begin{array}{c}
\mathit{\mathit{\mathfrak{R}_{\text{s},1,0}:=}}\\
\xyC{2pc}\xyR{2pc}\mathit{\xymatrix{\mathit{A_{s_{1},s_{2}}^{-}}\ar[d]_{\mathit{I_{s_{1},s_{2}}^{-L_{1}}}}\ar[r]^{\mathit{I_{s_{1},s_{2}}^{+L_{2}}}}\ar[dr]|-{\mathit{I_{s_{1},s_{2}}^{-L_{1}\cup+L_{2}}}} & \mathit{A_{s_{1},+\infty}^{-}}\ar[d]^{\mathit{I_{s_{1},+\infty}^{-L_{1}}}}\ar[r]^{\mathit{id}} & \mathit{A_{s_{1},+\infty}^{-}}\ar[d]^{\mathit{I_{s_{1},+\infty}^{-L_{1}}}}\\
\mathit{A_{-\infty,s_{2}}^{-}}\ar[d]_{\mathit{D_{-\infty,s_{2}}^{-L_{1}}}}\ar[r]_{\mathit{I_{-\infty,s_{2}}^{+L_{2}}}} & \mathit{A_{-\infty,+\infty}^{-}}\ar[d]^{\mathit{D_{-\infty,+\infty}^{-L_{1}}}}\ar[r]^{\mathit{id}} & \mathit{A_{-\infty,+\infty}^{-}}\ar[d]^{\mathit{D_{-\infty,+\infty}^{-L_{1}}}}\\
\mathit{A_{+\infty,s_{2}+\mathrm{lk}}^{-}}\ar[r]_{\mathit{I_{+\infty,s_{2}}^{+L_{2}}}} & \mathit{A_{+\infty,+\infty}^{-}}\ar[r]_{\mathit{id}} & \mathit{A_{+\infty,+\infty}^{-};}
}
}
\end{array}} & \begin{array}{c}
\mathit{\mathit{\mathfrak{R}_{\text{s},0,0}:=}}\\
\xyC{2pc}\xyR{2pc}\mathit{\xymatrix{\mathit{A_{s_{1},s_{2}}^{-}}\ar[d]_{\mathit{I_{s_{1},s_{2}}^{+L_{1}}}}\ar[r]^{\mathit{I_{s_{1},s_{2}}^{+L_{2}}}}\ar[dr]|-{\mathit{I_{s_{1},s_{2}}^{+L_{1}\cup+L_{2}}}} & \mathit{A_{s_{1},+\infty}^{-}}\ar[d]^{\mathit{I_{s_{1},+\infty}^{+L_{1}}}}\ar[r]^{\mathit{id}} & \mathit{A_{s_{1},+\infty}^{-}}\ar[d]^{\mathit{I_{s_{1},+\infty}^{+L_{1}}}}\\
\mathit{A_{+\infty,s_{2}}^{-}}\ar[d]_{\mathit{id}}\ar[r]_{\mathit{I_{+\infty,s_{2}}^{+L_{2}}}} & \mathit{A_{+\infty,+\infty}^{-}}\ar[d]^{\mathit{id}}\ar[r]^{\mathit{id}} & \mathit{A_{+\infty,+\infty}^{-}}\ar[d]^{\mathit{id}}\\
\mathit{A_{+\infty,s_{2}}^{-}}\ar[r]_{\mathit{I_{+\infty,s_{2}}^{+L_{2}}}} & \mathit{\mathit{A_{+\infty,+\infty}^{-}}}\ar[r]_{\mathit{\mathit{id}}} & \mathit{\mathit{A_{+\infty,+\infty}^{-}.}}
}
}
\end{array}
\end{align}
The squares $R_{\text{s},i,j}$'s used in Equations (\ref{eq:surgery formula1})
to (\ref{eq:surgery formual4}) are defined to be the compressions
of $\mathfrak{R}_{\text{s},i,j}$'s.

\section{Applying the surgery formula to two-bridge links}

In this section, we show some algebraic rigidity results for the chain maps
between certain chain complexes up to chain homotopy. This provides a
 way to determine the destabilization
maps in the surgery complex of two-bridge links up to chain homotopy.
Using these maps to replace the original maps in the surgery complex,
we construct a \emph{perturbed surgery complex}. We
further show that it has the same homology as the original one. Based on the
perturbed surgery formula, we give an algorithm for computing the homology of surgeries
on two-bridge links.

\subsection{Algebraic rigidity results.}

There is a short exact sequence in the Exercise 3.6.1 in \cite{Weibel_homological}
as follows. Suppose $P_{*},Q^{*}$ are (co)chain complexes of $R$-modules,
and $P,d(P)=\mathrm{Im}(d)$ are both projective $R$-modules. Then
there is an exact sequence
\[
0\rightarrow\prod_{p+q=n-1}\mathrm{Ext}_{R}^{1}(H_{p}(P_{*}),H^{q}(Q^{*}))\rightarrow H_{n}(\text{Hom}(P_{*},Q^{*}))\rightarrow\prod_{p+q=n}\mathrm{Hom}_{R}(H_{p}(P_{*}),H^{q}(Q^{*}))\rightarrow0.
\]
For completeness, we give a proof here adapted to the setting of $\mathbb{Z}/2\mathbb{Z}$-graded
chain complexes.

\begin{lem}
\label{lem:H*(Hom(P,Q))}
Let $P_{*},Q_{*}$ be $\mathbb{Z}/2\mathbb{Z}$-graded chain complexes
of $R$-modules. If $P$ and $d(P)=\mathrm{Im}(d)$ are projective modules, then
there is a short exact sequence for any $n,p,q\in\mathbb{Z}/2\mathbb{Z}$,
\begin{equation}
0\rightarrow\bigoplus_{p+q=n+1}\mathrm{Ext}_{R}^{1}(H_{p}(P),H_{q}(Q))\rightarrow H_{n}(\mathrm{Hom}(P,Q))\rightarrow\bigoplus_{p+q=n}\mathrm{Hom}_{R}(H_{p}(P),H_{q}(Q))\rightarrow0.\label{eq:universal coefficient}
\end{equation}
\end{lem}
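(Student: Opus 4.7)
The plan is to decompose $P$ into its cycles and boundaries, apply $\mathrm{Hom}(-,Q)$, and extract the claimed short exact sequence from the resulting long exact sequence in homology. To begin, I would set $Z_n=\ker(d\colon P_n\to P_{n-1})$ and $B_n=\mathrm{Im}(d\colon P_{n+1}\to P_n)$. The hypothesis that $P$ and $d(P)$ are projective forces each short exact sequence
\[
0 \to Z_n \to P_n \to B_{n-1} \to 0
\]
to split, and hence makes $Z_n$ a projective direct summand of $P_n$. Regarding $Z$ and the shifted complex $B[-1]$ (with $(B[-1])_n = B_{n-1}$) as chain complexes equipped with the zero differential, I obtain a degreewise split short exact sequence of chain complexes
\[
0 \to Z \to P \to B[-1] \to 0.
\]

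Next I would apply $\mathrm{Hom}(-,Q)$, which preserves degreewise-split exactness, to produce
\[
0 \to \mathrm{Hom}(B[-1],Q) \to \mathrm{Hom}(P,Q) \to \mathrm{Hom}(Z,Q) \to 0,
\]
and take the associated long exact sequence in homology. Since $Z_p$ and $B_p$ are projective and carry zero differential, $\mathrm{Hom}$ commutes with taking homology in the $Q$-variable, so the outer terms decompose as direct sums of $\mathrm{Hom}(Z_p,H_q(Q))$ and $\mathrm{Hom}(B_p,H_q(Q))$ indexed by the appropriate total degree. The decisive step is to identify the connecting homomorphism $\delta$ with the map induced by the inclusion $\iota\colon B_p\hookrightarrow Z_p$. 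I would verify this by tracing through the snake-lemma construction: lift a cycle in $\mathrm{Hom}(Z,Q)$ to $\mathrm{Hom}(P,Q)$ via the chosen splitting, apply the mapping-complex differential, and observe that the result is precomposition with $d_P$, which on $B[-1]$ is exactly precomposition with $\iota$.

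Once $\delta$ has been so identified, the short exact sequence $0\to B_p\to Z_p\to H_p(P)\to 0$ is a length-one projective resolution of $H_p(P)$, so applying $\mathrm{Hom}(-,H_q(Q))$ shows that the kernel of $\iota^{*}\colon\mathrm{Hom}(Z_p,H_q(Q))\to\mathrm{Hom}(B_p,H_q(Q))$ is $\mathrm{Hom}(H_p(P),H_q(Q))$ and its cokernel is $\mathrm{Ext}^{1}_{R}(H_p(P),H_q(Q))$. The sequence \eqref{eq:universal coefficient} then falls out of the long exact sequence by extracting $H_n(\mathrm{Hom}(P,Q))$ as the extension of $\ker\delta$ by $\mathrm{coker}\,\delta$, with the grading shift in $B[-1]$ accounting for the offset in Ext-degree between the two summands. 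The main obstacle I expect is bookkeeping with the grading shift and with the sign conventions of the mapping-complex differential, so that $\delta$ is correctly identified; once this is handled, the conclusion is formal.
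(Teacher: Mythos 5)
Your proposal is correct and follows essentially the same route as the paper's proof: split off the cycles using projectivity of $d(P)$, apply $\mathrm{Hom}(-,Q)$ to the degreewise split sequence $0\to Z\to P\to d(P)\to 0$, identify the connecting map in the resulting long exact sequence with restriction along $B\hookrightarrow Z$, and read off $\mathrm{Hom}$ and $\mathrm{Ext}^1$ from the projective resolution $0\to B_p\to Z_p\to H_p(P)\to 0$. The only cosmetic difference is notational (your $B[-1]$ is the paper's $d(P)$ viewed as the quotient complex), and since the grading is $\mathbb{Z}/2\mathbb{Z}$ and the coefficients are over an $\mathbb{F}$-algebra, the shift and sign bookkeeping you flag is harmless.
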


\begin{proof}
First, all the indices $n,p,q,i,j$ are in $\mathbb{Z}/2\mathbb{Z}.$
Since $d(P)$ is projective, the short exact sequence $0\rightarrow Z^{P}\rightarrow P\rightarrow d(P)\rightarrow0$
splits, thus giving that $P=d(P)\oplus Z^{P}$. Thereby, $Z^{P}$
is projective and thus $\text{Ext}_{R}^{1}(Z^{P},M)=0$ for all $R$-module
$M$. Also, by $\text{Ext}_{R}^{1}(d(P),M)=0,\forall M$ we get an
exact sequence
\[
0\rightarrow\text{Hom}_{R}(d(P_{p}),Q_{q})\rightarrow\text{Hom}_{R}(P_{p},Q_{q})\rightarrow\text{Hom}_{R}(Z_{p}^{P},Q_{q})\rightarrow0.
\]
These assemble to a short exact sequence of chain complexes
\begin{equation}
0\rightarrow\bigoplus_{p+q=n}\text{Hom}_{R}(d(P_{p}),Q_{q})\rightarrow(\text{Hom}_{R}(P,Q))_{n}\rightarrow\bigoplus_{p+q=n}\text{Hom}_{R}(Z_{p}^{P},Q_{q})\rightarrow0.\label{eq:S.E.S of Hom}
\end{equation}
 Actually, it is not hard to check the following commuting diagram
\[
\xymatrix{0\ar[r] & \bigoplus\limits_{p+q=n}\text{Hom}_{R}(d(P_{p}),Q_{q})\ar[r]\ar[d]_{d} & (\text{Hom}_{R}(P,Q))_{n}\ar[d]_{d}\ar[r] & \bigoplus\limits_{p+q=n}\text{Hom}_{R}(Z_{p}^{P},Q_{q})\ar[d]_{d}\ar[r] & 0\\
0\ar[r] & \bigoplus\limits_{p+q=n+1}\text{Hom}_{R}(d(P_{p}),Q_{q})\ar[r] & (\text{Hom}_{R}(P,Q))_{n+1}\ar[r] & \bigoplus\limits_{p+q=n+1}\text{Hom}_{R}(Z_{p}^{P},Q_{q})\ar[r] & 0.
}
\]
Since $dP$ is projective, the short exact sequence $0\rightarrow d(Q_{j-1})\rightarrow Z_{j}^{Q}\rightarrow H_{j}(Q)\rightarrow0$
gives a short exact sequence
\[
0\rightarrow\text{Hom}_{R}(dP_{i},d(Q_{j-1}))\rightarrow\text{Hom}_{R}(dP_{i},Z_{j}^{Q})\rightarrow\text{Hom}_{R}(dP_{i},H_{j}(Q))\rightarrow0.
\]
Furthermore, the differential in $\text{Hom}_{R}(dP_{i},Q_{j})$ is
$d_{Q}$, so from the above exact sequence it follows that $H_{n}(\text{Hom}_{R}(d(P),Q))=\bigoplus_{p+q=n}\text{Hom}_{R}(d(P_{p}),H_{q}(Q)),p,q,n\in\mathbb{Z}/2\mathbb{Z}.$
Since $Z^{P}$ is projective, similarly we have
\[
H_{n}(\text{Hom}_{R}(Z^{P},Q))=\bigoplus_{p+q=n}\text{Hom}_{R}(Z_{p}^{P},H_{q}(Q)).
\]

Thus the long exact sequence of homology from Equation \eqref{eq:S.E.S of Hom}
is
\begin{eqnarray}
\label{eq:L.E.S of Homology}
\cdots\rightarrow\bigoplus_{p+q=n}\text{Hom}_{R}(Z_{p+1}^{P},H_{q}(Q)) & \xrightarrow{\partial_{n+1}} & \bigoplus_{p+q=n}\text{Hom}_{R}(d(P_{p}),H_{q}(Q))\rightarrow H_{n}(\text{Hom}_{R}(P,Q))\nonumber \\
\rightarrow\bigoplus_{p+q=n}\text{Hom}_{R}(Z_{p}^{P},H_{q}(Q)) & \xrightarrow{\partial_{n}} & \bigoplus_{p+q=n}\text{Hom}_{R}(d(P_{p+1}),H_{q}(Q))\rightarrow\cdots.
\end{eqnarray}
A diagram chasing shows that the connecting morphism $\partial_{*}:\text{Hom}(Z_{*}^{P},H_{*}(Q))\rightarrow\text{Hom}(d(P_{*}),H_{*}(Q))$
is the restriction.

Hence, the short exact sequence $0\rightarrow dP_{i+1}\rightarrow Z_{i}^{P}\rightarrow H_{i}(P)\rightarrow0$
can produce the exact sequence
\begin{eqnarray*}
0 & \rightarrow & \text{Hom}_{R}(H_{p}(P),H_{q}(Q))\rightarrow\text{Hom}_{R}(Z_{p}^{P},H_{q}(Q))\xrightarrow{\partial_{p+q}}\text{Hom}_{R}(dP_{p+1},H_{q}(Q))\\
 & \rightarrow & \text{Ext}_{R}^{1}(H_{p}(P),H_{q}(Q))\rightarrow\text{Ext}_{R}^{1}(Z_{p}^{P},H_{q}(Q))=0,
\end{eqnarray*}
thus $\text{Ker}(\partial_{p+q})\cong\text{Hom}_{R}(H_{p}(P),H_{q}(Q))$
and $\text{Coker}(\partial_{p+q})\cong\text{Ext}_{R}^{1}(H_{p}(P),H_{q}(Q))$.
Finally, the exact sequence in Equation (\ref{eq:universal coefficient})
comes from Equation (\ref{eq:L.E.S of Homology}).
\end{proof}

Let $(C_{*},\partial_{*})$ be a chain complex of $\mathbb{F}$-vector
spaces, with $U_{1},U_{2}$-actions which drop the $\mathbb{Z}$-grading
by 2. Consider $C$ as a $\mathbb{F}[[U_{1},U_{2}]]$-module. Even
though the $U_{1},U_{2}$-actions do not preserve the $\mathbb{Z}$-grading,
we will still call $C$ a \emph{complex of $\mathbb{F}[[U_{1},U_{2}]]$-modules}.

\begin{prop}

\label{prop:homotopy equivalence}Let $A,B$ be complexes of $\mathbb{F}[[U_{1},U_{2}]]$-modules
with $U_{1},U_{2}$-actions dropping grading by 2, and $A,d(A)$ are
both free $\mathbb{F}[[U_{1},U_{2}]]$-modules. Suppose $H_{*}(A)\cong H_{*}(B)\cong\mathbb{F}[[U_{1},U_{2}]]/(U_{1}-U_{2})$,
precisely, $H_{2k}(A)\cong H_{2k}(B)\cong\mathbb{F}$ for all $k\leq0$
and $H_{i}(A)=H_{i}(B)=0$ otherwise, where $U_{i}\cdot H_{2k}(A)=H_{2k-2}(A),U_{i}\cdot H_{2k}(B)=H_{2k-2}(B)$
for both $i=1,2$. If $F,G:A\rightarrow B$ are both quasi-isomorphisms
as $\mathbb{F}[[U_{1},U_{2}]]$-modules, then $F$ and $G$ are homotopic
as $\mathbb{F}[[U_{1},U_{2}]]$-modules.

\end{prop}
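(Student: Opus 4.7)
The plan is to apply Lemma \ref{lem:H*(Hom(P,Q))} with $P = A$ and $Q = B$. The hypothesis that $A$ and $d(A)$ are free over $R := \mathbb{F}[[U_1, U_2]]$ supplies the projectivity required, yielding the short exact sequence
\[
0 \to \bigoplus_{p+q=1} \mathrm{Ext}^1_R(H_p(A), H_q(B)) \to H_0(\mathrm{Hom}(A, B)) \to \bigoplus_{p+q=0} \mathrm{Hom}_R(H_p(A), H_q(B)) \to 0.
\]
The chain map $F - G$ defines a class $[F - G] \in H_0(\mathrm{Hom}(A, B))$, and it will suffice to show this class vanishes, since $[F - G] = 0$ means $F - G$ is null-homotopic, which is precisely the statement that $F$ and $G$ are chain homotopic over $\mathbb{F}[[U_1,U_2]]$.

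First I would show that $F_* = G_*$ on homology, so that $[F - G]$ projects to zero in the rightmost term of the exact sequence. Since both $F$ and $G$ are quasi-isomorphisms preserving the $\mathbb{Z}$-grading, the restrictions $F_*, G_* : H_0(A) \to H_0(B)$ are nonzero $\mathbb{F}$-linear maps between one-dimensional $\mathbb{F}$-vector spaces; because $\mathbb{F} = \mathbb{Z}/2\mathbb{Z}$, the only such map is the unique isomorphism, so $F_*$ and $G_*$ coincide on $H_0(A)$. As $H_*(A) \cong \mathbb{F}[[U_1, U_2]]/(U_1 - U_2)$ is generated over $R$ by $H_0(A)$ and both maps are $R$-linear, they agree everywhere, giving $(F - G)_* = 0$.

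It then remains to show that $\bigoplus_{p+q=1} \mathrm{Ext}^1_R(H_p(A), H_q(B)) = 0$, which I would handle by a parity argument. Both $H_*(A)$ and $H_*(B)$ are concentrated in even $\mathbb{Z}$-degree by hypothesis, so in every summand with $p + q = 1$, at least one of $p, q$ is odd, forcing $H_p(A) = 0$ or $H_q(B) = 0$, and hence the corresponding $\mathrm{Ext}$ vanishes. By exactness $[F - G] = 0$, completing the proof. I do not expect any substantial obstacle here; the argument relies only on the rigidity of automorphisms of $\mathbb{F}$ when $\mathbb{F} = \mathbb{Z}/2\mathbb{Z}$, the fact that $H_0(A)$ generates $H_*(A)$ as an $R$-module, and the concentration of homology in even degrees.
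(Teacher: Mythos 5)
Your proposal is correct and follows essentially the same route as the paper: pass to the $\mathbb{Z}/2\mathbb{Z}$-grading, apply Lemma \ref{lem:H*(Hom(P,Q))} to $\mathrm{Hom}(A,B)$, kill the $\mathrm{Ext}^1$ term because the odd-degree homology vanishes, and conclude that a chain map is null-homotopic once it vanishes on homology; your observation that $F_*=G_*$ because $H_0(A)\cong\mathbb{F}$ has only one nonzero endomorphism over $\mathbb{Z}/2\mathbb{Z}$ and generates $H_*(A)$ over $\mathbb{F}[[U_1,U_2]]$ is exactly the rigidity the paper uses (and you make it more explicit than the paper does). The one step you omit is the final upgrade from a $\mathbb{Z}/2\mathbb{Z}$-graded homotopy $H$ to one of $\mathbb{Z}$-degree $+1$: the paper decomposes $H=\sum_i H_{2i+1}$ with $H_{2i+1}:A_*\to B_{*+2i+1}$, notes that $H\partial+\partial H$ preserves the $\mathbb{Z}$-grading so each $H_{2i+1}\partial+\partial H_{2i+1}$ with $i\neq 0$ vanishes, and keeps only $H_1$; this refinement is harmless to add and is what the later grading statements rely on.
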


\begin{proof}
First, the $\mathbb{Z}$-grading of $A,B$ induces a $\mathbb{Z}/2\mathbb{Z}$-grading
on both $A$ and $B$, and $U_{1},U_{2}$-action preserves the induced
$\mathbb{Z}/2\mathbb{Z}$-grading, thus we regard $A,B$ as $\mathbb{Z}/2\mathbb{Z}$-graded
chain complexes of $\mathbb{F}[[U_{1},U_{2}]]$-modules. In order
to distinguish these two gradings, we put brackets on the numbers
to represent $\mathbb{Z}/2\mathbb{Z}$-gradings. Hence we have $H_{[0]}(A)=H_{[0]}(B)=\mathbb{F}[[U_{1},U_{2}]]/(U_{1}-U_{2}),H_{[1]}(A)=H_{[1]}(B)=0$.

By Lemma \ref{lem:H*(Hom(P,Q))}, we have
\begin{align*}
0 & \rightarrow\bigoplus_{[p]\in\mathbb{Z}/2\mathbb{Z}}\text{Ext}_{\mathbb{F}[[U_{1},U_{2}]]}^{1}(H_{[p+1]}(A_{*}),H_{[p]}(B_{*}))\rightarrow H_{[0]}(\text{Hom}(A_{*},B_{*}))\\
 & \rightarrow\bigoplus_{[p]\in\mathbb{Z}/2\mathbb{Z}}\text{Hom}_{\mathbb{F}[[U_{1},U_{2}]]}(H_{[p]}(A_{*}),H_{[p]}(B_{*}))\rightarrow0,
\end{align*}
thus $H_{[0]}(\text{Hom}(A_{*},B_{*}))=\text{Hom}_{\mathbb{F}[[U_{1},U_{2}]]}(\mathbb{F}[[U_{1},U_{2}]]/(U_{1}-U_{2}),\mathbb{F}[[U_{1},U_{2}]]/(U_{1}-U_{2}))=\mathbb{F}[[U_{1},U_{2}]]/(U_{1}-U_{2}).$
Since $H_{[0]}(\text{Hom}(A_{*},B_{*}))$ is the group of chain homotopy
equivalence classes of chain maps from $A$ to $B$, this means the
chain maps from $A$ to $B$ are classified by their action on homology.
Since $F$ and $G$ are both quasi-isomorphisms, they are homotopic
as $\mathbb{F}[[U_{1},U_{2}]]$-modules.

Let $H:A\rightarrow B$ be any homotopy such that $F-G=H\partial+\partial H,H\cdot U_{i}=U_{i}\cdot H$.
Then $H$ shifts the $\mathbb{Z}/2\mathbb{Z}$-grading by 1, thus
shifting the $\mathbb{Z}$-grading by odd numbers. Thus, let $H=\sum_{i\in\mathbb{Z}}H_{2i+1}$,
where $H_{2i+1}:A_{*}\rightarrow B_{*+2i+1}$. Since
\[
F-G=H\partial+\partial H=\sum_{i\in\mathbb{Z}}(H_{2i+1}\partial+\partial H_{2i+1})
\]
preserves the original $\mathbb{Z}$-grading, we have $H_{2i+1}\partial+\partial H_{2i+1}=0,\forall i\neq0.$
So we can replace the homotopy $H$ by $H_{1}:A_{*}\rightarrow B_{*+1}$,
thus being a chain homotopy of the original $\mathbb{Z}$-graded chain
complexes.\end{proof}

\begin{rem}
\label{rem:cx_of_the_unknot}
The prototype of the complexes in the previous Proposition is the simplest Heegaard Floer chain complex of the unknot in $S^3$. Let $C^{\text u}$ be the chain complex of $\bb{F}[[U_1,U_2]]$-modules generated by ${\bf{x}},{\bf{y}}$ with differential $\partial{{\bf{x}}}=(U_1-U_2){\bf{y}}$, where ${\bf{y}},{\bf{x}}$ are of gradings $0,-1$ respectively.
\end{rem}

\begin{cor}
\label{cor:homotopy equilvence}
Let $A,B$ be complexes of $\bb{F}[[U_1,U_2]]$-modules with $U_1,U_2$-actions dropping grading by 2. Suppose
$A$ is chain homotopy equivalent to the complex $C^{\text u}$ as $\mathbb{F}[[U_1,U_2]]$-modules, and $H_*(B)\cong \bb{F}[[U_1,U_2]]/(U_1-U_2)$
as in Proposition \ref{prop:homotopy equivalence}. Then for any quasi-isomorphisms $F,G:A\to B$ of $\bb{F}[[U_1,U_2]]$-modules,
$F$ and $G$ are chain homotopic as $\bb{F}[[U_1,U_2]]$-modules.
\end{cor}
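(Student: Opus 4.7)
The plan is to bootstrap from Proposition \ref{prop:homotopy equivalence} by using the given chain homotopy equivalence between $A$ and the model complex $C^{\text u}$. First I would verify that $C^{\text u}$ itself meets the hypotheses of Proposition \ref{prop:homotopy equivalence}: it is manifestly a free $\mathbb{F}[[U_1,U_2]]$-module on the two generators ${\bf x},{\bf y}$; its image-of-differential $d(C^{\text u})=(U_1-U_2)\mathbb{F}[[U_1,U_2]]\cdot{\bf y}$ is free of rank one because $U_1-U_2$ is a non-zero-divisor in $\mathbb{F}[[U_1,U_2]]$; and a direct computation shows $H_*(C^{\text u})\cong\mathbb{F}[[U_1,U_2]]/(U_1-U_2)$, concentrated in the single even grading $0$ with $U_i$ acting by the expected shift, exactly as the proposition requires.

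Next, fix a chain homotopy equivalence $\phi:C^{\text u}\to A$ with $\mathbb{F}[[U_1,U_2]]$-linear homotopy inverse $\psi:A\to C^{\text u}$, so that $\phi\circ\psi\simeq\mathrm{id}_A$ and $\psi\circ\phi\simeq\mathrm{id}_{C^{\text u}}$. Given the two quasi-isomorphisms $F,G:A\to B$, precompose with $\phi$ to obtain $F\circ\phi,\,G\circ\phi:C^{\text u}\to B$, both of which are again quasi-isomorphisms since $\phi$ is one. Applying Proposition \ref{prop:homotopy equivalence} with $C^{\text u}$ in the role of $A$ (which is legitimate by the previous paragraph) and the given $B$ unchanged, we obtain an $\mathbb{F}[[U_1,U_2]]$-linear chain homotopy $F\circ\phi\simeq G\circ\phi$.

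Finally, compose on the right with $\psi$ and use $\phi\circ\psi\simeq\mathrm{id}_A$ to conclude the chain of $\mathbb{F}[[U_1,U_2]]$-linear homotopy equivalences
\[
F\ \simeq\ F\circ\phi\circ\psi\ \simeq\ G\circ\phi\circ\psi\ \simeq\ G,
\]
where each individual equivalence is realized by an explicit $\mathbb{F}[[U_1,U_2]]$-linear chain homotopy obtained by composing the known homotopies with $F$, $G$, or $\psi$ on the appropriate side. There is no serious obstacle: the only point requiring attention is the verification that $d(C^{\text u})$ is free, which reduces to the standard fact that $U_1-U_2$ is a non-zero-divisor in $\mathbb{F}[[U_1,U_2]]$. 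Everything after that is a formal manipulation of chain homotopies.
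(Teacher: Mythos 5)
Your proof is correct and follows essentially the same route as the paper: transport $F$ and $G$ to maps out of $C^{\text u}$ via the homotopy equivalence, apply Proposition \ref{prop:homotopy equivalence} there, and then compose back with the homotopy inverse. Your explicit check that $C^{\text u}$ and $d(C^{\text u})$ are free (via $U_1-U_2$ being a non-zero-divisor) is a detail the paper leaves implicit, but nothing more is different.
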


\begin{proof}
Let $h_1:A\to C^{\text u},h_2:C^{\text u}\to A$ be the chain homotopy equivalences, such that $h_1\circ h_2 \simeq id_{C^{\text u}}, h_2\circ h_1 \simeq id_A.$
Then by Proposition \ref{prop:homotopy equivalence}, $F\circ h_2$ is homotopic to $G\circ h_2$ as $\bb{F}[[U_1,U_2]]$-modules. Hence, $F\circ h_2\circ h_1,G\circ h_2\circ h_1$ are homotopic as $\bb{F}[[U_1,U_2]]$-modules, and thus so are $F$ and $G$.
\end{proof}

\begin{prop}
\label{prop:homotopies on the diagonal}Let $A_{*},B_{*}$ be complexes
of $\mathbb{F}[[U]]$-modules with $U$-action dropping grading by
2, and $A,B$ are both free $\bb{F}[[U]]$-modules. Suppose $H_{*}(A)=H_{*}(B)=\mathbb{F}[[U]]$, precisely, $H_{2k}(A)\cong H_{2k}(B)\cong\mathbb{F}$
for all $k\leq0$ and $H_{i}(A)=H_{i}(B)=0$ otherwise, where $U\cdot H_{2k}(A)=H_{2k-2}(A),U\cdot H_{2k}(B)=H_{2k-2}(B)$. If $F,G:A\to B$ are both quasi-isomorphisms of $\bb{F}[[U]]$-modules, then $F,G$ are chain homotopic as maps of
$\bb{F}[[U]]$-modules.

Moreover, if $H,K$ are both chain homotopies as homomorphisms of $\mathbb{F}[[U]]$-modules
between any two chain maps $f,g:A\rightarrow B$, i.e. $H\partial+\partial H=K\partial+\partial K=f-g$,
then $H-K=\partial T+T\partial$, for some $\mathbb{F}[[U]]$-module
homomorphism $T:A_{*}\rightarrow B_{*+2}$.
\end{prop}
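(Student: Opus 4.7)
The plan is to adapt the proof of Proposition~\ref{prop:homotopy equivalence} to the ring $\mathbb{F}[[U]]$, again using Lemma~\ref{lem:H*(Hom(P,Q))} as the main tool. To apply the lemma with $P=A$ and $Q=B$ I need both $A$ and $d(A)=\mathrm{Im}(\partial)$ to be projective; since $\mathbb{F}[[U]]$ is a complete discrete valuation ring, hence a PID, every submodule of a free $\mathbb{F}[[U]]$-module is free, so $d(A)\subset A$ is automatically free. I view $A$ and $B$ as $\mathbb{Z}/2\mathbb{Z}$-graded complexes; because all their homology sits in even $\mathbb{Z}$-degrees, the $\mathbb{Z}/2\mathbb{Z}$-graded homology simplifies to $H_{[0]}(A)\cong H_{[0]}(B)\cong \mathbb{F}[[U]]$ and $H_{[1]}(A)=H_{[1]}(B)=0$.

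For the first claim I will compute $H_{[0]}(\mathrm{Hom}_{\mathbb{F}[[U]]}(A,B))$ via Lemma~\ref{lem:H*(Hom(P,Q))}. The $\mathrm{Ext}^{1}$ terms vanish because $H_{*}(A)\cong \mathbb{F}[[U]]$ is free over the PID $\mathbb{F}[[U]]$, leaving
\[
H_{[0]}(\mathrm{Hom}(A,B))\;\cong\; \mathrm{Hom}_{\mathbb{F}[[U]]}(\mathbb{F}[[U]],\mathbb{F}[[U]])\;\cong\; \mathbb{F}[[U]].
\]
Any $\mathbb{F}[[U]]$-linear, $\mathbb{Z}$-grading-preserving endomorphism of the graded module $\mathbb{F}[[U]]$ is determined by the image of $1\in H_{0}(A)=\mathbb{F}$, which must land in $H_{0}(B)=\mathbb{F}$, so it is either zero or the identity. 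Both quasi-isomorphisms $F$ and $G$ therefore induce the identity on homology, represent the same class in $H_{[0]}(\mathrm{Hom}(A,B))$, and are chain homotopic via some $\mathbb{Z}/2\mathbb{Z}$-graded homotopy $H$. To upgrade this to a $\mathbb{Z}$-graded chain homotopy I imitate the last paragraph of the proof of Proposition~\ref{prop:homotopy equivalence}: decompose $H=\sum_{i\in\mathbb{Z}}H_{2i+1}$ with $H_{2i+1}\colon A_{*}\to B_{*+2i+1}$; since $F-G$ preserves the original $\mathbb{Z}$-grading, the identities $\partial H_{2i+1}+H_{2i+1}\partial=0$ hold for all $i\neq 0$, so $H_{1}$ alone furnishes the desired $\mathbb{Z}$-graded chain homotopy.

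For the second claim the target is $H_{[1]}(\mathrm{Hom}(A,B))=0$. In Lemma~\ref{lem:H*(Hom(P,Q))} with $n=1$ the $\mathrm{Ext}^{1}$ terms vanish by freeness of $H_{*}(A)$, and each surviving term $\mathrm{Hom}_{\mathbb{F}[[U]]}(H_{[p]}(A),H_{[q]}(B))$ has $p+q\equiv 1\pmod{2}$, which forces one factor to be zero. The difference $H-K$ is a $\mathbb{Z}$-degree-$1$ cycle in $\mathrm{Hom}_{\mathbb{F}[[U]]}(A,B)$ (since $\partial(H-K)=(f-g)-(f-g)=0$), and by the above vanishing it is a boundary $H-K=\partial T'+T'\partial$ for some $T'$ of $\mathbb{Z}/2\mathbb{Z}$-degree zero. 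Decomposing $T'=\sum_{i\in\mathbb{Z}}T'_{2i}$ into its $\mathbb{Z}$-homogeneous pieces and matching $\mathbb{Z}$-degree shifts with $H-K$ (which shifts degree by exactly $+1$), only $T:=T'_{2}\colon A_{*}\to B_{*+2}$ contributes, producing the required homomorphism. The only conceptually new step in the whole argument is recognizing that $\mathbb{F}[[U]]$ is a PID so that Lemma~\ref{lem:H*(Hom(P,Q))} applies; beyond that the proof is formally parallel to Proposition~\ref{prop:homotopy equivalence} and I do not foresee a serious obstacle.
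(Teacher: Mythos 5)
Your proposal is correct and follows essentially the same route as the paper: both apply Lemma \ref{lem:H*(Hom(P,Q))} after noting that $d(A)$ is projective because $\mathbb{F}[[U]]$ is a P.I.D., compute $H_{[0]}(\mathrm{Hom}(A,B))\cong\mathbb{F}[[U]]$ and $H_{[1]}(\mathrm{Hom}(A,B))=0$, and then recover $\mathbb{Z}$-graded (co)homotopies from the $\mathbb{Z}/2\mathbb{Z}$-graded ones by decomposing into $\mathbb{Z}$-homogeneous pieces and matching degrees. No gaps; your explicit check that $F$ and $G$ both induce the identity on homology is a detail the paper leaves implicit.
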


\begin{proof}
First, we regard $A,B$ as $\mathbb{Z}/2\mathbb{Z}$-graded complexes
of $\mathbb{F}[[U]]$-modules. Since a P.I.D. is hereditary, every submodule of
 a free module over a P.I.D. is a projective module. See \cite{Weibel_homological} Definition
 4.2.10 and Exercise 4.2.6. Thus, $d(A),d(B)$ are both projective $\bb{F}[[U]]$-modules.
  Applying Lemma \ref{lem:H*(Hom(P,Q))}, we can compute
\begin{align*}
H_{[0]}(\mathrm{Hom}(A,B))&=\mathrm{Hom}(\bb{F}[[U]],\bb{F}[[U]])=\bb{F}[[U]],\\ H_{[1]}(\mathrm{Hom}(A,B))&=\mathrm{Ext}_{\mathbb{F}[[U]]}^{1}(\mathbb{F}[[U]],\mathbb{F}[[U]])=0.
\end{align*}

The first identity implies that the quasi-isomorphisms $F,G$ are chain homotopic as $\bb{Z}/2\bb{Z}$-graded complexes via $H$.
In order to get a homotopy between $F$ and $G$ preserving the $\bb{Z}$-grading, we decompose $H=\sum_{i\in \bb{Z}} H_{2i+1}$, where $H_{2i+1}:A_*\to B_{*+2i+1}$. Then similarly to Proposition \ref{prop:homotopy equivalence}, the map $H_1$ is also a chain homotopy between $F,G$.

Since $\partial(H-K)+(H-K)\partial=0$, the second identity implies that $H-K\in Z_{[1]}(\text{Hom}(A,B))=B_{[1]}(\text{Hom}(A,B))$.
This means there is a homomorphism of $\mathbb{F}[[U]]$-modules $T:A\rightarrow B$
preserving the $\mathbb{Z}/2\mathbb{Z}$-grading, such that $H-K=\partial T+T\partial.$
Thus, the map $T$ can be decomposed as $T=\sum_{i\in\mathbb{Z}}T_{2i}$,
where $T_{2i}:A_{*}\rightarrow B_{*+2i}$. From the fact that $H-K=\sum_{i\in\mathbb{Z}}\partial T_{2i}+T_{2i}\partial$
maps $A_{n}$ into $B_{n+1}$, it follows that $\partial T_{2i}+T_{2i}\partial:A_{*}\rightarrow B_{*+2i-1}$
vanish for all $i\neq1$. Thus $T=T_{2}:A_{*}\rightarrow B_{*+2}$.
\end{proof}

\begin{cor}
\label{cor:standard model of unknot}Suppose the complexes $A_{*}$
and $B_{*}$ are as in Proposition \ref{prop:homotopies on the diagonal}.
Then, $A_{*}$ and $B_{*}$ are chain homotopy equivalent as $\bb{F}[[U]]$-modules.
\end{cor}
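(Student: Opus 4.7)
The plan is to exhibit mutually homotopy-inverse quasi-isomorphisms $F\colon A\to B$ and $G\colon B\to A$ by combining Lemma \ref{lem:H*(Hom(P,Q))} with Proposition \ref{prop:homotopies on the diagonal}. Since Proposition \ref{prop:homotopies on the diagonal} already asserts that any two quasi-isomorphisms between two complexes of this type are $\mathbb{F}[[U]]$-linearly chain homotopic, the only work is to (i)~produce at least one quasi-isomorphism in each direction, and (ii)~apply the proposition to the compositions $G\circ F$ and $F\circ G$ to identify them with the respective identity maps up to homotopy.

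First I would construct $F\colon A\to B$ inducing the natural isomorphism $H_0(A)\cong\mathbb{F}[[U]]\cong H_0(B)$. Applying Lemma \ref{lem:H*(Hom(P,Q))} with $P=A$ and $Q=B$ is legitimate because $A$ and $d(A)$ are free, hence projective. The resulting short exact sequence shows that the evaluation map $H_{[0]}(\mathrm{Hom}(A,B))\to\bigoplus_{p+q=0}\mathrm{Hom}_{\mathbb{F}[[U]]}(H_{[p]}(A),H_{[q]}(B))$ is surjective, so I can lift the identity of $\mathbb{F}[[U]]$ to a $\mathbb{Z}/2\mathbb{Z}$-graded chain map. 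Decomposing this lift by the original $\mathbb{Z}$-grading and retaining the grading-preserving summand, exactly as in the proof of Proposition \ref{prop:homotopies on the diagonal}, produces a genuine $\mathbb{Z}$-graded quasi-isomorphism $F$. To manufacture $G\colon B\to A$, I would repeat the argument with the roles of $A$ and $B$ swapped; the input required is that $B$ and $d(B)$ be projective $\mathbb{F}[[U]]$-modules, which holds since $B$ is free and $\mathbb{F}[[U]]$ is a discrete valuation ring, so every submodule of a free $\mathbb{F}[[U]]$-module is itself free.

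With $F$ and $G$ in hand, the composition $G\circ F\colon A\to A$ is a quasi-isomorphism, as is $\mathrm{id}_A$. Proposition \ref{prop:homotopies on the diagonal} applied with both source and target equal to $A$ yields $G\circ F\simeq\mathrm{id}_A$ as $\mathbb{F}[[U]]$-linear maps. A symmetric application with both source and target equal to $B$ yields $F\circ G\simeq\mathrm{id}_B$, so $F$ and $G$ together furnish the desired $\mathbb{F}[[U]]$-linear chain homotopy equivalence. The only nontrivial verification beyond the preceding results is the projectivity of $d(B)$, which I would justify explicitly via the PID structure of $\mathbb{F}[[U]]$; I do not anticipate any serious obstacle, as the entire argument is the standard Whitehead-style deduction from the already-established uniqueness of quasi-isomorphisms up to homotopy.
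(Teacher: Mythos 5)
Your proposal is correct and follows essentially the same route as the paper: use Lemma \ref{lem:H*(Hom(P,Q))} to realize the identity of $H_{[0]}$ by a $\mathbb{Z}/2\mathbb{Z}$-graded chain map in each direction, extract the $\mathbb{Z}$-grading-preserving summand to get genuine quasi-isomorphisms $F$ and $G$, and then invoke Proposition \ref{prop:homotopies on the diagonal} to conclude $G\circ F\simeq\mathrm{id}_{A}$ and $F\circ G\simeq\mathrm{id}_{B}$. The projectivity of $d(B)$ that you flag is handled in the paper exactly as you propose, via the fact that $\mathbb{F}[[U]]$ is a P.I.D.
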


\begin{proof}

From the proof of Proposition \ref{prop:homotopies on the diagonal} we see $H_{[0]}(\text{Hom}(A_{*},B_{*}))=\mathrm{Hom}(H_{[0]}(A),H_{[0]}(B))=\bb{F}[[U]],$
which implies that there exists a quasi-isomorphism $h:A\rightarrow B$
as $\mathbb{Z}/2\mathbb{Z}$-graded chain complex of $\mathbb{F}[[U]]$-modules.
Decompose $h$ as $h=h_{0}+h_{1}$ such that for all $a\in A_{n}$,
$h_{0}(a)\in B_{n},h_{1}(a)\in\bigoplus_{i\neq0}B_{n+2i}$. Then $h\partial_{A}=\partial_{B}h$
implies that $h_{0}\partial_{A}=\partial_{B}h_{0},h_{1}\partial_{A}=\partial_{B}h_{1}$,
so $h_{0}$ is also a chain map preserving the $\mathbb{Z}$-grading.
Since $Uh=hU$ and the $U$-action drops the $\bb{Z}$-grading by
2, we have $h_{0}U+Uh_{0}=0$. In addition, $h_{0}$ is also
a quasi-isomorphism. Hence, on the homology level, $h_{0}(1)=1\in\mathbb{F}[[U]].$

Similarly, we have another quasi-isomorphism $g_{0}:B_{*}\rightarrow A_{*}$
preserving the $\mathbb{Z}$-grading, such that $g_0\partial_{B}=\partial_{A}g_{0},g_{0}U=Ug_{0}$.
Then, $g_{0}h_{0}:A_{*}\rightarrow A_{*}$ is a quasi-isomorphism.
From Proposition \ref{prop:homotopies on the diagonal}, it follows that $g_{0}h_{0}-id_{A}=\partial H+H\partial$,
where $H$ is chain homotopy of $\mathbb{Z}$-graded complexes commuting
with the $U$-action. Similarly, we have that $h_{0}g_{0}$ is homotopic
to $id_{B}.$
\end{proof}

\subsection{Destabilization maps.}

In the link surgery formula, the part of polygon counts for defining
the destabilization maps is difficult to read off from the Heegaard
diagram. However, in the case of two-bridge links we
can use the algebraic rigidity result to avoid the difficulty.

In a primitive system of hyperboxes, all the destabilization maps
we need are listed below:
\begin{eqnarray*}
D_{-\infty,s_{2}}^{-L_{1}}: & \mathfrak{A}(\mathcal{H}^{L},-\infty,s_{2})\rightarrow\mathfrak{A}^{-}(\mathcal{H}^{L},+\infty,s_{2}+\mathrm{lk}(L_{1},L_{2})),\\
D_{s_{1},-\infty}^{-L_{2}}: & \mathfrak{A}(\mathcal{H}^{L},s_{1},-\infty)\rightarrow\mathfrak{A}^{-}(\mathcal{H}^{L_{2}},s_{1}+\mathrm{lk}(L_{1},L_{2}),+\infty),\\
D_{-\infty,-\infty}^{-L_{1}\cup-L_{2}}: & \mathfrak{A}^{-}(\mathcal{H}^{L},-\infty,-\infty)\rightarrow\mathfrak{A}^{-}(\mathcal{H}^{L},+\infty,+\infty).
\end{eqnarray*}

Second, notice that all the domains and targets of these maps have
homology $\mathbb{F}[[U_{1},U_{2}]]/U_{1}-U_{2}$, which is isomorphic to
$\mathbb{F}[[U_1]]$ as an $\mathbb{F}[[U_1]]$-module. By Proposition
\ref{prop:homotopies on the diagonal}, we can substitute $D_{-\infty,s_{2}}^{-L_{1}},D_{s_{1},-\infty}^{-L_{2}}$
by any $\mathbb{F}[[U_1]]$-linear homotopy equivalence, since they are all homotopic as homomorphisms
of $\mathbb{F}[[U_{1}]]$-modules. We can also substitute the diagonal maps $D_{-\infty,-\infty}^{-L_{1}\cup-L_{2}}$
by any $\mathbb{F}[[U_{1}]]$-linear homotopy shifting grading by
1, since they are homotopic up to higher $\mathbb{F}[[U_{1}]]$-linear homotopy. We will show an invariance theorem
of the surgery square under perturbations of the edge maps and the
diagonal maps in the next sections.

\subsection{Perturbed surgery complex for two-bridge links.}

The rigidity results in  Section 5.1 allow us to perturb
the edge and diagonal maps up to homotopies, in the surgery square
for two-bridge links. However, in order to obtain a square of chain
complexes, we still need some more modifications of the square.

First, suppose we have a hypercube $(C^{\varepsilon},D^{\varepsilon})$.
If we change $D{}_{\varepsilon^{0}}^{\varepsilon}$ to $D'{}_{\varepsilon^{0}}^{\varepsilon}=D{}_{\varepsilon^{0}}^{\varepsilon}+\Delta D{}_{\varepsilon^{0}}^{\varepsilon}$
for all $\varepsilon$ with $\Vert\varepsilon\Vert>0$, then in order
to have a hypercube again, we need to have
\begin{eqnarray*}
\sum_{\varepsilon'\leq\varepsilon}(D_{\varepsilon^{0}+\varepsilon'}^{\varepsilon-\varepsilon'}+\Delta D{}_{\varepsilon^{0}+\varepsilon'}^{\varepsilon-\varepsilon'})\circ(D_{\varepsilon^{0}}^{\varepsilon'}+\Delta D{}_{\varepsilon^{0}}^{\varepsilon}) & = & 0,\\
\sum_{\varepsilon'\leq\varepsilon}\Delta D{}_{\varepsilon^{0}+\varepsilon'}^{\varepsilon-\varepsilon'}\circ D_{\varepsilon^{0}}^{\varepsilon'}+D_{\varepsilon^{0}+\varepsilon'}^{\varepsilon-\varepsilon'}\circ\Delta D{}_{\varepsilon^{0}}^{\varepsilon}+\Delta D{}_{\varepsilon^{0}+\varepsilon'}^{\varepsilon-\varepsilon'}\circ\Delta D{}_{\varepsilon^{0}}^{\varepsilon} & = & 0.
\end{eqnarray*}
This formula provides a necessary condition to inductively perturb
the maps from edges to the longest diagonal. Based on the above principles,
we get the following procedures to construct the perturbed surgery
square.

Suppose $\mathcal{H}$ be a primitive system of hyperboxes of a two-bridge
link $L$ and consider Equation (\ref{eq:surgery square}). Now we
choose an arbitrary $\mathbb{F}[[U_{1}]]$-linear quasi-isomorphism $\tilde{D}_{s_{1},s_{2}}^{-L_{i}}$
 for substituting $D_{s_{1},s_{2}}^{-L_{i}}$.
By Proposition \ref{prop:homotopies on the diagonal}, $\tilde{D}_{s_{1},s_{2}}^{-L_{i}}$
and $D_{s_{1},s_{2}}^{-L_{i}}$ are homotopic by a $\mathbb{F}[[U_{1}]]$-linear
homotopy $H_{s_{1},s_{2}}^{-L_{i}}$:
\[
\tilde{D}_{\text{s}}^{-L_{i}}=D_{\text{s}}^{-L_{i}}+H_{\text{s}}^{-L_{i}}\partial_{\text{s}}^{-}+\partial_{p^{L_{i}}(\text{s})}^{-}H_{\text{s}}^{-L_{i}}.
\]
 Then, we choose any $\mathbb{F}[[U_{1}]]$-linear maps $\tilde{F}_{s_{1},-\infty}^{\pm L_{1}\cup-L_{2}},\tilde{F}_{-\infty,s_{2}}^{-L_{1}\cup\pm L_{2}},\tilde{D}_{-\infty,-\infty}^{-L_{1}\cup-L_{2}}$
which are homotopies in each square of Equation (\ref{eq:surgery square}),
such that the following rectangles are hyperboxes of chain complexes:
\begin{align}
\xyC{2.5pc}\xyR{2.5pc}\xymatrix{\mathit{A_{s_{1},s_{2}}^{-}}\ar[d]_{\mathit{I_{s_{1},s_{2}}^{-L_{1}}}}\ar[r]^{\mathit{I_{s_{1},s_{2}}^{-L_{2}}}}\ar[dr]|-{\mathit{I_{s_{1},s_{2}}^{-L_{1}\cup-L_{2}}}} & \mathit{A_{s_{1},-\infty}^{-}}\ar[d]|-{\mathit{I_{s_{1},-\infty}^{-L_{1}}}}\ar[r]^{\mathit{\tilde{D}_{s_{1},-\infty}^{-L_{2}}}}\ar[dr]|-{\mathit{\tilde{F}_{s_{1},-\infty}^{-L_{1}\cup-L_{2}}}} & \mathit{A_{s_{1}+\mathrm{lk},+\infty}^{-}}\ar[d]^{\mathit{I_{s_{1}+\mathrm{lk},+\infty}^{-L_{1}}}}\\
\mathit{A_{-\infty,s_{2}}^{-}}\ar[d]_{\mathit{\tilde{D}_{-\infty,s_{2}}^{-L_{1}}}}\ar[r]_{\mathit{I_{-\infty,s_{2}}^{-L_{2}}}}\ar[dr]|-{\mathit{\tilde{F}_{-\infty,s_{2}}^{-L_{1}\cup-L_{2}}}} & \mathit{A_{-\infty,-\infty}^{-}}\ar[d]|-{\mathit{\tilde{D}_{-\infty,-\infty}^{-L_{1}}}}\ar[r]^{\mathit{\tilde{D}_{-\infty,-\infty}^{-L_{2}}}}\ar[dr]|-{\mathit{\tilde{D}_{-\infty,-\infty}^{-L_{1}\cup-L_{2}}}} & \mathit{A_{-\infty,+\infty}^{-}}\ar[d]^{\mathit{\tilde{D}_{-\infty,+\infty}^{-L_{1}}}}\\
\mathit{A_{+\infty,s_{2}+\mathrm{lk}}^{-}}\ar[r]_{\mathit{I_{+\infty,s_{2}+\mathrm{lk}}^{-L_{2}}}} & \mathit{A_{+\infty,-\infty}^{-}}\ar[r]_{\mathit{\tilde{D}_{+\infty,-\infty}^{-L_{2}}}} & \mathit{A_{+\infty,+\infty}^{-};}
}
 & \xyC{2.5pc}\xyR{2.5pc}\xymatrix{\mathit{A_{s_{1},s_{2}}^{-}}\ar[d]_{\mathit{I_{s_{1},s_{2}}^{+L_{1}}}}\ar[r]^{\mathit{I_{s_{1},s_{2}}^{-L_{2}}}}\ar[dr]|-{\mathit{I_{s_{1},s_{2}}^{+L_{1}\cup-L_{2}}}} & \mathit{A_{s_{1},-\infty}^{-}}\ar[d]|-{\mathit{I_{s_{1},-\infty}^{+L_{1}}}}\ar[r]^{\mathit{\tilde{D}_{s_{1},-\infty}^{-L_{2}}}}\ar[dr]|-{\mathit{\tilde{F}_{s_{1},-\infty}^{+L_{1}\cup-L_{2}}}} & \mathit{A_{s_{1}+\mathrm{lk},+\infty}^{-}}\ar[d]^{\mathit{I_{s_{1},+\infty}^{+L_{1}}}}\\
\mathit{A_{+\infty,s_{2}}^{-}}\ar[d]_{\mathit{id}}\ar[r]_{\mathit{I_{+\infty,s_{2}}^{-L_{2}}}} & \mathit{A_{+\infty,-\infty}^{-}}\ar[d]_{\mathit{id}}\ar[r]_{\mathit{\tilde{D}_{+\infty,-\infty}^{-L_{2}}}} & \mathit{A_{+\infty,+\infty}^{-}}\ar[d]^{\mathit{id}}\\
\mathit{A_{+\infty,s_{2}}^{-}}\ar[r]_{\mathit{I_{+\infty,s_{2}}^{-L_{2}}}} & \mathit{A_{+\infty,-\infty}^{-}}\ar[r]_{\mathit{\tilde{D}_{+\infty,-\infty}^{-L_{2}}}} & \mathit{A_{+\infty,+\infty}^{-};}
}
\label{eq:perturbed rect}\\
\xyC{2.5pc}\xyR{2.5pc}\xymatrix{\mathit{A_{s_{1},s_{2}}^{-}}\ar[d]_{\mathit{I_{s_{1},s_{2}}^{-L_{1}}}}\ar[r]^{\mathit{I_{s_{1},s_{2}}^{+L_{2}}}}\ar[dr]|-{\mathit{I_{s_{1},s_{2}}^{-L_{1}\cup+L_{2}}}} & \mathit{A_{s_{1},+\infty}^{-}}\ar[d]^{\mathit{I_{s_{1},+\infty}^{-L_{1}}}}\ar[r]^{\mathit{id}} & \mathit{A_{s_{1},+\infty}^{-}}\ar[d]^{\mathit{I_{s_{1},+\infty}^{-L_{1}}}}\\
\mathit{A_{-\infty,s_{2}}^{-}}\ar[d]_{\mathit{\tilde{D}_{-\infty,s_{2}}^{-L_{1}}}}\ar[r]_{\mathit{I_{-\infty,s_{2}}^{+L_{2}}}}\ar[dr]|-{\mathit{\tilde{F}_{-\infty,s_{2}}^{-L_{1}\cup+L_{2}}}} & \mathit{A_{-\infty,+\infty}^{-}}\ar[d]^{\mathit{\tilde{D}_{-\infty,+\infty}^{-L_{1}}}}\ar[r]^{\mathit{id}} & \mathit{A_{-\infty,+\infty}^{-}}\ar[d]^{\mathit{\tilde{D}_{-\infty,+\infty}^{-L_{1}}}}\\
\mathit{A_{+\infty,s_{2}+\mathrm{lk}}^{-}}\ar[r]_{\mathit{I_{+\infty,s_{2}}^{+L_{2}}}} & \mathit{A_{+\infty,+\infty}^{-}}\ar[r]_{\mathit{id}} & \mathit{A_{+\infty,+\infty}^{-};}
}
 & \xyC{2.5pc}\xyR{2.5pc}\mathit{\xymatrix{\mathit{A_{s_{1},s_{2}}^{-}}\ar[d]_{\mathit{I_{s_{1},s_{2}}^{+L_{1}}}}\ar[r]^{\mathit{I_{s_{1},s_{2}}^{+L_{2}}}}\ar[dr]|-{\mathit{I_{s_{1},s_{2}}^{+L_{1}\cup+L_{2}}}} & \mathit{A_{s_{1},+\infty}^{-}}\ar[d]^{\mathit{I_{s_{1},+\infty}^{+L_{1}}}}\ar[r]^{\mathit{id}} & \mathit{A_{s_{1},+\infty}^{-}}\ar[d]^{\mathit{I_{s_{1},+\infty}^{+L_{1}}}}\\
\mathit{A_{+\infty,s_{2}}^{-}}\ar[d]_{\mathit{id}}\ar[r]_{\mathit{I_{+\infty,s_{2}}^{+L_{2}}}} & \mathit{A_{+\infty,+\infty}^{-}}\ar[d]^{\mathit{id}}\ar[r]^{\mathit{id}} & \mathit{A_{+\infty,+\infty}^{-}}\ar[d]^{\mathit{id}}\\
\mathit{A_{+\infty,s_{2}}^{-}}\ar[r]_{\mathit{I_{+\infty,s_{2}}^{+L_{2}}}} & \mathit{\mathit{A_{+\infty,+\infty}^{-}}}\ar[r]_{\mathit{\mathit{id}}} & \mathit{\mathit{A_{+\infty,+\infty}^{-}.}}
}
}\nonumber
\end{align}

\begin{defn}[Perturbed surgery square]
The above rectangles in Equation \eqref{eq:perturbed rect} are called
\emph{perturbed surgery rectangles} for the two-bridge link $L$. After compressing
them, we get four sets of squares,
\[
\xyR{2pc}\xyC{4pc}\xymatrix{A_{s_{1},s_{2}}^{-}\ar[d]_{\tilde{\Phi}_{s_{1},s_{2}}^{-L_{1}}}\ar[r]^{\tilde{\Phi}_{s_{1},s_{2}}^{-L_{2}}}\ar[dr]|-{\tilde{\Phi}_{s_{1},s_{2}}^{-L_{1}\cup-L_{2}}} & A_{s_{1+\mathrm{lk}},+\infty}^{-}\ar[d]^{\tilde{\Phi}_{s_{1+\mathrm{lk}},+\infty}^{-L_{1}}}\\
A_{+\infty,s_{2}+\mathrm{lk}}^{-}\ar[r]_{\tilde{\Phi}_{+\infty,s_{2}+\mathrm{lk}}^{-L_{2}}} & A_{+\infty,+\infty}^{-};
}
\xyR{2pc}\xyC{4pc}\xymatrix{A_{s_{1},s_{2}}^{-}\ar[d]_{\tilde{\Phi}_{s_{1},s_{2}}^{-L_{1}}}\ar[r]^{\tilde{\Phi}_{s_{1},s_{2}}^{+L_{2}}}\ar[dr]|-{\tilde{\Phi}_{s_{1},s_{2}}^{-L_{1}\cup+L_{2}}} & A_{s_{1},+\infty}^{-}\ar[d]^{\tilde{\Phi}_{s_{1},+\infty}^{-L_{1}}}\\
A_{+\infty,s_{2}+\mathrm{lk}}^{-}\ar[r]_{\tilde{\Phi}_{+\infty,s_{2}+\mathrm{lk}}^{+L_{2}}} & A_{+\infty,+\infty}^{-};
}
\]

\[
\xyR{2pc}\xyC{4.5pc}\xymatrix{A_{s_{1},s_{2}}^{-}\ar[d]_{\tilde{\Phi}_{s_{1},s_{2}}^{+L_{1}}}\ar[r]^{\tilde{\Phi}_{s_{1},s_{2}}^{-L_{2}}}\ar[dr]|-{\tilde{\Phi}_{s_{1},s_{2}}^{+L_{1}\cup-L_{2}}} & A_{s_{1}+\mathrm{lk},+\infty}^{-}\ar[d]^{\tilde{\Phi}_{s_{1}+\mathrm{lk},+\infty}^{+L_{1}}}\\
A_{+\infty,s_{2}}^{-}\ar[r]_{\tilde{\Phi}_{+\infty,s_{2}}^{-L_{2}}} & A_{+\infty,+\infty}^{-};
}
\xyR{2pc}\xyC{4.5pc}\xymatrix{A_{s_{1},s_{2}}^{-}\ar[d]_{\tilde{\Phi}_{s_{1},s_{2}}^{+L_{1}}}\ar[r]^{\tilde{\Phi}_{s_{1},s_{2}}^{+L_{2}}}\ar[dr]|-{\tilde{\Phi}_{s_{1},s_{2}}^{+L_{1}\cup+L_{2}}} & A_{s_{1},+\infty}^{-}\ar[d]^{\tilde{\Phi}_{s_{1},+\infty}^{+L_{1}}}\\
A_{+\infty,s_{2}}^{-}\ar[r]_{\tilde{\Phi}_{+\infty,s_{2}}^{+L_{2}}} & A_{+\infty,+\infty}^{-}.
}
\]
After a $\Lambda$-twisted gluing of the above squares, we obtain
\emph{a perturbed surgery square} $(\mathcal{\tilde{C}}^{-}(\mathcal{H}^{L},\Lambda),\tilde{\mathcal{D}}^{-})$.
\end{defn}

\begin{rem}
In the definition, a perturbed surgery square depends on the choices
of the maps $\tilde{D}_{s_{1},s_{2}}^{-L_{i}},\tilde{F}_{s_{1},-\infty}^{\pm L_{1}\cup-L_{2}},\tilde{F}_{-\infty,s_{2}}^{\pm L_{1}\cup-L_{2}},\tilde{D}_{-\infty,-\infty}^{-L_{1}\cup-L_{2}}$.
However, we will show it is isomorphic to the original square as $\mathbb{F}[[U_{1}]]$-module.
\end{rem}

\subsection{Invariance of the perturbed surgery complex. }

Now we establish the invariance of the perturbed surgery complex for
two-bridge links under the change of edge maps and some diagonal maps
up to chain homotopies.
\begin{prop}
\label{prop:mapping cone iso.}Let $R$ be a $\mathbb{F}$-algebra. Suppose $f,g:A\rightarrow B$ be two chain
maps between two chain complexes of $R$-modules. If $f,g$ are homotopic
to each other by $f\overset{H}{\rightarrow}g$, then the mapping cones
$\mathrm{cone}(f),\mathrm{cone}(g)$ are isomorphic.
\end{prop}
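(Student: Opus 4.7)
The plan is to write down an explicit isomorphism between $\mathrm{cone}(f)$ and $\mathrm{cone}(g)$ using the homotopy $H$, and verify the chain map property by a direct matrix computation. Since the paper works over $\mathbb{F} = \mathbb{Z}/2\mathbb{Z}$, sign issues will be trivial, but I will write the argument so that it is valid over any $\mathbb{F}$-algebra.

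Recall that $\mathrm{cone}(f) = A[-1] \oplus B$ with differential represented (in characteristic $2$) by the matrix
\[
d_f = \begin{pmatrix} \partial_A & 0 \\ f & \partial_B \end{pmatrix},
\]
and similarly for $d_g$. First I will define the candidate map
\[
\Phi = \begin{pmatrix} \mathrm{id}_A & 0 \\ H & \mathrm{id}_B \end{pmatrix} : \mathrm{cone}(f) \longrightarrow \mathrm{cone}(g).
\]
This is manifestly an $R$-module isomorphism, with two-sided inverse $\Phi^{-1}$ obtained by replacing $H$ with $-H$ (equivalently $H$ itself in our characteristic-$2$ setting), since the lower-triangular unipotent matrices form a group.

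Next I will check that $\Phi$ is a chain map, i.e.\ $\Phi \circ d_f = d_g \circ \Phi$. A direct matrix multiplication reduces this to the identity
\[
H\partial_A + f = g + \partial_B H,
\]
which is precisely the chain homotopy relation $f - g = \partial_B H + H \partial_A$ written out in its two components. Hence $\Phi$ is a chain isomorphism, completing the proof.

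There is really no obstacle here; the only thing to be careful about is the sign convention in the cone differential if one later wants to extend the statement to $\mathbb{Z}$-coefficients, but over $\mathbb{F} = \mathbb{Z}/2\mathbb{Z}$ this is automatic. The result will be used in the sequel to justify the invariance of the perturbed surgery square under the homotopy-theoretic replacements $D^{-L_i}_{\mathrm{s}} \leadsto \tilde{D}^{-L_i}_{\mathrm{s}}$.
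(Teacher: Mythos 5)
Your proof is correct and is essentially the same as the paper's: the paper's maps $K_{1},K_{2}$ (defined by $a\mapsto a+H(a)$, $b\mapsto b$) are exactly your lower-triangular unipotent matrix $\Phi$ and its inverse, and the chain-map check reduces to the same homotopy identity $H\partial_{A}+f=g+\partial_{B}H$. Nothing further is needed.
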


\begin{proof}
We directly construct the isomorphism between the mapping cones $\text{cone}(f)$
and $\text{cone}(g)$. Define $K_{1}:\text{cone}(f)\rightarrow\text{cone}(g),K_{2}:\text{cone}(g)\rightarrow\text{cone}(f)$
by
\begin{eqnarray*}
K_{1}|_{A} & = & id_{A}\oplus H,K_{1}|_{B}=id_{B},\\
K_{2}|_{A} & = & id_{A}\oplus H,K_{2}|_{B}=id_{B}.
\end{eqnarray*}
 In fact, $K_{1}$ is a chain map, since $\forall a\in A,b\in B,$
\begin{eqnarray*}
K_{1}\partial_{f}(a)+\partial_{g}K_{1}(a) & = & K_{1}(\partial_{A}(a)+f(a))+\partial_{g}(a+H(a))\\
 & = & \partial(a)+H\partial_{A}(a)+f(a)+\partial_{A}(a)+g(a)+\partial_{B}H(a)=0,\\
K_{1}\partial_{f}(b)+\partial_{g}K_{1}(b) & = & K_{1}\partial_{B}(b)+\partial_{g}(b)=\partial_{B}(b)+\partial_{B}(b)=0.
\end{eqnarray*}
Moreover, $K_{2}K_{1}$ is $id_{\text{cone}(f)}$, since
\begin{eqnarray*}
K_{2}K_{1}(a) & = & K_{2}(a+H(a))=a+H(a)+H(a)=a,\\
K_{2}K_{1}(b) & = & K_{2}(b)=b,\forall a\in A,b\in B.
\end{eqnarray*}
\[
\xyC{2pc}\xyR{2pc}\xymatrix{A\ar[d]_{f}\ar[r]^{id_{A}}\ar[dr]|-{H} & A\ar[r]^{id_{A}}\ar[d]_{g}\ar[dr]|-{H} & A\ar[d]_{f}\\
B\ar[r]_{id_{B}} & B\ar[r]_{id_{B}} & B
}
\]

\end{proof}
There is a hyperbox version of Proposition \ref{prop:mapping cone iso.}.
\begin{defn}
A hyperbox of chain complexes $R$ is said to be \emph{isomorphic}
to another hyperbox $R'$, if there are chain maps of hyperboxes $F:R\rightarrow R',G:R'\rightarrow R$,
such that $F\circ G=id_{R'},G\circ F=id_{R}$. \end{defn}

\begin{prop}
\label{prop:replacing_maps}Let $R=((C^{\varepsilon})_{\varepsilon\in\mathbb{E}(\text{(d,}1))},(D^{\varepsilon})_{\varepsilon\in\mathbb{E}(n+1)})$
be a hyperbox of chain complexes of size $({\bf d},1)\in\mathbb{Z}_{\geq0}^{n+1}$.
If all the edge maps $D^{({\bf 0},1)}=id$, where ${\bf 0}=(0,...,0)\in\mathbb{Z}^{n}$,
then $R$ induces an isomorphism from the subhyperbox $R^{\varepsilon_{n+1}=0}$
to the subhyperbox $R^{\varepsilon_{n+1}=1}$. \end{prop}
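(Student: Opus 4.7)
The plan is to interpret the hyperbox $R$ of size $(\mathbf{d},1)$ as the data of a chain map $F\colon R^{\varepsilon_{n+1}=0}\to R^{\varepsilon_{n+1}=1}$ of $n$-dimensional hyperboxes, with components $F^{\varepsilon}_{\varepsilon^{0}}=D^{(\varepsilon,1)}_{(\varepsilon^{0},0)}$ for $\varepsilon\in\mathbb{E}_{n}$ and $\varepsilon^{0}\in\mathbb{E}(\mathbf{d})$. The assumption $D^{(\mathbf{0},1)}=\mathrm{id}$ identifies $C^{(\varepsilon^{0},0)}=C^{(\varepsilon^{0},1)}$ at each vertex and forces $F^{\mathbf{0}}_{\varepsilon^{0}}=\mathrm{id}$. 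To prove the proposition, it suffices to exhibit a chain map $G\colon R^{\varepsilon_{n+1}=1}\to R^{\varepsilon_{n+1}=0}$ inverse to $F$ under composition of chain maps of hyperboxes.

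I would work in the convolution algebra of collections $\{H^{\varepsilon}_{\varepsilon^{0}}\}_{\varepsilon\in\mathbb{E}_{n},\varepsilon^{0}\in\mathbb{E}(\mathbf{d})}$, with $H^{\varepsilon}_{\varepsilon^{0}}\colon C^{\varepsilon^{0}}\to C^{\varepsilon^{0}+\varepsilon}$, product $(HK)^{\varepsilon}_{\varepsilon^{0}}=\sum_{\varepsilon'\leq\varepsilon}H^{\varepsilon-\varepsilon'}_{\varepsilon^{0}+\varepsilon'}\circ K^{\varepsilon'}_{\varepsilon^{0}}$, and unit $\mathrm{id}^{\varepsilon}=\delta_{\varepsilon,\mathbf{0}}\cdot\mathrm{id}$. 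Here $F=\mathrm{id}+N$ with $N^{\mathbf{0}}=0$. A convolution power $(N^{k})^{\varepsilon}_{\varepsilon^{0}}$ is a sum over decompositions $\varepsilon=\varepsilon^{1}+\cdots+\varepsilon^{k}$ in $\mathbb{E}_{n}$ with each $\varepsilon^{i}\neq\mathbf{0}$; each such decomposition forces $\|\varepsilon\|\geq k$, and since $\|\varepsilon\|\leq n$ we get $N^{n+1}=0$. Working over $\mathbb{F}=\mathbb{Z}/2\mathbb{Z}$, the finite sum
\[
G:=\sum_{k=0}^{n}N^{k}
\]
is therefore a two-sided inverse of $F$ by telescoping: $FG=GF=\mathrm{id}$.

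The main technical point is to verify that $G$ is a chain map of hyperboxes. I would package the differentials of $R^{\varepsilon_{n+1}=0}$ and $R^{\varepsilon_{n+1}=1}$ into elements $\mathcal{D}^{0},\mathcal{D}^{1}$ of the same convolution algebra and unpack the hyperbox relation $\sum_{\varepsilon'\leq\varepsilon}D^{\varepsilon-\varepsilon'}_{\varepsilon^{0}+\varepsilon'}\circ D^{\varepsilon'}_{\varepsilon^{0}}=0$ for $R$ in the directions $(\varepsilon_{1},1)\in\mathbb{E}_{n+1}$. This produces the single identity $\mathcal{D}^{1}F+F\mathcal{D}^{0}=0$. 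Left- and right-multiplying by $G=F^{-1}$ then yields $\mathcal{D}^{0}G+G\mathcal{D}^{1}=0$, which is exactly the chain map condition for $G\colon R^{\varepsilon_{n+1}=1}\to R^{\varepsilon_{n+1}=0}$. Combined with $FG=GF=\mathrm{id}$, this exhibits $F,G$ as a mutually inverse pair of chain maps, giving the required isomorphism.

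The main obstacle is the bookkeeping needed to set up the convolution algebra precisely, namely checking associativity and that the ``unitriangular'' formal inversion respects the restriction of the index $\varepsilon$ to $\mathbb{E}_{n}$ (the latter holds because convolution cannot enlarge the allowed directions beyond $\mathbb{E}_n$, and $N^{k}$ already vanishes before any such issue could arise). Once that framework is in place, the whole argument becomes purely algebraic: the characteristic-two simplification removes all signs in both $N^{k}$ and the chain map identities, and the higher-dimensional case reduces to the same formal manipulation that underlies the one-dimensional Proposition \ref{prop:mapping cone iso.}.
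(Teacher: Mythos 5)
Your proposal is correct, and it takes a genuinely different route from the paper. The paper proves this by an explicit double induction: first on dimension for hypercubes, viewing $R$ as a square of lower-dimensional hypercubes $\bigl(R^{00}\to R^{10}\bigr)\Rightarrow\bigl(R^{01}\to R^{11}\bigr)$ and writing the inverse by hand as $h_{1}^{-1}+h_{2}^{-1}\circ H\circ h_{1}^{-1}+h_{2}^{-1}$, and then on the size $\mathbf{d}$ for general hyperboxes, slicing $R$ into a rectangle of hyperboxes and producing the inverse square-by-square with the homotopies $h_{j}^{-1}\circ H_{j}\circ h_{j-1}^{-1}$. Your argument replaces all of this with a single unipotent-inversion computation: $F=\mathrm{id}+N$ with $N^{\mathbf{0}}=0$ is nilpotent in the convolution algebra because every nonzero direction costs at least one unit of $\Vert\varepsilon\Vert\leq n$, so $G=\sum_{k=0}^{n}N^{k}$ is a two-sided inverse over $\mathbb{F}$, and the chain-map condition for $G$ follows by conjugating $\mathcal{D}^{1}F+F\mathcal{D}^{0}=0$ (which is exactly the content of the hyperbox relations of $R$ in the directions with $\varepsilon_{n+1}=1$; the remaining relations are just $(\mathcal{D}^{i})^{2}=0$). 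The convolution product you use is precisely the paper's composition of chain maps of hyperboxes --- it appears explicitly in the paper's formula for $h_{j}^{-1}\circ H_{j}\circ h_{j-1}^{-1}$ --- and its associativity is the same routine reindexing over ordered decompositions $\varepsilon=\varepsilon'+\varepsilon''+\varepsilon'''$ that the paper invokes; expanding your geometric series in low dimensions recovers the paper's explicit inverses. What your approach buys is uniformity (no case split between hypercubes and hyperboxes, no induction) and a closed formula for the inverse; what the paper's approach buys is that the intermediate objects ($h_{j}$, $H_{j}$, their conjugates) are the ones reused elsewhere in the compression machinery. The one point worth writing out carefully if you formalize this is that the truncation of indices to $\mathbb{E}_{n}$ (and to $\varepsilon^{0}+\varepsilon\in\mathbb{E}(\mathbf{d})$) is compatible with associativity, which holds because the product only ever sums over splittings $\varepsilon'\leq\varepsilon''\leq\varepsilon$ of an index already in range.
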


\begin{proof}
We first show the case of hypercubes by induction, i.e., ${\bf d}=(1,...,1)\in\mathbb{Z}^{n}$.

When $n=1$, this is exactly Proposition 5.8. When $n>1$, let us
make some notations at first. There is a $(n-1)$-dimensional subhypercube
corresponding to $\varepsilon_{n}=\varepsilon_{n+1}=0$, denoted by
$R^{00}$, and there is also a $(n-1)$-dimensional subhypercube corresponding
to $\varepsilon_{n}=0,\varepsilon_{n+1}=1$, denoted by $R^{01}$.
Similarly, the subhypercube corresponding to $\varepsilon_{n}=1,\varepsilon_{n+1}=0$
is denoted by $R^{10}$, and the hypercube corresponding to $\varepsilon_{n}=\varepsilon_{n+1}=1$
is denoted by $R^{11}$. Then we can view the hypercube $R$ as the
following square of hypercubes.

\[
\xymatrix{R^{00}\ar[r]^{f}\ar[d]_{h_{1}}\ar[rd]^{H} & R^{10}\ar[d]^{h_{2}}\\
R^{01}\ar[r]_{f'} & R^{11}
}
\]
Notice that $f,f',h_{1},h_{2}$ are chain maps of hypercubes, and
$H$ is a chain homotopy of hypercubes between the chain maps. In
other words, we have
\[
h_{1}\circ D|_{R^{00}}=D|_{R^{10}}\circ h_{1},h_{2}\circ D|_{R^{01}}=D|_{R^{11}}\circ h_{2},\ H\circ D|_{R^{00}}+D|_{R^{11}}\circ H=h_{2}\circ f+f'\circ h_{1}.
\]

By induction, the $(n-1)$-dimensional subhypercube corresponding to $\varepsilon_{n}=0$
induces the isomorphism $h_{1}$. Thus, we have a chain map of hypercubes
$h_{1}^{-1}:R^{01}\rightarrow R^{00}$, such that $h_{1}h_{1}^{-1}=id_{R^{01}}$
and $h_{1}^{-1}h_{1}=id_{R^{00}}.$ Similarly, we have $h_{2}^{-1}:R^{11}\rightarrow R^{10}$
as the inverse of $h_{2}$. The hypercube $R$ induces a chain map
$h_{1}+H+h_{2}$ from the subhypercube $R^{\varepsilon_{n+1}=0}$
to the subhypercube $R^{\varepsilon_{n+1}=1}$. We show that the chain map
of hyperboxes $h_{1}+H+h_{2}:C^{\varepsilon_{n+1}=0}\rightarrow C^{\varepsilon_{n+1}=1}$
is an isomorphism, by directly constructing the inverse $h_{1}^{-1}+h_{2}^{-1}\circ H\circ h_{1}^{-1}+h_{2}^{-1}:R^{\varepsilon_{n+1}=1}\rightarrow R^{\varepsilon_{n+1}=0}$,
which is induced by the following hypercube:
\[
K=\xyR{2pc}\xyC{2pc}\xymatrix{R^{01}\ar[r]^{f'}\ar[rd]|-{\ \ h_{2}^{-1}\circ H\circ h_{1}^{-1}}\ar[d]_{h_{1}^{-1}} & R^{11}\ar[d]^{h_{2}^{-1}}\\
R^{00}\ar[r]_{f} & R^{10}.
}
\]
Here, the map $h_{2}^{-1}\circ H\circ h_{1}^{-1}$ is the
composition of maps $h_{2}^{-1},H,h_{1}^{-1}$.

The following two rectangles of hypercubes show that $h_{1}^{-1}+h_{2}^{-1}Hh_{1}^{-1}+h_{2}^{-1}$
is the inverse of $h_{1}+H+h_{2}$.

\begin{align*}
\xyR{2pc}\xyC{2pc}\xymatrix{C^{00}\ar[r]^{f}\ar[d]_{h_{1}}\ar[rd]^{H} & C^{10}\ar[d]^{h_{2}}\\
C^{01}\ar[r]^{f'}\ar[rd]|-{\ \ h_{2}^{-1}Hh_{1}^{-1}}\ar[d]_{h_{1}^{-1}} & C^{11}\ar[d]^{h_{2}^{-1}}\\
C^{00}\ar[r]_{f} & C^{10}
}
 & \xymatrix{C^{01}\ar[r]^{f'}\ar[rd]|-{\ \ h_{2}^{-1}Hh_{1}^{-1}}\ar[d]_{h_{1}^{-1}} & C^{11}\ar[d]^{h_{2}^{-1}}\\
C^{00}\ar[r]^{f}\ar[d]_{h_{1}}\ar[rd]^{H} & C^{10}\ar[d]^{h_{2}}\\
C^{01}\ar[r]^{f'} & C^{11}
}
\end{align*}

Next, to prove the general case for hyperboxes, we do induction on the size of $H$,
while fixing $n$. We claim that for any $k$ with $1\leq k\leq n-1$, if the
proposition is true for all hyperboxes $R$ of size $({\bf d},1)$ where
${\bf d}=(d_{1},...,d_{k},0,0,...,0)\in\mathbb{Z}_{\geq0}^{n}$, then
the proposition is also true for all hyperboxes $S$ of size $({\bf d}',1)$
where ${\bf d}'=(d_{1}',...,d_{k+1}',0,...,0)\in\mathbb{Z}_{\geq0}^{n}$.

Let $S^{i,j}=S^{\varepsilon_{k+1}=i,\varepsilon_{n+1}=j}$ with $i\in\{0,1,...,d_{k+1}'\},j=\{0,1\}$
be the subhyperbox corresponding to those complexes with $\varepsilon_{k+1}=i,\varepsilon_{n+1}=j$.
Thereby, the subhyperbox $S^{i,j}$ is of size ${\bf d}_{k}'=(d_{1}',...,d_{k}',0,...,0)\in\mathbb{Z}_{\geq0}^{n+1}$.
So we can regard $S^{i,j}$ as a $k$-dimensional hyperbox of size $\bar{{\bf d}}_{k}'=(d_{1}',...,d_{k}')$.
For all $\varepsilon\in\mathbb{E}(\bar{{\bf d}}_{k}')$, we denote
the chain complex of $S$ sitting at $(\varepsilon,i,0,0,...,0,j)$
by $(S^{i,j})^{\varepsilon}$.

We can decompose the hyperbox $S$ as a rectangle of hyperboxes as
the following diagram:

\[
\xyR{2pc}\xyC{2pc}\xymatrix{S^{0,0}\ar[r]^{f_{1}}\ar[d]^{h_{0}}\ar[rd]^{H_{1}} & S^{1,0}\ar[r]^{f_{2}}\ar[d]^{h_{1}}\ar[rd]^{H_{2}} & S^{2,0}\ar[r]^{f_{3}}\ar[d]^{h_{2}}\ar[rd]^{H_{3}} & \cdots\ar[r]^{f_{d_{k+1}'}}\ar[rd]^{H_{d_{k+1}'}} & S^{d_{k+1}',0}\ar[d]^{h_{d_{k+1}'}}\\
S^{0,1}\ar[r]_{f_{1}'} & S^{1,1}\ar[r]_{f_{2}'} & S^{2,1}\ar[r]_{f_{3}'} & \cdots\ar[r]_{f_{d_{k+1}'}'} & S^{d_{k+1}',1},
}
\]
where $f_{1},...,f_{d_{k+1}'},f_{1}',...,f_{d_{k+1}'}',h_{0},...,h_{d_{k+1}'}$
are chain maps of hyperboxes and $H_{1},...,H_{d_{k+1}'}$ are chain
homotopies of hyperboxes.

By the induction hypothesis, the subhyperbox $S^{\varepsilon_{k+1}=j},j\in\{0,1,...,d_{k+1}'\}$
is of size $(d_{1}',...,d_{k}',0,...,0,1)$, and thereby induces the
isomorphism of hyperboxes $h_{j}:S^{j,0}\rightarrow S^{j,1}$. Let
the inverse of $h_{j}$ be $h_{j}^{-1}:S^{j,1}\rightarrow S^{j,0}$.
We define a set of homotopies of hyperboxes $h_{j}^{-1}\circ H_{j}\circ h_{j-1}^{-1}:S^{j-1,1}\rightarrow S^{j,0},$
for any $j\in\{1,2,...,d_{k+1}'\}$ by the following equations, for
all $\varepsilon^{0}\in\mathbb{E}(\bar{{\bf d}}_{k}'),\varepsilon\in\mathbb{E}(k)$
such that $\varepsilon^{0}+\varepsilon\in\mathbb{E}(\bar{{\bf d}}_{k}')$,
\[
(h_{j}^{-1}\circ H_{j}\circ h_{j-1}^{-1})_{\varepsilon^{0}}^{\varepsilon}=\sum_{\{\varepsilon',\varepsilon''\in\mathbb{E}(k)|\varepsilon'\leq\varepsilon''\leq\varepsilon\}}(h_{j}^{-1})_{\varepsilon^{0}+\varepsilon''}^{\varepsilon-\varepsilon''}\circ(H_{j})_{\varepsilon^{0}+\varepsilon'}^{\varepsilon''-\varepsilon'}\circ(h_{j-1}^{-1})_{\varepsilon^{0}}^{\varepsilon'}.
\]

We simply denote $h_{j}^{-1}\circ H_{j}\circ h_{j-1}^{-1}$ by $h_{j}^{-1}H_{j}h_{j-1}^{-1}$.
From the definition of $h_{j}^{-1}H_{j}h_{j-1}^{-1}$, we can show
the associativity of compositions of maps of hyperboxes. Thus, $H_{j}D|_{S^{j-1,0}}+D|_{S^{j,1}}H_{j}=h_{j}f_{j}+f_{j}'h_{j-1}$
and $h_{j}D|_{S^{j,0}}=D|_{S^{j,1}}h_{j}$ implies that
\[
h_{j}^{-1}\circ H_{j}\circ h_{j-1}^{-1}\circ D|_{S^{j-1,1}}+D|_{S^{j,0}}\circ h_{j}^{-1}\circ H_{j}\circ h_{j-1}^{-1}=f_{j}\circ h_{j-1}^{-1}+h_{j}^{-1}\circ f_{j}'.
\]

Therefore, we can construct the following the hypercube $T$
\[
\xyC{4pc}\xyR{2pc}\xymatrix{S^{0,1}\ar[r]^{f_{1}'}\ar[d]|-{h_{0}^{-1}}\ar[rd]|-{h_{1}^{-1}H_{1}h_{0}^{-1}} & S^{1,1}\ar[r]^{f_{2}'}\ar[d]|-{h_{1}^{-1}}\ar[rd]|-{h_{2}^{-1}H_{2}h_{1}^{-1}} & S^{2,1}\ar[r]^{f_{3}'}\ar[d]|-{h_{2}^{-1}}\ar[rd]|-{h_{3}^{-1}H_{3}h_{2}^{-1}\ \ } & \cdots\ar[r]^{f_{d_{k+1}'}'}\ar[rd]|-{h_{d_{k+1}'}^{-1}H_{d_{k+1}'}h_{d_{k+1}'-1}^{-1}} & S^{d_{k+1}',1}\ar[d]^{h_{d_{k+1}'}^{-1}}\\
S^{0,0}\ar[r]_{f_{1}} & S^{1,0}\ar[r]_{f_{2}} & S^{2,0}\ar[r]_{f_{3}} & \cdots\ar[r]_{f_{d_{k+1}'}} & S^{d_{k+1}',0},
}
\]
which induces a chain map from the subhyperbox $S^{\varepsilon_{n+1}=1}$
to the subhyperbox $S^{\varepsilon_{n+1}=0}$. Direct computations
show that the chain maps induced by $S$ and $T$ are the inverses
of each other.
\end{proof}

\begin{cor}
\label{cor:replacing_diagonal}On a rectangle of chain complexes $R=(C^{\varepsilon},D^{\varepsilon})$,
if we change the diagonal maps $D_{\varepsilon}^{(1,1)}$ by a higher
homotopy, i.e. $D'{}_{\varepsilon}^{(1,1)}=D_{\varepsilon}^{(1,1)}+H_{\varepsilon}^{(1,1)}D_{\varepsilon}^{\varepsilon}+D_{\varepsilon+(1,1)}^{\varepsilon+(1,1)}H_{\varepsilon}^{(1,1)}$
with $H_{\varepsilon}^{(1,1)}:C_{*}^{\varepsilon}\rightarrow C_{*+2}^{\varepsilon+(1,1)}$,
then the new rectangle $R'=(C^{\varepsilon},D'^{\varepsilon})$ is
isomorphic to $R$.
\end{cor}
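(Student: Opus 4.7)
The plan is to realize $R$ and $R'$ as the two opposite faces of a single three-dimensional hyperbox of chain complexes in which every edge map in the new (third) direction is the identity, and then invoke Proposition \ref{prop:replacing_maps} to obtain the isomorphism. Thus the whole statement reduces to producing a valid hyperbox $\tilde{R}$ of size $({\bf d},1)$ (where ${\bf d}$ is the size of the rectangle $R$) that interpolates between $R$ and $R'$.

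Concretely, I would define $\tilde{R}$ as follows. At each vertex $(\varepsilon^{0},0)$ and $(\varepsilon^{0},1)$ the chain complex is the same $C^{\varepsilon^{0}}$. On the face $\varepsilon_{3}=0$ I install all maps of $R$, and on the face $\varepsilon_{3}=1$ I install all maps of $R'$ (which differ only in the diagonal entries). For the maps that involve the new direction I set
\[
D^{(0,0,1)}_{\varepsilon^{0}}=\mathrm{id}, \qquad D^{(1,0,1)}_{\varepsilon^{0}}=0, \qquad D^{(0,1,1)}_{\varepsilon^{0}}=0, \qquad D^{(1,1,1)}_{\varepsilon^{0}}=H^{(1,1)}_{\varepsilon^{0}}.
\]
There is no data beyond these since $\varepsilon\in\mathbb{E}_{3}=\{0,1\}^{3}$.

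The main step is to verify the hyperbox relations of Definition \ref{defn:hyperbox of chain complexes} for each $\varepsilon\in\mathbb{E}_{3}$. The relations for $\varepsilon$ with $\varepsilon_{3}=0$ and for $\varepsilon$ with $\|\varepsilon\|\le 1$ involving $\varepsilon_{3}=1$ are automatic: $R$ is a hyperbox, and $\mathrm{id}$ is a chain map (mod $2$). The relation at $\varepsilon=(1,0,1)$ expands, after substituting $D^{(0,0,1)}=\mathrm{id}$ and $D^{(1,0,1)}=0$, to $D^{(1,0)}_{\text{level }0}=D^{(1,0)}_{\text{level }1}$, which is true because we never changed the horizontal edge maps; the $(0,1,1)$ relation is symmetric. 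The relation at $\varepsilon=(1,1,0)$ on each of the two horizontal slices is just the $(1,1)$-relation for $R$ and for $R'$ respectively, and $R'$ still satisfies it because $D'^{(1,1)}-D^{(1,1)}=H\partial+\partial H$ is a boundary in $\mathrm{Hom}(C^{\varepsilon^{0}},C^{\varepsilon^{0}+(1,1)})$. The heart of the verification is the $\varepsilon=(1,1,1)$ relation: after all the zero and identity substitutions it collapses to
\[
\partial\, H^{(1,1)}_{\varepsilon^{0}}+H^{(1,1)}_{\varepsilon^{0}}\,\partial \;=\; D^{(1,1)}_{\varepsilon^{0}}+D'^{(1,1)}_{\varepsilon^{0}},
\]
which is precisely the hypothesis defining the perturbation. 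So $\tilde{R}$ is an honest hyperbox.

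Finally, because $D^{((0,0),1)}=\mathrm{id}$ at every vertex, Proposition \ref{prop:replacing_maps} applies to $\tilde{R}$ and yields a chain isomorphism of hyperboxes between $\tilde{R}^{\varepsilon_{3}=0}=R$ and $\tilde{R}^{\varepsilon_{3}=1}=R'$. The main obstacle I anticipate is purely bookkeeping: keeping track of every term in the expansion of the $(1,1,1)$-relation (eight summands, several of which vanish) and making sure that when the rectangle has general size $(n,m)$ the statement is applied separately at every vertex $\varepsilon^{0}\in\mathbb{E}({\bf d})$, with independent choices of the higher homotopy $H^{(1,1)}_{\varepsilon^{0}}$ allowed. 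No compatibility beyond the single identity of the previous display is needed, since $\mathbb{E}_{3}$ has no components of size $\ge 2$.
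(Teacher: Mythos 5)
Your proposal is correct and follows exactly the route the paper intends: the corollary is stated as an immediate consequence of Proposition \ref{prop:replacing_maps}, and the implicit argument is precisely the interpolating $3$-dimensional hyperbox you build, with identity edge maps in the new direction, the old rectangle on one face, the perturbed one on the other, and $H^{(1,1)}_{\varepsilon^{0}}$ as the long-diagonal components; your check that the only nontrivial structure equation (the $(1,1,1)$-relation) reduces to the defining identity $D'^{(1,1)}+D^{(1,1)}=H\partial+\partial H$ is the whole content. Nothing further is needed.
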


\
\begin{thm}
\label{thm:invariance of perturbed surg. formula}
Suppose $L$ is an oriented two-bridge link with the framing $\Lambda$.
For any $\mathbb{F}[[U_{1}]]$-linear quasi-isomorphisms $\tilde{D}_{s_{1},s_{2}}^{-L_{i}}$
 and $\mathbb{F}[[U_{1}]]$-linear
homotopies $\tilde{F}_{s_{1},s_{2}}^{-L_{1}\cup\pm L_{2}},\tilde{F}_{s_{1},s_{2}}^{\pm L_{1}\cup-L_{2}},$
$\tilde{D}_{-\infty,-\infty}^{-L_{1}\cup-L_{2}}$, the perturbed surgery
complex $(\mathcal{\tilde{C}}^{-}(\mathcal{H}^{L},\Lambda),\tilde{\mathcal{D}}^{-})$
is isomorphic to the original surgery complex in \cite{link_surgery}
as $\mathbb{F}[[U_{1}]]$-module. By imposing the $U_{2}$-action
to be the same as $U_{1}$-action, the $\mathbb{F}[[U_{1},U_{2}]]$-module
$H_{*}(\mathcal{\tilde{C}}^{-}(\mathcal{H}^{L},\Lambda),\tilde{\mathcal{D}}^{-})$
is isomorphic to the homology ${\bf HF}^{-}(S_{\Lambda}^{3}(L))$.
Furthermore, this isomorphism preserves the absolute grading.
\end{thm}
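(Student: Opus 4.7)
The statement has two assertions: (i) the perturbed surgery complex $(\mathcal{\tilde{C}}^{-}(\mathcal{H}^{L},\Lambda),\tilde{\mathcal{D}}^{-})$ is isomorphic to the original surgery complex of \cite{link_surgery} as $\mathbb{F}[[U_{1}]]$-modules, and (ii) upon imposing $U_{2}=U_{1}$, the resulting $\mathbb{F}[[U_{1},U_{2}]]$-module on homology reproduces $\mathbf{HF}^{-}(S_{\Lambda}^{3}(L))$ with the absolute grading. Part (ii) is an immediate consequence of (i) and Theorem \ref{(Manolescu-Ozsvth-Link-Surgery}, so the core of the argument is part (i).

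Because $\Lambda$-twisted gluing is determined by the framing together with the four underlying squares, and because compression is functorial in the rectangle data, it suffices to produce, for each $\text{s}\in\mathbb{H}(L)$ and each $(i,j)\in\{0,1\}^{2}$, an isomorphism between the perturbed rectangle $\tilde{\mathfrak{R}}_{\text{s},i,j}$ of Equation \eqref{eq:perturbed rect} and the original rectangle $\mathfrak{R}_{\text{s},i,j}$ of Equation \eqref{eq:surgery square} that is the identity on each vertex complex $A_{s_{1},s_{2}}^{-}$. These per-rectangle isomorphisms will then induce an isomorphism of the four squares after compression, and hence of the $\Lambda$-twisted glued complex.

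The plan to produce each per-rectangle isomorphism is to assemble a single $(n{+}1)$-dimensional hyperbox extending both rectangles along a new axis, with $\mathfrak{R}_{\text{s},i,j}$ as the face $\varepsilon_{\mathrm{new}}{=}0$, the perturbed rectangle $\tilde{\mathfrak{R}}_{\text{s},i,j}$ as the face $\varepsilon_{\mathrm{new}}{=}1$, and the identity map on each vertex complex along the new axis. The two-dimensional faces connecting an original edge map $D^{-L_{i}}$ to its replacement $\tilde{D}^{-L_{i}}$ will be filled by an $\mathbb{F}[[U_{1}]]$-linear chain homotopy; such a homotopy exists by Proposition \ref{prop:homotopies on the diagonal}, whose hypotheses apply because every vertex complex of the rectangles is a free $\mathbb{F}[[U_{1}]]$-module with homology $\mathbb{F}[[U_{1}]]\cong\mathbb{F}[[U_{1},U_{2}]]/(U_{1}-U_{2})$ (see also Remark \ref{rem:cx_of_the_unknot}). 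With these two-dimensional faces in place, the higher-dimensional faces of the extended hyperbox are filled inductively: at each stage the obstruction to solving the hyperbox equation lies in $H_{[1]}(\mathrm{Hom}_{\mathbb{F}[[U_{1}]]}(A,B))$, which vanishes by Lemma \ref{lem:H*(Hom(P,Q))}, since $\mathrm{Ext}^{1}_{\mathbb{F}[[U_{1}]]}(\mathbb{F}[[U_{1}]],\mathbb{F}[[U_{1}]])=0$ and $\mathrm{Hom}_{\mathbb{F}[[U_{1}]]}(\mathbb{F}[[U_{1}]],\mathbb{F}[[U_{1}]])$ is concentrated in even degree. Once the extended hyperbox is complete, Proposition \ref{prop:replacing_maps} immediately delivers the desired isomorphism.

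The main obstacle I expect is verifying that the inductive filling can always be carried out compatibly with the given arbitrary choices of $\tilde{F}^{\overrightarrow{M}}$ and $\tilde{D}^{-L_{1}\cup-L_{2}}$: the theorem permits these homotopies to be any $\mathbb{F}[[U_{1}]]$-linear homotopies making the perturbed rectangles into hyperboxes, rather than the specific ones arising from our interpolation. This discrepancy is absorbed by the second clause of Proposition \ref{prop:homotopies on the diagonal} (any two such homotopies differ by a $\mathbb{Z}$-graded higher homotopy $T$) combined with Corollary \ref{cor:replacing_diagonal}, which says that modifying a diagonal map by such a higher homotopy produces an isomorphic hyperbox. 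Finally, for the absolute grading, the grading-decomposition argument in the proofs of Propositions \ref{prop:homotopy equivalence} and \ref{prop:homotopies on the diagonal} lets us choose every interpolating homotopy to be $\mathbb{Z}$-grading-preserving, so the resulting isomorphism of surgery complexes respects the absolute grading of \cite{link_surgery}, Section 7.4, and passing to homology with $U_{2}=U_{1}$ matches the grading on $\mathbf{HF}^{-}(S_{\Lambda}^{3}(L))$.
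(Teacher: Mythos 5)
Your proposal is correct and follows essentially the same route as the paper: reduce to a per-rectangle isomorphism, interpolate between the original and perturbed rectangles by a one-higher-dimensional hyperbox with identity vertex maps, invoke Proposition \ref{prop:replacing_maps} to extract the isomorphism, and absorb the arbitrariness of the chosen diagonal homotopies via the second clause of Proposition \ref{prop:homotopies on the diagonal} together with Corollary \ref{cor:replacing_diagonal}. The only difference is presentational: where you fill the top-dimensional faces by noting the obstruction lies in the vanishing group $H_{[1]}(\mathrm{Hom})$, the paper writes down the interpolating cubes explicitly and inducts over the edges of the rectangles; the underlying lemmas are identical.
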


\begin{proof}
First, we restrict our scalars to $\bb{F}[[U_1]].$ By Proposition \ref{prop:replacing_maps}, the following cubes show
that the top square in each cube is isomorphic to the bottom one.
\[
\xyR{2pc}\xyC{2pc}\xymatrix{ & C\ar[ddd]|-{id}\ar[rrr]|-{f_{2}} &  &  & D\ar[ddd]|-{id}\\
A\ar[ddd]|-{id}\ar[rrr]|-{f_{3}}\ar[ur]|-{f_{1}+K\partial_{A}+\partial_{C}K\quad}\ar[urrrr]|-{H+f_{2}K}\ar[ddr]|-{K} &  &  & B\ar[ddd]|-{id}\ar[ur]|-{f_{4}}\\
\\
 & C\ar[rrr]|-{f_{2}} &  &  & D\\
A\ar[rrr]|-{f_{3}}\ar[ur]|-{f_{1}}\ar[urrrr]|-{H} &  &  & B\ar[ur]|-{f_{4}}
} \quad
\xyR{2pc}\xyC{2pc}\xymatrix{ & C\ar[ddd]|-{id}\ar[rrr]|-{f_{2}} &  &  & D\ar[ddd]|-{id}\\
A\ar[ddd]|-{id}\ar[rrr]|-{f_{3}}\ar[ur]|-{f_{1}}\ar[urrrr]|-{H+Kf_{3}} &  &  & B\ar[ddd]|-{id}\ar[ur]|-{\quad f_{4}+K\partial_{B}+\partial_{D}K}\ar[ddr]|-{K}\\
\\
 & C\ar[rrr]|-{f_{2}} &  &  & D\\
A\ar[rrr]|-{f_{3}}\ar[ur]|-{f_{1}}\ar[urrrr]|-{H} &  &  & B\ar[ur]|-{f_{4}}
}
\]
This means when an edge map is changed up to a $\bb{F}[[U_1]]$-linear chain homotopy in a
square $R$, we are able to change the diagonal maps correspondingly
to guarantee the new square is isomorphic to the original one as an $\bb{F}[[U_1]]$-module. By
inductions on the edges of rectangles $\mathfrak{R}_{{\mathrm s},i,j}$ in Equation \eqref{eq:surgery square},
we can show that after changing the edge maps $D_{s_{1},s_{2}}^{-L_{i}}$ by $\tilde{D}_{s_{1},s_{2}}^{-L_{i}}$
and changing some diagonal maps accordingly, the result
rectangle $\mathfrak{R}'_{{\mathrm s},i,j}$ is isomorphic to $\mathfrak{R}_{{\mathrm s},i,j}$ as $\mathbb{F}[[U_{1}]]$-modules.
In fact, we only have changed diagonal maps among the positions of $\tilde{F}_{s_{1},s_{2}}^{\pm L_{1}\cup\pm L_{2}},\tilde{F}_{s_{1},s_{2}}^{\pm L_{1}\cup\pm L_{2}},\tilde{D}_{\pm\infty,\pm\infty}^{\pm L_{1}\cup\pm L_{2}}$
in \eqref{eq:perturbed rect}, where we can keep applying the rigidity results in Proposition \ref{prop:homotopies on the diagonal}.
Thereby, Corollary \ref{cor:replacing_diagonal} implies the perturbed
rectangles in Equation \eqref{eq:perturbed rect} are isomorphic to those rectangles $\mathfrak{R}'_{{\mathrm s},i,j}$'s, and thus
isomorphic to the original $\mathfrak{R}_{{\mathrm s},i,j}$'s in Equation (\ref{eq:surgery square}) as $\mathbb{F}[[U_{1}]]$-modules.
After compressing these rectangles and gluing them together, the perturbed
surgery complex is isomorphic to the original surgery complex as an
$\mathbb{F}[[U_{1}]]$-module.

From Theorem \ref{(Manolescu-Ozsvth-Link-Surgery}, it follows that
the $U_{1},U_{2}$ actions in $H_{*}(\mathcal{C}^{-}(\mathcal{H}^{L},\Lambda),\mathcal{D}^{-})$
are the same. Thus by imposing the $U_{2}$-action as the same as
the $U_{1}$-action on the $\mathbb{F}[[U_{1}]]$-module $H_{*}(\mathcal{\tilde{C}}^{-}(\mathcal{H}^{L},\Lambda),\tilde{\mathcal{D}}^{-})$,
we get an isomorphism as $\mathbb{F}[[U_{1},U_{2}]]$-module between
$H_{*}(\mathcal{\tilde{C}}^{-}(\mathcal{H}^{L},\Lambda),\tilde{\mathcal{D}}^{-})$
and $H_{*}(\mathcal{C}^{-}(\mathcal{H}^{L},\Lambda),\mathcal{D}^{-})$.
As all the rigidity results respect the gradings, the above isomorphism
also preserves the grading.
\end{proof}

\begin{rem}
In the above theorem, the homology of the unknot is $\mathbb{F}[[U_{1},U_{2}]]/(U_{1}-U_{2})$
as an $\mathbb{F}[[U_{1},U_{2}]]$-module. There is no analogue of
the Proposition \ref{prop:homotopies on the diagonal} for homotopies
over the ring $\mathbb{F}[[U_{1},U_{2}]]$. This is why we restrict
our scalars to $\mathbb{F}[[U_{1}]]$. This idea is due to Ciprian
Manolescu.
\end{rem}

\subsection{Algorithm for computing ${\bf HF}^{-}(S_{\Lambda}^{3}(L))$ for two-bridge links.}
Let $L$ be a two-bridge link.

First, we use the algorithm in Section 3.4 to compute all the $A_{\mathrm{s}}^{-}(L)$'s.

Second, by solving linear equations, we find $\bb{F}[[U_1,U_2]]$-linear quasi-homomorphisms
\[
\tilde{D}_{-\infty,s_{2}}^{-L_{1}}:A_{-\infty,s_{2}}^{-}\rightarrow A_{-\infty,s_{2}+\mathrm{lk}}^{-},\ \ \tilde{D}_{s_{1},-\infty}^{-L_{2}}:A_{s_{1},-\infty}^{-}\rightarrow A_{s_{1}+\mathrm{lk},-\infty}^{-}.
\]
Finding chain maps is a problem of solving linear equations modulo
2, which has fast algorithm in the case of sparse matrices. In order to
find a quasi-isomorphism without computing the homology, we adopt an area filtration on the complexes as follows.
From the Schubert diagram $\mathcal{H}^{L}$, one can see that the
diagram $r_{-L_{1}}(\mathcal{H}^{L})$ is isotopic to the standard
genus-$0$ diagram of the unknot with one free basepoint and two intersection
points ${\bf x},{\bf y}$ of attaching curves. Let $C^{\text{u}}$
be the chain complex of $\mathbb{F}[[U_{1},U_{2}]]$-modules generated
by ${\bf x},{\bf y}$, with differential $\partial{\bf x}=(U_{1}-U_{2}){\bf y}$.
Thus there is a chain homotopy equivalence by counting holomorphic
triangles from $A_{\pm\infty,s_{2}}^{-}$ to $C^{\text{u}}$, denoted
by $F:A_{\pm\infty,s_{2}}^{-}\rightarrow C^{\text{u}}$.

Consider the Heegaard diagram by removing $z_{1}$, then an area filtration argument
shows this chain homotopy equivalence $F:A_{+\infty,s_{2}}^{-}\rightarrow C^{\text{u}}$
is in the form of
\begin{align*}
F(b_{0}) & ={\bf x}+\text{lower terms},\\
F(b_{-1}) & ={\bf y}+\text{lower terms,}
\end{align*}
where the lower terms are referred to the area filtration. In fact,
as long as a chain map $G:C^{\text{u}}\rightarrow A_{+\infty,s_{2}}^{-}$
is in the form of
\begin{align*}
G({\bf x}) & =b_{0}+\text{lower terms},\\
G({\bf y}) & =b_{-1}+\text{lower terms},
\end{align*}
then it is a quasi-isomorphism. This is because $F\circ G:C^{\text{u}}\rightarrow C^{\text{u}}$
is in the form of
\begin{align*}
F\circ G({\bf x}) & ={\bf x}+\text{lower terms},\\
F\circ G({\bf y}) & ={\bf y}+\text{lower terms,}
\end{align*}
which is an isomorphism of groups by Lemma 9.10 in \cite{OS_HF1}.
 In order to find an area filtration, we
can set every bigon and square to be of the same area $1$ on the
Schubert Heegaard diagram so that every periodic domain has area $0$.
We can also similarly determine $F$.

Third, we plug in all the maps $I_{\text{s}}^{\overrightarrow{M}}$
and $\tilde{D}^{-L_{i}}$ to (\ref{eq:perturbed rect}). By Corollary \ref{cor:homotopy equilvence}, these
$\tilde{D}^{-L_{i}}$'s are chain homotopic to $D^{-L_{i}}$'s as maps of $\mathbb{F}[[U_1,U_2]]$-modules.
 Thus, following the same line in the proof of Theorem \ref{thm:invariance of perturbed surg. formula}, we can find $\bb{F}[[U_1,U_2]]$-linear diagonal maps $\tilde{F}$'s and $\tilde{D}^{-L_1\cup -L_2}_{-\infty,-\infty}$
 to make those rectangles to be hyperboxes of chain complexes. Finding such maps is also a problem of
 solving linear equations.

Finally, after compressing all the rectangles and doing $\Lambda$-twisted
gluing of these squares, we obtain the perturbed surgery complex $(\mathcal{\tilde{C}}^{-}(\mathcal{H}^{L},\Lambda),\tilde{\mathcal{D}}^{-})$.
Then, we compute the homology over the polynomial ring $\mathbb{F}[[U_1,U_2]]$ and
there are several algorithms of polynomial time. This $\mathbb{F}[[U_1,U_2]]$-module might not be isomorphic to
 ${\bf HF}^{-}(S_{\Lambda}^{3}(L))$. However, by Theorem \ref{thm:invariance of perturbed surg. formula}, as an $\mathbb{F}[[U_1]]$-module, it is isomorphic to ${\bf HF}^{-}(S_{\Lambda}^{3}(L))$. So we impose the $U_2$-action on the homology of $(\mathcal{\tilde{C}}^{-}(\mathcal{H}^{L},\Lambda),\tilde{\mathcal{D}}^{-})$ to be the same as the $U_1$-action. By Theorem \ref{thm:invariance of perturbed surg. formula}, now it is isomorphic to ${\bf HF}^{-}(S_{\Lambda}^{3}(L))$ as an $\mathbb{F}[[U_1,U_2]]$-module.

We note that the surgery complex is infinitely generated over $\mathbb{F}[[U_1,U_2]]$. Hence, before
finding the perturbed surgery complex, we need to do truncations for a fixed framing
matrix $\Lambda$, as described in \cite{link_surgery} Section 8.3.
The time complexity of doing truncations is a polynomial of $\det(\Lambda).$

\begin{rem}
Indeed, in the second and third steps, we only need $\bb{F}[[U_1]]$-linear quasi-isomorphisms $\tilde{D}^{-L_i}$'s and $\bb{F}[[U_1]]$-linear diagonal maps $\tilde{F}$'s and $\tilde{D}^{-L_1\cup -L_2}$ to replace those $\bb{F}[[U_1,U_2]]$-linear maps. The reason we use $\bb{F}[[U_1,U_2]]$-linear maps is that over $\bb{F}[[U_1,U_2]]$ the module $A^-_{\mathrm s}$ is finitely generated and thus easier to use in computer programs.
\end{rem}

\begin{example}
Consider $(0,0)$ surgery on the unlink $L=L_1\cup L_2$ and look at the $(0,0)$ $\mathrm{Spin}^c$ structure $\mathrm{s_0}$. The general Floer
complexes of the unlink $A^-_{\mathrm s}(L)$'s are all $C^{\text u}$, where $C^{\text u}$ is defined in Remark \ref{rem:cx_of_the_unknot}. Since the Alexander gradings $A({\bf{x}})=A({\bf{y}})=(0,0)$, the inclusion maps $I^{\pm L_i}_{\mathrm{s_0}}$ are all the identities for $i=1,2.$
It follows that $\Phi^{+L_i}_{\mathrm s_0}$ and $\Phi^{-L_i}_{\mathrm s_0}$ are chain homotopic by Proposition \ref{prop:homotopy equivalence}. Hence, we can get the perturbed surgery complex for the  $\mathrm{Spin}^c$ structure $\mathrm{s_0}$ as follows
\[
\xymatrix{C^{\text u}\ar[r]^0\ar[d]_0\ar[dr]|-{F} & C^{\text u}\ar[d]^0\\
C^{\text u} \ar[r]_0 &C^{\text u},}
\]
where $F$ is an $\mathbb{F}[[U_1,U_2]]$-linear map shifting the gradings by $1$ satisfying $\partial F=F\partial$. Thus, $F$ can either be $0$ or the following map $f:C^{\text u}\to C^{\text u}$, where $f({\bf x})={\bf y},f({\bf y})=0.$
\begin{description}
\item[Case I]For $F=0$, the homology of the perturbed complex is $\big(\mathbb{F}[[U_1,U_2]]/(U_1-U_2)\big)^{\oplus4}$.
\item[Case II]For $F=f$, the homology is $\big(\mathbb{F}[[U_1,U_2]]/(U_1-U_2)\big)^{\oplus2}\oplus \big(\mathbb{F}[[U_1,U_2]]/(U_1-U_2)^2\big).$
\end{description}
However, as $\mathbb{F}[[U_1]]$-modules, both homology groups are isomorphic to $\mathbb{F}[[U_1]]^{\oplus 4}$. Thus, by imposing the $U_2$-action to be the same as $U_1$, we obtain the correct homology is $\big(\mathbb{F}[[U_1,U_2]]/(U_1-U_2)\big)^{\oplus 4}.$
\end{example}
\subsection{Further discussions of the perturbed surgery complex.}

Besides replacing the maps in a hypercube of chain complexes up to homotopy, we can also replace
the chain complexes sitting at the vertices in the hypercube up to chain homotopy equivalences.
Sometimes this procedure allows us to simplify the computations of the homology of a hypercube.

\begin{lem}
Let $A,\tilde{A},B,\tilde{B}$ be chain complexes. Suppose $h_{A}:A\rightarrow\tilde{A},h_{\tilde{A}}:\tilde{A}\rightarrow A$
are the chain homotopy equivalences, with $h_{A}h_{\tilde{A}}\overset{K_{\tilde{A}}}{\simeq}id_{\tilde{A}},h_{\tilde{A}}h_{A}\overset{K_{A}}{\simeq}id_{A}.$
Similarly, we have the chain homotopy equivalences $h_{B},h_{\tilde{B}}$
and the chain homotopies $K_{B},K_{\tilde{B}}$ with $h_{B}h_{\tilde{B}}\overset{K_{\tilde{B}}}{\simeq}id_{\tilde{B}},h_{\tilde{B}}h_{B}\overset{K_{B}}{\simeq}id_{B}.$
Let $f:A\rightarrow B$ be any chain map. Then, the mapping cones
$\mathrm{cone}(f)$ and $\mathrm{cone}(h_{B}fh_{\tilde{A}})$ are
chain homotopy equivalent via the chain maps $H_1:\mathrm{cone}(f)\to\mathrm{cone}(h_{B}fh_{\tilde{A}}),H_2:\mathrm{cone}(h_{B}fh_{\tilde{A}})\to \mathrm{cone}(f)$ induced by the following squares of chain
complexes
\[
H_{1}:=\xymatrix{A\ar[d]_{h_{A}}\ar[r]^{f}\ar[rd]|-{h_{B}fK_{A}} & B\ar[d]^{h_{B}}\\
\tilde{A}\ar[r]_{h_{B}fh_{\tilde{A}}} & \tilde{B};
}
H_{2}:=\xymatrix{\tilde{A}\ar[r]^{h_{B}fh_{\tilde{A}}}\ar[rd]|-{K_{B}fh_{\tilde{A}}}\ar[d]_{h_{\tilde{A}}} & \tilde{B}\ar[d]^{h_{\tilde{B}}}\\
A\ar[r]_{f} & B.
}
\]
\end{lem}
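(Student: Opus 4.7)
The plan is to exhibit the two maps $H_1,H_2$ explicitly, check they really are chain maps, and then write down explicit homotopies showing their compositions are the identities. First I would verify that each displayed square is indeed a square of chain complexes in the sense of Example \ref{eg:dim-2 hyperbox}, i.e.\ that the diagonal entry is a chain homotopy between the two compositions along the edges. For $H_1$ this reduces to the identity
\[
h_B f - (h_B f h_{\tilde{A}}) h_A = h_B f (\mathrm{id}_A - h_{\tilde{A}} h_A) = h_B f(\partial K_A + K_A\partial) = \partial(h_B f K_A) + (h_B f K_A)\partial,
\]
using that $h_B$ and $f$ are chain maps; the verification for $H_2$ is entirely analogous, with diagonal $K_B f h_{\tilde{A}}$.

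Next I would compute the composite $H_2\circ H_1:\mathrm{cone}(f)\to\mathrm{cone}(f)$ directly from the square-induced formulas. Reading off the $A$- and $B$-components one obtains
\[
(H_2 H_1)|_A(a)= h_{\tilde{A}} h_A(a) \;\oplus\; \bigl(K_B f h_{\tilde{A}} h_A + h_{\tilde{B}} h_B f K_A\bigr)(a),\qquad (H_2 H_1)|_B(b)=h_{\tilde{B}} h_B(b).
\]
To realize a chain homotopy $J$ from this to $\mathrm{id}_{\mathrm{cone}(f)}$ I would try the ansatz
\[
J|_A = K_A \;\oplus\; (K_B f K_A),\qquad J|_B = K_B,
\]
and compute $\partial J+J\partial$ on each summand. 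The diagonal pieces $A\to A$ and $B\to B$ collapse immediately to $h_{\tilde{A}}h_A - \mathrm{id}_A$ and $h_{\tilde{B}}h_B - \mathrm{id}_B$ by the defining relations for $K_A,K_B$. The off-diagonal piece $A\to B$, after using $\partial K_A+K_A\partial = h_{\tilde{A}}h_A-\mathrm{id}_A$, $\partial K_B+K_B\partial=h_{\tilde{B}}h_B-\mathrm{id}_B$, and the chain-map identity $f\partial_A=\partial_B f$, rearranges into exactly $K_B f h_{\tilde{A}} h_A + h_{\tilde{B}} h_B f K_A$, which is the remaining off-diagonal term of $H_2 H_1 - \mathrm{id}$.

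The direction $H_1 H_2 \simeq \mathrm{id}_{\mathrm{cone}(h_B f h_{\tilde{A}})}$ follows by the symmetric argument, swapping the roles of $(A,\tilde{A})$ and $(B,\tilde{B})$ and of $(K_A,K_B,f)$ with $(K_{\tilde{A}},K_{\tilde{B}},h_B f h_{\tilde{A}})$; the analogous homotopy is $\tilde{J}|_{\tilde{A}}=K_{\tilde{A}}\oplus K_{\tilde{B}}(h_B f h_{\tilde{A}})K_{\tilde{A}}$ and $\tilde{J}|_{\tilde{B}}=K_{\tilde{B}}$. I expect no deep obstacle here: the whole argument is bookkeeping. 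The only point that must be handled with care is the off-diagonal contribution in the composition, where one must recognize the combination $K_B f \partial_A K_A + \partial_B K_B f K_A + K_B \partial_B f K_A + K_B f K_A \partial_A$ as $\partial(K_B f K_A) + (K_B f K_A)\partial$ using the chain-map property of $f$; this is precisely where the ansatz term $K_B f K_A$ in $J$ is forced. Everything else is the standard manipulation of mapping-cone differentials and the associativity of composition of squares of chain complexes recalled in Section~4.1.
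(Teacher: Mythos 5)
Your proposal is correct and follows essentially the same route as the paper: you compute $H_2\circ H_1$ as the compressed square with diagonal $\tilde F=K_Bfh_{\tilde A}h_A+h_{\tilde B}h_BfK_A$ and exhibit the homotopy to the identity with components $K_A$, $K_B$, and the crucial corner term $K_BfK_A$, which is exactly the cube the paper writes down (its "longest diagonal" check is your off-diagonal $A\to B$ computation), with the reverse composition handled symmetrically in both arguments. The only difference is cosmetic: you additionally spell out the verification that $H_1$ and $H_2$ are themselves squares of chain complexes, which the paper leaves implicit.
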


\begin{proof}
We directly compute the compositions  $H_{1}\circ H_{2}$
and $H_{2}\circ H_{1}$ to check that they are both chain homotopic to the identities.
The composition $H_{2}\circ H_{1}$ is the compression of the juxtaposition
of the two squares, which is the square
\[
H_{2}\circ H_{1}=\xymatrix{A\ar[r]^{f}\ar[d]_{h_{\tilde{A}}h_{A}}\ar[rd]|-{\tilde{F}} & B\ar[d]^{h_{\tilde{B}}h_{B}}\\
A\ar[r]_{f} & B
}
\]
where $\tilde{F}=K_{B}fh_{\tilde{A}}h_{A}+h_{\tilde{B}}h_{B}fK_{A}$.
The following cube shows that $H_{2}\circ H_{1}$ is homotopic to
$id_{\mathrm{cone}(f)}$
\[
\xymatrix{ & A\ar[ddd]|-{id_{A}}\ar[rrr]^{f}\ar[rrd]|-{\tilde{F}}\ar[ld]|-{h_{\tilde{A}}h_{A}}\ar[ldddd]|-{K_{A}}\ar[ddddrr]|-{K_{B}fK_{A}} &  &  & B\ar[ddd]|-{id_{B}}\ar[ld]|-{h_{\tilde{B}}h_{B}}\ar[ddddl]|-{K_{B}}\\
A\ar[ddd]|-{id_{A}}\ar[rrr]|-{f} &  &  & B\ar[ddd]|-{id_{B}}\\
\\
 & A\ar[ld]|-{id_{A}}\ar[rrr]|-{f} &  &  & B\ar[ld]|-{id_{B}}\\
A\ar[rrr]|-{f} &  &  & B
}
\]
where $\tilde{F}=K_{B}fh_{\tilde{A}}h_{A}+h_{\tilde{B}}h_{B}fK_{A}.$
Direct computation verifies that the above cube is a hypercube chain
complex. We only check the longest diagonal here:
\begin{align*}
K_{B}f+fK_{A}+K_{B}fh_{\tilde{A}}h_{A}+h_{\tilde{B}}h_{B}fK_{A} & =K_{B}f(\partial_{A}K_{A}+K_{A}\partial_{A})+(\partial_{B}K_{B}+K_{B}\partial_{B})fK_{A})\\
 & =K_{B}fK_{A}\partial_{A}+\partial_{B}K_{B}fK_{A}.
\end{align*}
Similarly, $H_{1}\circ H_{2}$ is chain homotopic to $id_{\mathrm{cone}(h_{B}fh_{\tilde{A}})}.$
\end{proof}

Now we generalize this lemma to a hypercube version. The proof is similar, so we omit it.

\begin{prop}
Suppose $A$ and $\tilde{A}$ are two chain homotopy equivalent $n$-dimensional
hypercubes, and so do $B$ and $\tilde{B}$. Then the $(n+1)$-dimensional hypercube $\mathrm{cone}(A\xrightarrow{f}B)$
is chain homotopy equivalent to $\mathrm{cone}(\tilde{A}\xrightarrow{h_{B}fh_{\tilde{A}}}\tilde{B})$,
where $h_{B}:B\rightarrow\tilde{B},h_{\tilde{A}}:\tilde{A}\rightarrow A$
are chain homotopy equivalences.
\end{prop}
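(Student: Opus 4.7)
My plan is to lift the construction of the preceding lemma to the hyperbox setting, treating every chain map, chain homotopy and mapping cone between chain complexes as its hypercube-indexed analogue. No new algebraic content appears; the content is purely bookkeeping.

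First, I would unpack the hypotheses. A chain homotopy equivalence of $n$-dimensional hypercubes $A\simeq\tilde{A}$ yields chain maps of hyperboxes $h_A:A\to\tilde{A}$ and $h_{\tilde{A}}:\tilde{A}\to A$ (these are $(n+1)$-dimensional hyperboxes of size $({\bf 1}^{n},1)$), together with chain homotopies of hyperboxes $K_A$ and $K_{\tilde{A}}$ of size $({\bf 1}^{n},1,1)$ so that $h_{\tilde{A}}\circ h_A+\mathrm{id}_A=\partial K_A+K_A\partial$ and similarly for $h_A\circ h_{\tilde{A}}$, where compositions are performed using the associative composition of chain maps of hyperboxes from Section 4.1.2. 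The analogous data $h_B,h_{\tilde{B}},K_B,K_{\tilde{B}}$ is present for $B,\tilde{B}$.

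Next, I would define
\[
H_1:\mathrm{cone}(A\xrightarrow{f}B)\to\mathrm{cone}(\tilde{A}\xrightarrow{h_B f h_{\tilde{A}}}\tilde{B})
\]
as the chain map of $(n+1)$-dimensional hypercubes encoded by the same square-of-hypercubes as in the base case,
\[
\xyR{1.5pc}\xyC{3pc}\xymatrix{A\ar[d]_{h_{A}}\ar[r]^{f}\ar[rd]|-{h_{B}fK_{A}} & B\ar[d]^{h_{B}}\\
\tilde{A}\ar[r]_{h_{B}fh_{\tilde{A}}} & \tilde{B},
}
\]
and $H_2$ by the symmetric square. That this diagram is a genuine $(n+2)$-dimensional hyperbox of chain complexes reduces to the identity $\partial(h_B f K_A)+(h_B f K_A)\partial=h_B f+h_B f h_{\tilde{A}}h_A$, which is the defining relation for $K_A$ post-composed with $h_B f$; this post-composition makes sense at the hyperbox level by the associativity of Section 4.1.2.

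Finally, to verify $H_2\circ H_1\simeq\mathrm{id}_{\mathrm{cone}(f)}$ and $H_1\circ H_2\simeq\mathrm{id}_{\mathrm{cone}(h_B f h_{\tilde{A}})}$, I would transcribe the $(n+2)$-dimensional cubes from the base lemma, now with each entry a hypercube, each edge a chain map of hyperboxes, and the longest diagonal the chain homotopy $K_B f K_A$ of hyperboxes. The required longest-diagonal relation $K_B f+f K_A+K_B f h_{\tilde{A}}h_A+h_{\tilde{B}}h_B f K_A=\partial(K_B f K_A)+(K_B f K_A)\partial$ is obtained by juxtaposing the defining identities of $K_A$ and $K_B$. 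The main obstacle, as I see it, is purely organizational: one must confirm that every composition, every interchange and every application of associativity used in the base proof still holds at the hyperbox level. All of these are guaranteed by Section 4.1.2, so the base argument transfers verbatim and the hypercube version follows with no additional algebraic input.
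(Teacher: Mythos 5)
Your proposal is correct and is essentially the proof the paper has in mind: the paper explicitly omits the argument as "similar" to the preceding lemma, and your plan is exactly that lemma's construction of $H_{1}$, $H_{2}$ (with diagonals $h_{B}fK_{A}$ and $K_{B}fh_{\tilde{A}}$) and the verification via the cube with longest diagonal $K_{B}fK_{A}$, lifted to hyperboxes using the composition and associativity of chain maps of hyperboxes from Section 4.1.2. The identities you check, including $\partial(h_{B}fK_{A})+(h_{B}fK_{A})\partial=h_{B}f+h_{B}fh_{\tilde{A}}h_{A}$ and the longest-diagonal relation, match the paper's base-case computation.
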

Iterating these conjugation constructions, we have the following proposition.
\begin{prop}
Let $H=(C^{\varepsilon},D^{\varepsilon})$ be a $n$-dimensional hypercube
of chain complexes. Suppose we have that $\tilde{C}^{\varepsilon}$
is chain homotopy equivalent to $C^{\varepsilon}$ for all $\varepsilon\in\mathbb{E}_{n}$.
Then there exists a hypercube $\tilde{H}=(\tilde{C}^{\varepsilon},\tilde{D}^{\varepsilon})$
which is chain homotopy equivalent to $H$. \end{prop}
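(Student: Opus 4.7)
The plan is to induct on the dimension $n$ and reduce the statement to the preceding Proposition about mapping cones of chain homotopy equivalent hypercubes. The base case $n=0$ is immediate: a $0$-dimensional hypercube is just a chain complex, so the hypothesis already gives the desired chain homotopy equivalence.

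For the inductive step, I would use the standard observation that any $n$-dimensional hypercube of chain complexes can be written as a mapping cone of $(n-1)$-dimensional hypercubes. Explicitly, let $H^{(i)}=H^{\varepsilon_{n}=i}$ for $i=0,1$ denote the two sub-hypercubes corresponding to fixing the last coordinate. Each $H^{(i)}=(C^{(\varepsilon',i)},D^{(\varepsilon',i)})_{\varepsilon'\in\mathbb{E}_{n-1}}$ is an $(n-1)$-dimensional hypercube, and the collection of maps $\{D_{\varepsilon^{0}}^{\varepsilon}\}$ with $\varepsilon_{n}=1$ packages into a chain map of hypercubes $f:H^{(0)}\to H^{(1)}$, so that $H=\mathrm{cone}(H^{(0)}\xrightarrow{f}H^{(1)})$ in the sense of the previous example on mapping cones.

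Now apply the inductive hypothesis separately to the two sub-hypercubes: we obtain $(n-1)$-dimensional hypercubes $\tilde{H}^{(i)}=(\tilde{C}^{(\varepsilon',i)},\tilde{D}^{(\varepsilon',i)})$ whose vertex complexes are the prescribed $\tilde{C}^{\varepsilon}$ and which are chain homotopy equivalent to $H^{(i)}$, say via hypercube chain maps $h_{H^{(i)}}:H^{(i)}\to\tilde{H}^{(i)}$ and $h_{\tilde{H}^{(i)}}:\tilde{H}^{(i)}\to H^{(i)}$ with homotopies $K_{H^{(i)}},K_{\tilde{H}^{(i)}}$. By the preceding Proposition applied in dimension $n-1$, we then have
\[
H=\mathrm{cone}(H^{(0)}\xrightarrow{f}H^{(1)})\;\simeq\;\mathrm{cone}\bigl(\tilde{H}^{(0)}\xrightarrow{\tilde{f}}\tilde{H}^{(1)}\bigr),
\]
where $\tilde{f}=h_{H^{(1)}}\circ f\circ h_{\tilde{H}^{(0)}}$, and the right-hand side is an $n$-dimensional hypercube $\tilde{H}=(\tilde{C}^{\varepsilon},\tilde{D}^{\varepsilon})$ with the prescribed vertex complexes. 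This gives the desired $\tilde{H}\simeq H$.

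The only non-routine point, and thus the main thing to verify, is that the inductive step genuinely produces a hypercube: one must check that the conjugated map $\tilde{f}$ assembles with $\tilde{H}^{(0)},\tilde{H}^{(1)}$ into a valid $n$-dimensional hypercube (i.e.\ that the compressions along the first $n-1$ axes satisfy Definition \ref{defn:hyperbox of chain complexes}). This follows from the explicit cone formulas used in the proof of the preceding Proposition, together with the associativity of the composition of chain maps between hyperboxes established in Section 4.1.2, so no new ideas beyond the previous Proposition are required. Everything else is a direct application of the induction.
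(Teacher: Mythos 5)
Your proof is correct and is essentially the paper's intended argument: the paper omits the proof, saying only that one ``iterates the conjugation constructions,'' and your induction on dimension --- writing $H=\mathrm{cone}(H^{(0)}\xrightarrow{f}H^{(1)})$ and invoking the preceding cone proposition with the inductively constructed $\tilde{H}^{(0)},\tilde{H}^{(1)}$ --- is exactly that iteration made precise. The one point worth recording explicitly is that the inductive hypothesis must be taken to supply the hypercube chain maps $h_{H^{(i)}},h_{\tilde{H}^{(i)}}$ and homotopies $K$ (not merely the existence of an equivalence), which your formulation already does.
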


For the purpose of this paper, we give an example of the 2-dimensional case.

\begin{example}
Let $H$ be the following square of chain complexes
\[
H=\xymatrix{C_{1}\ar[r]^{f_{1}}\ar[d]_{g_{1}}\ar[rd]|-{F} & C_{2}\ar[d]^{f_{2}}\\
C_{3}\ar[r]_{g_{2}} & C_{4}.
}
\]
Then suppose we have a set of chain homotopy equivalences $h_{i}:C_{i}\to\tilde{C}_{i},\tilde{h}_{i}:\tilde{C}_{i}\to C_{i}$,
where $h_{i}\circ\tilde{h_{i}}$ is homotopic to $id_{\tilde{C}_{i}}$
via $\tilde{K}_i$ and $\tilde{h}_{i}\circ h_{i}$ is homotopic
to $id_{C_{i}}$ via $K_{i}.$ Then compressing the following rectangle,
we obtain the desired square $\tilde{H}$.

\[
\xymatrix{\tilde{C}_{1}\ar[r]^{\tilde{h}_{1}}\ar[d]_{h_{3}g_{1}\tilde{h}_{1}}\ar[rd]|-{K_{3}g_{1}\tilde{h}_{1}} & C_{1}\ar[r]^{f_{1}}\ar[d]^{g_{1}}\ar[rd]|-{F} & C_{2}\ar[d]_{f_{2}}\ar[r]^{h_{2}}\ar[rd]|-{h_{4}f_{2}K_{2}} & \tilde{C_{2}}\ar[d]^{h_{4}f_{2}\tilde{h}_{2}}\\
\tilde{C}_{3}\ar[r]_{\tilde{h}_{3}} & C_{3}\ar[r]_{g_{2}} & C_{4}\ar[r]_{h_{4}} & \tilde{C_{4}}.
}
\]
Thus,
\[
\tilde{H}=\xymatrix{\tilde{C_{1}}\ar[r]^{h_{2}f_{1}\tilde{h}_{1}}\ar[d]_{h_{3}g_{1}\tilde{h}_{1}}\ar[rd]|-{\tilde{F}} & \tilde{C}_{2}\ar[d]^{h_{4}f_{2}\tilde{h}_{2}}\\
\tilde{C}_{3}\ar[r]_{h_{4}g_{2}\tilde{h}_{3}} & \tilde{C}_{4}.
}
\]
where $\tilde{F}=h_{4}g_{2}K_{3}g_{1}\tilde{h}_{1}+h_{4}F\tilde{h}_{1}+h_{4}f_{2}K_{2}f_{1}\tilde{h}_{1}.$ \end{example}

If we know the chain homotopy types of all the $A^-_{\mathrm{s}}$'s, we can also replace the chain complexes in the perturbed surgery
complex by the conjugation construction. We will still call it the perturbed surgery complex.  This is used in simplifying the computations in the next section.
\section{Examples}

\subsection{The complexes $\widehat{CFL}(L)$ for two-bridge links $L$.}

We recall from \cite{OS_link_FLoer} that for a link $L$, the filtered
chain complex $\widehat{CFL}(L)$ is a chain complex of $S^{3}$ with
a filtration induced from $L$. More precisely, fixing a Heegaard
diagram $\mathcal{H}^{L}$ of $L\subset S^{3}$, we obtain a chain
complex of $\mathbb{F}$-modules $\widehat{CF}(\mathcal{H}^{L})$,
generated by the intersection points of $\mathbb{T_{\alpha}}$ and
$\mathbb{T}_{\beta}$ in the symmetric product. There is an Alexander
filtration on $\widehat{CF}(\mathcal{H}^{L}).$ It is shown that given
different Heegaard diagrams of $L$, $\mathcal{H}_{1}^{L}$ and $\mathcal{H}_{2}^{L}$,
there is a chain homotopy equivalence from $\widehat{CF}(\mathcal{H}_{1}^{L})$
to $\widehat{CF}(\mathcal{H}_{2}^{L})$, which preserves the Alexander
filtration. Thus, the filtered chain homotopy equivalence class of
these chain complexes is called the \emph{filtered chain homotopy
type} of $\widehat{CFL}(L)$. By abuse of notation, we also let $\widehat{CFL}(L)$
be some filtered complex in this equivalence class. Similarly, we define
the filtered chain homotopy type of $CFL^{-}(L)$, by looking at the
Alexander filtered chain complex $CF^{-}(\mathcal{H}^{L}).$

We represent $\mathbb{Z}^{2}$-filtered complexes graphically by dots
and arrows on the $x$-$y$ coordinate plane, with the dots representing
generators, the arrows representing differentials, and the coordinates
representing filtrations.

\begin{thm}[Theorem 12.1 in \cite{OS_link_FLoer}]
\label{thm:(Ozsvth-Szab)}
Suppose $\overrightarrow{L}=\overrightarrow{L_{1}}\cup\overrightarrow{L_{2}}$
is an oriented two-component alternating link. Then the filtered chain
homotopy type of $\widehat{CFL}(L)$ is determined by the following
data:

(1)the multi-variable Alexander polynomial of L, $\Delta_{L}$;

(2)the signature of L, $\sigma(L),$ and the linking number of $L$,
$\mathrm{lk}(L)$;

(3)the filtered chain homotopy type of $\widehat{CFK}(L_{1})$ and
$\widehat{CFK}(L_{2})$.
\end{thm}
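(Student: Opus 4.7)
The plan is to imitate Ozsv\'ath--Szab\'o's proof that $\widehat{HFK}$ of an alternating knot is determined by the Alexander polynomial and the signature, pushed to the two-variable setting. The first step is to establish \emph{thinness}: for an alternating two-component link $L$, every nonzero group $\widehat{HFL}(L,h)$ lies in the single $\delta$-grading $\delta(L) = M - A_1 - A_2$ equal to $-\sigma(L)/2$ (up to the normalization constant from $\mathrm{lk}(L)$). I would prove this by induction on the number of crossings of an alternating projection, using the skein exact sequence relating $\widehat{HFL}(L)$ to $\widehat{HFL}$ of the two resolutions at a chosen crossing, exactly as in the knot case; the base cases are unlinks and Hopf links, which can be computed directly from model diagrams. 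The input to the induction is an inequality between $\delta$-gradings coming from the connecting homomorphism together with a rank count matching the Alexander-polynomial prediction.

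Once thinness is in hand, the bigraded group $\widehat{HFL}(L)$ is determined by its graded Euler characteristic plus the single overall Maslov shift. The Euler characteristic is fixed by the formula
\[
\sum_{h \in \mathbb{H}(L)} \chi(\widehat{HFL}(L,h)) \cdot e^{h} = \prod_{i=1}^{2}(T_i^{1/2} - T_i^{-1/2}) \cdot \Delta_L,
\]
with $\mathrm{lk}(L)$ entering through the affine offset $\mathbb{H}(L)_i = \mathrm{lk}/2 + \mathbb{Z}$, and the overall Maslov shift is pinned down by $\sigma(L)$. This gives $\widehat{HFL}(L)$ as a bigraded $\mathbb{F}$-vector space from the data in (1) and (2).

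The second half of the argument promotes this computation of $\widehat{HFL}(L)$ to the filtered chain homotopy type of $\widehat{CFL}(L)$. Because the complex is thin, it is $E_2$-collapsed in the sense defined earlier, so its differential splits as $\partial = \partial_1 + \partial_2$ with $\partial_i$ decreasing $A_i$ by one. I would then invoke the classification of $E_2$-collapsed complexes over $\mathbb{F}$ (Section 12.1 of \cite{OS_link_FLoer}): any such complex is filtered chain homotopy equivalent to a canonical direct sum of elementary pieces (dots, bigons, and square-shaped pieces), and the multiplicities of these pieces are determined by $\widehat{HFL}(L)$ together with the ``marginal'' homologies obtained by collapsing one Alexander filtration at a time.

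The final step is to identify those marginal homologies with the filtered chain homotopy type of $\widehat{CFK}(L_i)$, which is the content of input (3). Collapsing the $A_j$-filtration of $\widehat{CFL}(L)$ corresponds at the level of Heegaard diagrams to deleting the pair $\{w_j,z_j\}$ (or equivalently converting them to a single free basepoint), which produces a Heegaard diagram for $\widehat{CFK}(L_i)$ tensored with a trivial factor; hence the $A_i$-filtered complex obtained this way recovers $\widehat{CFK}(L_i)$ up to filtered chain homotopy. The main obstacle is the classification step: one must check that the data $(1)$--$(3)$ really are enough to distinguish all the $E_2$-collapsed models with the prescribed Euler characteristic, which in the link case requires a careful bookkeeping of the square-pieces that are not detected by either marginal filtration alone. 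Thinness (step one) is the other delicate point, but it follows the established alternating-link template.
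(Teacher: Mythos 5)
This statement is not proved in the paper at all: it is quoted verbatim as Theorem 12.1 of \cite{OS_link_FLoer}, and the paper only uses it (via Corollary \ref{cor:filtered_type_CFLhat_2-bri}) as an external input. So there is no in-paper proof to compare against; your proposal has to be measured against the Ozsv\'ath--Szab\'o argument itself. Measured that way, your architecture is the right one and matches theirs: establish thinness, so that $\widehat{HFL}(L)$ is determined by its Euler characteristic (hence by $\Delta_L$ and $\mathrm{lk}$, with the overall Maslov shift fixed by $\sigma$); pass to a reduced, $E_2$-collapsed model of $\widehat{CFL}(L)$; invoke the classification of $E_2$-collapsed complexes into the pieces $B,H,V,X,Y$; and pin down the multiplicities of the zigzag pieces using the total homology $\mathbb{F}^{2}$ and the two marginal complexes, which are $\widehat{CFK}(L_i)\otimes(\mathbb{F}\oplus\mathbb{F})$ up to a filtration shift by $\mathrm{lk}/2$.

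Two caveats. First, your thinness step as literally stated is problematic: a skein exact sequence for the \emph{multi-graded} $\widehat{HFL}$ does not make sense across a resolution that changes the component structure, since the target groups carry different Alexander multi-gradings. The fix is that thinness is a statement about $M-A_1-A_2$ only, so it descends to the collapsed (single) Alexander grading, where the statement is exactly the earlier Ozsv\'ath--Szab\'o theorem on $\widehat{HFK}$ of alternating links; you should reduce to that rather than run the induction multi-graded. Second, the step you defer as ``careful bookkeeping'' --- showing that $\Delta_L$, $\sigma$, $\mathrm{lk}$, the total homology, and the two marginal filtered homotopy types really determine the multiplicities of all five elementary types, in particular distinguishing the $X$- and $Y$-type zigzags that the boxes cannot absorb --- is the actual content of Section 12.1 of \cite{OS_link_FLoer} and cannot be waved through; as written your proposal identifies the difficulty correctly but does not resolve it.
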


In fact, for alternating two-component links, $\widehat{CFL}(L)$
is filtered chain homotopy equivalent to a simplified filtered chain
complex $\widehat{CFL}_{\mathrm{OS}}(L)$.  The simplified complex
is a direct sum of five different types of $\mathbb{Z}\oplus\mathbb{Z}$-filtered
chain complexes $B_{(d)}[i,j]$,$H_{(d)}^{l}[i,j]$,$V_{(d)}^{l}[i,j]$,$X_{(d)}^{l}[i,j]$
and $Y_{(d)}^{l}[i,j]$. These basic filtered complexes are described
in Section 12.1 of \cite{OS_link_FLoer}; the filtered complex $B_{(d)}[i,j]$
looks like a box and the others look like zigzags. See Figure \ref{B,H,V,X,Y}.

\begin{centering}
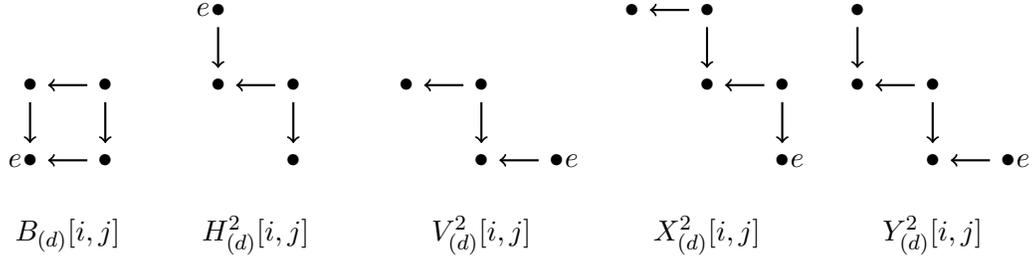
\begin{figure}
\begin{tikzpicture}[thick]
\node (b1) at ( -6,1) {$\bullet$};
\node (b2) at ( -6,0) {$\bullet$}
  edge [<-]  (b1);
\node at (-6.2,0) {$e$};
\node (b4) at ( -5,1) {$\bullet$}
  edge [->] (b1);
\node (b3) at ( -5,0) {$\bullet$}
  edge [->] (b2)
  edge [<-] (b4);
\node (b0) at (-5.5,-1) {$B_{(d)}[i,j]$};

\node (c1) at ( -3.5,2) {$\bullet$};
\node (c2) at ( -3.5,1) {$\bullet$}
  edge [<-]  (c1);
\node (c3) at ( -2.5,1) {$\bullet$}
  edge [->] (c2);
\node (c4) at ( -2.5,0) {$\bullet$}
  edge [<-] (c3);
\node at (-3.7,2) {$e$};
\node  (c0) at (-3,-1) {$H_{(d)}^2[i,j]$};

\node (d1) at ( -1,1) {$\bullet$};
\node (d2) at ( 0,1) {$\bullet$}
  edge [->]  (d1);
\node (d3) at ( 0,0) {$\bullet$}
  edge [<-] (d2);
\node (d4) at ( 1,0) {$\bullet$}
 edge [->] (d3);
\node at (1.2,0) {$e$};
\node (d0) at (0,-1) {$V^2_{(d)}[i,j]$};

\node (e1) at ( 2,2) {$\bullet$};
\node (e2) at ( 3,2) {$\bullet$}
  edge [->]  (e1);
\node (e3) at ( 3,1) {$\bullet$}
  edge [<-] (e2);
\node (e4) at ( 4,1) {$\bullet$}
  edge [->] (e3);
\node (e5) at  (4,0) {$\bullet$}
  edge [<-] (e4);
\node at (4.2,0) {$e$};
\node (e0) at (3,-1) {$X^2_{(d)}[i,j]$};

\node (f1) at ( 5,2) {$\bullet$};
\node (f2) at ( 5,1) {$\bullet$}
  edge [<-]  (f1);
\node (f3) at ( 6,1) {$\bullet$}
  edge [->] (f2);
\node (f4) at ( 6,0) {$\bullet$}
  edge [<-] (f3);
\node (f5) at  (7,0) {$\bullet$}
  edge [->] (f4);
\node at (7.2,0) {$e$};
\node (f0) at (6,-1) {$Y^2_{(d)}[i,j]$};
\end{tikzpicture}
\caption{{\bf{Examples of the}} $\mathbb{Z}^2${\bf{-filtered chain complexes $B,H,V,X,Y$.}}   The labelled dots $e$ in $B_{(d)}[i,j],H_{(d)}^l[i,j],V_{(d)}^l[i,j],X_{(d)}^l[i,j],Y_{(d)}^l[i,j]$ are of  grading $d$ and  with the filtrations $(i,j),(i,j),(i,j),(i+l,j),(i+l,j)$ respectively.}

\label{B,H,V,X,Y}
\end{figure}
\end{centering}

\begin{cor}
\label{cor:filtered_type_CFLhat_2-bri}
If L is an oriented two-bridge link, then the filtered homotopy type
of $\widehat{CFL}(L)$ is determined by $\sigma(L)$, $\mathrm{lk}(L)$
and the multi-variable Alexander polynomial $\Delta_{L}(x,y)$. More
concretely, let
\[
l=\mathrm{lk}(\overrightarrow{L})+\frac{\sigma(\overrightarrow{L})-1}{2},
\]

(1) if $l\geq0$, let $a=\frac{1-\sigma-\mathrm{lk}}{2},\ b=\frac{-1-\sigma-\mathrm{lk}}{2}$,
then we have that $\widehat{CFL}(L)$ is filtered chain homotopic
to
\[
Y_{(0)}^{l}[a,a]\oplus Y_{(-1)}^{l+1}[b,b]\oplus\underset{k}{\bigoplus}B_{(d_{k})}[i_{k},j_{k}],
\]
where those $d_{k},i_{k},j_{k}$'s are determined by the Alexander
polynomial $\Delta_{L}$;

(2) if $l<0$, then we have that the $\widehat{CFL}(L)$ is filtered
chain homotopic to
\[
X_{(0)}^{|l|}[\frac{\mathrm{lk}}{2},\frac{\mathrm{lk}}{2}]\oplus X_{(-1)}^{|l|-1}[\frac{\mathrm{lk}}{2},\frac{\mathrm{lk}}{2}]\oplus\underset{k}{\bigoplus}B_{(d_{k})}[i_{k},j_{k}],
\]
 where those $d_{k},i_{k},j_{k}$'s are determined by the Alexander
polynomial $\Delta_{L}$.
\end{cor}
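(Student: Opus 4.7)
The plan is to reduce the statement to Theorem \ref{thm:(Ozsvth-Szab)} and then unpack the resulting canonical model explicitly. First I would invoke Fact 3.5(1): if $L=b(p,q)$ has two components (equivalently, $p$ is even), then each individual component $L_i$ is the unknot. Consequently $\widehat{CFK}(L_i)$ is filtered chain homotopy equivalent to the one-dimensional complex $\mathbb{F}_{(0)}$ supported at bigrading $(0,0)$. Since every two-bridge link is alternating, Theorem \ref{thm:(Ozsvth-Szab)} applies, and the filtered homotopy type inputs $\widehat{CFK}(L_1), \widehat{CFK}(L_2)$ become trivial. This immediately yields the first assertion that $\widehat{CFL}(L)$ is determined up to filtered homotopy by $\sigma(L)$, $\mathrm{lk}(L)$, and $\Delta_L(x,y)$.

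For the explicit decomposition, I would appeal to the classification of the canonical model $\widehat{CFL}_{\mathrm{OS}}(L)$ in Section 12 of \cite{OS_link_FLoer}, which decomposes as a direct sum of the basic pieces $B, H, V, X, Y$ illustrated in Figure \ref{B,H,V,X,Y}. The pieces $H_{(d)}^l$ and $V_{(d)}^l$ arise only when one of the single-component projections of $\widehat{CFL}$ carries homology beyond a single copy of $\mathbb{F}$; for unknotted components this cannot happen. Hence only $B$-boxes together with a single zigzag of $X$- or $Y$-type survive. The collection of boxes $B_{(d_k)}[i_k,j_k]$ is read off directly from $\Delta_L(x,y)$ via the Euler characteristic formula \eqref{eq:AlexPoly}, using that the Alexander-filtered Euler characteristic in each bigrading $(i,j)$ equals the contribution from whatever basic pieces pass through $(i,j)$, with the boxes contributing net zero; the boxes are then pinned down by the locations where $\Delta_L$ has odd coefficients.

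The remaining zigzag piece accounts for the unique surviving class in the vertically/horizontally collapsed homology. Its length and grading shift are fixed by two constraints: (i) each of the two coordinate projections must recover $\widehat{HFK}(\text{unknot})=\mathbb{F}_{(0)}$, which forces the endpoints of the zigzag to project to a single point in each axis; (ii) the Maslov grading of the top surviving generator is computed by the standard formula for alternating links in terms of $\sigma(L)$ and $\mathrm{lk}(L)$. The sign of $l=\mathrm{lk}+(\sigma-1)/2$ controls whether the zigzag tilts in the positive diagonal direction (giving the $Y$-type pieces $Y_{(0)}^l[a,a] \oplus Y_{(-1)}^{l+1}[b,b]$ when $l\geq 0$) or the negative diagonal direction (giving the $X$-type pieces $X_{(0)}^{|l|}[\mathrm{lk}/2,\mathrm{lk}/2] \oplus X_{(-1)}^{|l|-1}[\mathrm{lk}/2,\mathrm{lk}/2]$ when $l<0$).

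The main obstacle will be the bookkeeping of grading conventions: matching the Alexander and Maslov normalizations in \cite{OS_link_FLoer} to our normalization in order to verify the specific shifts $a=(1-\sigma-\mathrm{lk})/2$ and $b=(-1-\sigma-\mathrm{lk})/2$ in the $l\geq 0$ case, and the shift $\mathrm{lk}/2$ in the $l<0$ case. Once conventions are fixed, the shifts are forced by requiring the projections of the endpoints of the zigzag to land at bigrading $(0,0)$ after the symmetrization built into $\mathbb{H}(L)$, together with the Maslov normalization in (ii). An elementary direct inspection of $\widehat{HFL}(L)$ in, say, the Hopf link $b(2,1)$ and the Whitehead link $b(8,3)$ will serve as a sanity check for the signs.
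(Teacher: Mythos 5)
Your proposal matches the paper's (essentially unwritten) proof of this corollary: it is deduced directly from Theorem \ref{thm:(Ozsvth-Szab)} together with the facts that two-bridge links are alternating and that each component is an unknot (so the $\widehat{CFK}(L_i)$ inputs are trivial and no $H$- or $V$-pieces occur), with the remaining zigzag and box data read off from the classification of $\widehat{CFL}_{\mathrm{OS}}(L)$ in Section 12.1 of \cite{OS_link_FLoer}. The paper supplies no more detail than this, so your extra bookkeeping of the shifts $a$, $b$, and $\mathrm{lk}/2$ is consistent with, and somewhat more explicit than, what appears in the text.
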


\begin{example}
Let $\mathit{Wh}$ denote the Whitehead link. Since $\mathrm{lk}(\mathit{Wh})=0$,
$\sigma(\mathit{Wh})=-1$, we get $l=-1$. Notice that $\mathrm{lk}=0$
implies that the signature doesn't depend on the orientations of the
link. Thus the filtered chain homotopy type of $\widehat{CFL}(\mathit{Wh})$
is
\[
X_{(0)}^{1}[0,0]\oplus X_{(-1)}^{0}[0,0]\oplus\underset{k}{\bigoplus}B_{(d_{k})}[i_{k},j_{k}],
\]
 where those $d_{k},i_{k},j_{k}$ are determined by the Alexander
polynomial. If we consider the mirror of $\mathit{Wh}$, we have $\sigma(\overline{\mathit{Wh}})=1$.
Similarly, the filtered chain homotopy type of $\widehat{CFL}(\overline{\mathit{Wh}})$
is
\[
Y_{(0)}^{0}[0,0]\oplus Y_{(-1)}^{1}[-1,-1]\oplus\underset{k}{\bigoplus}B_{(d_{k}')}[i_{k}',j_{k}'].
\]
In the following diagram, $\widehat{CFL}(\mathit{Wh})$ and $\widehat{CFL}(\overline{\mathit{Wh}})$
are illustrated, where each dot represents a generator and each arrow
represents a differential.

\begin{equation}
\xyR{1pc}\xyC{1pc}\sigma(\overline{\mathit{Wh}})=1:\xymatrix{\bullet\ar[d] & \mbox{\ensuremath{\bullet}\ \ensuremath{\bullet}}\ar@<1ex>[d]\ar@<-1ex>[d]\ar[l] & \bullet\ar[l]\ar[d]\\
{\displaystyle \underset{{\displaystyle \bullet}}{\bullet}}\ar[d] & {\displaystyle \underset{{\displaystyle \bullet}\ {\displaystyle \bullet}}{\bullet\ \bullet}}\ar[l]\ar@<1ex>[d] & {\displaystyle \underset{{\displaystyle \bullet}}{\bullet}}\ar@<1.5ex>[l]\ar[l]\ar[d]\\
\bullet & \bullet\ \bullet\ar[l] & \bullet,\ar[l]
}
\ \ \ \ \ \ \ \ \sigma(\mathit{Wh})=-1:\xymatrix{\bullet\ar[d] & \mbox{\ensuremath{\bullet}\ \ensuremath{\bullet}}\ar@<-1ex>[d]\ar[l] & \bullet\ar[l]\ar[d]\\
{\displaystyle \underset{{\displaystyle \bullet}}{\bullet}}\ar[d] & {\displaystyle \underset{{\displaystyle \bullet}\ {\displaystyle \bullet}}{\bullet\ \bullet}}\ar[l]\ar@<1ex>[d]\ar@<1.5ex>[l]\ar@<-1ex>[d] & {\displaystyle \underset{{\displaystyle \bullet}}{\bullet}}\ar@<1.5ex>[l]\ar[d]\\
\bullet & \bullet\ \bullet\ar[l] & \bullet.\ar[l]
}
\label{eq:CFL_Hat_Whitehead}
\end{equation}
We find that it is easier to work with $\mathit{Wh}$ with $\sigma=-1$,
since all the $A_{\mathrm{s}}^{-}(\mathit{Wh})$ have homology $\mathbb{F}[[U_{1},U_{2}]]/(U_{1}-U_{2})$,
so that we can apply the rigidity results. (One can compare this to
the case of the right-handed trefoil knot versus the left-handed trefoil
knot.) \end{example}

\begin{rem}
The way of decomposing the complex $\widehat{CFL}(\overrightarrow{L})$
into direct sums of $B,H,V,X,Y$ is not canonical. We can do some
base-changes to change the above direct sum decomposition of $\widehat{CFL}$
such that the patterns of the arrows don't change.
\end{rem}

\subsection{The filtered homotopy type of $CFL^{-}(L)$ for some two-bridge links.}

\begin{figure}
\centering

\includegraphics[scale=0.5]{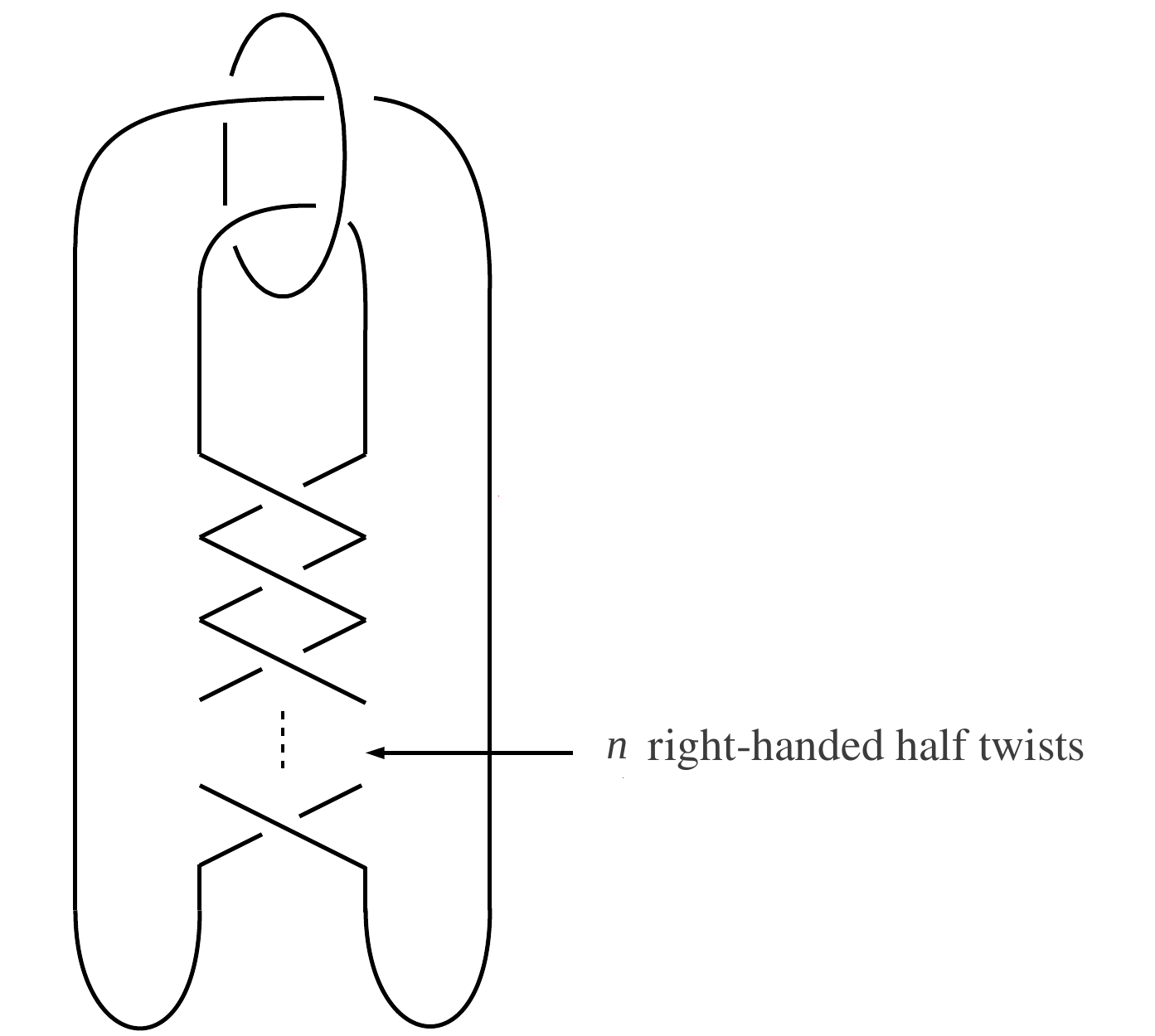}\caption{\textbf{The two-bridge links} $b(4n+4,2n+3)$\textbf{.} If $n=2k-1$,
then the linking number is 0; if $n=2k$, then the linking number
is $2$.}

\label{b(4n,2n+1)}

\end{figure}

Given a two-bridge link $L$, we can use the Schubert Heegaard diagram
to combinatorially find the filtered complex $CFL^{-}(L)$. However,
this description is too cumbersome. Instead, here we use algebraic arguments
to determine the filtered homotopy type of $CFL^{-}(L)$ in some special
examples.

Consider the Schubert Heegaard diagram $\mathcal{H}$ for $L$ and
the $\mathbb{Z}^{2}$-filtered chain complex $\widehat{CF}(\mathcal{H})$.
Then there is a filtered chain homotopy equivalence $F:\widehat{CF}(\mathcal{H})\rightarrow\widehat{CFL}_{\mathrm{OS}}(L)$.
Thus, $F$ induces an isomorphism on the homology of their associated
graded, i.e. the link Floer homology. In fact, the homology of their
associated graded are just the chain groups themselves, because $\widehat{CF}(\mathcal{H})$
and $\widehat{CFL}_{\mathrm{OS}}(L)$ are both thin (with no differentials
in their associated graded). So $F$ is an isomorphism. In other words,
we can change the basis of $\widehat{CF}(\mathcal{H})$ preserving
the filtration, such that the arrows are pruned as in the Ozsv\'{a}th-Szab\'{o}
simplified pattern. We use this new basis to consider $CF^{-}(\mathcal{H})$.
Since every bigon in $\mathcal{H}$ contains a basepoint, those $\partial_{U_{1},U_{2}}$
arrows in $CF^{-}(\mathcal{H})$ are either upward or rightward of
length 1. This property is repeatedly used later.

In this section, we show that for the two-bridge links $b(4n,2n+1)$,
the filtered homotopy type of $CFL^{-}(L)$ is determined by $\widehat{CFL}(L)$.
Since $\widehat{CFL}(L)$ can be decomposed as direct sums of $B,X,Y$'s,
our goal is to show that $CFL^{-}(L)$ can be viewed as a square of
chain complexes of  these $B,X,Y$'s.

Using continuous fractions, we can get the 4-plat presentations of
$b(4n,2n+1)$, thus providing the diagram in Figure \ref{b(4n,2n+1)}.
In addition, there is a convention issue of signs of the signature.
We adopt the convention compatible with Corollary \ref{cor:filtered_type_CFLhat_2-bri},
so that $\sigma\big(b(8k,4k+1)\big)=-1$.

\begin{prop}
\label{prop:CFL^-(b(8k,4k+1))}For the two-bridge link $L=b(8k,4k+1)$,
the filtered homotopy type of $CFL^{-}(L)$ is determined by the Alexander
polynomial, signature and linking number of $L$, or equivalently
by $\widehat{CFL}(L)$. Precisely, we have $CFL^{-}(L)=CFL^{-}(\mathit{Wh})\oplus\bigoplus_{i=1}^{k-1}(N,\partial^{-})$,
where $CFL^{-}(\mathit{Wh})$ and $(N,\partial^{-})$ are described
in Figure \ref{CFL^-(Wh)}.

\begin{figure}
\centering

\includegraphics[scale=0.7]{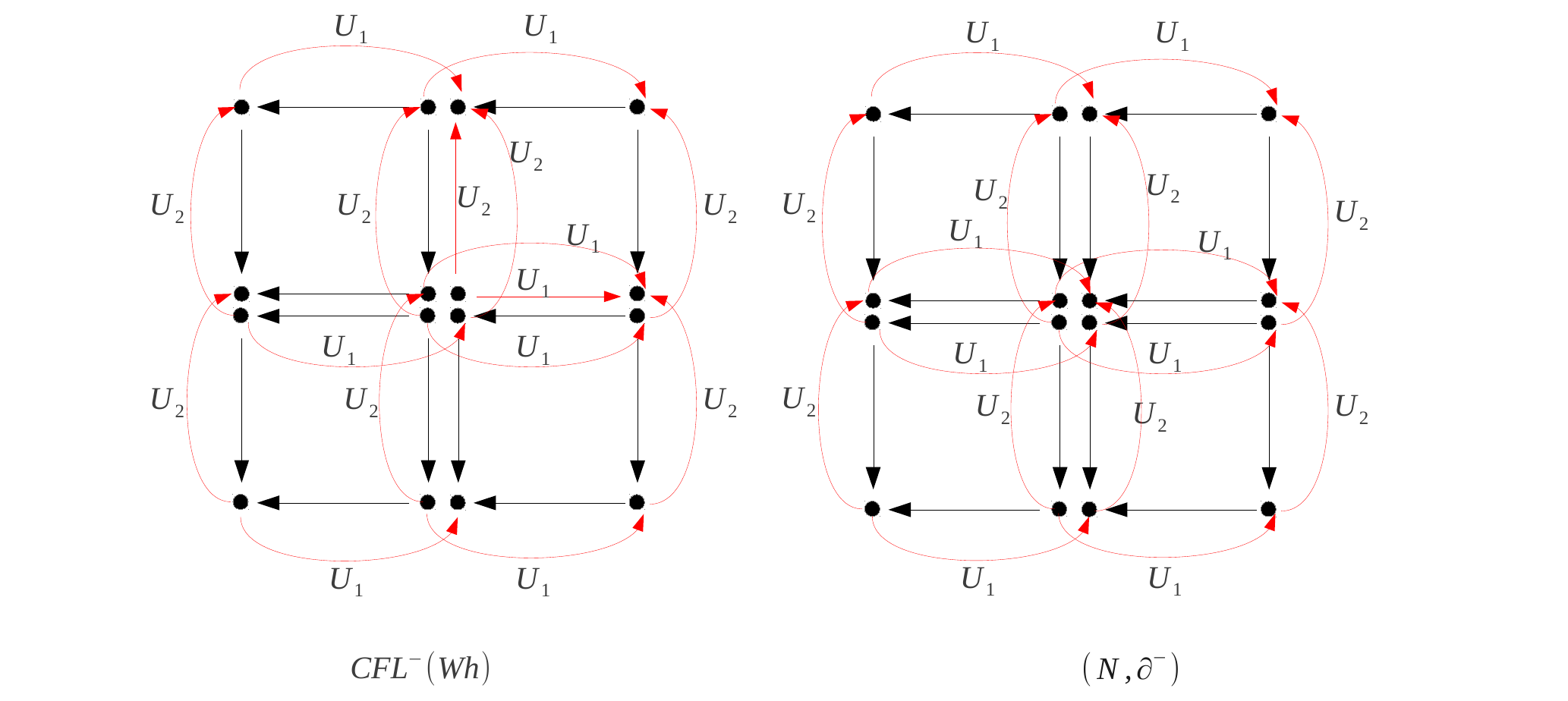}\caption{\textbf{The filtered complex $CFL^{-}(\mathit{Wh})$ and }$(N,\partial^{-})$\textbf{.}
The horizontal red arrows and vertical red arrows have $U_{1}$ and
$U_{2}$ coefficients respectively. }
\label{CFL^-(Wh)}

\end{figure}
\end{prop}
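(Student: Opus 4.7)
The plan is to first fix the filtered chain homotopy type of $\widehat{CFL}(L)$ for $L = b(8k, 4k+1)$ using Corollary \ref{cor:filtered_type_CFLhat_2-bri}. Since a direct computation with Proposition \ref{prop:Alexander grading} gives $\sigma(L) = -1$ and $\mathrm{lk}(L) = 0$, we are in case (2) with $l = -1$, so
\[
\widehat{CFL}(L) \simeq X^{1}_{(0)}[0,0] \oplus X^{0}_{(-1)}[0,0] \oplus \bigoplus_{j} B_{(d_j)}[i_j, j_j],
\]
where the boxes are read off from the multi-variable Alexander polynomial. Computing $\Delta_L(x,y)$ from Proposition \ref{prop:Alexander grading} shows the box data is $k-1$ copies of a fixed pattern clustered along the anti-diagonal, which produces the $(N, \partial^-)$ block in Figure \ref{CFL^-(Wh)}, plus the central $X$-zigzags which will yield the $CFL^{-}(\mathit{Wh})$ summand.

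Next, I would work on the Schubert Heegaard diagram $\mathcal{H}$ of $L$ and transport a filtration-preserving change of basis on $\widehat{CF}(\mathcal{H})$ back to $CF^{-}(\mathcal{H})$. The thinness of $\widehat{CF}(\mathcal{H})$ (Proposition \ref{prop:Alexander grading}) means its $E_\infty$ page equals its $E_1$ page, so the basis witnessing the OS simplified form of $\widehat{CFL}(L)$ lifts to generators of $CF^-(\mathcal{H})$ of matching bi-filtration. Because each bigon in a Schubert Heegaard diagram contains exactly one basepoint (Lemma \ref{lem:(Pattern-of-bigons)}), every $U_i$-arrow in $CF^-(\mathcal{H})$ shifts the $\mathbb{Z}^2$-filtration by a single unit vector ($-\mathbf{e}_1$ or $-\mathbf{e}_2$). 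Hence the only $U_i$-differentials that can appear are the length-one horizontal/vertical arrows between generators of the OS model, and these form a very small finite list of candidates at each lattice point.

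The remaining step is to pin down exactly which of these candidate length-one arrows are actually present. Here I would invoke three constraints: (i) the link Floer conjugation symmetry $(s_1,s_2) \mapsto (-s_1,-s_2)$ restricted to the generators of $\widehat{CFL}(L)$, which forces $U_1$-arrows and $U_2$-arrows to come in symmetric pairs about the origin; (ii) the requirement that each $H_*\big(A^-_{s_1,s_2}(L)\big)$ be a rank-one free $\mathbb{F}[[U]]$-module (so that every truncated subquotient has homology $\mathbb{F}[[U]]$ in the top tower); and (iii) the requirement $H_*\big(CFL^{-}(L)\big) = HF^-(S^3) = \mathbb{F}[[U]]$. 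Applied to the $X$-zigzag block, these constraints leave only the arrow configuration in Figure \ref{CFL^-(Wh)} marked $CFL^-(\mathit{Wh})$. Applied to each box $B_{(d_j)}[i_j, j_j]$ together with its two closest neighbors on the anti-diagonal, they leave only the $(N, \partial^-)$ pattern; the boxes are far enough apart that there can be no cross-arrows between distinct $(N,\partial^-)$ summands.

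The main obstacle will be ruling out these possible cross-arrows and extra length-one arrows in the $X$-zigzag region where generators cluster most densely. I expect the cleanest way to dispose of them is to combine the conjugation symmetry with a local homology computation: assuming an additional arrow, one of the small truncated subcomplexes $A^-_{s_1,s_2}$ fails to have $H_* \cong \mathbb{F}[[U]]$, contradicting the known large-surgery behavior of the rank-one hat-groups. Once this is done, the claimed decomposition $CFL^-(L) = CFL^-(\mathit{Wh}) \oplus \bigoplus_{i=1}^{k-1} (N, \partial^-)$ follows immediately by reading off the arrows in Figure \ref{CFL^-(Wh)}, and the fact that all inputs to the argument ($\Delta_L$, $\sigma(L)$, $\mathrm{lk}(L)$) come only from $\widehat{CFL}(L)$ establishes the stated determination.
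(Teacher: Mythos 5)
Your overall strategy matches the paper's: fix the filtered homotopy type of $\widehat{CFL}(L)$ via the Ozsv\'ath--Szab\'o classification, exploit the thinness and the fact that every $U_i$-arrow in the Schubert Heegaard complex has length one, and then pin down the $U_i$-arrows by algebraic and homological constraints. The opening two paragraphs of your proposal are essentially correct.

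The gap is in your constraint (ii). You propose to rule out candidate arrows by requiring that each $H_*\bigl(A^-_{s_1,s_2}(L)\bigr)$ be a rank-one free $\mathbb{F}[[U]]$-module. That requirement is exactly the statement that $L$ is an L-space link, and in this paper that is a \emph{consequence} of the structure of $CFL^-(L)$, proven afterwards in Lemma~\ref{lem:H_*(A_s(Wh))} by exhibiting the complex as a square with three acyclic corners. It is not available as an a~priori input. For a knot one knows $H_*(A^-_s)$ is always rank one over $\mathbb{F}[U]$, but for a link that only guarantees a rank-one free summand plus torsion, and the absence of torsion is what you are trying to establish. So as written the argument is circular. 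Your constraint (i), the conjugation symmetry, is also only valid up to filtered chain homotopy equivalence, so it constrains the homotopy type rather than the specific chain complex you have in hand; in the paper it is invoked only at the level of homology after the structure theorem is in place, not as a chain-level rigidity.

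The paper replaces your (ii) with two constraints that are available from the outset. First, since each component $L_i$ of a two-bridge link is an unknot, the homology of $CFL^-(L)$ with respect to the differential $\partial_{U_2}$ alone (with $U_2$ set to $1$, i.e.\ the reduction $r_{-L_2}(\mathcal H)$) must be $\widehat{HFK}$ of the unknot, namely $\mathbb F^{\oplus 2}$, supported in $A_1 = 0$. This kills the possibility of a vanishing $\partial_{U_2}$ on the outer columns. Second, the identity $H_*\bigl(CFL^-(L)\bigr)=\mathbb F[[U_1,U_2]]/(U_1-U_2)$ together with the long exact sequence for multiplication by $U_1$ pins down $H_*\bigl(CFL^-(L)/U_1\bigr)$ and hence the last undetermined arrow $\partial_{U_2}(d_2^{(1)})$. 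Combined with the commutator relations from $(\partial^-)^2 = 0$ and a sequence of explicit base changes (needed especially for $k>1$ to normalize the block structure), these are enough. If you replace your constraint (ii) by the unknotted-component constraint and track the base changes, the rest of your outline goes through.
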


\begin{proof}
By Theorem \ref{thm:(Ozsvth-Szab)}, the Ozsv\'{a}th-Szab\'{o} simplified
complex $\widehat{CFL}_{\mathrm{OS}}(L)$ can be computed in terms
of $\Delta_{L}(x,y)=k\Delta_{\mathit{Wh}}(x,y)=k\frac{(x-1)(y-1)}{\sqrt{xy}}$,
$\mathrm{lk}=0$, and $\sigma(L)=-1$.
Then, we compute that
\[
\widehat{CFL}_{\mathrm{OS}}(L)=A\oplus B\oplus C\oplus D=\bigoplus_{i=1}^{k}A^{(i)}\oplus\bigoplus_{i=1}^{k}B^{(i)}\oplus\bigoplus_{i=1}^{k}C^{(i)}\oplus\bigoplus_{i=1}^{k}D^{(i)}.
\]
See Figure \ref{ABCD} for the filtered homotopy type of $A^{(i)},B^{(i)},C^{(i)},D^{(i)}$,
where we denote the generators in $A^{(i)},B^{(i)},C^{(i)},D^{(i)}$
by $a_{j}^{(i)},b_{j}^{(i)},c_{j}^{(i)},d_{j}^{(i)},$ $j=1,2,3,4,$
respectively.

\begin{figure}
\centering

\includegraphics[scale=0.6]{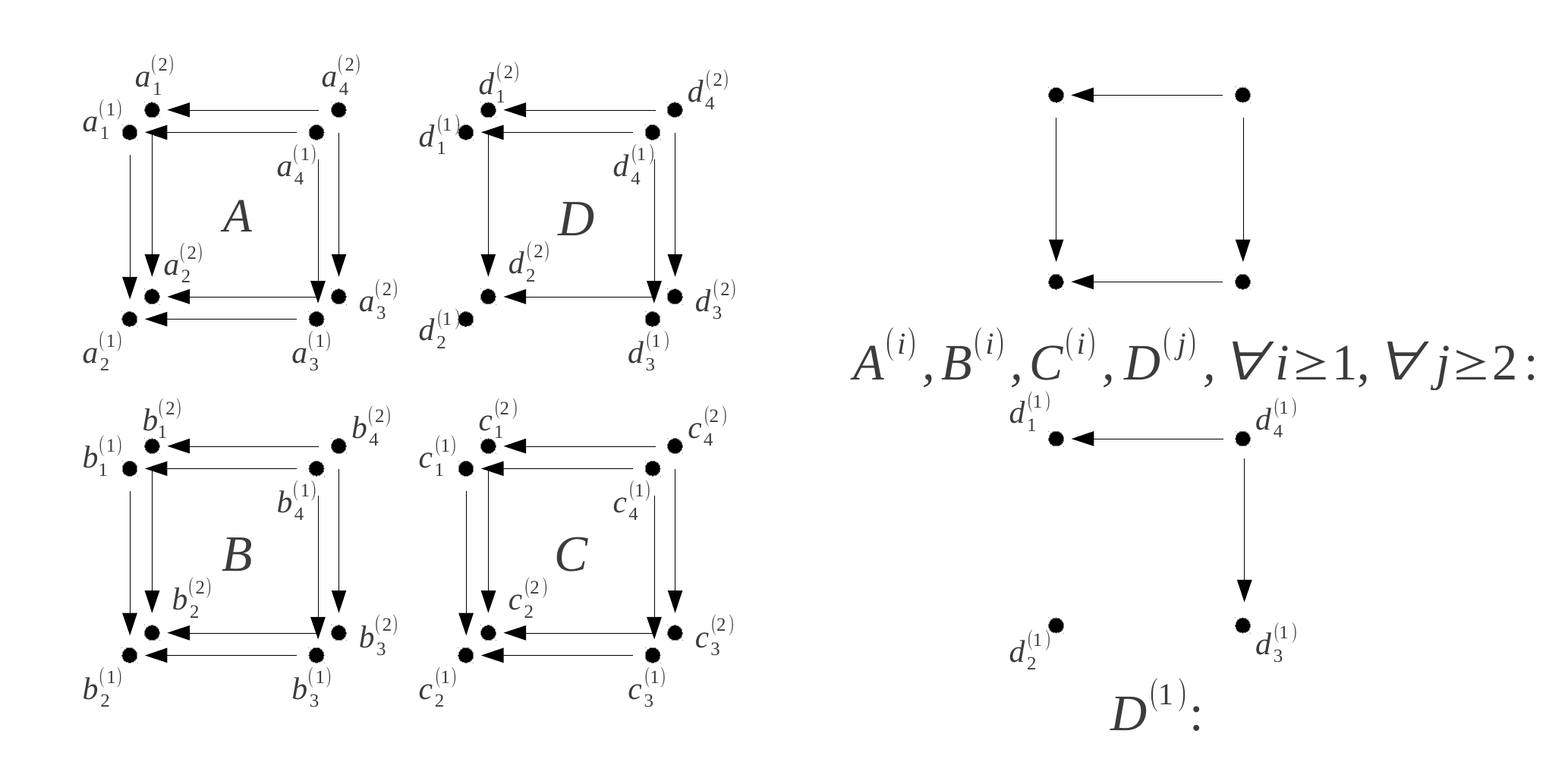}

\caption{$\widehat{CFL}_{\mathrm{OS}}(b(8k,4k\pm1))$\textbf{.} On the left
side, the figure illustrates the Alexander grading of $A,B,C,D$ summands,
where $k=2$. On the right side, it indicates the filtered homotopy
types of $(A^{(i)},\hat{\partial}),(B^{(i)},\hat{\partial}),(C^{(i)},\hat{\partial}),(D^{(i)},\hat{\partial})$,
which all have the filtered homotopy types as boxes, except for $(D^{(1)},\hat{\partial}).$ }

\label{ABCD}

\end{figure}

Given $\widehat{CFL}_{\mathrm{OS}}(L)$, let us investigate the possibilities
for $CFL^{-}(L).$ The differential $\partial^{-}$ in $CFL^{-}(L)$
decomposes into
\[
\partial^{-}=\hat{\partial}+\partial_{U_{1},U_{2}}=\partial_{A_{1}}+\partial_{A_{2}}+\partial_{U_{1}}+\partial_{U_{2}},
\]
where $\partial_{U_{1},U_{2}}(x)=\partial_{U_{1}}(x)+\partial_{U_{2}}(x)$
consists of the components in $\partial^{-}(x)$ with coefficients
of $U_{1},U_{2}$ powers, and $\hat{\partial}(x)=\partial_{A_{1}}(x)+\partial_{A_{2}}(x)$
is decomposed by the Alexander filtration. As stated before, here
$\partial_{U_{i}}$ has the form of $\partial_{U_{i}}(x)=U_{i}y$
for $i=1,2,$  i.e. the $\partial_{U_{i}}$-arrows are all of length
1. A close examination of $U_{1},U_{2}$ powers and the Alexander
filtrations in the coefficients of the following identity provides
that
\begin{align*}
0=(\partial^{-})^{2}=(\hat{\partial}+\partial_{U_{1},U_{2}})^{2}=(\partial_{A_{1}}+\partial_{A_{2}}+\partial_{U_{1}}+\partial_{U_{2}})^{2}\implies & [\hat{\partial},\partial_{U_{1},U_{2}}]=0,\ \partial_{U_{1},U_{2}}^{2}=\hat{\partial}^{2}=0\implies\\
{}[\partial_{A_{1}},\partial_{U_{1}}]=[\partial_{A_{2}},\partial_{U_{1}}]=[\partial_{A_{2}},\partial_{U_{1}}]=[\partial_{A_{2}},\partial_{U_{2}}]=[\partial_{U_{1}},\partial_{U_{2}}]=0, & \ \ \partial_{A_{1}}^{2}=\partial_{U_{1}}^{2}=\partial_{A_{2}}^{2}=\partial_{U_{2}}^{2}=0.
\end{align*}
where $[f,g]=fg+gf$.

At this point, we first consider the Whitehead link. The $\widehat{CFL}(\mathit{Wh})$
is shown by the right term in Equation (\ref{eq:CFL_Hat_Whitehead}),
and the bullets are labeled as in Figure \ref{ABCD}. By looking at
the vertical arrows only, the equations $\partial_{U_{2}}^{2}=\partial_{A_{2}}^{2}=[\partial_{A_{2}},\partial_{U_{2}}]=0$
give rise to the two possibilities of the rightmost column as follows,
according to whether $\partial_{U_{2}}(c_{4}^{(1)})$ is $0$ or not.

\begin{center}
\begin{tikzpicture}[thick]
\node (a1) at ( 0,1.5) {$d^{(1)}_4$};
\node (a2) at ( 0,.3) {$d^{(1)}_3$}
  edge [<-] (a1);
\node (b1) at ( 0,-.3) {$c^{(1)}_4$}
  edge [->,out=180,in=180,red] node [left,black] {$U_2$} (a1);
\node (b2) at ( 0,-1.5) {$c^{(1)}_3$}
  edge [<-] (b1)
  edge [->,out=180,in=180,red] node [left,black] {$U_2$} (a2);
\node (a1') at ( 6,1.5) {$d^{(1)}_4$};
\node (a2') at ( 6,.3) {$d^{(1)}_3$}
  edge [<-] (a1');
\node (b1') at ( 6,-.3) {$c^{(1)}_4$};
\node (b2') at ( 6,-1.5) {$c^{(1)}_3$}
  edge [<-] (b1');
\end{tikzpicture}
\end{center}

Consider the Heegaard diagram of $L_{1}$ obtained from $\mathcal{H}$
by deleting $w_{2}$, i.e. the reduction $r_{-L_{2}}(\mathcal{H})$.
The differentials in $\widehat{CF}(r_{-L_{2}}(\mathcal{H}))$ count
the bigons without basepoints $w_{1},z_{1},z_{2}$ on $\mathcal{H}$,
which are the same as bigons with basepoint $w_{2}$. Thus, the complex
$\widehat{CF}(r_{-L_{2}}(\mathcal{H}))$ can be obtained by ignoring
the arrows $\partial_{A_{2}}$ and setting $U_{2}=1$. So the vertical
homology of $CFL^{-}(L)$ using only the $\partial_{U_{2}}$ arrows
is the knot Floer homology of the unknot $L_{1}$, $\mathbb{F}\oplus\mathbb{F}$,
supported in the filtration $A_{1}=\tau(L_{1})+\frac{\mathrm{lk}(L)}{2}=0.$
Thus, the right-hand side in the above diagram is ruled out. A similar
argument applies to the leftmost column. In sum,
\[
\partial_{U_{2}}(b_{1}^{(1)})=U_{2}a_{1}^{(1)},\ \partial_{U_{2}}(b_{2}^{(1)})=U_{2}a_{2}^{(1)},\ \partial_{U_{2}}(c_{4}^{(1)})=U_{2}d_{4}^{(1)},\ \partial_{U_{2}}(c_{3}^{(1)})=U_{2}d_{3}^{(1)}.
\]
Together with $\partial_{A_{1}}\partial_{U_{2}}=\partial_{U_{2}}\partial_{A_{1}},$
we get
\[
\partial_{A_{1}}\partial_{U_{2}}(b_{4}^{(1)})=\partial_{U_{2}}\partial_{A_{1}}(b_{4}^{(1)})=\partial_{U_{2}}b_{1}^{(1)}=U_{2}a_{1}^{(1)}\implies\partial_{U_{2}}(b_{4}^{(1)})=U_{2}a_{4}^{(1)}\text{ or }U_{2}(a_{4}^{(1)}+d_{1}^{(1)}).
\]
Thus, $\partial_{A_{2}}\partial_{U_{2}}=\partial_{U_{2}}\partial_{A_{2}}$
implies that $\partial_{U_{2}}(b_{3}^{(1)})=a_{3}^{(1)}$ and $\partial_{U_{2}}(c_{1}^{(1)})=d_{1}^{(1)},\ \partial_{U_{2}}(c_{2}^{(1)})=0.$

\begin{center}
\begin{tikzpicture}[thick]
\node (a1) at ( 0,2) {$a^{(1)}_1$};
\node (a2) at ( 0,.3) {$a^{(1)}_2$}
  edge [<-] (a1);
\node (b1) at ( 0,-.3) {$b^{(1)}_1$}
  edge [->,out=180,in=180,red] node [left,black] {$U_2$} (a1);
\node (b2) at ( 0,-2) {$b^{(1)}_2$}
  edge [<-] (b1)
  edge [->,out=180,in=180,red] node [left,black] {$U_2$} (a2);
\node (a4) at ( 1.7,2) {$a^{(1)}_4$}
  edge [->] (a1);
\node (a3) at ( 1.7,.3) {$a^{(1)}_3$}
  edge [<-] (a4)
  edge [->] (a2);
\node (b4) at ( 1.7,-.3) {$b^{(1)}_4$}
  edge [->] (b1)
  edge [->,out=170,in=190,red] node [right,black] {$U_2$} (a4);
\node (b3) at ( 1.7,-2) {$b^{(1)}_3$}
  edge [->] (b2)
  edge [<-] (b4)
  edge [->,out=170,in=190,red] node [right,black] {$U_2$} (a3);

\node (d1) at ( 2.3,2) {$d^{(1)}_1$}
  edge [<-, out=-100, in=65, red, dashed] node [black] {$U_2$} (b4);
\node (d2) at ( 2.3,.3) {$d^{(1)}_2$}
  edge [->, red] node [right,black] {$U_2$} (d1);
\node (c1) at ( 2.3,-.3) {$c^{(1)}_1$}
  edge [->,out=10,in=-10,red] node [right,black] {$U_2$} (d1);
\node (c2) at ( 2.3,-2) {$c^{(1)}_2$}
  edge [<-] (c1);
\node (d4) at ( 4,2) {$d^{(1)}_4$}
  edge [->] (d1);
\node (d3) at ( 4,.3) {$d^{(1)}_3$}
  edge [<-] (d4);
\node (c4) at ( 4,-.3) {$c^{(1)}_4$}
  edge [->] (c1)
  edge [->,out=0,in=0,red] node [right,black] {$U_2$} (d4);
\node (c3) at ( 4,-2) {$c^{(1)}_3$}
  edge [->] (c2)
  edge [<-] (c4)
  edge [->,out=0,in=0,red] node [right,black] {$U_2$} (d3);
\end{tikzpicture}
\end{center}

Next, $\partial_{U_{2}}\partial_{A_{2}}=\partial_{A_{2}}\partial_{U_{2}}$
implies that $\partial_{U_{2}}(d_{2}^{(1)})\in U_{2}\cdot D.$ To determine $\partial_{U_{2}}(d_{2}^{(1)})$,
we consider the complex $CFL^{-}(L)\otimes_{\mathbb{F}[[U_{1},U_{2}]]}(\mathbb{F}[[U_{1},U_{2}]]/U_{1})=CFL^{-}(L)/(U_{1}\cdot CFL^{-}(L))$,
i.e. setting $U_{1}=0$. The homology of this complex can be computed from the long exact
sequence of homologies, and it is $\mathbb{F}[[U_{2}]]/U_{2}$
as an $\mathbb{F}[[U_{2}]]$-module. Meanwhile, to compute this homology we can also use
the $A_1$-filtration and kill acyclic subcomplexes and acyclic quotient complexes. Taking the vertical homology
of this complex with respect to $\partial_{A_{2}}$ leaves only $d_{2}^{(1)}$
and $d_{1}^{(1)}$. Thus, from the homology constraint we have computed, it follows that $\partial_{U_{2}}(d_{2}^{(1)})=U_{2}d_{1}^{(1)}$.
Thus, we recover all the $U_{2}$-arrows. See the above figure, where
the dashed arrow is undetermined. Similarly, we can get all the $U_{1}$-arrows.
By changing basis, $\tilde{a}_{4}^{(1)}=a_{4}^{(1)}+d_{1}^{(1)}$
and $\tilde{c}_{4}^{(1)}=c_{4}^{(1)}+d_{3}^{(1)}$, we can get rid
of the dashed arrows, which gives the picture of $CFL^{-}(\mathit{Wh})$
in Figure \ref{CFL^-(Wh)}.

When $k>1$, we follow the same line of argument, together with doing
more changes of basis to prune the arrows. First, consider the rightmost
column, i.e. $R=\mathrm{Span}_{\mathbb{F}[[U_{2}]]}\{d_{3}^{(i)},d_{4}^{(i)},c_{3}^{(i)},c_{4}^{(i)}\}_{i=1}^{k}$
with the differentials $\partial_{A_{2}}+\partial_{U_{2}}$. Assume
$\partial_{U_{2}}(c{}_{4}^{(i)})=U_{2}\cdot\sum_{m=1}^{k}\lambda_{i,m}d_{4}^{(m)}$.
Then $\partial_{U_{2}}\partial_{A_{2}}=\partial_{A_{2}}\partial_{U_{2}}$
implies that $\partial_{U_{2}}(c_{3}^{(i)})=U_{2}\cdot\sum_{m=1}^{k}\lambda_{i,m}d_{3}^{(m)}.$
So the matrix $D=(\lambda_{i,m})$ represents the differential $\partial_{U_{2}}$
in the upside down vertical complex $\big(R\otimes_{\mathbb{F}[[U_2]]}\mathbb{F}[[U_{2}]]/(U_{2}-1),\partial_{U_{2}}\big)$.
Since its homology is $0$, the matrix $D$ is invertible. In other
words, the $\partial_{U_{2}}$-arrows form an isomorphism from $\mathrm{Span}_{\mathbb{F}}\{c_{3}^{(i)},c_{4}^{(i)}\}_{i=1}^{k}$
to $\mathrm{Span}_{\mathbb{F}}\{d_{3}^{(i)},d_{4}^{(i)}\}_{i=1}^{k}$.
Thus, we can find a new basis of $C$, namely $\{\tilde{c}_{1}^{(i)},\tilde{c}_{2}^{(i)},\tilde{c}_{3}^{(i)},\tilde{c}_{4}^{(i)}\}_{i=1}^{k}$,
such that
\[
\partial_{U_{2}}(\tilde{c}_{3}^{(i)})=U_{2}\cdot d_{3}^{(i)},\ \ \partial_{U_{2}}(\tilde{c}_{4}^{(i)})=U_{2}\cdot d_{4}^{(i)},\forall1\leq i\leq k,
\]
while the pattern of the $\hat{\partial}$ is preserved. In addition,
$[\partial_{U_{2}},\partial_{A_{1}}]=0$ implies that
\begin{align*}
\partial_{U_{2}}(\tilde{c}_{1}^{(i)}) & =U_{2}\cdot d_{1}^{(i)},\ \ \partial_{U_{2}}(\tilde{c}_{2}^{(i)})=U_{2}\cdot d_{2}^{(i)},\forall2\leq i\leq k,\\
\partial_{U_{2}}(\tilde{c}_{1}^{(1)}) & =U_{2}\cdot d_{1}^{(1)},\ \ \partial_{U_{2}}(\tilde{c}_{2}^{(1)})=0.
\end{align*}
From the fact that the vertical homology of $CFL^{-}(L)$ with respect
to the differential $\partial_{A_{2}}+\partial_{U_{2}}$ is $\mathbb{F}[[U_{2}]]/U_{2}$,
it follows that $\partial_{U_{2}}(d_{2})=U_{2}\cdot d_{1}$.

We may as well keep using the notations $c_{j}^{(i)}$ for the new
basis. Applying similar arguments for the leftmost column with respect
to vertical arrows, we can change the basis of $A$ without changing
the pattern of $\hat{\partial}$, such that
\[
\partial_{U_{2}}(b_{j}^{(i)})=U_{2}a_{j}^{(i)},\forall j=1,2,\forall i=1,...,k.
\]
Then $\partial_{A_{1}}\partial_{U_{2}}=\partial_{U_{2}}\partial_{A_{1}}$
implies
\[
\partial_{U_{2}}(b_{4}^{(i)})=U_{2}a_{4}^{(i)}+\sum_{m=1}^{k}\varepsilon_{i,m}U_{2}d_{1}^{(m)},\forall i=1,...,k,\varepsilon_{i,m}\in\mathbb{F}.
\]
Thus, $\partial_{U_{2}}(b_{3}^{(i)})=U_{2}a_{3}^{(i)}+\sum_{m=2}^{k}\varepsilon_{i,m}U_{2}d_{2}^{(m)},\forall i=1,...,k.$
Do base-changes:
\[
\tilde{a}_{4}^{(i)}=a_{4}^{(i)}+\sum_{m=1}^{k}\varepsilon_{i,m}d_{1}^{(m)},\ \ \tilde{a}_{3}^{(i)}=a_{3}^{(i)}+\sum_{m=2}^{k}\varepsilon_{i,m}d_{2}^{(m)}.
\]
We can preserve the pattern of $\hat{\partial}$, such that under
the new basis (where we keep using the notations $a_{j}^{(i)}$) all
the vertical arrows are pruned as $\partial_{U_{2}}(b_{j}^{(i)})=U_{2}a_{j}^{(i)},\forall j=1,2,3,4,\forall i=1,...,k.$
Similarly, by changing the bases of $A$ and $B$ simultaneously,
we can prune the horizontal arrows in the top row, while preserving
the pattern of $\hat{\partial},\partial_{U_{2}}$ on $A$ and \textbf{$B$},
such that $\partial_{U_{1}}(b_{j}^{(i)})=U_{1}c_{j}^{(i)},\forall j=1,4,\forall i=1,...,k.$
Then $\partial_{A_{2}}\partial_{U_{1}}=\partial_{U_{1}}\partial_{A_{2}}$
implies that $\partial_{U_{1}}(b_{2}^{(i)}),\partial_{U_{1}}(b_{3}^{(i)})$
are determined.

Similarly, all the horizontal arrows from $B$ can be pruned by changing
the basis of $C$. Suppose
\[
\partial_{U_{1}}(b_{4}^{(i)})=U_{1}(\sum_{m=1}^{k}\lambda_{i,m}c_{4}^{(m)}+\sum_{m=1}^{k}\mu_{i,m}d_{3}^{(m)}).
\]
Then $\partial_{U_{1}}\partial_{U_{2}}(b_{4}^{(i)})=\partial_{U_{2}}\partial_{U_{1}}(b_{4}^{(i)})$
implies that $U_{1}U_{2}(\sum_{m=1}^{k}\lambda_{i,m}d_{4}^{(m)})=U_{1}U_{2}d_{4}^{(i)}$.
Thus, $\lambda_{i,i}=1,\lambda_{i,m}=0,\forall i=1,...,k,\forall m\neq i.$
Thus, $\partial_{U_{1}}(b_{4}^{(i)})=U_{1}(c_{4}^{(i)}+\sum_{m=1}^{k}\mu_{i,m}d_{3}^{(m)})$.
Do base-changes
\begin{align*}
\tilde{c}_{4}^{(i)}=c_{4}^{(i)}+\sum_{m=1}^{k}\mu_{i,m}d_{3}^{(m)}, & \ \ \tilde{c}_{1}^{(i)}=c_{1}^{(i)}+\sum_{m=2}^{k}\mu_{i,m}d_{2}^{(m)},\forall i=1,...,k.
\end{align*}
Under the new basis (where we keep using the notations $c_{j}^{(i)}$),
the patterns of all the $\hat{\partial}$ and $\partial_{U_{2}}$
arrows are preserved, while $\partial_{U_{1}}(b_{4}^{(i)})=c_{4}^{(i)}.$
Moreover, $[\partial_{A_{1}},\partial_{U_{1}}]=[\partial_{A_{2}},\partial_{U_{1}}]=0$
implies that $\partial_{U_{1}}b_{j}^{(i)}=U_{1}c_{j}^{(i)},\forall j=1,2,3,4,\forall i=1,...,k.$
Finally, all the arrows are as in Figure \ref{CFL^-(Wh)}. Thus, $CFL^{-}(L)$
can be viewed as a square of chain complexes of $A,B,C,D.$
\end{proof}
Similar arguments apply to the case of $L=b(8k+4,4k+3).$
\begin{prop}
\label{prop:CFL^-b(8k,4k+3)}For the two-bridge link $L=b(8k+4,4k+3)$,
the filtered homotopy type of $CFL^{-}(L)$ is determined by the filtered
homotopy type of $\widehat{CFL}(L)$ (and hence by the Alexander polynomial,
signature and linking number). Furthermore, $CFL^{-}(L)=CFL^{-}(T(2,4))\oplus\bigoplus_{i=1}^{k-1}(N,\partial^{-})$,
where $(N,\partial^{-})$ is as in Figure \ref{CFL^-(Wh)} and $CFL^{-}(T(2,4))$
is as follows.\end{prop}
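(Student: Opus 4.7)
The plan is to mirror the proof of Proposition \ref{prop:CFL^-(b(8k,4k+1))}, adapted to the link $L = b(8k+4, 4k+3)$, which has linking number $\mathrm{lk}(L) = 2$ (read off from Figure \ref{b(4n,2n+1)} with $n = 2k$) and signature determined by the Schubert form. First, I would apply Corollary \ref{cor:filtered_type_CFLhat_2-bri} to write $\widehat{CFL}_{\mathrm{OS}}(L)$ as the direct sum of a family of ``box'' summands $B_{(d)}[i,j]$ together with a single distinguished $X$- or $Y$-type zigzag summand; this zigzag coincides with the corresponding piece of $\widehat{CFL}(T(2,4))$. Regrouping, I would write $\widehat{CFL}_{\mathrm{OS}}(L) = A \oplus B \oplus C \oplus D$ with blocks $A^{(i)}, B^{(i)}, C^{(i)}, D^{(i)}$ laid out in the same four-corner pattern as in Proposition \ref{prop:CFL^-(b(8k,4k+1))}, with the distinguished zigzag occupying one corner.

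Next, I would use that every bigon in the Schubert Heegaard diagram contains exactly one basepoint: this makes $\widehat{CF}(\mathcal{H})$ thin, so after a filtration-preserving change of basis it is literally identified with $\widehat{CFL}_{\mathrm{OS}}(L)$. Working in this basis for $CF^-(\mathcal{H})$, the full differential splits as $\partial^- = \partial_{A_1} + \partial_{A_2} + \partial_{U_1} + \partial_{U_2}$, where each $\partial_{U_i}$ consists of length-one arrows carrying a single $U_i$. Expanding $(\partial^-)^2 = 0$ and separating by Alexander grading and by $U_i$-monomial yields the pairwise commutations $[\partial_{A_i}, \partial_{U_j}] = [\partial_{U_1}, \partial_{U_2}] = 0$ together with $\partial_{A_i}^2 = \partial_{U_i}^2 = 0$.

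The same three homology constraints used in Proposition \ref{prop:CFL^-(b(8k,4k+1))} then pin the arrows down: specializing $U_2 = 1$ recovers $\widehat{HFK}$ of the unknot $L_1$ and forces $\partial_{U_2}$ on each column to realize an isomorphism between matched pairs of generators; the symmetric specialization $U_1 = 1$ gives the analogous row statement; and specializing $U_1 = 0$ yields vertical homology $\mathbb{F}[[U_2]]/U_2$, which fixes the remaining interior $\partial_{U_2}$-arrow inside the $D$-block. Combined with the commutation relations, these determine every $U_i$-arrow up to terms that can be absorbed by filtration-preserving base changes on $A, B, C$ that do not disturb $\hat{\partial}$ or the already-pruned $\partial_{U_i}$-arrows, exactly as in Proposition \ref{prop:CFL^-(b(8k,4k+1))}.

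The main obstacle will be the corner zigzag summand, which for $L = b(8k+4, 4k+3)$ is longer than the $X$-zigzag appearing for the Whitehead link and couples to its neighbors via extra $U_i$-arrows; in particular, the base change identities used in Proposition \ref{prop:CFL^-(b(8k,4k+1))} will have more coefficients to track. To handle this cleanly I would first establish the base case $k = 0$, i.e.\ $L = T(2,4)$, by running the same algebraic argument directly, thus obtaining the explicit complex $CFL^-(T(2,4))$ asserted in the statement. Once this corner is pinned down, the inductive extension to $k \geq 1$ adds only standard box summands, each of which reduces to a copy of $(N, \partial^-)$, yielding the decomposition $CFL^-(L) = CFL^-(T(2,4)) \oplus \bigoplus_{i=1}^{k-1}(N, \partial^-)$.
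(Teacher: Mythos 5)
Your proposal is correct and follows essentially the same route the paper intends: the paper's own ``proof'' of this proposition is just the remark that the arguments of Proposition \ref{prop:CFL^-(b(8k,4k+1))} carry over, and your outline reproduces exactly those arguments (thinness and identification with $\widehat{CFL}_{\mathrm{OS}}(L)$, the decomposition $\partial^-=\partial_{A_1}+\partial_{A_2}+\partial_{U_1}+\partial_{U_2}$ with its commutation relations, the homology constraints from the unknot components and from setting $U_1=0$, and the filtered base changes), adapted to $\mathrm{lk}=2$. Two descriptive slips — the zigzag summand for $b(8k+4,4k+3)$ is not longer than the Whitehead one (both contribute four generators, of $Y$- rather than $X$-type here, up to the non-canonical choice of decomposition), and the non-box part sits on a diagonal two-block pattern rather than in one corner of a four-corner layout — are harmless, since your plan of first computing $\widehat{CFL}_{\mathrm{OS}}(L)$ via Corollary \ref{cor:filtered_type_CFLhat_2-bri} and treating $T(2,4)$ as the base case would correct them.
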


\begin{center}
\begin{tikzpicture}[thick, scale=0.7, transform shape]
\node (b1) at ( -.5,-.3) {$\bullet$};
\node (b2) at ( -.5,-2.5) {$\bullet$}
  edge [->,red] node [left, black]{$U_2$} (b1);
\node (b4) at ( 1.7,-.3) {$\bullet$}
  edge [->] (b1);
\node (b3) at ( 1.7,-2.5) {$\bullet$}
  edge [<-,red] node [below, black] {$U_1$}(b2)
  edge [<-] (b4);

\node (d1) at ( 2.3,2.5) {$\bullet$}
  edge [<-, red] node [left,black] {$U_2$} (b4);
\node (d2) at ( 2.3,.3) {$\bullet$}
  edge [<-,red] node [above, black] {$U_1$}  (b1)
  edge [<-,red] node [right,black] {$U_2$}  (b3)
  edge [<-] (d1);
\node (d4) at ( 4.5,2.5) {$\bullet$}
  edge [->] (d1);
\node (d3) at ( 4.5,.3) {$\bullet$}
  edge [<-] (d4)
  edge [->] (d2)
  edge [<-,red] node [below,black] {$U_1$} (b4);

\end{tikzpicture}
\end{center}

\subsection{Computations of surgeries on $b(8k,4k+1)$.}

In this section, we compute the homology of surgeries on the two-bridge
link $b(8k,4k+1)$ and their $d$-invariants explicitly. Here, we
make a convention of the $d$-invariants of ${\bf HF}^{-}$ different
from \cite{OS_Mixed_inv}. We require that $d({\bf HF}^{-}(S^{3}))=0.$
Thus, the $d$-invariants computed here are the same as the $d$-invariants
for $HF^{+}$.

We will first compute for the Whitehead link, following three steps:
computations of $A_{\mathrm{s}}^{-}(\mathit{Wh})$, computations of
the inclusion maps $I_{\mathrm{s}}^{\overrightarrow{M}}$, and the
computations of the homology of the surgeries on $\mathit{Wh}$.

\begin{lem}
\label{lem:H_*(A_s(Wh))}$H_{*}(A_{\mathrm{s}}^{-}(\mathit{Wh}))=\mathbb{F}[[U_{1},U_{2}]]/(U_{1}-U_{2})=\mathbb{F}[[U]]$
for all $\text{s}\in\mathbb{H}(\mathit{Wh})=\mathbb{Z}^{2}$.\end{lem}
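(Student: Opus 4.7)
The plan is to read off $A_{\mathrm{s}}^{-}(\mathit{Wh})$ directly from the explicit filtered complex $CFL^{-}(\mathit{Wh})$ given in Proposition \ref{prop:CFL^-(b(8k,4k+1))} (the case $k=1$ of Figure \ref{CFL^-(Wh)}), and to verify by a finite case analysis that the homology is always $\mathbb{F}[[U]]$. Recall from Section 2.2 that $A_{\mathrm{s}}^{-}(L)$ is obtained from $CFL^{-}(L)$ by modifying the differential according to the $E_{s_i}^{i}(\phi)$-exponents in \eqref{eq:A_s}: every $\partial_{U_i}$-arrow in $CFL^{-}$ is decorated with an additional factor $U_i^{E_{s_i}^{i}(\phi)}$ determined by the Alexander gradings of its endpoints relative to $s_i$. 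Since the Alexander bigradings of the generators of $CFL^{-}(\mathit{Wh})$ take only finitely many values (supported in a small box around the origin), the function $\mathrm{s}\mapsto A_{\mathrm{s}}^{-}(\mathit{Wh})$ is constant on each of finitely many regions of $\mathbb{Z}^{2}$, so it suffices to check finitely many representative $\mathrm{s}$.

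First I would handle the four ``extremal'' regions. When $s_{1},s_{2}\gg 0$, one has $A_{\mathrm{s}}^{-}=A_{+\infty,+\infty}^{-}=CFL^{-}(\mathit{Wh})$, viewed as $CF^{-}(S^{3})\otimes_{\mathbb{F}[[U]]}\mathbb{F}[[U_{1},U_{2}]]/(U_{1}-U_{2})$, whose homology is $\mathbb{F}[[U]]$ by the Ozsv\'ath--Szab\'o computation $HF^{-}(S^{3})=\mathbb{F}[U]$ (completed). The other three extremal regions $\mathrm{s}\in\{(\pm\infty,\pm\infty)\}$ give identical complexes up to isomorphism by the symmetries of the Whitehead link (orientation reversal on each component, or swapping the two components, which preserve the link up to the conventions of Fact 3.3), so each has homology $\mathbb{F}[[U]]$.

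For the remaining finitely many interior values of $\mathrm{s}$, I would compute $A_{\mathrm{s}}^{-}(\mathit{Wh})$ generator by generator from the ten-vertex picture in Figure \ref{CFL^-(Wh)}. The key simplification is that in the Schubert Heegaard diagram every bigon contains exactly one basepoint, so each $U_{i}$-arrow in $CFL^{-}$ has length one in the $A_{i}$-direction; hence $E_{s_{i}}^{i}$ takes only the values $0$ or $1$ on any given arrow, depending on whether the arrow's starting Alexander coordinate $A_{i}(\mathbf{x})$ lies above or at-or-below $s_{i}$. I would then filter $A_{\mathrm{s}}^{-}$ by $-A_{1}-A_{2}$ and run the induced spectral sequence, or equivalently perform repeated cancellations of acyclic rank-$1$ subcomplexes along the $\hat{\partial}$-arrows of Figure \ref{CFL^-(Wh)}, until only two surviving generators remain connected by a single $(U_{1}-U_{2})$-type edge (using that $U_{1}$ and $U_{2}$ act identically on homology, as in Remark \ref{rem:cx_of_the_unknot}).

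The main obstacle is the bookkeeping of the interior case: one must verify that after cancellation the surviving complex is always the standard model $C^{\mathrm{u}}$ of Remark \ref{rem:cx_of_the_unknot}, rather than something with extra torsion. I expect the cleanest way to rule out torsion is to combine the cancellation argument with the fact that $A_{\mathrm{s}}^{-}$ is quasi-isomorphic to the subcomplex of $CFL^{-}(\mathit{Wh})$ in Alexander filtration $\{(i,j):i\le s_{1},\,j\le s_{2}\}$ after inverting $U_{1}$ and $U_{2}$ in the appropriate direction, together with the observation (used implicitly in Section 5.2) that every such subcomplex has vertical and horizontal homology equal to $\mathbb{F}[[U]]$ because $\mathit{Wh}$ has unknotted components. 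Once the homology of $A_{\mathrm{s}}^{-}(\mathit{Wh})$ is shown to be $\mathbb{F}[[U]]$ for all $\mathrm{s}$, the lemma follows, and as noted in the introduction this exhibits $\mathit{Wh}$ as an L-space link.
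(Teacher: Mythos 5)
Your overall strategy coincides with the paper's: start from the explicit filtered complex $CFL^{-}(\mathit{Wh})$ of Proposition \ref{prop:CFL^-(b(8k,4k+1))} and check finitely many regions of $\mathrm{s}$, the extremal regions being essentially immediate. The genuine gap is exactly where you flag it, namely the interior cases (those with some $s_{i}=0$), and the fallback you propose for ruling out torsion does not close it. The claim that these complexes have vertical and horizontal homology $\mathbb{F}[[U]]$ is false: the vertical homology of $CFL^{-}(\mathit{Wh})$ with respect to $\partial_{A_{2}}+\partial_{U_{2}}$ is the torsion module $\mathbb{F}[[U_{2}]]/U_{2}$, as computed inside the proof of Proposition \ref{prop:CFL^-(b(8k,4k+1))} (where it is used to pin down the arrow $\partial_{U_2}d_2=U_2d_1$). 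Even if both edge homologies were known, they do not determine the total homology without controlling the spectral sequence differentials, which is precisely the bookkeeping you are deferring. More tellingly, ``the components are unknots'' cannot be the operative input: the mirror Whitehead link also has unknotted components, yet the paper indicates (in the comparison of $\sigma(\mathit{Wh})=-1$ with $\sigma(\overline{\mathit{Wh}})=1$, analogous to the two trefoils) that its $A_{\mathrm{s}}^{-}$ do \emph{not} all have homology $\mathbb{F}[[U]]$. The assertion $H_{*}(A_{0,0}^{-})=\mathbb{F}[[U]]$ is the L-space-link condition itself and must come from the specific shape of $CFL^{-}(\mathit{Wh})$.

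The mechanism the paper uses, and which you should import, is that the decomposition $CFL^{-}(\mathit{Wh})=\mathrm{cone}\bigl(A\oplus B\oplus C\rightarrow D\bigr)$ of Proposition \ref{prop:CFL^-(b(8k,4k+1))} persists for every $\mathrm{s}$: because every $U_{i}$-arrow has length one, passing from $A_{+\infty,+\infty}^{-}$ to $A_{s_{1},s_{2}}^{-}$ merely reflects the individual summands $A,B,C,D$ about the coordinate axes. In each of the nine regions three of the four reflected summands remain acyclic, so $H_{*}(A_{\mathrm{s}}^{-})$ equals the homology of the remaining $D$-type summand, which is $\mathbb{F}[[U]]$ by inspection (this is Table \ref{As(Wh)} in the paper, which also records generators needed later for the inclusion maps and gradings). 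Alternatively, you could carry out your cancellation scheme explicitly for the nine representative values of $\mathrm{s}$ --- the complex has only $16$ generators, so this is feasible --- but as written the interior case is asserted rather than proved.
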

\begin{proof}
By Proposition \ref{prop:CFL^-(b(8k,4k+1))}, we can decompose $A_{+\infty,+\infty}^{-}(\mathit{Wh})$
into a square of chain complexes, i.e.
\[
\xyR{1pc}\xyC{1pc}\xymatrix{A\ar[r] & D\\
B\ar[u]\ar[r] & C,\ar[u]
}
\]
where the summands $A,B,C,D$ are described in Proposition \ref{prop:CFL^-(b(8k,4k+1))}.
Since $A_{+\infty,+\infty}^{-}$ can be viewed as a mapping cone from
$A\oplus B\oplus C$ to $D$, we get a short exact sequence of chain
complexes
\[
0\rightarrow D\overset{i}{\rightarrow}A_{+\infty,+\infty}^{-}\rightarrow A\oplus B\oplus C\rightarrow0.
\]
From the fact that $H_{*}(A\oplus B\oplus C)=0,$ it follows that
$i$ is a quasi-isomorphism. Hence $H_{*}(A_{+\infty,+\infty}^{-})=\mathbb{F}[[U]]$
and $[d_{1}]=[d_{3}]=1\in H_{*}(A_{+\infty,+\infty}^{-}).$

All the other complexes $A_{s_{1},s_{2}}^{-}(\mathit{Wh})$ can be
actually obtained by taking various reflections on $A_{+\infty,+\infty}^{-}(\mathit{Wh})$.
Note that Equation (\ref{eq:A_s}) implies that the differentials
in $A_{\mathrm{s}}^{-}$ are only changed by $U_{1},U_{2}$ powers
from $A_{+\infty,+\infty}^{-}$. In order to read off the correct
powers of $U_{1},U_{2}$-coefficients, we can change the $\mathbb{Z}^{2}$-filtration
of $A_{\mathrm{s}}^{-}$, such that the upward and rightward arrows
in $A_{\mathrm{s}}^{-}$ are with $U_{1}$ and $U_{2}$ coefficients
respectively. For instance, when $s_{1}>0,s_{2}=0$, we can flip the
summands $A$ and $D$ about the $A_{1}$-axis to obtain the complex
$A_{s_{1},0}^{-}$. For convenience, we denote the vertical reflections
of $A,B,C,D$ by $\overline{A},\overline{B},\overline{C},\overline{D}$.
Thus, the complex $A_{s_{1},0}^{-}$ is still a square of chain complexes
as follows:
\[
\xyR{1pc}\xyC{1pc}\xymatrix{\overline{A}\ar[r] & \overline{D}\\
B\ar[u]\ar[r] & C.\ar[u]
}
\]
Thus, the fact that $\overline{A},B,C$ are acyclic implies that $H_{*}(A_{s_{1},0}^{-})=H_{*}(\overline{D})=\mathbb{F}[[U]]$.
Similarly, we denote the horizontal reflections of $A,B,C,D$ by $|A,|B,|C,|D$
respectively. Thus, the complex $A_{0,s_{1}}^{-}$ with $s_{1}>0$
is the following square of chain complexes
\[
\xyR{1pc}\xyC{1pc}\xymatrix{A\ar[r] & |D\\
B\ar[u]\ar[r] & |C.\ar[u]
}
\]

Following the same line, we list all the filtered homotopy types of
$A_{\mathrm{s}}^{-}$'s together with some generators of their homologies
in Table \ref{As(Wh)}.  Since $\max A_{1}=\max A_{2}=1,$ $\min A_{1}=\min A_{2}=-1$,
the notation $+\infty$ means a positive integer $s$, while $-\infty$
means a negative integer $s.$

\begin{table}[h]
\begin{tabular}{|p{2.09in}|p{1.8in}|p{2.09in}|}
\hline
$\begin{array}{c}
A_{-\infty,+\infty}^{-}=\\
\xymatrix{|A\ar[r] & |D\\
|B\ar[u]\ar[r] & |C,\ar[u]
}
\\
{}[d_{1}]=1\in H_{*}(A_{-\infty,+\infty}^{-});
\end{array}$ & $\begin{array}{c}
A_{0,+\infty}^{-}=\\
\xymatrix{A\ar[r] & |D\\
B\ar[u]\ar[r] & |C,\ar[u]
}
\\
{}[d_{1}]=1\in H_{*}(A_{0,+\infty}^{-});
\end{array}$ & $\begin{array}{c}
A_{+\infty,+\infty}^{-}=\\
\xymatrix{A\ar[r] & D\\
B\ar[u]\ar[r] & C,\ar[u]
}
\\
{}[d_{1}]=[d_{3}]=1\in H_{*}(A_{+\infty,+\infty}^{-});
\end{array}$\tabularnewline
\hline
$\begin{array}{c}
A_{-\infty,0}^{-}=\\
\xymatrix{|\overline{A}\ar[r] & |\overline{D}\\
|B\ar[u]\ar[r] & |C,\ar[u]
}
\\
{}[a_{2}]=1\in H_{*}(A_{-\infty,0}^{-});
\end{array}$ & $\begin{array}{c}
A_{0,0}^{-}=\\
\xymatrix{\overline{A}\ar[r] & |\overline{D}\\
B\ar[u]\ar[r] & |C,\ar[u]
}
\\
{}[d_{1}]=[d_{3}]=1\in H_{*}(A_{0,0}^{-});
\end{array}$ & $\begin{array}{c}
A_{+\infty,0}^{-}=\\
\xymatrix{\overline{A}\ar[r] & \overline{D}\\
B\ar[u]\ar[r] & C,\ar[u]
}
\\
{}[d_{3}]=1\in H_{*}(A_{+\infty,0}^{-});
\end{array}$\tabularnewline
\hline
$\begin{array}{c}
A_{-\infty,-\infty}^{-}=\\
\xymatrix{|\overline{A}\ar[r] & |\overline{D}\\
|\overline{B}\ar[u]\ar[r] & |\overline{C},\ar[u]
}
\\
{}[a_{2}]=[c_{2}]=1\in H_{*}(A_{-\infty,-\infty}^{-});
\end{array}$ & $\begin{array}{c}
A_{0,-\infty}^{-}=\\
\xymatrix{\overline{A}\ar[r] & |\overline{D}\\
\overline{B}\ar[u]\ar[r] & |\overline{C},\ar[u]
}
\\
{}[c_{2}]=1\in H_{*}(A_{0,-\infty}^{-});
\end{array}$ & $\begin{array}{c}
A_{+\infty,-\infty}^{-}=\\
\xymatrix{\overline{A}\ar[r] & \overline{D}\\
\overline{B}\ar[u]\ar[r] & \overline{C},\ar[u]
}
\\
{}[d_{3}]=[c_{2}]=1\in H_{*}(A_{+\infty,-\infty}^{-}).
\end{array}$\tabularnewline
\hline
\end{tabular}\caption{$A_{\mathrm{s}}^{-}(\mathit{Wh})$ and generators of their homology.}
\label{As(Wh)}
\end{table}

Note that $A,\overline{A},\vert A,B,\overline{B},\vert B,C,\overline{C},\vert C$
are all acyclic, and $D,\overline{D},\vert D,\vert\overline{D}$ all
have the same homology $\mathbb{F}[[U]]$. We can use the same argument
for $A_{+\infty,+\infty}^{-}$ to show that $A_{-\infty,+\infty}^{-},$
$A_{0,+\infty}^{-},$ $A_{+\infty,+\infty}^{-},$ $A_{0,0}^{-},$
$A_{+\infty,0}^{-},$ and $A_{+\infty,-\infty}^{-}$ all have the
same homology $\mathbb{F}[[U]]$. For those other $A_{\mathrm{s}}^{-}$,
we can use the conjugation symmetry, that is, $H_{*}(A_{\mathrm{s}}^{-}(L))=H_{*}(A_{-\mathrm{s}}^{-}(L)),\forall\mathrm{s}\in\mathbb{H}(L),\forall L.$
This is because $A_{\mathrm{s}}^{-}$'s are quasi-isomorphic to the
Floer complexes of large surgeries on $L$.

Now we explain the generators of their homologies in Table \ref{As(Wh)}.
The chain complex $A_{0,-\infty}^{-}$ can be viewed as a mapping
cone of a chain map from $\text{cone}(\overline{B}\rightarrow\overline{A})$
to $\text{cone}(|\overline{C}\rightarrow|\overline{D})$. Because
$\text{cone}(\overline{B}\rightarrow\overline{A}$) is acyclic, the
generator of $H_{*}\big(\text{cone}(|\overline{C}\rightarrow|\overline{D})\big)$
is also a generator of $H_{*}(A_{0,-\infty}^{-})$. Since $H_{*}(|\overline{C})=\mathbb{F}[[U]]/U,H_{*}(|\overline{D})=\mathbb{F}[[U]]$,
we derive a short exact sequence
\[
0\rightarrow\mathbb{F}[[U]]\rightarrow\mathbb{F}[[U]]\rightarrow\mathbb{F}[[U]]/U\rightarrow0
\]
from the long exact sequence of the homologies $\cdots\rightarrow H_{*}(|\overline{D})\rightarrow H_{*}\big(\text{cone}(|\overline{C}\rightarrow|\overline{D})\big)\rightarrow H_{*}(|\overline{C})\rightarrow\cdots.$
Because $[c_{2}]=1\in H_{*}(|\overline{C})=\mathbb{F}[[U]]/U$ and
$[c_{2}]\in\text{cone}(|\overline{C}\rightarrow|\overline{D})$ is
mapped to $[c_{2}]\in H_{*}(|\overline{C})$, the above short exact
sequence implies that $[c_{2}]=1\in H_{*}\big(\text{cone}(|\overline{C}\rightarrow|\overline{D})\big)$,
and thus $[c_{2}]=1\in H_{*}(A_{0,-\infty}^{-})$.

Similar arguments show that $[a_{2}]=[c_{2}]=1\in H_{*}(A_{-\infty,-\infty}^{-})$
and $[d_{3}]=1\in H_{*}(A_{+\infty,-\infty}^{-}).$ Moreover, in the
complex $A_{+\infty,-\infty}^{-}$, the equations $\partial^{-}c_{1}=U_{2}c_{2}+d_{1},\partial^{-}d_{2}=d_{1}+U_{1}d_{3}$
imply that $[c_{2}]=[d_{3}]=1\in H_{*}(A_{+\infty,-\infty}^{-})$.
\end{proof}

Taking the grading into account, we adopt the formula of the $\mathbb{Z}/\mathfrak{d}(\mathfrak{u})\mathbb{Z}$-grading
defined on the surgery complex for a $\text{Spin}^{c}$ structures
$\mathfrak{u}\in\mathbb{H}(L)/H(L,\Lambda)$ in \cite{link_surgery}
Section 7.4,
\begin{equation}
\mu(\text{s},{\bf x})=\mu_{\text{s}}^{M}({\bf x})+\nu(\text{s})-\Vert M\Vert,{\bf x}\in\mathfrak{A}^{-}(\mathcal{H}^{L-\overrightarrow{M}},\psi^{\overrightarrow{M}}(\text{s})),\label{eq:grading in Surgery Formula}
\end{equation}
where $\text{s}\in\mathfrak{u}$ and $\mu_{\text{s}}^{M}=\mu_{\psi^{\overrightarrow{M}}(s)}$
is a natural $\mathbb{Z}$-grading defined on $\mathfrak{A^{-}}\big(L-\overrightarrow{M},\psi^{\overrightarrow{M}}(s)\big)$.
In the torsion case, the quadratic function $\nu$ can be chosen as
$0$. The natural $\mathbb{Z}$-grading $\mu_{\text{s}}^{\emptyset}=\mu_{s_{1},s_{2}}$
on each $A_{s_{1},s_{2}}^{-}$ is given by
\[
\mu_{s_{1},s_{2}}({\bf x})=M({\bf x})-2\sum_{i=1}^{2}\max\{A_{i}({\bf x})-s_{i},0\},
\]
where $M({\bf x})$ is the Maslov grading. When we use the Schubert
Heegaard diagram, $A_{1}({\bf x})+A_{2}({\bf x})-M({\bf x})$ is constant.
Thus, up to a shift of a constant number, we can take $M({\bf x})=A_{1}({\bf x})+A_{2}({\bf x})$
for $\forall{\bf x}\in A_{s_{1},s_{2}}^{-}.$ In the primitive system
we identify $\mathfrak{A^{-}}\big(L-\overrightarrow{M},\psi^{\overrightarrow{M}}(\text{s})\big)$
with some $A_{s_{1}',s_{2}'}^{-}$ (where $s_{1}',s_{2}'$ can evaluate
$+\infty$), so the grading $\mu_{\text{s}}^{M}$ is actually $\mu_{\text{s'}}$.
We define some rules of $\infty$ as follows:
\[
0\cdot(+\infty)=+\infty;\ s+(+\infty)=+\infty,\forall s\in\mathbb{R};\ s+(-\infty)=-\infty,\forall s\in\mathbb{R};\ (\pm1)\cdot(+\infty)=\pm\infty.
\]

Recall the notations in Example \ref{(Twisted-gluing-in}. The complexes
$C_{(s_{1},s_{2})}^{(\varepsilon_{1},\varepsilon_{2})}=A_{s_{1}+\varepsilon_{1}\cdot\infty,s_{2}+\varepsilon_{2}\cdot\infty}^{-},\varepsilon_{i}\in\{0,1\}$
are setting at the position $(\varepsilon_{1},\varepsilon_{2})$ in
the square and with the index $(s_{1},s_{2})$ in the product complex
$C^{(\varepsilon_{1},\varepsilon_{2})}=\prod_{s_{1},s_{2}}C_{(s_{1},s_{2})}^{(\varepsilon_{1},\varepsilon_{2})}$.

We define the grading $\mu_{s_{1},s_{2}}^{\varepsilon_{1},\varepsilon_{2}}$
on the complex $C_{(s_{1},s_{2})}^{(\varepsilon_{1},\varepsilon_{2})}$
by the formula:
\[
\mu_{s_{1},s_{2}}^{\varepsilon_{1},\varepsilon_{2}}({\bf x})=M({\bf x})-2\sum_{i=1}^{2}\max\{A_{i}({\bf x})-s_{i}-\varepsilon_{i}(+\infty),0\}-\varepsilon_{1}-\varepsilon_{2}.
\]
Here $\mu_{s_{1},s_{2}}^{\varepsilon_{1},\varepsilon_{2}}$ plays
the role as $\mu$ in Equation (\ref{eq:grading in Surgery Formula}).

Let $W$ be the four-manifold cobordism corresponding to the surgery
from $S^{3}$ to $S_{\Lambda}^{3}(L)$. In \cite{link_surgery}, it
is shown that the cobordism map $F_{W,\text{s}}^{-}$ corresponds
to the inclusion $\iota:\mathfrak{A}^{-}(\mathcal{H}^{\emptyset})\rightarrow\mathfrak{A}^{-}(\mathcal{H}^{\emptyset},\psi^{\overrightarrow{L}}(\text{s}))\subset\mathcal{C}^{-}(\mathcal{H},\Lambda),\ \overrightarrow{L}=+L_{1}\cup+L_{2}.$
So we need to shift the grading such that $\iota$ is of the degree
$\deg(F_{W,\text{s}}^{-})=\frac{\text{c}_{1}(\text{s})^{2}-2\chi(W)-3\sigma(W)}{4}$.
In our case, the complex $\mathfrak{A}^{-}(\mathcal{H}^{\emptyset},\psi^{\overrightarrow{L}}(\text{s}))=C_{(s_{1},s_{2})}^{(1,1)}=A_{+\infty,+\infty}^{-}(\mathit{Wh})$
has a generator $[d_{1}]=1\in H_{*}(A_{+\infty,+\infty}^{-})$ of
Alexander grading $A(d_{1})=(0,1).$ Finally, the grading formula
turns out to be
\begin{equation}
\mu_{s_{1},s_{2}}^{\varepsilon_{1},\varepsilon_{2}}({\bf x})=A_{1}({\bf x})+A_{2}({\bf x})-2\sum_{i=1}^{2}\max\{A_{i}({\bf x})-s_{i}-\varepsilon_{i}(+\infty),0\}-\varepsilon_{1}-\varepsilon_{2}+\frac{\text{c}_{1}(\text{s})^{2}-2\chi(W)-3\sigma(W)}{4}+1,\label{eq:absolute grading}
\end{equation}
where $\text{c}_{1}(\text{s})=[2\text{s}]-\Lambda_{1}-\Lambda_{2}=(2s_{1}-p_{1},2s_{2}-p_{2})\in\mathbb{Z}^{2}/\Lambda.$
In the perturbed surgery complex, since all the perturbed maps have
the same degrees as the original, we can compute the gradings still
using Equation \eqref{eq:absolute grading}.

Now we restrict our scalars to $\bb{F}[[U_1]]$. By Proposition \ref{prop:homotopies on the diagonal} and Lemma \ref{lem:H_*(A_s(Wh))},
up to $\bb{F}[[U_1]]$-linear chain homotopy, all the edge maps $\Phi_{\text{s}}^{\pm L_{i}}$
are classified by their actions on the homologies. The actions of
$\Phi_{\text{s}}^{\pm L_{i}}$ on homologies are determined by the
corresponding inclusion maps $I_{\text{s}}^{\pm L_{i}}$. We denote
the induced maps on homologies by $(I_{\text{s}}^{\pm L_{i}})_{*}:\mathbb{F}[[U]]\rightarrow\mathbb{F}[[U]]$.

\begin{lem}
Regarding the inclusion maps, we have the following results for $I_{\text{s}}^{\pm L_{1}}$,
where $\text{s}=(s_{1},s_{2})$.

\begin{itemize}
\item

If $s_{1}>0$, then $(I_{\text{s}}^{+L_{1}})_{*}=id.$

\item

If $s_{1}=0,s_{2}\neq0$, then $(I_{\text{s}}^{+L_{1}})_{*}=id.$

\item

If $s_{1}=s_{2}=0,$ then $(I_{\text{s}}^{+L_{1}})_{*}=U\cdot id.$

\item

If $s_{1}<0$, then $(I_{\text{s}}^{+L_{1}})_{*}=U^{-s_{1}}\cdot id.$

\item

If $s_{1}>0$, then $(I_{\text{s}}^{-L_{1}})_{*}=U^{s_{1}}\cdot id.$

\item

If $s_{1}=0,s_{2}\neq0$, then $(I_{\text{s}}^{-L_{1}})_{*}=id.$

\item

If $s_{1}=s_{2}=0,$ then $(I_{\text{s}}^{-L_{1}})_{*}=U\cdot id.$

\item

If $s_{1}<0$, then $(I_{\text{s}}^{-L_{1}})_{*}=id.$

\end{itemize}\end{lem}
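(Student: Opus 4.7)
The plan is to exploit the explicit chain-level formula for the inclusion maps. By definition,
\[
\mathcal{I}_{\mathrm{s}}^{+L_{1}}(x)=U_{1}^{\max(A_{1}(x)-s_{1},0)}\,x,\qquad \mathcal{I}_{\mathrm{s}}^{-L_{1}}(x)=U_{1}^{\max(s_{1}-A_{1}(x),0)}\,x,
\]
so each map is diagonal in the generator basis and just multiplies each generator by a power of $U_{1}$ that depends only on its Alexander grading. Since Lemma \ref{lem:H_*(A_s(Wh))} tells us $H_{*}(A_{\mathrm{s}}^{-}(\mathit{Wh}))\cong\mathbb{F}[[U]]$ for every $\mathrm{s}$, the induced map $(I_{\mathrm{s}}^{\pm L_{1}})_{*}$ must be multiplication by some $U^{k}$, and the task is simply to identify $k$ in each case by applying $\mathcal{I}_{\mathrm{s}}^{\pm L_{1}}$ to the chain representatives of the homology generators listed in Table \ref{As(Wh)}.

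Because $CFL^{-}(\mathit{Wh})$ has Alexander gradings in $\{-1,0,1\}$, several cases are immediate. When $s_{1}\geq 1$ the exponent $\max(A_{1}(x)-s_{1},0)$ vanishes on every generator, so $\mathcal{I}_{\mathrm{s}}^{+L_{1}}$ is literally the identity chain map; similarly $\mathcal{I}_{\mathrm{s}}^{-L_{1}}$ is the identity when $s_{1}\leq -1$. The non-trivial values of $k$ arise in the boundary regime $s_{1}=0$ and in the cases where $s_{1}$ has the ``wrong'' sign for the orientation of $L_{1}$. For example, in the case $s_{1}=s_{2}=0$ for $+L_{1}$, one checks from Figure \ref{CFL^-(Wh)} that $A_{1}(d_{1})=0$ and $A_{1}(d_{3})=1$, so on chains $\mathcal{I}^{+L_{1}}_{(0,0)}(d_{1})=d_{1}$ while $\mathcal{I}^{+L_{1}}_{(0,0)}(d_{3})=U_{1}d_{3}$. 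In the target $A_{+\infty,0}^{-}$, whose generator is $[d_{3}]$, the relation $\partial^{-}d_{2}=d_{1}+U_{1}d_{3}$ (which survives because the relevant bigon contributes with the same $U_{1}$-coefficient after raising $s_{1}$ to $+\infty$) gives $[d_{1}]=U_{1}[d_{3}]$. Thus $(I^{+L_{1}}_{(0,0)})_{*}$ sends the generator $1=[d_{1}]=[d_{3}]$ of $H_{*}(A_{0,0}^{-})$ to $U\cdot 1$, as claimed. The remaining cases ($s_{1}=0$ with $s_{2}\neq 0$; $s_{1}<0$ for $+L_{1}$ and $s_{1}>0$ for $-L_{1}$) will be handled analogously, by combining the chain-level formula for $\mathcal{I}$ with the Alexander gradings of the specified chain representatives and, where needed, the analogues of the relations $\partial^{-}c_{1}=U_{2}c_{2}+d_{1}$ and $\partial^{-}d_{2}=d_{1}+U_{1}d_{3}$ in the appropriate complex (after the reflections used in Table \ref{As(Wh)}).

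The principal obstacle is bookkeeping: one must read off the Alexander bi-grading of each of the sixteen generators of $CFL^{-}(\mathit{Wh})$ from Figure \ref{CFL^-(Wh)}, and then track how the $U_{i}$-exponents in $\partial^{-}$ shift as one moves from $A_{+\infty,+\infty}^{-}$ to $A_{\mathrm{s}}^{-}$ according to the $E^{i}_{s}(\phi)$ formula. This is routine but must be done carefully, because an incorrect Alexander grading on $d_{1}$ or $d_{3}$ would alter the answer by one power of $U$. The conjugation symmetry $H_{*}(A_{\mathrm{s}}^{-})\cong H_{*}(A_{-\mathrm{s}}^{-})$ together with the reflections that produce $\overline{A},\overline{B},\overline{C},\overline{D}$ and $|A,|B,|C,|D$ cut the verification roughly in half, since the $+L_{1}$ and $-L_{1}$ cases at symmetric values of $s_{1}$ correspond to one another under these reflections.
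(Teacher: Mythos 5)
Your proposal is correct and follows essentially the same route as the paper: apply the explicit chain-level formula for $\mathcal{I}_{\mathrm{s}}^{\pm L_{1}}$ to the homology generators recorded in Table \ref{As(Wh)}, use the boundedness of the Alexander gradings to dispose of the cases $s_{1}>0$ (resp.\ $s_{1}<0$) for $+L_{1}$ (resp.\ $-L_{1}$), and use relations such as $\partial^{-}a_{3}=a_{2}+U_{1}d_{3}$ and $\partial^{-}d_{2}=d_{1}+U_{1}d_{3}$ to compare generators in the target. The only cosmetic difference is that in the case $s_{1}=s_{2}=0$ you track $d_{1}$ and invoke $[d_{1}]=U[d_{3}]$ in $A^{-}_{+\infty,0}$, while the paper applies the inclusion directly to $d_{3}$; both yield $(I^{+L_{1}}_{(0,0)})_{*}=U\cdot id$.
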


\begin{proof}
In fact, when $s_{1}>0$, by definition $I_{s_{1},s_{2}}^{+L_{1}}=id.$
When $s_{1}=0,s_{2}>0$, we have $I_{0,s_{2}}^{+L_{1}}(d_{1})=d_{1}$.
Therefore by Table \ref{As(Wh)}, the inclusion map $I_{0,s_{2}}^{+L_{1}}$
acts on the homology as $id:\mathbb{F}[[U]]\rightarrow\mathbb{F}[[U]].$
When $s_{1}=0,s_{2}<0$, we have $I_{0,s_{2}}^{+L_{1}}(c_{2})=c_{2}.$
Therefore by Table \ref{As(Wh)}, the inclusion map $I_{0,s_{2}}^{+L_{1}}$
acts on homology as the identity. When $s_{1}=s_{2}=0$, we have $I_{0,0}^{+L_{1}}(d_{3})=U_{1}\cdot d_{3}$.
Therefore by Table \ref{As(Wh)}, $I_{0,0}^{+L_{1}}$ acts on homology
as $U\cdot id$. When $s_{1}<0,s_{2}>0$, we have $I_{s_{1},s_{2}}^{+L_{1}}(d_{1})=U_{1}^{-s_{1}}\cdot d_{1}$.
Thus, $(I_{\text{s}}^{+L_{1}})_{*}=U^{-s_{1}}\cdot id.$ When $s_{1}<0,s_{2}\leq0$,
we have $I_{s_{1},s_{2}}^{+L_{1}}(a_{2})=U_{1}^{-1-s_{1}}\cdot a_{2}.$
In the complex $A_{+\infty,s_{2}}^{-},s_{2}\leq0$, the equation $\partial^{-}a_{3}=a_{2}+U_{1}d_{3}$
implies that $[a_{2}]=U_{1}[d_{3}]\in H_{*}(A_{+\infty,s_{2}}^{-})$.
Thus, $[a_{2}]=U\in\mathbb{F}[[U]]=H_{*}(A_{+\infty,s_{2}}^{-}).$
Therefore, it follows that $(I_{s_{1},s_{2}}^{+L_{1}})_{*}=U^{-s_{1}}\cdot id,$
when $s_{1}<0,s_{2}\leq0.$

In the same way, we get the following results for $I_{\text{s}}^{-L_{1}}$,
where $\mathrm{s}=(s_{1},s_{2})$.
\end{proof}

Now we can compute the homology of surgeries on $\mathit{S}$. In each case, we write
down the $d$-invariants, which are the gradings of the top element
in each $\mathbb{F}[[U]]$ summand.

\begin{prop}
\label{prop:surgery on Wh}
Let $\mathit{Wh}$ be the Whitehead link,
$\Lambda=\text{diag}(p_{1},p_{2})$ and $Y$ be the surgery manifold
$S_{\Lambda}^{3}(\mathit{Wh})$. Then $\mathrm{Spin}^{c}(Y)$ can
be identified with $\mathbb{Z}^{2}/\Lambda\cong\mathbb{Z}/p_{1}\mathbb{Z}\oplus\mathbb{Z}/p_{2}\mathbb{Z}$,
so we use $(t_{1},t_{2})\in\mathbb{Z}/p_{1}\mathbb{Z}\oplus\mathbb{Z}/p_{2}\mathbb{Z}$
to denote the \emph{$\text{Spin}^{c}$} structures over $Y$. Then,
the Floer homology of $Y$ is as follows.
\begin{itemize}
\item

If $p_{1}=p_{2}=0$, then $\mathbf{HF}^{-}(Y,(t_{1},t_{2}))=\begin{cases}
\mathbb{F}[[U]]^{\oplus4}, & (t_{1},t_{2})=(0,0);\\
0, & \text{otherwise,}
\end{cases}$ with $d=-1,-1,0,0.$

\item If $p_{1}>0,p_{2}=0$, then $\mathbf{HF}^{-}(Y,(t_{1},t_{2}))=\begin{cases}
\mathbb{F}[[U]]^{\oplus2}, & (t_{1},t_{2})=(t_{1},0);\\
0, & \text{otherwise.}
\end{cases}$ Their $d$-invariants are $d(Y,(0,0))=\frac{p_{1}}{4}-\frac{7}{4},\frac{p_{1}}{4}-\frac{3}{4},$
and $d(Y,(t_{1},0))=\frac{(2s_{1}+p_{1})^{2}}{4p_{1}}+\frac{1}{4},\frac{(2s_{1}+p_{1})^{2}}{4p_{1}}-\frac{3}{4},$
when $t_{1}\neq0$, where $s_{1}$ is an integer in the class $t_{1}\in\mathbb{Z}/p_{1}\mathbb{Z}$
such that $-p_{1}<s_{1}\leq0$.

\item  If $p_{1}<0,p_{2}=0$, then $\mathbf{HF}^{-}(Y,(t_{1},t_{2}))=\begin{cases}
\mathbb{F}[[U]]^{\oplus2}\oplus(\mathbb{F}[[U]]/U), & (t_{1},t_{2})=(0,0);\\
\mathbb{F}[[U]]^{\oplus2}, & t_{1}\neq0,t_{2}=0;\\
0, & \text{otherwise.}
\end{cases}$ Their $d$-invariants are $d(Y,(t_{1},0))=\frac{(2s_{1}-p_{1})^{2}}{4p_{1}}+\frac{3}{4},\frac{(2s_{1}-p_{1})^{2}}{4p_{1}}-\frac{1}{4},$
where $s_{1}$ is an integer in the class $t_{1}\in\mathbb{Z}/p_{1}\mathbb{Z}$
such that $p_{1}<s_{1}\leq0$.

\item If $p_{1}>0,p_{2}>0$, then $\mathbf{HF}^{-}(Y,(t_{1},t_{2}))=\mathbb{F}[[U]],\ \forall(t_{1},t_{2})\in\mathbb{Z}/p_{1}\mathbb{Z}\oplus\mathbb{Z}/p_{2}\mathbb{Z}.$
Their $d$-invariants are $d(Y,(0,0))=\frac{p_{1}+p_{2}-10}{4},$
and $d(Y,(t_{1},t_{2}))=\frac{(2s_{1}+p_{1})^{2}}{4p_{1}}+\frac{(2s_{2}+p_{2})^{2}}{4p_{2}}-\frac{1}{2},$
when $(t_{1},t_{2})\neq(0,0),$ where $s_{i}$ is an integer in the
class $t_{i}\in\mathbb{Z}/p_{i}\mathbb{Z}$ such that $-p_{i}<s_{i}\leq0$.

\item If $p_{1}>0,p_{2}<0$, then $\mathbf{HF}^{-}(Y,(t_{1},t_{2}))=\begin{cases}
\mathbb{F}[[U]]\oplus(\mathbb{F}[[U]]/U), & (t_{1},t_{2})=(0,0);\\
\mathbb{F}[[U]], & \text{otherwise.}
\end{cases}$ Their $d$-invariants are $d(Y,(t_{1},t_{2}))=\frac{(2s_{1}+p_{1})^{2}}{4p_{1}}+\frac{(2s_{2}-p_{2})^{2}}{4p_{2}}$,
where $s_{i}$ is an integer in the class $t_{i}\in\mathbb{Z}/p_{i}\mathbb{Z}$
such that $-\vert p_{i}\vert<s_{i}\leq0$.

\item  If $p_{1},p_{2}<0$, then $\mathbf{HF}^{-}(Y,(t_{1},t_{2}))=\begin{cases}
\mathbb{F}[[U]]\oplus(\mathbb{F}[[U]]/U), & (t_{1},t_{2})=(0,0);\\
\mathbb{F}[[U]], & \text{otherwise.}
\end{cases}$ Their $d$-invariants are $d(Y,(t_{1},t_{2}))=\frac{(2s_{1}-p_{1})^{2}}{4p_{1}}+\frac{(2s_{2}-p_{2})^{2}}{4p_{2}}+\frac{1}{2},$
where $s_{i}$ is an integer in the class $t_{i}\in\mathbb{Z}/p_{i}\mathbb{Z}$
such that $p_{i}<s_{i}\leq0$.

\end{itemize}
\end{prop}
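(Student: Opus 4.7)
The plan is to apply the perturbed surgery formula of Theorem~\ref{thm:perturbed_surgery_formula} with the computations assembled in the preceding subsection. Since Lemma~\ref{lem:H_*(A_s(Wh))} shows $H_*\bigl(A_{\mathrm s}^-(\mathit{Wh})\bigr)=\mathbb{F}[[U_1,U_2]]/(U_1-U_2)$ for every $\mathrm s\in\mathbb{H}(\mathit{Wh})$, Corollary~\ref{cor:standard model of unknot} tells us that, as an $\mathbb{F}[[U_1]]$-module, each $A_{\mathrm s}^-$ is chain homotopy equivalent to the standard model $C^{\text u}$ of Remark~\ref{rem:cx_of_the_unknot}. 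The conjugation construction at the end of Section~4.1 then lets me replace every vertex of the perturbed surgery square by a copy of $C^{\text u}$, and the inclusion maps $\Phi_{\mathrm s}^{\pm L_i}$ and destabilizations $\tilde D_{\mathrm s}^{-L_i}$ by $\mathbb{F}[[U_1]]$-linear representatives acting on $H_*\cong\mathbb{F}[[U]]$ in the prescribed way. The action of $\Phi_{\mathrm s}^{\pm L_i}$ on homology is precisely the action of $I_{\mathrm s}^{\pm L_i}$ computed in the lemma immediately preceding this proposition, and by Corollary~\ref{cor:homotopy equilvence} any two $\mathbb{F}[[U_1]]$-linear quasi-isomorphisms with the same homology action are chain homotopic, so the choice of representative is immaterial.

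Next I would fix the destabilizations: both $\tilde D_{-\infty,s_2}^{-L_1}$ and $\tilde D_{s_1,-\infty}^{-L_2}$ may be chosen to be the identity on $C^{\text u}$ (they are quasi-isomorphisms between copies of $C^{\text u}$, so Proposition~\ref{prop:homotopies on the diagonal} applies), and the double destabilization $\tilde D_{-\infty,-\infty}^{-L_1\cup-L_2}$ together with the homotopies $\tilde F^{\pm L_1\cup\pm L_2}$ may be chosen to be zero, provided the square in the perturbed surgery rectangle actually closes up. I expect that it does close up in this simplified form because after passing to the model $C^{\text u}$ all composites $\Phi^{\pm L_i}\circ\Phi^{\pm L_j}$ agree on the nose with what the diagonal must cancel. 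Should an obstruction appear, the rigidity Proposition~\ref{prop:homotopies on the diagonal} guarantees a choice of $\tilde F$ modulo an overall shift, which is the content of Theorem~\ref{thm:invariance of perturbed surg. formula}.

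With these reductions in hand the problem becomes finite and combinatorial. For each framing $\Lambda=\mathrm{diag}(p_1,p_2)$ and each $\mathrm{Spin}^c$ class $(t_1,t_2)\in\mathbb{Z}^2/\Lambda$ I truncate the $\Lambda$-twisted glued product as explained in \cite{link_surgery}, Section~8.3, obtaining a finite complex of $\mathbb{F}[[U]]$-modules whose nontrivial maps are multiplications by $U^k$ recorded in the lemma on inclusion maps; I then read the homology off by inspection. The case split in the statement reflects the four sign patterns of $(p_1,p_2)$: when a $p_i=0$ one has to keep the full infinite strand in the $L_i$-direction, which produces the extra $\mathbb{F}[[U]]$-summand (and, when the other $p_j\le0$, an additional $\mathbb{F}[[U]]/U$ arising from the two homology generators $[d_1]$ and $[d_3]$ in $A^-_{+\infty,+\infty}$); in the definite cases $p_1p_2\neq0$ the truncation is a finite rectangle and the answer collapses to $\mathbb{F}[[U]]$ or $\mathbb{F}[[U]]\oplus\mathbb{F}[[U]]/U$.

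The absolute gradings are handed to us by Equation~\eqref{eq:absolute grading} together with $\mathrm{c}_1(\mathrm s)=(2s_1-p_1,2s_2-p_2)$, $\chi(W)=1$, and $\sigma(W)$ equal to the signature of $\Lambda$; plugging in the Alexander gradings of the concrete chain-level representatives from Table~\ref{As(Wh)} (namely $d_1$, $d_3$, $a_2$, $c_2$ with Alexander bigradings $(0,\pm 1)$ and $(\pm 1,0)$) produces the stated closed formulas $\frac{(2s_i\pm p_i)^2}{4p_i}$ and correction constants. The main obstacle, and the place requiring the most care, is the case $p_1=p_2=0$: the twisted gluing does not reduce to a finite rectangle and one has to verify directly that the glued complex produces four copies of $\mathbb{F}[[U]]$ at gradings $-1,-1,0,0$, which amounts to checking that the two $\Phi^{\pm L_i}$-components of the edge maps become zero at $(s_1,s_2)=(0,0)$ once one imposes $U_2=U_1$, so that no cancellations occur in the homology. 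After this bookkeeping is complete, the statement follows.
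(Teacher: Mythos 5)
Your overall strategy is the right one and is essentially the paper's: replace the vertices by the standard model via Corollary \ref{cor:standard model of unknot} and the conjugation construction, use the rigidity results to reduce the edge maps to their actions on homology as computed from the inclusion maps, truncate, and read off gradings from Equation \eqref{eq:absolute grading}. However, there are concrete errors in the execution that would derail the computation. First, $\chi(W)=2$, not $1$: the surgery cobordism attaches two $2$-handles, and the paper uses $\chi(W)=2$ with $\sigma(W)\in\{0,\pm1,\pm2\}$ depending on the signs of $p_1,p_2$; with $\chi(W)=1$ every constant term in your $d$-invariants is off by $\tfrac12$ and does not reproduce the stated formulas. Second, your picture of the twisted gluing is backwards. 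A $\mathrm{Spin}^c$ class $(t_1,t_2)$ contains the lattice points $(s_1,s_2)$ with $s_i\equiv t_i\pmod{p_i}$, so when $p_i=0$ the coordinate $s_i$ is pinned down uniquely, and the infinite strand requiring truncation occurs precisely in the directions where $p_i\neq0$. Consequently $p_1=p_2=0$ is the \emph{easiest} case --- each $\mathrm{Spin}^c$ structure is a single square of four complexes and no truncation is needed at all --- whereas the genuinely delicate cases are $p_1p_2\neq0$ (especially mixed signs), where the paper must introduce the filtrations $\mathcal F_{00},\mathcal F_{01},\mathcal F_{10},\mathcal F_{11}$ and kill acyclic subcomplexes and acyclic quotient complexes in a specific order. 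Your proposal does not engage with this step, which is where most of the work in the actual proof lives.

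Two further points of mechanism. The vanishing of $\Phi^{+L_i}_{\mathrm s}+\Phi^{-L_i}_{\mathrm s}$ when $s_i=0$ is not a consequence of "imposing $U_2=U_1$"; it is an application of Proposition \ref{prop:homotopies on the diagonal} over $\mathbb F[[U_1]]$: both summands induce the same map ($U\cdot\mathrm{id}$ or $\mathrm{id}$) on homology, so their sum is null-homotopic and may be replaced by $0$ in the perturbed complex. The identification $U_2=U_1$ is only invoked at the very end, on homology, via Theorem \ref{thm:invariance of perturbed surg. formula}. Likewise, the torsion summand $\mathbb F[[U]]/U$ in the $(0,0)$ $\mathrm{Spin}^c$ structure does not come from "the two homology generators $[d_1]$ and $[d_3]$"; it is $H_*\bigl(\mathrm{cone}(U\colon\mathbb F[[U]]\to\mathbb F[[U]])\bigr)$, arising because $I^{\pm L_i}_{0,0}$ acts as multiplication by $U$ on homology (e.g.\ the first row $\mathrm{cone}(\Phi^{+L_1}_{0,0})\oplus C^{(1,0)}_{(p_1,0)}$ in the $p_1<0$, $p_2=0$ case, or the nine-term grid in the $p_1,p_2<0$ case). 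With these corrections --- and with the actual acyclicity arguments for the non-torsion $\mathrm{Spin}^c$ structures spelled out (the sums $\Phi^{+L_i}+\Phi^{-L_i}$ act as $(1+U^{|s_i|})\cdot\mathrm{id}$, hence are quasi-isomorphisms) --- your outline would match the paper's proof.
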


\begin{proof}
First, let's look at the $(0,0)$-surgery on the Whitehead link. The
surgery complex splits into a direct product of squares of chain complexes
according to $\mathrm{Spin}^{c}$ structures. See Figure \ref{(0,0)surgery}.
In the $(s_{1},s_{2})$ $\mathrm{Spin}^{c}$ structure, the factor
of the direct product is the following square of chain complexes:
\[
\xyR{2pc}\xyC{4pc}\xymatrix{A_{s_{1},s_{2}}^{-}\ar[r]^{\Phi_{s_{1},s_{2}}^{+L_{1}}+\Phi_{s_{1},s_{2}}^{-L_{1}}}\ar[d]_{\Phi_{s_{1},s_{2}}^{+L_{2}}+\Phi_{s_{1},s_{2}}^{-L_{2}}}\ar[dr]|-{\sum\Phi_{s_{1},s_{2}}^{\pm L_{1}\cup\pm L_{2}}} & A_{+\infty,s_{2}}^{-}\ar[d]^{\Phi_{+\infty,s_{2}}^{+L_{2}}+\Phi_{+\infty,s_{2}}^{-L_{2}}}\\
A_{s_{1},+\infty}^{-}\ar[r]_{\Phi_{s_{1},+\infty}^{+L_{1}}+\Phi_{s_{1},+\infty}^{-L_{1}}} & A_{+\infty,+\infty}^{-}.
}
\]

For the torsion $\text{Spin}^{c}$ structure $(0,0)\in\mathbb{Z}^{2}$,
since $\Phi_{0,0}^{+L_{1}}\simeq\Phi_{0,0}^{-L_{1}},$ $\Phi_{0,0}^{+L_{2}}\simeq\Phi_{0,0}^{-L_{2}},$
$\Phi_{0,+\infty}^{+L_{1}}\simeq\Phi_{0,\infty}^{-L_{1}},$ $\Phi_{+\infty,0}^{+L_{2}}\simeq\Phi_{+\infty,0}^{-L_{2}}$,
the perturbed surgery complex is as follows:
\[
\xyR{1pc}\xyC{1pc}\xymatrix{A_{0,0}^{-}\ar[r]^{0}\ar[d]_{0} & A_{+\infty,0}^{-}\ar[d]^{0}\\
A_{0,+\infty}^{-}\ar[r]^{0} & A_{+\infty,+\infty}^{-}.
}
\]
Therefore, the homology is $\mathbb{F}[[U]]^{\oplus4}$ generated
by $d_{1}\in C_{(0,0)}^{(0,0)},d_{3}\in C_{(0,0)}^{(1,0)},d_{1}\in C_{(0,0)}^{(0,1)},d_{1}\in C_{(0,0)}^{(1,1)}$.
Since $\text{c}_{1}\big((0,0)\big)=(0,0)$, $\chi(W)=2,$ $\sigma(W)=0$,
from Equation (\ref{eq:absolute grading}), we also get their absolute
gradings $\mu_{0,0}^{0,0}([d_{1}])=-1,\mu_{0,0}^{1,0}([d_{3}])=0,\mu_{0,0}^{0,1}([d_{1}])=0,\mu_{0,0}^{1,1}([d_{3}])=-1.$

For the non-torsion $\text{Spin}^{c}$ structure $(s_{1},s_{2})\in\mbox{\ensuremath{\mathbb{Z}}}^{2},s_{1}>0$,
since $\Phi_{s_{1},s_{2}}^{+L_{1}}+\Phi_{s_{1},s_{2}}^{-L_{1}}$ acts
on homology as $id+U^{s_{1}}\cdot id=(1+U^{s_{1}})\cdot id$, which
is a quasi-isomorphism, it follows that the homology of this $\mathrm{Spin}^{c}$
structure is $0$. Indeed, one can consider the horizontal filtration
for this square, whose associated graded is the direct sum of the
two acyclic horizontal rows. A similar argument applies to all the other
non-torsion $\text{Spin}^{c}$ structures.

\begin{figure}

\includegraphics[scale=0.8]{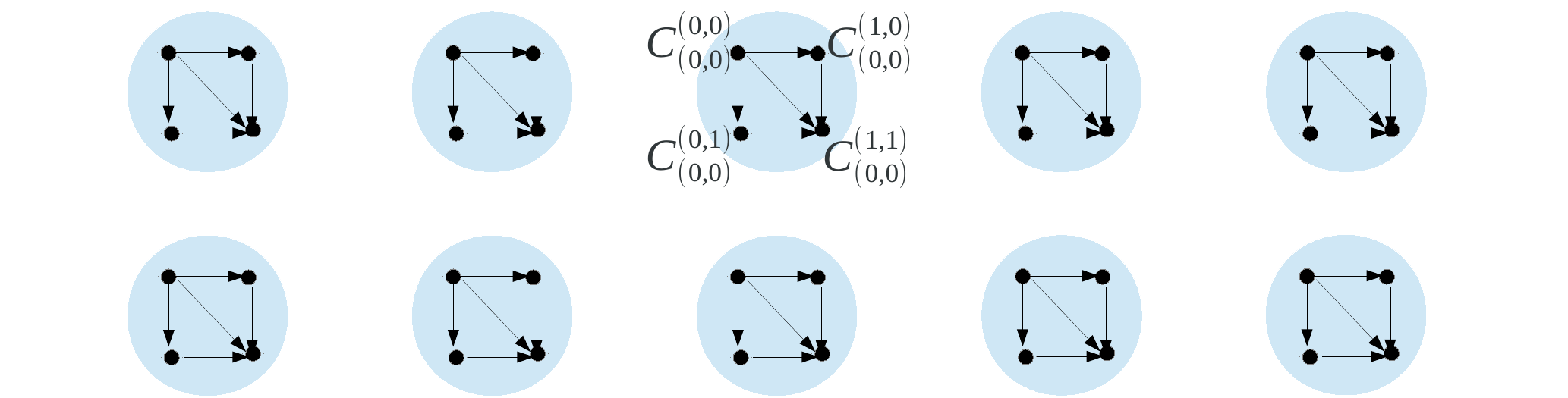}\caption{\textbf{The surgery complex for $\Lambda=(0,0)$.} Every dot represents
a complex $C_{\text{s}}^{\varepsilon}$ which is a certain generalized
Floer complex $A_{\text{s'}}^{-}(\mathit{Wh})$, and every arrow represents
a $\Phi$-map according to the endpoints of the arrow. We only label
the four complexes $C_{\text{s}}^{\varepsilon}$ for the $\mathrm{Spin}^{c}$
structure $\text{s}=(0,0)$, and the others are similar. }

\label{(0,0)surgery}

\end{figure}

\

Second, let's look at the $(p_{1},0)$-surgery with $p_{1}\neq0$,
which gives rise to a manifold with $b_{1}=1$. Suppose $p_{1}>0.$
In order to compute the homology, we need some filtrations to kill
acyclic subcomplexes and quotient complexes. Let $\mathcal{F}_{1}(C_{(s_{1},s_{2})}^{(\varepsilon_{1},\varepsilon_{2})})=-s_{1},\mathcal{F}_{2}(C_{(s_{1},s_{2})}^{(\varepsilon_{1},\varepsilon_{2})})=s_{1}-(\varepsilon_{1}-1)p_{1}.$
Without loss of generality, see Figure \ref{(1,0)surgery} for the
illustration of the surgery complex and the truncation in the case
of $\Lambda=(1,0).$

For any $(t_{1},t_{2})\in\text{Spin}^{c}(Y)=\mathbb{Z}/p_{1}\mathbb{Z}\oplus\mathbb{Z}$
with $t_{2}\neq0$, the Floer homology is $0$. Indeed, we can consider
the union of all these $\mathrm{Spin}^{c}$ structures, which corresponds
to the subcomplex
\[
\mathcal{R}_{1}=\bigoplus_{s_{2}\neq0}(C_{(s_{1},s_{2})}^{(0,0)}\oplus C_{(s_{1},s_{2})}^{(1,0)}\oplus C_{(s_{1},s_{2})}^{(0,1)}\oplus C_{(s_{1},s_{2})}^{(1,1)}).
\]
Since $\Phi_{s_{1},s_{2}}^{+L_{2}}+\Phi_{s_{1},s_{2}}^{-L_{2}},s_{2}\neq0$
acts on homology as $id+U^{|s_{2}|}\cdot id=(1+U^{|s_{2}|})\cdot id$,
which is a quasi-isomorphism, the following square is acyclic:
\[
\xyR{1.5pc}\xyC{5pc}\xymatrix{A_{s_{1},s_{2}}^{-}\ar[r]^{\Phi_{s_{1},s_{2}}^{+L_{1}}}\ar[d]_{\Phi_{s_{1},s_{2}}^{+L_{2}}+\Phi_{s_{1},s_{2}}^{-L_{2}}}\ar[dr]|-{\Phi_{s_{1},s_{2}}^{+L_{1}\cup+L_{2}}+\Phi_{s_{1},s_{2}}^{+L_{1}\cup-L_{2}}} & A_{+\infty,s_{2}}^{-}\ar[d]^{\Phi_{+\infty,s_{2}}^{+L_{2}}+\Phi_{+\infty,s_{2}}^{-L_{2}}}\\
A_{s_{1},+\infty}^{-}\ar[r]_{\Phi_{s_{1},+\infty}^{+L_{1}}} & A_{+\infty,+\infty}^{-}.
}
\]
The associated graded complex of $\mathcal{F}_{1}$ splits as a direct
product of the above squares, so $\mathcal{R}_{1}$ is acyclic.

For the $\text{Spin}^{c}$ structure $(t_{1},0)$, we first kill the
acyclic subcomplex
\[
\mathcal{R}_{2}=\bigoplus_{s_{1}>0}C_{(s_{1},0)}^{(\varepsilon_{1},\varepsilon_{2})}.
\]
Since the inclusion map $I_{s_{1},0}^{+L_{1}}$ is $id$ for all $s_{1}>0$,
the associated graded complex of the filtration $\mathcal{F}_{1}$
splits as a direct product of acyclic complexes in the form of
\[
\xyR{1.5pc}\xyC{5pc}\xymatrix{\ \ C_{(s_{1},0)}^{(0,0)}\ar[r]^{\Phi_{s_{1},0}^{+L_{1}}}\ar[d]_{\Phi_{s_{1},0}^{+L_{2}}+\Phi_{s_{1},0}^{-L_{2}}}\ar[dr]|-{\Phi_{s_{1},s_{2}}^{+L_{1}\cup+L_{2}}+\Phi_{s_{1},s_{2}}^{+L_{1}\cup-L_{2}}} & C_{(s_{1},0)}^{(1,0)}\ \ \ar[d]^{\Phi_{+\infty,0}^{+L_{2}}+\Phi_{+\infty,0}^{-L_{2}}}\\
\ \ C_{(s_{1},0)}^{(0,1)}\ar[r]_{\Phi_{s_{1},+\infty}^{+L_{1}}} & C_{(s_{1},0)}^{(1,1)}.\ \
}
\]
Thus $\mathcal{R}_{2}$ is acyclic.

On the other hand, we have another acyclic subcomplex
\[
\mathcal{R}_{3}=\bigoplus_{\mathcal{F}_{2}\leq0}C_{(s_{1},0)}^{(\varepsilon_{1},\varepsilon_{2})}.
\]
In fact, since the inclusion maps $I_{s_{1},0}^{-L_{1}}$ and $I_{s_{1},+\infty}^{-L_{1}}$
are both $id$ when $s_{1}<0$, the associated graded complex of the
filtration $\mathcal{F}_{2}$ splits as a direct product of acyclic
complexes in the form of
\[
\xyR{1.5pc}\xyC{5pc}\xymatrix{C_{(s_{1},0)}^{(0,0)}\ar[r]^{\Phi_{s_{1},0}^{-L_{1}}}\ar[d]_{\Phi_{s_{1},0}^{+L_{2}}+\Phi_{s_{1},0}^{-L_{2}}}\ar[dr]|-{\Phi_{s_{1},s_{2}}^{-L_{1}\cup+L_{2}}+\Phi_{s_{1},s_{2}}^{-L_{1}\cup-L_{2}}} & C_{(s_{1}+p_{1},0)}^{(1,0)}\ar[d]^{\Phi_{+\infty,0}^{+L_{2}}+\Phi_{+\infty,0}^{-L_{2}}}\\
C_{(s_{1},0)}^{(0,1)}\ar[r]_{\Phi_{s_{1},+\infty}^{-L_{1}}} & C_{(s_{1}+p_{1},0)}^{(1,1)}.
}
\]
 Thus $\mathcal{R}_{3}$ is acyclic. So the quotient complex $\mathcal{Q}=\mathcal{C}^{-}/\mathcal{R}_{1}\cup\mathcal{R}_{2}\cup\mathcal{R}_{3}$
is a direct product of
\[
C_{(s_{1},0)}^{(0,0)}\xrightarrow{\Phi_{s_{1},0}^{+L_{2}}+\Phi_{s_{1},0}^{-L_{2}}}C_{(s_{1},0)}^{(0,1)},
\]
where $-p_{1}+1\leq s_{1}\leq0.$ From the computations of the inclusion
maps, we know that $\Phi_{s_{1},0}^{+L_{2}}\simeq\Phi_{s_{1},0}^{-L_{2}}$.
Thus the homology of each $\text{Spin}^{c}$ structure $(t_{1},0)\in\mathbb{Z}/p_{1}\mathbb{Z}\oplus\mathbb{Z}$
is $\mathbb{F}[[U]]^{\oplus2}$. Note that $\chi(W)=2,\sigma(W)=1.$
When $-p_{1}+1\leq s_{1}<0$, the complex $C_{(s_{1},0)}^{(0,0)}=A_{s_{1},0}^{-}$
has $a_{2}$ as a generator of its homology of grading $\mu_{s_{1},0}^{0,0}(a_{2})=\frac{s_{1}^{2}}{p_{1}}+s_{1}+\frac{p_{1}}{4}+\frac{1}{4}$,
and the complex $C_{(s_{1},0)}^{(0,1)}=A_{s_{1},+\infty}^{-}$ has
$d_{1}$ as a generator of its homology of grading $\mu_{s_{1},0}^{0,1}(d_{1})=\frac{s_{1}^{2}}{p_{1}}+s_{1}+\frac{p_{1}}{4}-\frac{3}{4}$.
While for the $(0,0)$ $\mathrm{Spin}^{c}$ structure, $C_{(0,0)}^{(0,0)}=A_{0,0}^{-}$
has $d_{1}$ as a generator of its homology with grading $\mu_{0,0}^{0,0}(d_{1})=\frac{p_{1}}{4}-\frac{7}{4}$,
and $C_{(0,0)}^{(0,1)}=A_{0,+\infty}^{-}$ has $d_{1}$ as a generator
of its homology with grading $\mu_{0,0}^{0,1}(d_{1})=\frac{p_{1}}{4}-\frac{3}{4}.$

\begin{figure}

\includegraphics[scale=0.8]{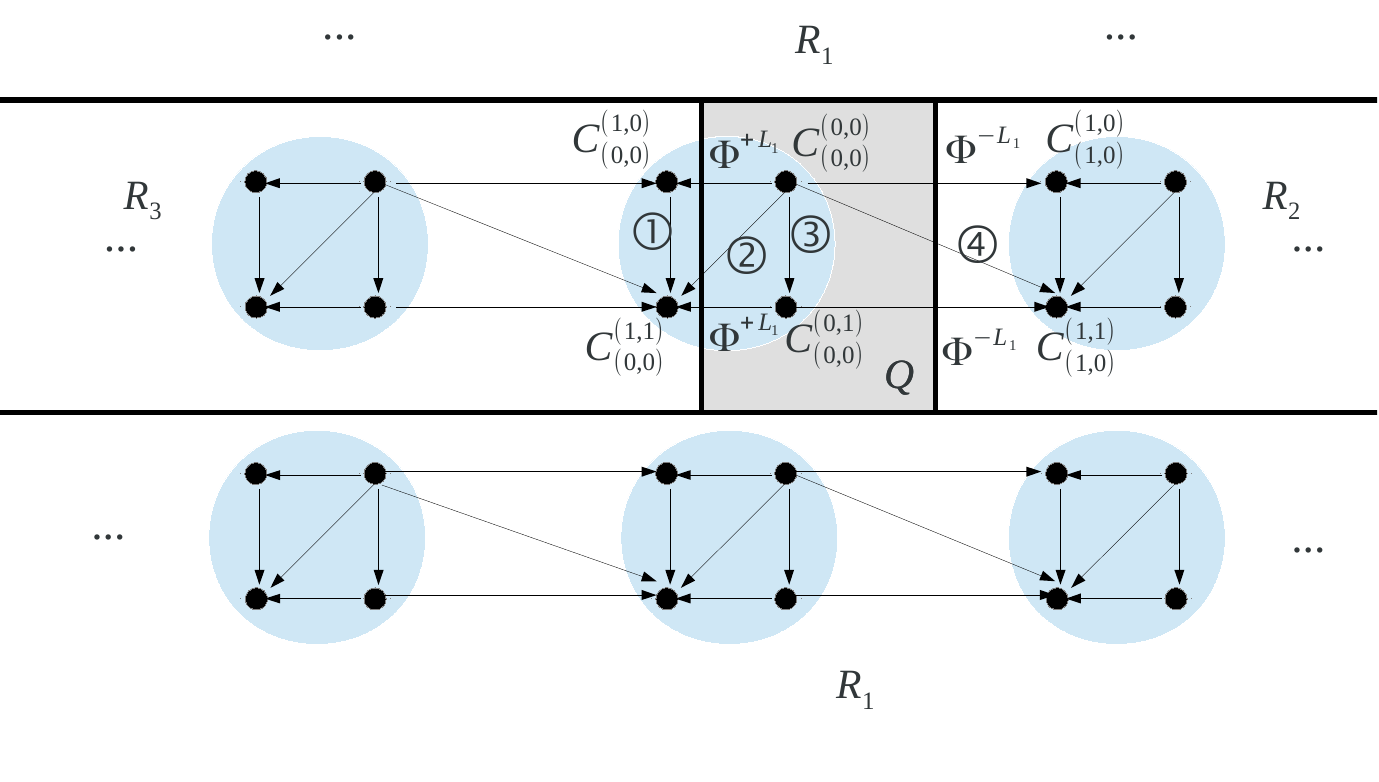}\caption{\textbf{The surgery complex for $\Lambda=(1,0)$.} Every dot represents
a complex $C_{\text{s}}^{\varepsilon}$ which is a certain generalized
Floer complex $A_{\text{s'}}^{-}(\mathit{Wh})$, and in every shaded
circle the complexes $C_{\text{s}}^{\varepsilon}$'s have the same
subscript $\text{s}$. Every arrow represents a $\Phi$-map according
to the endpoints of the arrow, where we omit the subscripts. All the
parallel arrows share the same type of $\Phi^{\protect\overrightarrow{M}}$,
i.e. having the same superscript $\protect\overrightarrow{M}$. The
arrows with circled numbers $1,2,3,4$ are $\Phi_{+\infty,0}^{+L_{2}}+\Phi_{+\infty,0}^{-L_{2}},$
$\Phi_{0,0}^{+L_{2}\cup+L_{1}}+\Phi_{0,0}^{-L_{2}\cup+L_{1}},$ $\Phi_{0,0}^{+L_{2}}+\Phi_{0,0}^{-L_{2}},$
and $\Phi_{0,0}^{+L_{2}\cup-L_{1}}+\Phi_{0,0}^{-L_{2}\cup-L_{1}}$
respectively. The regions $R_{1},R_{2},R_{3}$ divided by the (thicker)
lines are corresponded to the acyclic subcomplexes $\mathcal{R}_{1},\mathcal{R}_{2},\mathcal{R}_{3}.$
The shaded region $Q$ corresponds to the truncated complex $\mathcal{Q}$. }

\label{(1,0)surgery}

\end{figure}

The case of $p_{1}<0$ is similar. We first kill the acyclic subcomplex
\[
\mathcal{R}_{1}=\bigoplus_{s_{2}\neq0}(C_{(s_{1},s_{2})}^{(0,0)}\oplus C_{(s_{1},s_{2})}^{(1,0)}\oplus C_{(s_{1},s_{2})}^{(0,1)}\oplus C_{(s_{1},s_{2})}^{(1,1)}).
\]
 Thus, the homology for the $\text{Spin}^{c}$ structure $(t_{1},t_{2})$
with $t_{2}\neq0$ is $0$. Next, we kill the acyclic quotient complexes
\[
\mathcal{R}_{2}=\bigoplus_{s_{1}>0}C_{(s_{1},0)}^{(\varepsilon_{1},\varepsilon_{2})},\ \ \ \mathcal{R}_{3}=\bigoplus_{s_{1}-\varepsilon_{1}p_{1}<0}C_{(s_{1},0)}^{(\varepsilon_{1},\varepsilon_{2})}.
\]
In the $(0,0)$ $\text{Spin}^{c}$ structure, the remaining complexes
are as follows

\[
\xyR{1.5pc}\xyC{9pc}\xymatrix{C_{(p_{1},0)}^{(1,0)}\ar[d]_{\Phi_{+\infty,0}^{+L_{2}}+\Phi_{+\infty,0}^{-L_{2}}} & C_{(0,0)}^{(0,0)}\ar[d]|-{\Phi_{0,0}^{+L_{2}}+\Phi_{0,0}^{-L_{2}}}\ar[l]_{\Phi_{0,0}^{-L_{1}}}\ar[r]^{\Phi_{0,0}^{+L_{1}}}\ar[rd]|-{\ \ \ \Phi_{0,0}^{+L_{1}\cup+L_{2}}+\Phi_{0,0}^{+L_{1}\cup-L_{2}}}\ar[ld]|-{\Phi_{0,0}^{-L_{1}\cup+L_{2}}+\Phi_{0,0}^{-L_{1}\cup-L_{2}}\ \ \ } & C_{(0,0)}^{(1,0)}\ar[d]^{\Phi_{+\infty,0}^{+L_{2}}+\Phi_{+\infty,0}^{-L_{2}}}\\
C_{(p_{1},0)}^{(1,1)} & C_{(0,0)}^{(0,1)}\ar[l]^{\Phi_{0,+\infty}^{-L_{1}}}\ar[r]_{\Phi_{0,+\infty}^{+L_{1}}} & C_{(0,0)}^{(1,1)}.
}
\]
Since $\Phi_{+\infty,0}^{+L_{2}},\Phi_{+\infty,0}^{-L_{2}}$ are chain
homotopic, we can replace $\Phi_{+\infty,0}^{+L_{2}}+\Phi_{+\infty,0}^{-L_{2}}$
by $0$ in the perturbed surgery complex. Therefore, we can also replace
the diagonal maps by $0$. Thus, we have two split complexes,
\[
\xyR{1pc}\xyC{2pc}\xymatrix{C_{(p_{1},0)}^{(1,0)} & C_{(0,0)}^{(0,0)}\ar[l]_{\Phi_{0,0}^{-L_{1}}}\ar[r]^{\Phi_{0,0}^{+L_{1}}} & C_{(0,0)}^{(1,0)},}
\]
\[
\xyR{1pc}\xyC{2pc}\xymatrix{C_{(p_{1},0)}^{(1,1)} & C_{(0,0)}^{(0,1)}\ar[l]^{\Phi_{0,+\infty}^{-L_{1}}}\ar[r]_{\Phi_{0,+\infty}^{+L_{1}}} & C_{(0,0)}^{(1,1)}.}
\]
Since $C_{(p_{1},0)}^{(1,0)}=C_{(0,0)}^{(1,0)}$ and $\Phi_{0,0}^{+L_{1}}\simeq\Phi_{0,0}^{-L_{1}}$,
we can replace $\Phi_{0,0}^{+L_{1}}$ by $\Phi_{0,0}^{-L_{1}}$ in
the perturbed complex. By changing basis, we can split the first row
as $\text{cone}(\Phi_{0,0}^{+L_{1}})\oplus C_{(p_{1},0)}^{(1,0)}$
with homology $\mathbb{F}[[U]]\oplus(\mathbb{F}[[U]]/U).$ Similarly,
from that $\Phi_{0,+\infty}^{\pm L_{1}}$ are quasi-isomorphisms,
it follows that the second row is quasi-isomorphic to $C_{(0,0)}^{(1,1)}$
by changing basis. Thus in the $(0,0)$ $\text{Spin}^{c}$ structure,
the homology is $\mathbb{F}[[U]]^{\oplus2}\oplus(\mathbb{F}[[U]]/U).$

For the other $\text{Spin}^{c}$ structures $(t_{1},0),t_{1}\neq0$,
the remaining complexes are as follows
\[
C_{(s_{1},0)}^{(1,0)}\xrightarrow{\Phi_{+\infty,0}^{+L_{2}}+\Phi_{+\infty,0}^{-L_{2}}}C_{(s_{1},0)}^{(1,1)},
\]
where the integer $s_{1}$ is in the residue class $t_{1}\in\mathbb{Z}/p_{1}\mathbb{Z}$
such that $p_{1}<s_{1}<0$. Similarly, we replace $\Phi_{+\infty,0}^{+L_{2}}+\Phi_{+\infty,0}^{-L_{2}}$
by $0$, and get that the homology is $\mathbb{F}[[U]]^{\oplus2}.$
The correction terms can be computed similarly with $\chi(W)=2,\sigma(W)=-1.$

\

Finally, let's look at the $(p_{1},p_{2})$-surgery, where $p_{1}p_{2}\neq0$.
This breaks down to three cases: $p_{1},p_{2}>0$, $p_{1},p_{2}<0$,
and $p_{1}p_{2}<0.$ We apply the truncation tricks shown in \cite{link_surgery}
Section 8.3.

(1) When $p_{1},p_{2}>0$, the $(p_{1},p_{2})$-surgery is actually
a large surgery, so its homology can be derived from $A_{\mathrm{s}}^{-}$
directly. However, we still compute them by elementary methods. We
construct two filtrations,
\[
\mathcal{F}_{00}(C_{(s_{1},s_{2})}^{(\varepsilon_{1},\varepsilon_{2})})=-s_{1}-s_{2},\ \ \mathcal{F}_{11}(C_{(s_{1},s_{2})}^{(\varepsilon_{1},\varepsilon_{2})})=s_{1}-(\varepsilon_{1}-1)p_{1}+s_{2}-(\varepsilon_{2}-1)p_{2}.
\]
Without loss of generality, see Figure \ref{(1,1)surgery} for the
illustration of the surgery complex and the truncation in the case
of $\Lambda=(1,1).$

We first consider an acyclic subcomplex
\[
\mathcal{R}_{1}=\bigoplus_{\max\{s_{1},s_{2}\}>0}C_{(s_{1},s_{2})}^{(\varepsilon_{1},\varepsilon_{2})}.
\]
In fact, since the inclusion maps $I_{s_{1},s_{2}}^{+L_{1}},s_{1}>0$
and $I_{s_{1},s_{2}}^{+L_{2}},s_{2}>0$ are both $id$'s, the associated
graded complex of the filtration $\mathcal{F}_{00}$ splits as a direct
product of acyclic squares $R_{\mathrm{s},0,0}$ in Equation \ref{eq:surgery formula1}:
\[
\xyR{2pc}\xyC{4pc}\xymatrix{A_{s_{1},s_{2}}^{-}\ar[r]^{\Phi_{s_{1},s_{2}}^{+L_{1}}}\ar[d]_{\Phi_{s_{1},s_{2}}^{+L_{2}}}\ar[dr]|-{\Phi_{s_{1},s_{2}}^{+L_{1}\cup+L_{2}}} & A_{+\infty,s_{2}}^{-}\ar[d]^{\Phi_{+\infty,s_{2}}^{+L_{2}}}\\
A_{s_{1},+\infty}^{-}\ar[r]_{\Phi_{s_{1},+\infty}^{+L_{1}}} & A_{+\infty,+\infty}^{-}.
}
\]
Thus $\mathcal{R}_{1}$ is acyclic.

There is another acyclic subcomplex
\[
\mathcal{R}_{2}=\bigoplus_{\max\{s_{1}-(\varepsilon_{1}-1)p_{1},s_{2}-(\varepsilon_{2}-1)p_{2}\}\leq0}C_{(s_{1},s_{2})}^{(\varepsilon_{1},\varepsilon_{2})}.
\]
One can directly check $\mathcal{R}_{2}$ is a subcomplex by computation.
Because the inclusion maps $I_{s_{1},s_{2}}^{-L_{1}},s_{1}<0$ and
$I_{s_{1},s_{2}}^{-L_{2}},s_{2}<0$ are both $id$'s, the associated
graded complex of $\mathcal{F}_{11}$ splits as a product of acyclic
squares $R_{\mathrm{s},1,1}:$
\[
\xyR{2pc}\xyC{4pc}\xymatrix{A_{s_{1},s_{2}}^{-}\ar[r]^{\Phi_{s_{1},s_{2}}^{-L_{1}}}\ar[d]_{\Phi_{s_{1},s_{2}}^{-L_{2}}}\ar[dr]|-{\Phi_{s_{1},s_{2}}^{-L_{1}\cup-L_{2}}} & A_{+\infty,s_{2}}^{-}\ar[d]^{\Phi_{+\infty,s_{2}}^{-L_{2}}}\\
A_{s_{1},+\infty}^{-}\ar[r]_{\Phi_{s_{1},+\infty}^{-L_{1}}} & A_{+\infty,+\infty}^{-},
}
\]
where $s_{1}+p_{1}\leq0,s_{2}+p_{2}\leq0.$ Thus $\mathcal{R}_{2}$
is acyclic.

Let $\mathcal{C}_{1}=\mathcal{C}/(\mathcal{R}_{1}+\mathcal{R}_{2})$.
Inside $\mathcal{C}_{1}$, there are two acyclic subcomplexes
\begin{align*}
\mathcal{R}_{3} & =\{\bigoplus_{s_{1}-(\varepsilon_{1}-1)p_{1}\leq0,-p_{2}+1\leq s_{2}\leq0}C_{(s_{1},s_{2})}^{(\varepsilon_{1},\varepsilon_{2})}\}\cap\mathcal{C}_{1}=\bigoplus_{s_{1}+p_{1}\leq0,0\geq s_{2}>-p_{2}}\big(C_{(s_{1},s_{2})}^{(0,0)}\oplus C_{(s_{1}+p_{1},s_{2})}^{(1,0)}\big),\\
\mathcal{R}_{4} & =\{\bigoplus_{s_{2}-(\varepsilon_{2}-1)p_{2}\leq0,-p_{1}+1\leq s_{1}\leq0}C_{(s_{1},s_{2})}^{(\varepsilon_{1},\varepsilon_{2})}\}\cap\mathcal{C}_{1}=\bigoplus_{s_{2}+p_{2}\leq0,0\geq s_{1}>-p_{1}}\big(C_{(s_{1},s_{2})}^{(0,0)}\oplus C_{(s_{1},s_{2}+p_{2})}^{(1,1)}\big).
\end{align*}
In fact, the associated graded complex of $\mathcal{F}_{11}$ on $\mathcal{R}_{3}$
splits as a direct product of acyclic complexes $C_{(s_{1},s_{2})}^{(0,0)}\xrightarrow{\Phi_{s_{1},s_{2}}^{-L_{1}}}C_{(s_{1}+p_{1},s_{2})}^{(1,0)},$
because the inclusion map $I_{s_{1},s_{2}}^{-L_{1}},s_{1}<0$ is $id$.
Thus $\mathcal{R}_{3}$ is acyclic. Similar argument applies to $\mathcal{R}_{4}$.

At last, we look at the quotient complex
\[
\mathcal{Q}=\mathcal{C}_{1}/(\mathcal{R}_{3}+\mathcal{R}_{4})=\bigoplus_{-p_{1}<s_{1}\leq0,-p_{2}<s_{2}\leq0}C_{(s_{1},s_{2})}^{(0,0)},
\]
where $C_{(s_{1},s_{2})}^{(0,0)}=A_{s_{1},s_{2}}^{-}.$ There is only
one $A_{\mathrm{s}}^{-}$ left in each $\text{Spin}^{c}$ structure
$Y$ with homology $\mathbb{F}[[U]]$. For $(s_{1},s_{2})=(0,0)$,
the complex $C_{(0,0)}^{(0,0)}=A_{0,0}^{-}$ has $d_{1}$ as a generator
of its homology with grading $\mu_{0,0}^{0,0}(d_{1})=\frac{p_{1}+p_{2}-10}{4}.$
For $-p_{1}<s_{1}<0$, the complex $C_{(s_{1},s_{2})}^{(0,0)}=A_{s_{1},s_{2}}^{-}$
has $a_{2}$ as a generator of its homology with grading $\mu_{s_{1},s_{2}}^{0,0}(a_{2})=\frac{s_{1}^{2}}{p_{1}}+\frac{s_{2}^{2}}{p_{2}}+s_{1}+s_{2}+\frac{p_{1}+p_{2}-2}{4}.$
Similarly, we have the same formula for $-p_{2}<s_{2}<0,-p_{1}<s_{1}\leq0$.

\begin{figure}

\includegraphics[scale=0.8]{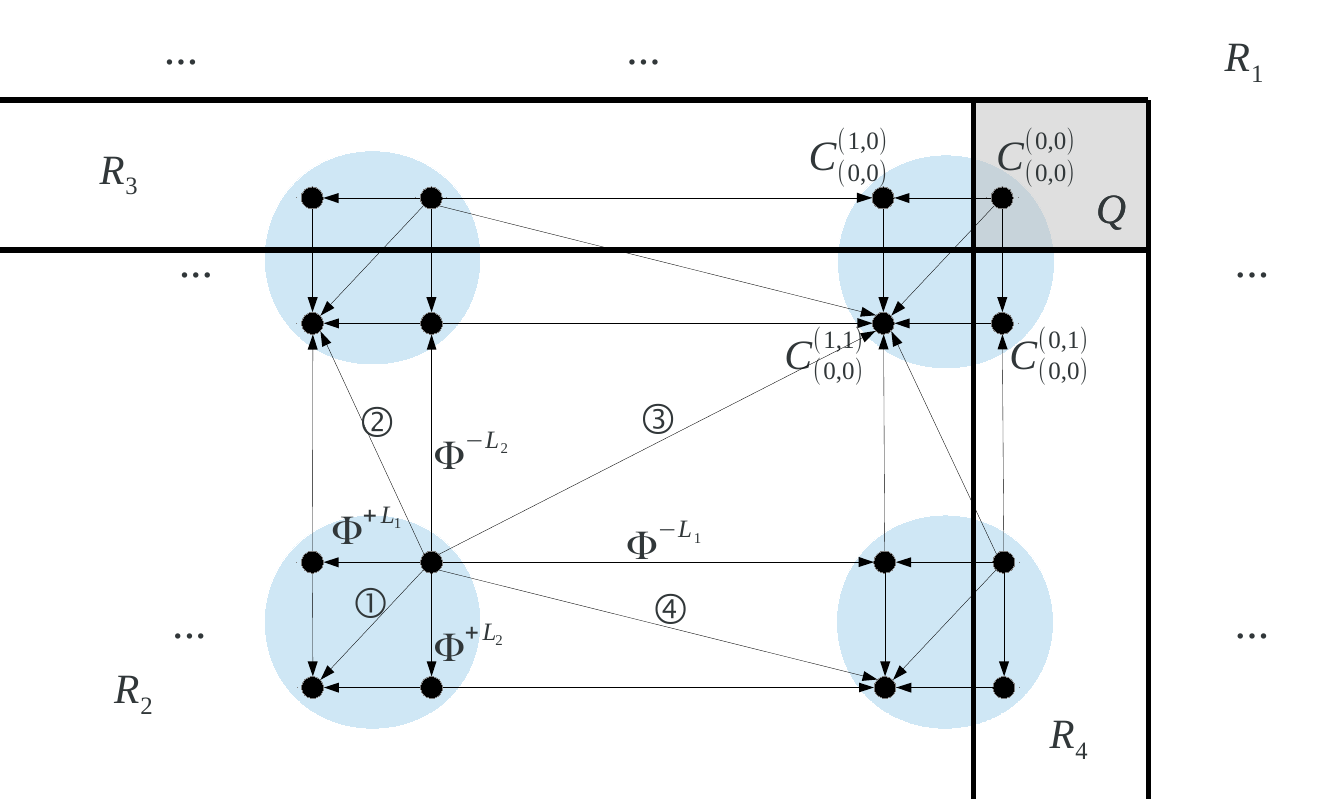}\caption{\textbf{The surgery complex for $\Lambda=(1,1)$.} The arrows with
circled numbers $1,2,3,4$ are $\Phi^{+L_{1}\cup+L_{2}},$ $\Phi^{+L_{1}\cup-L_{2}},$$\Phi^{-L_{1}\cup-L_{2}},$
and $\Phi^{-L_{1}\cup+L_{2}}$ respectively. The regions $R_{1},R_{2},R_{3},R_{4}$
divided by the (thicker) lines are corresponded to the acyclic subcomplexes
$\mathcal{R}_{1},\mathcal{R}_{2},\mathcal{R}_{3},\mathcal{R}_{4}.$
The shaded region $Q$ corresponds to the truncated complex $\mathcal{Q}$. }

\label{(1,1)surgery}

\end{figure}

(2) When $p_{1}p_{2}<0$, we might as well suppose $p_{1}>0,p_{2}<0$
due to the symmetry of the two components. We construct four filtrations
\begin{align*}
\mathcal{F}_{00}(C_{(s_{1},s_{2})}^{(\varepsilon_{1},\varepsilon_{2})})=-s_{1}+s_{2}, & \ \ \mathcal{F}_{01}(C_{(s_{1},s_{2})}^{(\varepsilon_{1},\varepsilon_{2})})=-s_{1}-s_{2}+(\varepsilon_{2}-1)p_{2},\\
\mathcal{F}_{10}(C_{(s_{1},s_{2})}^{(\varepsilon_{1},\varepsilon_{2})})=s_{1}-(\varepsilon_{1}-1)p_{1}+s_{2}, & \ \ \mathcal{F}_{11}(C_{(s_{1},s_{2})}^{(\varepsilon_{1},\varepsilon_{2})})=s_{1}-(\varepsilon_{1}-1)p_{1}-s_{2}+(\varepsilon_{2}-1)p_{2}.
\end{align*}

Without loss of generality, see Figure \ref{(1,-1)surgery} for the
illustration of the surgery complex and the truncation in the case
of $\Lambda=(1,-1).$ We first kill an acyclic subcomplex $\mathcal{R}_{1}$
composed of $C_{(s_{1},s_{2})}^{(\varepsilon_{1},\varepsilon_{2})}$
with $s_{1}>0$. Indeed, the associated graded complex of $\mathcal{F}_{00}$
on $\mathcal{R}_{1}$ splits as a direct product of acyclic squares,
since the inclusion map $I_{s_{1},s_{2}}^{+L_{1}},s_{1}>0$ is $id$.

We have another acyclic subcomplex
\[
\mathcal{R}_{2}=\bigoplus_{s_{1}-(\varepsilon_{1}-1)p_{1}\leq0}C_{(s_{1},s_{2})}^{(\varepsilon_{1},\varepsilon_{2})}.
\]
In fact, since $\Phi_{s_{1},s_{2}}^{-L_{1}}$ are quasi-isomorphisms
when $s_{1}<0$, the associated graded of the filtration $\mathcal{F}_{10}$
for $\mathcal{R}_{2}$ splits as a direct product of acyclic squares
$R_{\mathrm{s},1,0}.$ Thus $\mathcal{R}_{2}$ is acyclic.

Thus, $\mathcal{C}$ is quasi-isomorphic to the quotient complex
\[
\mathcal{C}_{1}=\mathcal{C}/(\mathcal{R}_{1}+\mathcal{R}_{2})=\bigoplus_{-p_{1}<s_{1}\leq0}\big(C_{(s_{1},s_{2})}^{(0,0)}\oplus C_{(s_{1},s_{2})}^{(0,1)}\big).
\]
We have an acyclic quotient complex $\mathcal{R}_{3}$ of $\mathcal{C}_{1}$
\[
\mathcal{R}_{3}=\bigoplus_{-p_{1}<s_{1}\leq0,s_{2}>0}\big(C_{(s_{1},s_{2})}^{(0,0)}\oplus C_{(s_{1},s_{2})}^{(0,1)}\big),
\]
since the inclusion maps $I_{s_{1},s_{2}}^{+L_{2}},s_{2}>0$ are all
the identities. Furthermore, we have another acyclic quotient complex
$\mathcal{R}_{4}$ of $\mathcal{C}_{1}$
\[
\mathcal{R}_{4}=\bigoplus_{-p_{1}<s_{1}\leq0,s_{2}<0}\big(C_{(s_{1},s_{2})}^{(0,0)}\oplus C_{(s_{1},s_{2}-p_{2})}^{(0,1)}\big).
\]
Thus $\mathcal{C}$ is quasi-isomorphic to
\[
\mathcal{Q}=\mathcal{C}_{1}\backslash(\mathcal{R}_{3}\cup\mathcal{R}_{4})=\{\bigoplus_{-p_{1}<s_{1}\leq0}\big(C_{(s_{1},0)}^{(0,0)}\oplus C_{(s_{1},0)}^{(0,1)}\oplus C_{(s_{1},p_{2})}^{(0,1)}\big)\}\oplus\{\bigoplus_{-p_{1}<s_{1}\leq0,p_{2}<s_{2}<0}C_{(s_{1},s_{2})}^{(0,1)}\}.
\]

In the $\text{Spin}^{c}$ structure $(t_{1},0)\in\mathbb{Z}/p_{1}\mathbb{Z}\oplus\mathbb{Z}/p_{2}\mathbb{Z}$,
we have the complex as follows,
\[
\xyR{1pc}\xyC{2pc}\xymatrix{C_{(s_{1},0)}^{(0,0)}=A_{s_{1},0}^{-}\ar[r]^{\Phi_{s_{1},0}^{+L_{2}}}\ar[rd]_{\Phi_{s_{1},0}^{-L_{2}}} & A_{s_{1},+\infty}^{-}=C_{(s_{1},0)}^{(0,1)}\\
 & A_{s_{1},+\infty}^{-}=C_{(s_{1},p_{2})}^{(0,1)},
}
\]
where $s_{1}$ is an integer such that $-p_{1}<s_{1}\leq0$ and $s_{1}\equiv t_{1}(\mathrm{mod}\ p_{1})$.
Since the inclusion maps $I_{0,0}^{\pm L_{2}}$ induce the same action
on homology, $\Phi_{0,0}^{\pm L_{2}}$ are chain homotopic to each
other. By Corollary \ref{cor:standard model of unknot}, we can replace
$A_{0,0}^{-},A_{0,+\infty}^{-}$ by the complex $\mathbb{F}[[U_{1},U_{2}]]\xrightarrow{U_{1}-U_{2}}\mathbb{F}[[U_{1},U_{2}]],$
where the generators are $g_{1},g_{2}$. Then, we can replace the
chain maps $I_{0,0}^{\pm L_{2}}$ by the same chain map $\tilde{I}$,
where $\tilde{I}(g_{i})=U_{1}g_{i}$. Thus, the homology of the $(0,0)$
$\text{Spin}^{c}$ structure can be computed by this perturbed complex,
which is $\mathbb{F}[[U]]\oplus\mathbb{F}[[U]]/U$. From above computation,
the generator corresponding to $\mathbb{F}[[U]]$ is actually the
generator of $H_{*}(C_{(0,0)}^{(0,1)})$, which is $d_{1}\in A_{s_{1},+\infty}^{-}$
with grading $\mu_{0,0}^{0,1}(d_{1})=\frac{p_{1}+p_{2}}{4}$ by Equation
(\ref{eq:absolute grading}).

On the other hand, since the inclusion maps $I_{s_{1},0}^{-L_{2}},s_{1}<0$
are all quasi-isomorphisms, we can kill the acyclic quotient complex
$A_{s_{1},0}^{-}\xrightarrow{\Phi^{-L_{2}}}A_{s_{1},+\infty}^{-}.$
Thus, the homology for the $\text{Spin}^{c}$ structure $(t_{1},0)\in\mathbb{Z}/p_{1}\mathbb{Z}\oplus\mathbb{Z}/p_{2}\mathbb{Z}$
with $t_{1}\neq0$ is $\mathbb{F}[[U]]$ generated by $d_{1}\in A_{s_{1},+\infty}^{-}$
of grading $\mu_{s_{1},0}^{0,1}(d_{1})=\frac{s_{1}^{2}}{p_{1}}+s_{1}+\frac{p_{1}+p_{2}}{4}$,
where $s_{1}$ is an integer with $-p_{1}<s_{1}<0$ in the class $t_{1}$
modulo $p_{1}$.

In every other $\text{Spin}^{c}$ structure in the complex $\mathcal{Q}$,
there is only one complex $C_{(s_{1},s_{2})}^{(0,1)}=A_{s_{1},+\infty}^{-},-p_{1}<s_{1}\leq0$
of homology $\mathbb{F}[[U]]$ of grading $\frac{s_{1}^{2}}{p_{1}}+\frac{s_{2}^{2}}{p_{2}}+s_{1}-s_{2}+\frac{p_{1}+p_{2}}{4}$.

\begin{figure}

\includegraphics[scale=0.8]{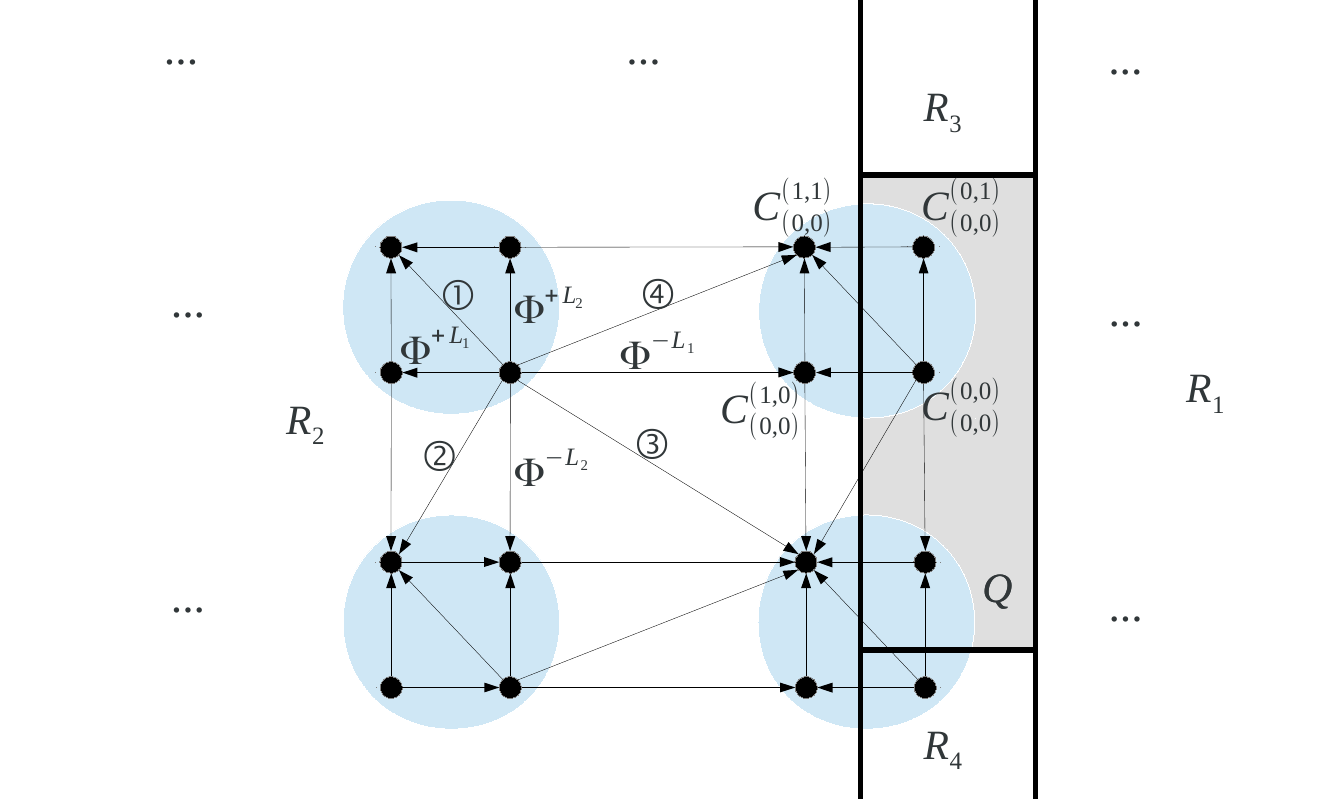}\caption{\textbf{The surgery complex for $\Lambda=(1,-1)$.} The arrows with
circled numbers $1,2,3,4$ are $\Phi^{+L_{1}\cup+L_{2}},$ $\Phi^{+L_{1}\cup-L_{2}},$$\Phi^{-L_{1}\cup-L_{2}},$
and $\Phi^{-L_{1}\cup+L_{2}}$ respectively. The regions $R_{1},R_{2},R_{3},R_{4}$
divided by the (thicker) lines are corresponded to the acyclic subcomplexes
$\mathcal{R}_{1},\mathcal{R}_{2},\mathcal{R}_{3},\mathcal{R}_{4}.$
The shaded region $Q$ corresponds to the truncated complex $\mathcal{Q}$. }

\label{(1,-1)surgery}

\end{figure}

(3) The last case is when $p_{1},p_{2}$ are both negative integers.
We use two filtrations
\[
\mathcal{F}_{00}(C_{(s_{1},s_{2})}^{(\varepsilon_{1},\varepsilon_{2})})=s_{1}+s_{2},\ \ \mathcal{F}_{11}(C_{(s_{1},s_{2})}^{(\varepsilon_{1},\varepsilon_{2})})=-s_{1}+(\varepsilon_{1}-1)p_{1}-s_{2}+(\varepsilon_{2}-1)p_{2}.
\]
Without loss of generality, see Figure \ref{(-1,-1)surgery} for the
illustration of the surgery complex and the truncation in the case
of $\Lambda=(-1,-1).$

We first kill an acyclic quotient complex
\[
\mathcal{R}_{1}=\bigoplus_{\max\{s_{1},s_{2}\}>0}C_{(s_{1},s_{2})}^{(\varepsilon_{1},\varepsilon_{2})}.
\]
By considering the filtration $\mathcal{F}_{00}$, we can see that
$\mathcal{R}_{1}$ is acyclic. We also have another acyclic quotient
complex
\[
\mathcal{R}_{2}=\bigoplus_{\min\{s_{1}-\varepsilon_{1}p_{1},s_{2}-\varepsilon_{2}p_{2}\}<0}C_{(s_{1},s_{2})}^{(\varepsilon_{1},\varepsilon_{2})}.
\]
In fact, the inclusion maps $I_{s_{1},s_{2}}^{-L_{i}},s_{i}<0$ are
quasi-isomorphisms. Thus the associated graded complex of the filtration
$\mathcal{F}_{11}$ splits as a direct product of acyclic complexes
\[
R_{\mathrm{s},1,1}\cap(\mathcal{C}\backslash\mathcal{R}_{1})
\]
where $\min\{s_{1},s_{2}\}<0$ and $R_{\mathrm{s},1,1}$ is in Equation
(\ref{eq:surgery formual4}). Therefore $\mathcal{R}_{2}$ is acyclic.

Hence, the subcomplex $\mathcal{Q}=\mathcal{C}\backslash(\mathcal{R}_{1}\cup\mathcal{R}_{2})$
is quasi-isomorphic to $\mathcal{C}$, where
\[
\mathcal{Q}=\bigoplus_{\max\{s_{1},s_{2}\}\leq0,\min\{s_{1}-\varepsilon_{1}p_{1},s_{2}-\varepsilon_{2}p_{2}\}\geq0}C_{(s_{1},s_{2})}^{(\varepsilon_{1},\varepsilon_{2})}.
\]
In the $\text{Spin}^{c}$ structure $(t_{1},t_{2}),$ $t_{1}\neq0,t_{2}\neq0$,
there is only one complex $C_{(s_{1},s_{2})}^{(1,1)}$ in $\mathcal{Q}$,
thus having the homology $\mathbb{F}[[U]]$ with grading $\frac{(2s_{1}-p_{1})^{2}}{4p_{1}}+\frac{(2s_{2}-p_{2})^{2}}{4p_{2}}+\frac{1}{2},$
where $s_{1},s_{2}$ are negative integers in the residue classes
$t_{1},t_{2}$ such that $s_{i}\geq p_{i}+1,i=1,2$. In the $\text{Spin}^{c}$
structure $(0,t_{2}),t_{2}\neq0$, there are three complexes $C_{(0,s_{2})}^{(0,1)},C_{(0,s_{2})}^{(1,1)},C_{(p_{1},s_{2})}^{(1,1)}$
in $\mathcal{Q}$, where $s_{2}$ is an integer in the residue class
$t_{2}$ such that $p_{2}<s_{2}<0$. Since the inclusion map $I_{0,s_{2}}^{\pm L_{1}},s_{2}\neq0$
are quasi-isomorphisms, we can replace $\Phi_{0,s_{2}}^{+L_{1}}$
by $\Phi_{0,s_{2}}^{-L_{1}}$ in the perturbed complex and thus split
it as a direct sum, $\mathrm{cone}(\Phi_{0,s_{2}}^{+L_{1}})\oplus C_{(0,s_{2})}^{(1,1)}$,
by changing the basis. Thus the homology is the same as the homology
of $C_{(0,s_{2})}^{(1,1)}=A_{+\infty,+\infty}^{-}$, which is $\mathbb{F}[[U]]$
generated by $[d_{1}]$ with grading $\frac{p_{1}}{4}+\frac{(2s_{2}-p_{2})^{2}}{4p_{2}}+\frac{1}{2}$.
It is similar for $(t_{1},0)\in\mathrm{Spin}^{c}(Y),t_{1}\neq0.$

\begin{figure}

\includegraphics[scale=0.8]{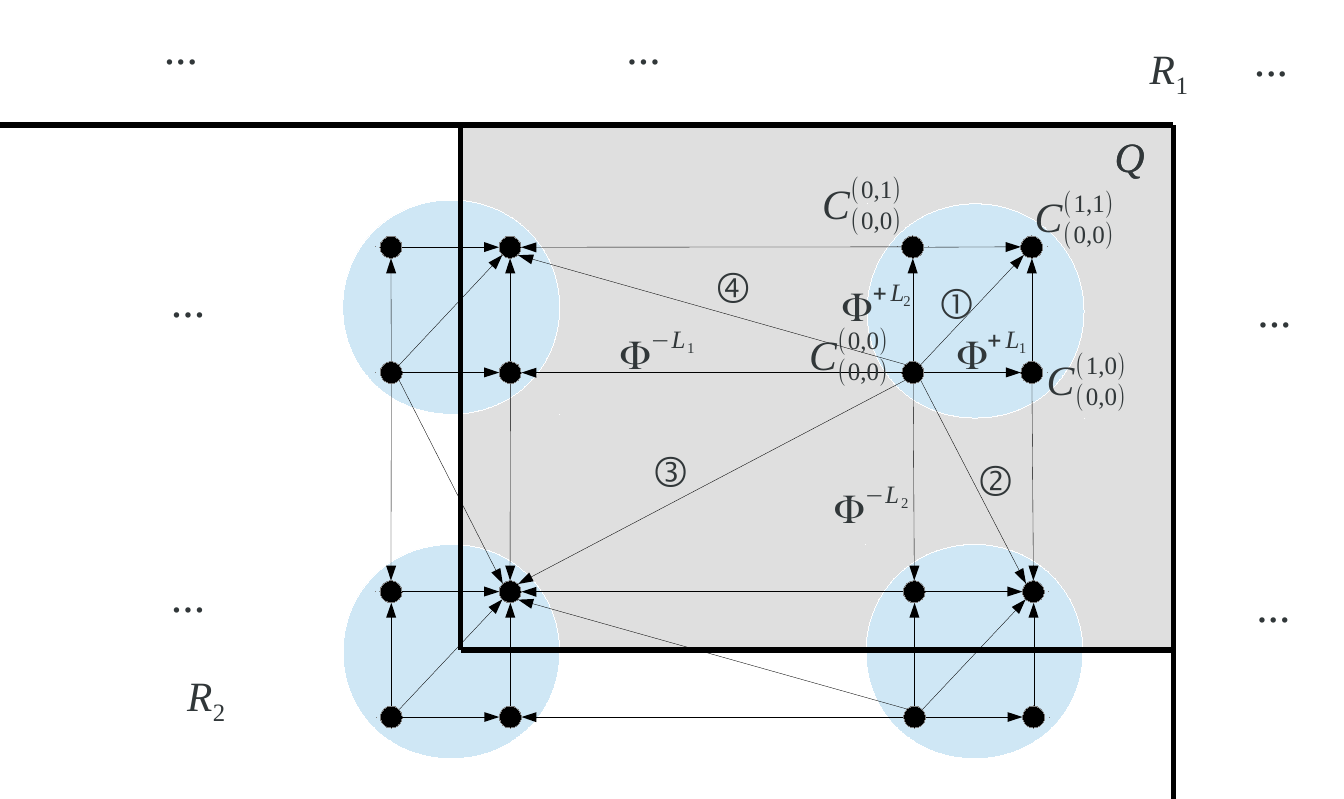}\caption{\textbf{The surgery complex for $\Lambda=(-1,-1)$.} The arrows with circled numbers $1,2,3,4$ are $\Phi^{+L_{1}\cup+L_{2}},$ $\Phi^{+L_{1}\cup-L_{2}},$$\Phi^{-L_{1}\cup-L_{2}},$
and $\Phi^{-L_{1}\cup+L_{2}}$ respectively. The regions $R_{1},R_{2}$
divided by the (thicker) lines are corresponded to the acyclic subcomplexes
$\mathcal{R}_{1},\mathcal{R}_{2}.$ The shaded region $Q$ corresponds
to the truncated complex $\mathcal{Q}$.}

\label{(-1,-1)surgery}

\end{figure}

The most interesting $\text{Spin}^{c}$ structure is $(0,0)$. It
consists of nine complexes, which are also illustrated in Figure \ref{(-1,-1)surgery}.
By Corollary \ref{cor:standard model of unknot} and the discussion in Section 5.6,
in the perturbed complex we can replace all the $A_{\mathrm{s}}^{-}$
by the complex $\bb{F}[[U]]$ with $0$ differential
and replace the edge maps by the corresponding maps on homology. Finally, the perturbed complex is the following
chain complex
\[\xymatrix{\bb{F}[[U]] &\bb{F}[[U]]\ar[l]_{1} \ar[r]^{1} &\bb{F}[[U]]\\
           \bb{F}[[U]]\ar[d]_{1}\ar[u]_1 &\bb{F}[[U]]\ar[l]_{U} \ar[r]^{U}\ar[u]^{U}\ar[d]_{U} &\bb{F}[[U]]\ar[u]^{1}\ar[d]_{1}\\
           \bb{F}[[U]]  &\bb{F}[[U]]\ar[l]_{1}\ar[r]^{1} &\bb{F}[[U]].}
\]
Direct computation shows that
\[
{\bf HF}^{-}(S_{\Lambda}^{3}(\mathit{Wh}),(0,0))=\mathbb{F}[[U]]\oplus(\mathbb{F}[[U]]/U),
\]
when $\Lambda=\text{diag}(p_{1},p_{2})$ with $p_{1},p_{2}<0$. Thereby, $[d_{1}]=1\in H_{*}(C_{(p_{1},p_{2})}^{(1,1)})$
is a generator of the $\bb{F}[[U]]$ summand with the absolute
grading $\frac{p_{1}+p_{2}+2}{4}$.
\end{proof}

\begin{thm}
\label{thm:surgery_on_b(8k,4k+1)}
Let $\overrightarrow{L}$ be the two-bridge link $b(8k,4k+1),k\in\mathbb{N}$
and $\Lambda=\text{diag}(p_{1},p_{2}),p_{1},p_{2}\in\mathbb{Z}$ be
the framing matrix of an integer surgery on $\overrightarrow{L}$.
As in Proposition \ref{prop:surgery on Wh}, we use $(t_{1},t_{2})\in\mathbb{Z}/p_{1}\mathbb{Z}\oplus\mathbb{Z}/p_{2}\mathbb{Z}$
to denote the \emph{$\text{Spin}^{c}$} structures over $S_{\Lambda}^{3}(\overrightarrow{L})$.
Then, we have the Floer homology
\begin{equation}
{\bf HF}^{-}(S_{\Lambda}^{3}(\overrightarrow{L}),(t_{1},t_{2}))=\begin{cases}
{\bf HF}^{-}(S_{\Lambda}^{3}(\mathit{Wh}),(0,0))\oplus\mathbb{F}^{\oplus(k-1)}, & (t_{1},t_{2})=(0,0),\\
{\bf HF}^{-}(S_{\Lambda}^{3}(\mathit{Wh}),(t_{1},t_{2})), & \text{otherwise.}
\end{cases}\label{eq:HF(b(8k,4k+1))}
\end{equation}
The correction terms of the elements in the ${\bf HF}^{-}(S_{\Lambda}^{3}(\mathit{Wh}))$-summand
are the same as in ${\bf HF}^{-}(S_{\Lambda}^{3}(\mathit{Wh})).$ \end{thm}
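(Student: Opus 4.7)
The plan is to reduce the computation for $\overrightarrow{L}=b(8k,4k+1)$ to the Whitehead case already handled in Proposition \ref{prop:surgery on Wh}, using the direct-sum decomposition of $CFL^{-}(L)$ established in Proposition \ref{prop:CFL^-(b(8k,4k+1))}. The key observation is that the splitting $CFL^{-}(L)=CFL^{-}(\mathit{Wh})\oplus\bigoplus_{i=1}^{k-1}(N,\partial^{-})$ of $A_{+\infty,+\infty}^{-}(L)$ as a $\mathbb{Z}\oplus\mathbb{Z}$-filtered $\mathbb{F}[[U_{1},U_{2}]]$-module propagates through the entire surgery formula, because (i) the other $A_{\mathrm{s}}^{-}(L)$ are obtained from $A_{+\infty,+\infty}^{-}(L)$ by the same horizontal/vertical reflections used in the proof of Lemma \ref{lem:H_*(A_s(Wh))}, so each $A_{\mathrm{s}}^{-}(L)$ inherits an analogous splitting $A_{\mathrm{s}}^{-}(Wh)\oplus\bigoplus_{i=1}^{k-1}N_{\mathrm{s}}^{(i)}$; (ii) the inclusion maps $\mathcal{I}_{\mathrm{s}}^{\pm L_{j}}$ are defined by multiplication by explicit $U_{1},U_{2}$-powers depending only on the Alexander gradings, so they respect the direct sum; and (iii) by Proposition \ref{prop:homotopies on the diagonal} and Corollary \ref{cor:standard model of unknot}, the edge and diagonal maps in the perturbed surgery complex may be chosen compatibly with this decomposition.

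First I would make the decomposition of $A_{\mathrm{s}}^{-}(L)$ into a Whitehead-summand and $k-1$ copies of a reflected $N$-complex explicit by applying the same flip arguments that produced Table \ref{As(Wh)}, and verify from Figure \ref{CFL^-(Wh)} that each $(N,\partial^{-})$-summand is acyclic (its four generators are paired by $U_{1}$- and $U_{2}$-arrows, giving an acyclic mapping cone between two $C^{\mathrm{u}}$-type complexes). This shows $H_{*}(A_{\mathrm{s}}^{-}(L))\cong H_{*}(A_{\mathrm{s}}^{-}(\mathit{Wh}))\cong\mathbb{F}[[U]]$ for every $\mathrm{s}\in\mathbb{H}(L)$, so the perturbed surgery machinery of Section 5 applies verbatim. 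Next, I would choose $\mathbb{F}[[U_{1}]]$-linear quasi-isomorphisms $\tilde{D}_{\mathrm{s}}^{-L_{i}}$ that respect the splitting, using that the inclusion maps on the $N$-summands are still multiplications by $U_{1}$- or $U_{2}$-powers, so the rigidity results force the edge and diagonal maps to decompose accordingly; then the perturbed surgery complex itself splits as
\[
(\mathcal{\tilde{C}}^{-}(\mathcal{H}^{L},\Lambda),\tilde{\mathcal{D}}^{-})\;\cong\;(\mathcal{\tilde{C}}^{-}(\mathcal{H}^{\mathit{Wh}},\Lambda),\tilde{\mathcal{D}}^{-})\;\oplus\;\bigoplus_{i=1}^{k-1}\mathcal{N}^{(i)},
\]
where $\mathcal{N}^{(i)}$ is the surgery-type complex built from the $N$-summands.

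Third, I would compute $H_{*}(\mathcal{N}^{(i)})$ directly. Because each $(N,\partial^{-})$ is acyclic over $\mathbb{F}[[U_{1},U_{2}]]$ yet nontrivially $\mathbb{Z}\oplus\mathbb{Z}$-filtered, the only surviving homology after the twisted gluing comes from the four corners of the surgery square evaluated at the single lattice point $\mathrm{s}=(0,0)$; all the other $N_{\mathrm{s}}^{(i)}$ contribute acyclic subquotients to the truncated surgery complex by exactly the filtration arguments used to kill $\mathcal{R}_{1},\mathcal{R}_{2},\ldots$ in the Whitehead computation. This forces the extra contribution to live entirely in the $(0,0)$ Spin$^{c}$-structure and to equal $\mathbb{F}^{\oplus (k-1)}$, since $(N,\partial^{-})$ has exactly one nontrivial generator modulo the $U$-action once viewed as a bounded module. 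Imposing $U_{2}=U_{1}$ at the end converts the $\mathbb{F}[[U_{1}]]$-answer into the stated $\mathbb{F}[[U_{1},U_{2}]]/(U_{1}-U_{2})$-answer and leaves the absolute gradings of the Whitehead part unchanged by Theorem \ref{thm:invariance of perturbed surg. formula}.

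The main obstacle will be the bookkeeping in step three: one must verify that the $N$-summands really are confined to $(0,0)\in\mathrm{Spin}^{c}(S_{\Lambda}^{3}(L))$ and that no cross-terms with the Whitehead summand are introduced by the $\tilde{F}$-homotopies in the perturbed square. Here the argument is that the generators of $(N,\partial^{-})$ all sit at Alexander grading $(0,0)$ (as visible in Figure \ref{CFL^-(Wh)}), so after $\Lambda$-twisted gluing they can only enter the class $(0,0)\bmod\Lambda$; and any potential cross-terms can be removed by a base change, as in the pruning arguments used to prove Proposition \ref{prop:CFL^-(b(8k,4k+1))}. Once this rigidity is in place, all gradings of the Whitehead summand persist, each $N$-summand contributes a single $\mathbb{F}$ to the $(0,0)$ Spin$^{c}$-structure, and Equation \eqref{eq:HF(b(8k,4k+1))} follows.
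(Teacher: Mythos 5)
Your proposal is correct and follows essentially the same route as the paper's proof: split $A_{\mathrm{s}}^{-}(L)=A_{\mathrm{s}}^{-}(\mathit{Wh})\oplus\mathscr{N}_{\mathrm{s}}$, choose the perturbed destabilization maps to be $D(\mathit{Wh})\oplus 0$ (legitimate since the $\mathscr{N}$-pieces at infinite filtration levels are acyclic), and observe that every $\mathscr{N}_{\mathrm{s}}$ with $\mathrm{s}\neq(0,0)$ is acyclic, so the surgery complex splits off the Whitehead complex plus a piece whose homology is $H_{*}(\mathscr{N}_{0,0})=\mathbb{F}^{\oplus(k-1)}$. One small correction: the generators of $(N,\partial^{-})$ do \emph{not} all sit at Alexander grading $(0,0)$ (they occupy a box of Alexander gradings, as in Figure \ref{ABCD}); the reason the extra $\mathbb{F}^{\oplus(k-1)}$ lands in the $(0,0)$ $\mathrm{Spin}^{c}$ structure is simply that the only non-acyclic summand $\mathscr{N}_{0,0}$ sits at the lattice index $\mathrm{s}=(0,0)\in\mathbb{H}(L)$, and the $\mathrm{Spin}^{c}$ decomposition of the glued complex is governed by that index modulo $\Lambda$, not by the Alexander gradings of individual generators.
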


\begin{proof}
By Proposition \ref{prop:CFL^-(b(8k,4k+1))}, $CFL^{-}(\overrightarrow{L})=CFL^{-}(\mathit{Wh})\oplus\bigoplus_{i=1}^{k-1}(N,\partial^{-})$.
Let $\mathfrak{\mathscr{\mathscr{N}}}=\bigoplus_{i=1}^{k-1}(N,\partial^{-})$.
We define $\mathscr{N}_{\text{s}}$ similarly as $A_{\mathrm{s}}^{-}$
in (\ref{eq:A_s}). Concretely, suppose $G$ be a set of homogeneous
generators of $\mathscr{N}$ as a $\mathbb{F}[[U_{1},U_{2}]]$-module,
and for $x\in G$,
\[
\partial x=\sum_{y\in G}k_{xy}y,
\]
where $k_{xy}\in\mathbb{F}[[U_{1},U_{2}]]$. Let $A(x)=(A_{1}(x),A_{2}(x))$
denote the Alexander filtration of $x\in G$. Define $\mathscr{N}_{\text{s}}$
by
\[
\partial x=\sum_{y\in G}k_{xy}\cdot U_{1}^{\max\{A_{1}(x)-s_{1},0\}-\max\{A_{1}(y)-s_{1},0\}}U_{2}^{\max\{A_{2}(x)-s_{2},0\}-\max\{A_{2}(y)-s_{2},0\}}\cdot y.
\]

Thus $A_{\mathrm{s}}^{-}(\overrightarrow{L})=A_{\mathrm{s}}^{-}(\mathit{Wh})\oplus\mathscr{N}_{\text{s}}$.
Thus all the inclusion maps $I^{\pm L_{i}},i=1,2$ preserve this direct
sum decomposition. Since the complexes $\mathscr{N}_{s_{1},\pm\infty}$
are acyclic complexes, we can choose $\tilde{D}_{s_{1},-\infty}^{\pm L_{2}}:A_{s_{1},\pm\infty}^{-}(\overrightarrow{L})\rightarrow A_{s_{1},+\infty}^{-}(\overrightarrow{L})$
to be
\[
\tilde{D}_{s_{1},-\infty}^{\pm L_{2}}=D(\mathit{Wh})_{s_{1},-\infty}^{\pm L_{2}}\oplus0,
\]
where $D(\mathit{Wh})_{s_{1},-\infty}^{\pm L_{2}}$ is the destabilization
map for $\mathit{Wh}$. Therefore $\tilde{\Phi}_{\text{s}}^{\pm L_{i}}=\Phi(\mathit{Wh})_{\text{s}}^{\pm L_{i}}\oplus\Phi_{\mathscr{N},\text{s}}^{\pm L_{i}},$
where $\Phi_{\mathscr{N},\text{s}}^{\pm L_{i}}=0:\mathscr{N}_{\text{s}}\rightarrow\mathscr{N}_{\psi^{\pm L_{i}}(\text{s})}.$

Thus the perturbed surgery complex $(\tilde{\mathcal{C}}^{-}(\overrightarrow{L},\Lambda),\mathcal{\tilde{D}}^{-})$
is a direct sum of two twisted gluing of squares
\[
(\tilde{\mathcal{C}}^{-}(\overrightarrow{L},\Lambda),\mathcal{\tilde{D}}^{-})=(\mathcal{C}^{-}(\mathit{Wh},\Lambda),\mathcal{D}^{-})\oplus\prod_{\text{s}=(s_{1},s_{2})\in\mathbb{Z}^{2}}(\mathscr{N}_{\text{s}}\oplus\mathscr{N}_{s_{1},+\infty}\oplus\mathscr{N}_{+\infty,s_{2}}\oplus\mathscr{N}_{+\infty,+\infty},\tilde{\mathcal{D}}^{-}).
\]
From the fact that any $\mathscr{N_{\text{s}}}$ with $\text{s}\neq(0,0)$
is acyclic, it follows that $H_{*}(\tilde{\mathcal{C}}^{-}(\overrightarrow{L}),\mathcal{\tilde{D}}^{-})=H_{*}(\mathcal{C}^{-}(\mathit{Wh}),\mathcal{D}^{-})\oplus H_{*}(\mathscr{N}_{0,0})$.
For that $\mathscr{N}_{0,0}$ belongs to the $(0,0)$ $\text{Spin}^{c}$
structure and $H_{*}(\mathscr{N}_{0,0})=\mathbb{F}[[U]]/U,$ we have
the equations (\ref{eq:HF(b(8k,4k+1))}). The absolute gradings are
inherited from $H_{*}(\mathcal{C}^{-}(\mathit{Wh}),\mathcal{D}^{-}).$
\end{proof}

\bibliographystyle{plain}
\bibliography{Math}

\end{document}